\tikzset{
>=stealth',
help lines/.style={dashed, thick},
axis/.style={<->},
important line/.style={thick},
connection/.style={thick, dotted},
}
\newtheorem{theorem}{Theorem}[section]
\newtheorem{lemma}[theorem]{Lemma}
\newtheorem{corollary}[theorem]{Corollary}
\newtheorem{prop}[theorem]{Proposition}
\theoremstyle{definition}
\newtheorem{definition}[theorem]{Definition}
\newtheorem{example}[theorem]{Example}
\newtheorem{remark}[theorem]{Remark}
\newtheorem{thm}{Theorem}
\newcommand{\N}{\mathbb{N}}
\newcommand{\Z}{\mathbb{Z}}
\newcommand{\Q}{\mathbb{Q}}
\newcommand{\R}{\mathbb{R}}
\newcommand{\C}{\mathbb{C}}
\newcommand{\M}{\mathcal {M}}
\newcommand{\E}{\mathcal {E}}
\newcommand{\inj}{\hookrightarrow}
\newcommand{\h}{\mathfrak h}
\newcommand{\g}{\mathfrak g}
\DeclareMathOperator{\ad}{ad}
\DeclareMathOperator{\Hom}{Hom}
\DeclareMathOperator{\gr}{gr}
\DeclareMathOperator{\SL}{SL}
\DeclareMathOperator{\Der}{Der}
\DeclareMathOperator{\reg}{reg}
\DeclareMathOperator{\Hol}{Hol}
\DeclareMathOperator{\Lie}{Lie}
\DeclareMathOperator{\trig}{trig}
\DeclareMathOperator{\KZB}{KZB}
\DeclareMathOperator{\AKZ}{AKZ}
\DeclareMathOperator{\Spec}{Spec}
\DeclareMathOperator{\rank}{rank}
\DeclareMathOperator{\orb}{orb}
\newcommand {\Omit}[1]{}
\newcommand {\bcomment}[1]{}
\newcommand {\wh}[1]{\widehat{#1}}
\newcommand {\elp}{{\scriptscriptstyle{\operatorname{ell}}}}
\newcommand {\Aell}{{\mathfrak t}_\elp^\Phi}
\newcommand {\Aelltwo}{{\mathfrak t}_\elp^{\mathsf{A_1}}}
\newcommand {\ttrig}{{\mathfrak t}_{\trig}^\Phi}
\newcommand {\Bell}{B_\elp}
\newcommand {\Pell}{P_\elp}
\renewcommand {\Im}{\operatorname{Im}}
\newcommand {\rreg}{_{\scriptscriptstyle{\reg}}}
\newcommand {\etaa}[1]{\frac{d#1}{e^{#1}-1}}
\newenvironment{romenum}
{

\begin{enumerate}}{\end{enumerate}}
\newcommand {\CEE}{Calaque--Enriquez--Etingof }
\newcommand {\sfA}{\mathsf A}
\newcommand {\IR}{\mathbb R}
\newcommand {\IZ}{\mathbb Z}
\newcommand {\IC}{\mathbb C}
\newcommand {\wrt}{with respect to }
\newcommand {\mm}{\mathfrak m}
\newcommand {\aand}{\qquad\text{and}\qquad}
\newcommand {\wt}[1]{\widetilde{#1}}
\newcommand {\IN}{\mathbb N}
\title[Universal KZB Equations for arbitrary root systems]%
{Universal KZB Equations for arbitrary root systems}
\keywords{}
\author[V.~Toledano Laredo]{Valerio~Toledano Laredo}
\address{Department of Mathematics,
Northeastern University,
360 Huntington Ave.,
Boston, MA 02115, USA}
\email{V.ToledanoLaredo@neu.edu}
\author[Y.~Yang]{Yaping~Yang}
\address{
School of Mathematics and Statistics
The University of Melbourne,
813 Swanston Street, Parkville VIC 3010}
\email{yaping.yang1@unimelb.edu.au}
\begin{document}
\begin{abstract}
Generalising work of Calaque--Enriquez--Etingof \cite{CEE}, we construct a universal
KZB connection $\nabla_\elp$ for any finite (reduced, crystallographic) root system
$\Phi$. $\nabla_\elp$ is a flat connection on the regular locus of the elliptic configuration
space associated to $\Phi$, with values in a graded Lie algebra $\Aell$ with a
presentation with relations in degrees 2, 3 and 4 which we determine explicitly. The
connection $\nabla_\elp$ also extends to a flat connection over the moduli space
of pointed elliptic curves. We prove that its monodromy induces an
isomorphism between the Malcev Lie algebra of the elliptic pure braid group $\Pell^\Phi$
corresponding to $\Phi$ and $\Aell$, thus showing that $\Pell^\Phi$ is not $1$--formal
and extending a result of Bezrukavnikov valid in type $\sfA$ \cite{Bez}. 
We then study one concrete incarnation of our KZB connection, which is obtained by
mapping $\Aell$ to the rational Cherednik algebra $H_{\hbar, c}$ of the corresponding
Weyl group $W$. Its monodromy gives rise to an isomorphism between appropriate
completions of the double affine Hecke algebra of $W$ and $H_{\hbar, c}$. 
\end{abstract}
\Omit{ArXiv abstract Jan 2018
Generalising work of Calaque-Enriquez-Etingof, we construct a universal KZB
connection D_R for any finite (reduced, crystallographic) root system R. D_R is
a flat connection on the regular locus of the elliptic configuration space associated
to R, with values in a graded Lie algebra t_R with a presentation with relations
in degrees 2, 3 and 4 which we determine explicitly. The connection D_R also
extends to a flat connection over the moduli space of pointed elliptic curves.
We prove that its monodromy induces an isomorphism between the Malcev
Lie algebra of the elliptic pure braid group P_R corresponding to R and t_R,
thus showing that P_R is not 1-formal and extending a result of Bezrukavnikov
valid in type A. We then study one concrete incarnation of our KZB connection,
which is obtained by mapping t_R to the rational Cherednik algebra H_{h,c} of
the corresponding Weyl group W. Its monodromy gives rise to an isomorphism
between appropriate completions of the double affine Hecke algebra of W and
H_{h,c}. 
Comments: 50 pages
}
\maketitle
\tableofcontents

\section{Introduction}

The Knizhnik--Zamolodchikov--Bernard (KZB) connection was constructed by Felder--Wieczerkowski
in \cite{FW}. It is a flat connection on the vector bundle of WZW conformal blocks on the moduli space
$\M_{1,n}$ of elliptic curves with $n$ marked points. Calaque--Enriquez--Etingof later constructed a
universal KZB connection on $\M_{1,n}$ \cite{CEE}, which has coefficients in an arbitrary vector bundle.
The main goal of the current paper is to generalise the construction of \CEE to the elliptic configuration
space associated to an arbitrary root system.

\subsection{The universal KZB connection}

\bcomment{
2. is the trigonometric/
elliptic Lie algebra of $\Phi$ isomorphic to that for the dual root system
$\Phi^\vee$? Probably not, but check. If not, make a remark about this in the main
text (not in the introduction).}

\subsubsection{}

Let $E$ be a Euclidean vector space, and $\Phi\subset E^*$ a finite, reduced,
crystallographic root system. Let $Q\subset E^*$, $Q^\vee\subset E$ be the
root and coroot lattices, and $P\subset E^*,P^\vee\subset E$ the corresponding
weight and coweight lattices dual to $Q^\vee$ and $Q$ respectively. Denote the
complexification of $E$ by $\h$.

Let $\tau$ be a point in the upper half plane $\mathfrak{H}=\{z \in \C \mid
\Im(z)> 0\}$, and set $\Lambda_\tau:=\Z+\Z\tau\subset \C$. Consider the
elliptic curve $\mathcal{E}_\tau:=\C/{\Lambda_\tau}$ with modular parameter
$\tau$. Let $T:=\h/(P^\vee+\tau P^\vee)\cong\Hom_{\IZ}(Q,\E_\tau)$, which
is non--canonically isomorphic to $\E_\tau^n$, where $n=\rank(P^{\vee})$. Any root $\alpha
\in\Phi$ induces a map $\chi_\alpha:T\to\E_\tau$, with kernel $T_\alpha$.
We refer to  $T_{\reg}=T\setminus \bigcup_{\alpha\in \Phi}T_{\alpha}$, as
the \textit{elliptic configuration space} associated to $\Phi$.
If $E=\IR^n$ with standard coordinates $\{\epsilon_i\}$, and $\Phi=\{\epsilon
_i-\epsilon_j\}_{1\leq i\neq j\leq n}\subset E^*$ is the root system of type $\sfA_{n-1}$,
$T_{\reg}$ is the configuration space of $n$ ordered points on the elliptic
curve $\E_\tau$. 

\subsubsection{}

Let $\theta(z| \tau )$ be the Jacobi theta function,  which is a holomorphic function $\C\times
\mathfrak{H} \to \C$, whose zero set is $\{z\mid\theta(z| \tau )=0\}=\Lambda_\tau$, and such
that its residue at $z=0$ is $1$. (See Section
\S\ref{theta function}). Let $x$ be another complex variable, and set
\begin{equation*}
k(z, x|\tau):=\frac{\theta(z+x| \tau)}{\theta(z| \tau)\theta(x| \tau)}-\frac{1}{x}.
\end{equation*}
The function $k(z, x|\tau)$ has only simple poles at $z\in \Lambda_\tau$, and $k(z, x|\tau)$ is holomorphic in $x$. In other words, $k(z, x|\tau)$ belongs to $\Hol(\C-\Lambda_\tau)[\![x]\!]$. 

\subsubsection{}

Let $A$ be an algebra endowed with the following data: a set of elements $\{t_{\alpha}\}_{\alpha
\in\Phi}$, such that  $t_{-\alpha}=t_{\alpha}$, and two linear maps $x: \h\to A$, $y:\h\to A$. We
define an $A$--valued connection on $T_{\reg}$. It takes the following form. 
\begin{equation*}
\nabla_{\KZB, \tau}=d-\sum_{\alpha \in \Phi^+} k(\alpha, \ad(\frac{x_{\alpha^\vee}}{2})|\tau)(t_\alpha)d\alpha+\sum_{i=1}^{n}y(u^i)du_i,
\end{equation*}
where $\Phi_+\subset \Phi$ is a chosen system of positive roots, $\{u_i\}$, and $\{u^i\}$ are
dual basis of $\h^*$ and $\h$ respectively.\footnote{We assume further that $A$ is endowed
with a topology such that the expressions $k(\alpha, \ad(\frac{x_{\alpha^\vee}}{2})|\tau)(t_\alpha)$
converge. For example, $A$ could be complete \wrt a grading for which the elements
$x_\alpha$ have positive degree.} When $\Phi$ is the root system of type $\sfA_n$,
the connection above coincides with the universal KZB connection introduced by
Calaque--Enriquez--Etingof.


\begin{thm}[Theorem~\ref{thm:connection flat}]
\label{intro:thm:flat}
The connection  $\nabla_{\KZB, \tau}$ is flat if and only if the following relations
hold in $A$
\begin{enumerate}
\item For any rank 2 root subsystem $\Psi$ of $\Phi$, and $\alpha\in\Psi$,
\[[t_\alpha, \sum_{\beta \in \Psi^+}t_\beta]=0. \]
\item For any $u, v\in \h$
\[[x(u), x(v)]=0=[y(u), y(v)].\]
\item For any $u, v\in \h$,
\[[y(u), x(v)]=\displaystyle\sum_{\gamma\in \Phi^+}\langle v, \gamma\rangle \langle u, \gamma\rangle t_\gamma.\]
\item For any $\alpha\in\Phi$ and $u\in\h$ such that $\alpha(u)=0$,
\[[t_\alpha, x(u)]=0=[t_\alpha, y(u)].\]
\end{enumerate}
If, moreover, the Weyl group $W$ of $\Phi$ acts on $A$, then $\nabla_{\KZB, \tau}$ is
$W$--equivariant if and only
\begin{enumerate}
\item[(5)] For any $w\in W$, $\alpha\in\Phi$, and $u,v\in\h$,
\[w(t_\gamma)=t_{w\,\gamma},\qquad
w(x(u))=x(w\,u),
\aand w(y(v))=y(w\,v)\]
\end{enumerate}
\end{thm}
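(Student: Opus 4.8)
I would write $\nabla_{\KZB,\tau}=d+\omega$ with
\[
\omega=-\sum_{\alpha\in\Phi^+}K_\alpha\,d\alpha+\sum_i y(u^i)\,du_i,\qquad K_\alpha:=k\bigl(\alpha,\ad(\tfrac{x_{\alpha^\vee}}{2})\,|\,\tau\bigr)(t_\alpha).
\]
Since $\h^*$ trivialises the holomorphic cotangent bundle of $T_{\reg}$, every $d\alpha$ and every $du_i$ is a \emph{constant} $\h^*$-valued $1$-form, so I would collect $\omega$ in the basis $\{du_i\}$ as $\omega=\sum_i\Omega_{u^i}\,du_i$, where
\[
\Omega_v:=y(v)-\sum_{\alpha\in\Phi^+}\langle\alpha,v\rangle\,K_\alpha\qquad(v\in\h).
\]
Each $K_\alpha$ depends on the point of $T_{\reg}$ only through $\chi_\alpha$, hence $\partial_{u^i}\Omega_{u^j}=-\sum_\alpha\langle\alpha,u^i\rangle\langle\alpha,u^j\rangle\,\partial_z K_\alpha$ is symmetric in $i,j$; this gives $d\omega=0$ and collapses the curvature to $\omega\wedge\omega$. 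The first, clean, step is therefore the equivalence
\[
\nabla_{\KZB,\tau}\ \text{is flat}\iff [\Omega_u,\Omega_v]=0\ \text{for all }u,v\in\h,
\]
an identity of the $A$-valued function $G(u,v):=[\Omega_u,\Omega_v]$, holomorphic on $T_{\reg}$ and meromorphic on the universal cover $\h$ with poles only along the hyperplanes $H_\alpha=\{\chi_\alpha\in\Lambda_\tau\}$.

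\textbf{Relations (2)--(4) from poles and regular part.} I would expand $G(u,v)$ by the Liouville-type principle that a function of its quasi-periodicity vanishes iff all its polar parts along the $H_\alpha$ and its global regular part vanish, organising the computation by the $\ad(\tfrac{x_{\alpha^\vee}}{2})$-filtration, i.e.\ by the Taylor expansion $k(z,x)=\sum_{m\ge0}k_m(z)\,x^m$ with $k_0=\theta'/\theta$ and residue $k_0\sim 1/z$. The pole-free, translation-invariant piece of $G(u,v)$ is precisely $[y(u),y(v)]$, forcing $[y(u),y(v)]=0$. The simple residue along $H_\alpha$ is governed by $K_\alpha\sim t_\alpha/\chi_\alpha$ and produces brackets $[t_\alpha,\,\cdot\,]$; its components transversal to $\alpha$ (terms $[t_\alpha,x(u)]$, $[t_\alpha,y(u)]$ with $\alpha(u)=0$) give relation (4), while assembling the regular part to first order in the $x$-filtration over all positive roots yields the aggregated relation (3), the sum $\sum_{\gamma\in\Phi^+}\langle v,\gamma\rangle\langle u,\gamma\rangle t_\gamma$ recording one contribution per root. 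Because the functional coefficients $k_m(\chi_\alpha)$ and their residues are linearly independent, each relation is forced separately, which delivers necessity and sufficiency at once.

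\textbf{The rank-$2$ relation (1): the main obstacle.} The genuinely new point lies in the $[K_\alpha,K_\beta]$-terms, controlled along the strata $H_\alpha\cap H_\beta$ where \emph{all} roots of the rank-$2$ subsystem $\Psi=\Phi\cap\operatorname{span}(\alpha,\beta)$ degenerate simultaneously. For orthogonal pairs ($\mathsf{A}_1\times\mathsf{A}_1$) the bracket $[K_\alpha,K_\beta]$ must vanish on its own, yielding $[t_\alpha,t_\beta]=0$ and, from the terms in which the two $\ad$-operators fail to commute, the relation $[x(u),x(v)]=0$ (using that the coroots span $\h$). For a genuinely rank-$2$ $\Psi$ one must recombine the products and restricted values of the $K_\gamma$, $\gamma\in\Psi^+$, using the genus-one addition (Fay/Kronecker) identity for $k$, which converts $k(\chi_\gamma,\cdot)\,k(\chi_\delta,\cdot)$ at roots with $\chi_\gamma+\chi_\delta=\chi_{\gamma+\delta}$ into a three-term expression. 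In the simply-laced case $\Psi=\mathsf{A}_2$ this is exactly the \CEE identity, and it relies on $(\gamma+\delta)^\vee=\gamma^\vee+\delta^\vee$, so that $\ad(\tfrac{x_{\gamma^\vee}}{2})\pm\ad(\tfrac{x_{\delta^\vee}}{2})$ aligns with $\ad(\tfrac{x_{(\gamma\pm\delta)^\vee}}{2})$ once $[x(u),x(v)]=0$ is invoked. The hard part, absent in type $\sfA$, is $\Psi=\mathsf{B}_2$ and $\Psi=\mathsf{G}_2$: the root strings are longer, the coroot arithmetic $(\gamma+\delta)^\vee\neq\gamma^\vee+\delta^\vee$ fails in general, and one must iterate the addition identity along each string while tracking the coroot combinations. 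I expect that after these substitutions the entire contribution collapses to the single uniform statement $[t_\alpha,\sum_{\beta\in\Psi^+}t_\beta]=0$; verifying this collapse for $\mathsf{B}_2$ and $\mathsf{G}_2$ is the principal technical obstacle of the theorem.

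\textbf{$W$-equivariance.} Assuming $W$ acts on $A$, I would check equivariance termwise: since $w\in W$ permutes $\Phi$ and acts on $\h$ preserving the pairing $\langle\,,\rangle$, pulling $\omega$ back by $w$ relabels the positive system and replaces $t_\gamma,x(u),y(v)$ by $t_{w\gamma},x(wu),y(wv)$ inside each summand. Comparing $w_*\omega$ with $\omega$ then shows that the exact obstruction to $W$-equivariance is relation (5), a direct verification once the reduction $d\omega=0$ above is in hand.
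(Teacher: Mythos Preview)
Your sufficiency argument is in the same spirit as the paper's: both reduce flatness to the vanishing of $\sum_{\alpha\neq\beta}[K_\alpha,K_\beta]\,d\alpha\wedge d\beta$ modulo the $(yy)$, $(yx)$, $(tx)$, $(ty)$ relations, invoke the Fay-type addition law for $k(z,x|\tau)$, and then close the computation by a rank-$2$ case analysis ($\mathsf A_1{\times}\mathsf A_1$, $\mathsf A_2$, $\mathsf B_2$, $\mathsf G_2$). The paper organises this differently, however: it introduces for each pair $\alpha\neq\beta$ an auxiliary vector $\omega(\alpha^\vee,\beta)\in\h$ characterised by $(\alpha^\vee+\omega(\alpha^\vee,\beta))\perp\alpha$ and $\omega(\alpha^\vee,\beta)\perp\beta$, and uses $(tx)$ and $(xx)$ to rewrite $[K_\alpha,K_\beta]$ as a single operator $k(\alpha,\ad\tfrac{x_{\omega(\alpha^\vee,\beta)}}{-2})k(\beta,\ad\tfrac{x_{\omega(\beta^\vee,\alpha)}}{-2})$ applied to $[t_\alpha,t_\beta]$. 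This device is what makes the non-simply-laced cases tractable: the identities $\omega(\alpha^\vee,\beta)-\omega(\beta^\vee,\alpha)=\omega(\alpha^\vee,\alpha+\beta)$ etc.\ substitute for the failure of $(\gamma+\delta)^\vee=\gamma^\vee+\delta^\vee$, and Proposition~\ref{prop:coeff} (a single three-term identity $k(\alpha,\beta)\,d\alpha\wedge d\beta+k(\alpha,\alpha+\beta)\,d(\alpha+\beta)\wedge d\alpha+k(\beta,\alpha+\beta)\,d\beta\wedge d(\alpha+\beta)=0$) then feeds directly into the rank-$2$ graph arguments. Your plan to iterate the addition identity along root strings is workable but messier; the $\omega$-trick is worth knowing.

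Where your proposal genuinely diverges from the paper is necessity. You argue that ``the functional coefficients $k_m(\chi_\alpha)$ and their residues are linearly independent'', so vanishing of $G(u,v)$ forces each relation. This is the right heuristic, but as stated it is a gap. The curvature takes values in $A$, not in $\mathbb C$, and in a general algebra $A$ you cannot project onto ``the $[y(u),y(v)]$-component'' or ``the $[x(u),x(v)]$-component''; extracting a relation requires exhibiting a specific scalar-valued linear functional on the space of meromorphic functions on $T_{\reg}$ that isolates that relation (a residue, a value, a Fourier coefficient) and then proving the requisite linear independence over $\mathbb C$ of the remaining coefficient functions. Your one-line Liouville principle does not supply this, and your extraction of $(xx)$ (``from the terms in which the two $\ad$-operators fail to commute'') is especially vague. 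The paper avoids this issue entirely: necessity is deduced \emph{a posteriori} from Theorem~\ref{thm:formal}, which shows that the monodromy of $\nabla_{\KZB,\tau}$ induces an isomorphism $\widehat{\mathbb C\Pell^\Phi}\cong\widehat{U(\Aell)}$. Given any algebra $A$ with data making the connection flat, the monodromy then factors through $\widehat{U(\Aell)}$, producing a map $\Aell\to A$ that carries over the relations (Corollary~\ref{cor:ness}). If you want a direct necessity proof, you will need to make the linear-independence step precise.

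Your treatment of $W$-equivariance matches the paper's: a direct comparison of $s_i^*\nabla_{\KZB,\tau}$ with $\nabla_{\KZB,\tau}$ using that $s_i$ permutes $\Phi^+\setminus\{\alpha_i\}$ and $k(-z,-x)=-k(z,x)$.
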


Define $\Aell$ to be the Lie algebra generated by a set of elements $\{t_\alpha\}_{\alpha\in \Phi}$,
such that $t_\alpha=t_{-\alpha}$, and two linear maps $x: \h\to\Aell$, $y:\h\to \Aell$, satisfying
the relations in Theorem \ref{intro:thm:flat}. The Weyl group $W$ acts on $\Aell$ according
to relation (5) in Theorem \ref{intro:thm:flat}. 

\subsection{The Malcev Lie algebra of $\Pell^\Phi$}

Let $\Pell^\Phi=\pi_1(T_{\reg}, x_0)$ be the fundamental group of 
$T_{\reg}$. We refer to $\Pell^\Phi$ as the {\it elliptic pure braid group} corresponding to $\Phi$.
The flatness of the universal KZB connection $\nabla_{\KZB, \tau}$ gives rise to the monodromy
map
\[\mu: \Pell^\Phi \to \exp(\widehat{\Aell}),\]
where $\widehat{\Aell}$ is the (pro--nilpotent) completion of $\Aell$ with respect to the grading
given by $\deg(x(u))=1=\deg(y(u))$, and $\deg(t_\alpha)=2$, and $\exp(\widehat{\Aell})$ is the
corresponding pro--unipotent group.

Let $J \subseteq \C \Pell^\Phi$ be the augmentation ideal of the group ring $\C \Pell^\Phi$,
and $\widehat{\C \Pell^\Phi}$ the completion of $\C \Pell^\Phi$ with respect to $J$. Let $U
(\Aell)$ be the universal enveloping algebra of $\Aell$, and $\widehat{U(\Aell)}$ its completion
\wrt the grading on $\Aell$. The monodromy map $\mu$ extends to the completions

\begin{thm}[Theorem~\ref{thm:formal}]
\label{intro:thm formal}\hfill
\begin{enumerate}
\item The monodromy map extends to an isomorphism of Hopf algebras
$\wh{\mu}: \widehat{\C \Pell^\Phi} \to \widehat{U(\Aell)}$.
\item The restriction of $\wh{\mu}$ to primitive elements is an isomorphism of
the Malcev Lie algebra of $\Pell^\Phi$ to the graded completion of $\Aell$.
\end{enumerate}
\end{thm}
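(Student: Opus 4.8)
The plan is to establish the Hopf-algebra isomorphism (1) and then read off (2) by restricting to primitive elements. Since $\nabla_{\KZB,\tau}$ takes values in the pro-nilpotent Lie algebra $\wh{\Aell}$, the monodromy of every loop is a group-like element $\exp(\xi)$ with $\xi$ of strictly positive degree; in particular $\mu(\gamma)-1$ lies in the augmentation ideal of $\wh{U(\Aell)}$ for each $\gamma\in\Pell^\Phi$. Thus $\mu$ is continuous for the $J$-adic topology and extends to a morphism $\wh\mu$ of complete Hopf algebras. By the theorems of Milnor--Moore and Quillen it is then enough to prove that the restriction of $\wh\mu$ to primitives, the Lie algebra morphism $\mm(\Pell^\Phi)\to\wh{\Aell}$, is an isomorphism; this simultaneously yields (2).

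To analyse this morphism I would pass to associated graded for the \emph{weight} grading, in which $x(u)$ and $y(u)$ have weight $1$ and $t_\alpha$ has weight $2$. Geometrically this is the weight filtration carried by the Malcev Lie algebra of the smooth variety $T_{\reg}$, and $\nabla_{\KZB,\tau}$ is homogeneous for it because the one-forms $du_i$ and the coefficients of $d\alpha$ appearing in $\nabla_{\KZB,\tau}$ carry definite weights. The essential point is that this weight grading is \emph{not} the grading associated to the lower central series: relation (3) expresses certain combinations of the weight-$2$ generators $t_\gamma$ as brackets $[y(u),x(v)]$ of weight-$1$ elements, so $\Aell$ acquires genuine generators in weight $2$ precisely when these combinations fail to exhaust the $t_\gamma$ (for instance in types $\mathsf{B}_2$ and $\mathsf{G}_2$). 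It is exactly this discrepancy that makes $\Pell^\Phi$ fail to be $1$-formal. Writing $\gr^{w}$ for the weight-associated graded, it then suffices to prove that $\gr^{w}\wh\mu\colon \gr^{w}\mm(\Pell^\Phi)\to\Aell$ is an isomorphism of graded Lie algebras.

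Surjectivity is the more routine half. I would exhibit an explicit system of loops generating $\Pell^\Phi$ and compute the leading weight-symbols of their monodromies: the loops coming from the two lattice directions $P^\vee$ and $\tau P^\vee$ of $T$ have weight-$1$ symbols $x(u)$ and $y(u)$, while a small loop about the divisor $T_\alpha$ has weight-$2$ symbol equal, up to a scalar, to the residue $t_\alpha$ of the connection. As these symbols are precisely the generators of $\Aell$, the map $\gr^{w}\wh\mu$ is onto.

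The main obstacle is injectivity, equivalently the matching bound $\dim\gr^{w}_d\mm(\Pell^\Phi)\le\dim(\Aell)_d$ in every degree. Here the failure of $1$-formality is decisive: the quadratic model furnished by $H^1(T_{\reg})$ and the cup product does not compute the associated graded, so it cannot serve as the upper bound, and a finer computation is required. I would compute $\gr^{w}\mm(\Pell^\Phi)$ directly, either by exploiting a fibration of the elliptic configuration space obtained by forgetting coordinates one at a time --- generalising the Fadell--Neuwirth argument used in type $\sfA$ in \cite{CEE}, so that the resulting short exact sequences of Malcev Lie algebras realise $\gr^{w}\mm(\Pell^\Phi)$ as an iterated semidirect product with computable graded dimensions --- or, absent a suitable fibration, by working from an explicit presentation of $\Pell^\Phi$ and tracking relations to all orders via the Magnus expansion. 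Throughout, $W$-equivariance (relation (5)) lets me reduce the verification that relations (1)--(4) are complete --- in particular that the weight-$3$ relation (4) and the weight-$4$ relation (1) hold with no further relations --- to the rank-$2$ subsystems $\mathsf{A}_1\times\mathsf{A}_1$, $\mathsf{A}_2$, $\mathsf{B}_2$ and $\mathsf{G}_2$, where it can be checked by hand. Comparing the resulting Hilbert series with that of $U(\Aell)$, computed by the Poincar\'e--Birkhoff--Witt theorem from the presentation of $\Aell$, then forces $\gr^{w}\wh\mu$ to be an isomorphism, proving (1) and hence (2).
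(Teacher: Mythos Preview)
Your setup and surjectivity argument are fine and match the paper: extend $\mu$ by continuity, pass to associated graded for the weight grading, and identify the leading symbols of the lattice loops and the small loops about the divisors $T_\alpha$ with the generators $x(u),y(u),t_\alpha$ of $\Aell$.

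The gap is in injectivity. Neither of your proposed methods is available here. There is no Fadell--Neuwirth fibration for the elliptic configuration space of a general root system: the ``forget a coordinate'' map exists only in type $\mathsf{A}$, where the divisors are $\{z_i=z_j\}$; for $\mathsf{B}_n$, $\mathsf{G}_2$, etc., the root hypertori $T_\alpha$ do not arrange themselves so that projecting out one coordinate yields a fiber bundle over a smaller configuration space. The alternative you propose --- Magnus expansion from an explicit presentation of $\Pell^\Phi$ --- presupposes exactly the presentation you are trying to establish, and comparing Hilbert series in all degrees requires knowing the graded dimensions on both sides independently, which you do not have.

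The paper avoids dimension counting entirely. It constructs a map $p$ in the \emph{reverse} direction, $p\colon \widehat{U(\Aell)}\to\gr(\widehat{\C\Pell^\Phi})$, sending $t_\alpha, x(\lambda_i^\vee), y(\lambda_i^\vee)$ to the classes of the obvious loops $T_\alpha, X_i, Y_i$. The content is that $p$ is well defined, i.e.\ that the defining relations of $\Aell$ already hold in $\gr(\widehat{\C\Pell^\Phi})$. The (tt) relations are proved by a local argument near a generic point of a codimension-$2$ stratum $T_\alpha\cap T_\beta$ (Lemma~\ref{lem:subtori adj}); the (yx) relation by a linking-number computation on the punctured torus (Proposition~\ref{prop:yx}); and the (xx), (yy), (tx), (ty) relations by quoting Cherednik's presentation of the double affine braid group $\Bell$, of which $\Pell^\Phi$ is the pure part. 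Once $p$ exists, one computes $\gr(\wh\mu)\circ p$ on generators: it is the explicit linear automorphism $x(u)\mapsto -y(u)$, $y(u)\mapsto 2\pi i\, x(u)-\tau y(u)$, $t_\alpha\mapsto 2\pi i\, t_\alpha$, hence invertible, and both $p$ and $\gr(\wh\mu)$ are isomorphisms. This is the missing idea in your proposal.
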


Let $\Gamma$ be an abstract group. 
Recall the Malcev Lie algebra $\mm_\Gamma$ of $\Gamma$ is the subspace of
primitive elements in the completion of $\IC\Gamma$ \wrt its augmentation ideal. By definition, $\Gamma$ is 1--formal if $\mm_\Gamma$ is isomorphic, as a filtered
Lie algebra, to the graded completion of a quadratically presented Lie algebra (see,
e.g. \cite{DPS}).

Let $g\geq 1$, and $\Gamma_{g,n}$ be the fundamental group of the configuration space
of $n$ ordered points on an oriented surface of genus $g$. Using the theory of minimal models,
Bezrukavnikov gave an explicit presentation of the Malcev Lie algebra of $\Gamma_{g,n}$,
and proved in particular that $\Gamma_{g,n}$ is 1--formal if and only if $g>1$ or $g=1$
and $n\leq 2$ \cite{Bez}. \CEE rederived Bezrukavnikov's result in the case of genus $g=1$
by using Chen's iterated integrals associated to the universal KZB connection which they
introduced \cite{CEE}. A similar construction for surfaces of higher genus was carried by
Enriquez \cite{E}. A Tannakian interpretation of the construction of \CEE was recently
given by Enriquez--Etingof \cite{EE}.

The proof of Theorem \ref{intro:thm formal} is modelled on the approach of \cite{CEE, E}.

\begin{thm}[Theorem~\ref{th:not quadratic}]
\item The Lie algebra $\Aell$ is not quadratic. In particular, the pure elliptic
braid group $\Pell^\Phi$ is not 1--formal.
\end{thm}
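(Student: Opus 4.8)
The plan is to deduce the failure of $1$-formality from the failure of $\Aell$ to be a \emph{quadratic} Lie algebra, and to prove the latter by a direct computation in the two lowest graded pieces. Recall that $1$-formality of a group forces the associated graded of its Malcev Lie algebra to be quadratic; combined with the identification of Theorem~\ref{intro:thm formal}, this means that if $\Pell^\Phi$ were $1$-formal then $\Aell$ would have to be a quadratic Lie algebra, i.e.\ generated by its degree-one part $\Aell_1=x(\h)\oplus y(\h)$ with all defining relations concentrated in degree $2$. I would therefore reduce the theorem to showing that $\Aell$ admits no such presentation, and I expect the obstruction to be visible already within degree $2$ for most root systems, and to be pushed into degree $3$ only in type $\sfA$.

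First I would pin down $\Aell_1$ and $\Aell_2$ directly from the presentation. Relations (1) and (4) generate an ideal supported in degrees $\geq 3$, so $\Aell_2$ is governed by the quadratic relations (2) and (3) alone. Relation (2) kills $\Lambda^2 x(\h)$ and $\Lambda^2 y(\h)$, while relation (3) identifies the $n^2$-dimensional space of brackets $[y(u),x(v)]$ (with $n=\rank\Phi$) with the image of the symmetric map $M\colon \operatorname{Sym}^2\h\to\bigoplus_{\gamma\in\Phi^+}\C\,t_\gamma$ sending $u\cdot v$ to $\sum_{\gamma\in\Phi^+}\langle u,\gamma\rangle\langle v,\gamma\rangle\,t_\gamma$. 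No relation among the $t_\gamma$ survives, so $\Aell_2\cong\bigoplus_{\gamma\in\Phi^+}\C\,t_\gamma$ has dimension $|\Phi^+|$, whereas $[\Aell_1,\Aell_1]=\operatorname{im}M$ has dimension equal to $\dim\operatorname{span}\{\gamma\otimes\gamma:\gamma\in\Phi^+\}$.

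This turns quadraticity in degree $2$ into a question about root systems: are the rank-one quadratic forms $\{\gamma\otimes\gamma\}_{\gamma\in\Phi^+}$ linearly independent in $\operatorname{Sym}^2\h^*$? For every irreducible $\Phi$ not of type $\sfA$ (with $\rank\Phi\geq 2$) one has $|\Phi^+|>\binom{\rank\Phi+1}{2}=\dim\operatorname{Sym}^2\h^*$, so these forms are forced to be linearly dependent; the smallest instance is $\mathsf B_2$, where $(\epsilon_1+\epsilon_2)^{\otimes 2}+(\epsilon_1-\epsilon_2)^{\otimes 2}=2\,\epsilon_1^{\otimes 2}+2\,\epsilon_2^{\otimes 2}$. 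Hence $\dim[\Aell_1,\Aell_1]<\dim\Aell_2$, so $\Aell$ is not generated in degree $1$ and a fortiori is not quadratic, and $\Pell^\Phi$ is not $1$-formal in all these cases. In type $\sfA$, by contrast, a short linear-algebra argument shows the $\gamma\otimes\gamma$ are independent: any dependence among the $(\epsilon_i-\epsilon_j)^{\otimes2}$, as quadratic forms on the hyperplane $\sum_i\epsilon_i=0$, lifts to a multiple of $(\sum_i\epsilon_i)\cdot\ell$ in $\operatorname{Sym}^2$, and solving the resulting linear system forces all coefficients to vanish. Thus $\Aell$ is generated in degree $1$, coincides with the elliptic Lie algebra of \cite{CEE}, and the obstruction moves into degrees $3$ and $4$, where non-quadraticity is exactly Bezrukavnikov's theorem \cite{Bez}, re-derived in \cite{CEE}.

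The main difficulty I anticipate is precisely the type $\sfA$ case, where the easy degree-$2$ obstruction disappears: one must show that the cubic relation (4), $[t_\alpha,x(u)]=0$ for $\alpha(u)=0$, is genuinely independent of the quadratic relations, i.e.\ that the corresponding element is nonzero in the Lie algebra defined by (2) and (3) alone. I would establish this either by exhibiting a graded module on which (2) and (3) hold identically but the relevant cubic bracket acts nontrivially, or by an explicit Hilbert-series comparison in degree $3$; in either form this is the technical heart, and it is the point already settled by the minimal-model and iterated-integral methods of \cite{Bez,CEE}. A secondary subtlety to handle carefully is the bookkeeping between the weight grading on $\Aell$ (with $\deg t_\gamma=2$) and the lower central series filtration on the Malcev Lie algebra, which underlies the reduction to non-quadraticity. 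For $\rank\Phi\geq 2$ with $\Phi\neq\sfA_1$ the two arguments above cover all cases, while $\sfA_1$ is correctly excluded, matching the $1$-formal case of two marked points.
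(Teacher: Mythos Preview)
Your argument is correct and follows the same overall strategy as the paper: both reduce to the dimension count $\dim(\Aell)_2=|\Phi^+|>\binom{n+1}{2}\geq\dim[\Aell_1,\Aell_1]$ for $\Phi$ not of type $\sfA$, and both invoke Bezrukavnikov \cite{Bez} in type $\sfA$. The one substantive difference lies in how the linear independence of the $t_\gamma$ in degree~$2$ is established. The paper uses the homomorphism $\Aell\to H_{\hbar,c}$ of Proposition~\ref{prop:map to H_{h,c}}, under which $t_\gamma$ maps to a nonzero multiple of $s_\gamma$ plus a central term, and then appeals to the linear independence of the reflections $s_\gamma$ in $\C W$. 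You instead read this off directly from the presentation, observing that no degree-$2$ element of the relation ideal lies purely in $\operatorname{span}\{t_\gamma\}$. Your route is more self-contained for this particular statement; the paper's has the virtue of reusing machinery needed elsewhere anyway. Your closing observation that $\sfA_1$ is genuinely $1$-formal and must be excluded is correct, and is a point the paper's statement leaves implicit.
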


\subsection{Extension to the moduli space}

Let $\M_{1, n}$ be the moduli space of pointed elliptic curves associated to a root system $\Phi$. 
More explicitly, let $\mathfrak{H}\ni\tau$ be the upper half plane.
The semidirect product $(P^\vee\oplus P^\vee)\rtimes \SL_2(\Z)$ acts on $\h \times \mathfrak{H}$. 
For $(\bold{n}, \bold{m}) \in (P^\vee\oplus  P^\vee)$ and $(z, \tau)\in \h \times \mathfrak{H}$, the action is given by translation:
$(\bold{n}, \bold{m})*(z, \tau):=
(z+\bold{n}+\tau\bold{m}, \tau)$. For $\left(\begin{smallmatrix}
a & b \\
c & d
\end{smallmatrix}\right)\in \SL_2(\Z)$, the action is given by
$\left(\begin{smallmatrix}
a & b \\
c & d
\end{smallmatrix}\right)*(z, \tau):=(\frac{z}{c\tau+d}, \frac{a\tau+b}{c\tau+d})$. 
 Let $\alpha(-): \h\to \C$ be the map induced by the root $\alpha\in \Phi$. 
 We define $\widetilde{H}_{\alpha, \tau}\subset \h\times \mathfrak{H}$ to be
\[
\widetilde{H}_{\alpha, \tau}=\{(z, \tau)\in \h\times\mathfrak{H}\mid \alpha(z) \in \Lambda_\tau=\Z+\tau \Z\}.
\]
The elliptic moduli space $\M_{1, n}$ is defined to be the quotient of 
$\h \times \mathfrak{H}\setminus\bigcup_{\alpha\in \Phi^+, \tau\in \mathfrak{H}} \widetilde{H}_{\alpha, \tau}$
by the action of $(P^\vee \oplus P^\vee) \rtimes \SL_2(\Z)$ action.

In the type $\sfA$ case, $\M_{1, n}$ is the moduli space of elliptic curves with $n$ marked points. We extend the KZB connection $\nabla_{\KZB, \tau}$ on $\mathcal{E}_\tau$ to a flat connection on $\M_{1, n}$. 

\subsubsection{}
To begin with, we have the following derivation of $\Aell$. 
Let $\mathfrak{d}$ be the Lie algebra with generators $\Delta_0, d, X$, and $\delta_{2m}( m\geq 1)$, and relations
\begin{align*}
&[d, X]=2X, \,\ [d, \Delta_0]=-2\Delta_0, \,\ [X, \Delta_0]=d,\\
&[\delta_{2m}, X]=0, \,\ [d, \delta_{2m}]=2m\delta_{2m}, \,\ (\ad\Delta_0)^{2m+1}(\delta_{2m})=0.
\end{align*}
The Lie subalgebra generated by $\Delta_0, d, X$ is a copy of $\mathfrak{sl}_2$ in $\mathfrak{d}$. We can decompose $\mathfrak{d}$ as $\mathfrak{d}=\mathfrak{d}_+\rtimes \mathfrak{sl}_2$, where $\mathfrak{d}_+$ is the subalgebra generated by $\{\delta_{2m}\mid m\geq 1\}$. 
In Proposition \ref{prop:d is deriv}, we show $\mathfrak{d}$ acts on $\Aell$ by derivation. 
 (See Proposition \ref{prop:d is deriv} for the action of $\mathfrak{d}$ on $\Aell$).

The Lie algebra $\Aell\rtimes \mathfrak{d}$ is $\Z^2-$ graded. The grading is given by 
\begin{align*}
&\deg(\Delta_0)=(-1, 1), \,\ \deg(d)=(0, 0), \,\  \deg(X)=(1, -1), \,\ \deg(\delta_{2m})=(2m+1, 1)\\
\text{and}\,\ &\deg(x(u))=(1, 0)\,\ \deg(y(u))=(0, 1), \,\ \deg(t_\alpha)=(1, 1).
\end{align*}

Let $G_n:=\exp(\widehat{\Aell\rtimes \mathfrak{d}_+})\rtimes \SL_2(\C)$
be the semiproduct of $\exp(\widehat{\Aell\rtimes \mathfrak{d}_+})$ and $\SL_2(\C)$, where the former is the completion of $\Aell\rtimes \mathfrak{d}_+$ with respect to the grading above. Following \cite{CEE}, in Proposition \ref{prop: G_n bundle}, we construct a principal bundle $P_n$ on $\M_{1, n}$ with structure group $G_n$, which is unique under certain conditions.

\subsubsection{}
We extend the connection $\nabla_{\KZB, \tau}$ to a connection $\nabla_{\KZB}$ on the principal bundle $P_n$.  We now describe the extension $\nabla_{\KZB}$. 

\Omit{Let $\widetilde{H}_{\alpha, \tau}\subset \h\times \mathfrak{H}$ be the hyperplane
$\widetilde{H}_{\alpha, \tau}=\{(z, \tau)\in \h\times\mathfrak{H}\mid \alpha(z) \in \Lambda_\tau=\Z+\tau \Z\}$. }Let $g(z, x|\tau):=k_x(z, x|\tau)$ be the derivative of function $k(z, x|\tau)$ with respect to variable $x$. 
Set $a_{2n}:=-\frac{(2n+1)B_{2n+2}(2i\pi)^{2n+2}}{(2n+2)!}$, where $B_n$ is the Bernoulli numbers and let $E_{2n+2}(\tau)$ be the Eisenstein series. 
Consider the following function on $\h\times \mathfrak{H}$ 
\begin{align*}
\Delta:=\Delta(\underline{\alpha}, \tau)=
&-\frac{1}{2\pi i}\Delta_0-\frac{1}{2\pi i}\sum_{n\geq 1}a_{2n}E_{2n+2}(\tau)\delta_{2n}+\frac{1}{2\pi i}\sum_{\beta \in \Phi^+}g(\beta, \ad\frac{ x_{\beta^\vee}}{2}|\tau)(t_\beta).
\end{align*}
This is a meromorphic function on $\h \times \mathfrak{H}$ valued in $\widehat{(\Aell\rtimes \mathfrak{d}_+)\rtimes \mathfrak{n}_+}\subset \Lie(G_n)$, (where $\mathfrak{n}_+=\C\Delta_0\subset \mathfrak{sl}_2$). It has only poles along the hyperplanes $\bigcup_{\alpha\in \Phi^+, \tau\in \mathfrak{H}} \widetilde{H}_{\alpha, \tau}$.

\begin{thm}[Theorem~\ref{thm:moduli}]
The following $\widehat{\Aell\rtimes\mathfrak{d}}$-valued KZB connection on $\M_{1, n}$ is flat.
\begin{align*}
\nabla_{\KZB}&=\nabla_{\KZB, \tau}-\Delta d\tau\\
&=
d-\Delta d\tau-\sum_{\alpha \in \Phi^+} k(\alpha, \ad\frac{x_{\alpha^\vee}}{2}|\tau)(t_\alpha)d\alpha+\sum_{i=1}^{n}
y(u^i)du_i.
\end{align*}
\end{thm}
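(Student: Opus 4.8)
The plan is to show that the curvature $\nabla_{\KZB}^2$ vanishes. Since vanishing of curvature is a local condition on the bundle $P_n$, and since $P_n$ is obtained in Proposition~\ref{prop: G_n bundle} by descent from the cover $\h\times\mathfrak{H}\setminus\bigcup_{\alpha\in\Phi^+,\tau}\wt{H}_{\alpha,\tau}$, it suffices to check flatness of the pulled-back connection $\nabla_{\KZB}=d+A$ there, where
\[
A=-\sum_{\alpha\in\Phi^+}k(\alpha,\ad\tfrac{x_{\alpha^\vee}}{2}|\tau)(t_\alpha)\,d\alpha+\sum_{i=1}^{n}y(u^i)\,du_i-\Delta\,d\tau.
\]
Write $A=A_z-\Delta\,d\tau$, with $A_z$ the connection form of $\nabla_{\KZB,\tau}$. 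The curvature $dA+\tfrac12[A,A]$ has no $d\tau\wedge d\tau$ component, so it splits into a purely horizontal part $F_{zz}$ (the $2$-forms in the $du_i$ alone) and a mixed part $F_{z\tau}$ (the coefficient of $d\tau\wedge du_i$). The horizontal part is exactly the curvature of $\nabla_{\KZB,\tau}$ at fixed $\tau$, which vanishes by Theorem~\ref{thm:connection flat}, the defining relations (1)--(4) of $\Aell$ being imposed; here one also invokes Proposition~\ref{prop:d is deriv} to know that $\Delta$ is valued in $\Lie(G_n)$, so that all brackets are defined. Thus the entire content of the theorem is the vanishing of the mixed part, i.e. the identity of $\Lie(G_n)$-valued $1$-forms in the $z$-variables
\[
\partial_\tau A_z+d_z\Delta-[\Delta,A_z]=0.
\]

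The essential analytic input is the \emph{mixed heat equation} for the Kronecker function $F(z,x|\tau)=\theta(z+x|\tau)/(\theta(z|\tau)\theta(x|\tau))$, namely $2\pi i\,\partial_\tau F=\partial_z\partial_x F$, which, with $k=F-\tfrac1x$ and $g=k_x$, becomes $2\pi i\,\partial_\tau k=\partial_z g$. Substituting the operator $x=\ad\tfrac{x_{\alpha^\vee}}{2}$, which commutes with $\partial_z$ and $\partial_\tau$, turns this into an identity between the $\partial_\tau$ of the $k$-terms of $A_z$ and the $z$-derivative of the $g$-terms $\frac{1}{2\pi i}\sum_{\beta}g(\beta,\ad\tfrac{x_{\beta^\vee}}{2}|\tau)(t_\beta)$ sitting inside $\Delta$. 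This pairs off the two $\tau$-dependent pieces, and the remaining discrepancy is supplied by $[\Delta,A_z]$ together with the $\Delta_0$- and $\delta_{2m}$-terms of $\Delta$. Concretely, I would collect the terms proportional to $t_\gamma$ and cancel them using the heat equation above together with the Eisenstein expansion of $\partial_\tau\log\theta$, whose coefficients are matched exactly by the normalization $a_{2n}=-\frac{(2n+1)B_{2n+2}(2\pi i)^{2n+2}}{(2n+2)!}$ of the $\delta_{2n}E_{2n+2}(\tau)$-terms; the terms of lower degree are absorbed by the $\mathfrak{sl}_2$-relations in $\mathfrak d$ and by the derivation action $[\Delta_0,y(u^i)]$, $[\delta_{2m},y(u^i)]$ on the $y$-part of $A_z$ furnished by Proposition~\ref{prop:d is deriv}.

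The hard part will be the root-system bookkeeping inside $[\Delta,A_z]$: the cross terms $[g(\beta,\ad\tfrac{x_{\beta^\vee}}{2})(t_\beta),\,k(\alpha,\ad\tfrac{x_{\alpha^\vee}}{2})(t_\alpha)]$ with $\alpha\neq\beta$ must recombine into the shape dictated by the heat equation. As in the proof of Theorem~\ref{thm:connection flat}, these are governed by the rank-$2$ relation (1) and the orthogonality relation (4), now weighted by the meromorphic coefficients coming from the $z$-expansions of $k$ and $g$, so that one reduces root by root to a computation inside each rank-$2$ subsystem — precisely where the genus-one, type-$\sfA$ calculation of \cite{CEE} is generalized. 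A secondary subtlety is keeping the half-coroot rescaling $\ad\tfrac{x_{\alpha^\vee}}{2}$ consistent across the heat equation and the quasi-periodicity of $k,g$ under $z\mapsto z+\Lambda_\tau$, since it is the latter that forces the Eisenstein series, and hence the derivations $\delta_{2m}$, to appear. Once these cancellations are verified on the cover, the $W$- and $(P^\vee\oplus P^\vee)\rtimes\SL_2(\Z)$-equivariance built into the construction of $P_n$ guarantees that the resulting flat connection descends to $\M_{1,n}$.
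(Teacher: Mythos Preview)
Your plan is correct and is essentially the paper's own argument: split the curvature into the horizontal part (handled by Theorem~\ref{thm:connection flat}) and the mixed $d\tau\wedge d\alpha$ part, kill the exact piece with the heat identity $2\pi i\,\partial_\tau k=\partial_z g$, and then reduce the bracket $[\Delta,A_z]$ to a rank-$2$ computation using the (tt), (tx), (yx) relations together with the derivation formulas of Proposition~\ref{prop:d is deriv}. The only things your sketch leaves implicit, and which the paper carries out in full, are the two theta-function identities borrowed from \cite{CEE} (their $L(z,u,v)\equiv 0$ for the diagonal $\alpha=\beta$ terms and $H(z,z',u,v)\equiv 0$ governing the cross terms via an additive relation $H(\alpha,\gamma)-H(\alpha,\alpha+\gamma)-H(\alpha+\gamma,\gamma)+H(\alpha+\gamma,\alpha)=0$) and the explicit case-by-case verification for $A_1\times A_1$, $A_2$, $B_2$, $G_2$.
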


\subsection{Trigonometric degeneration}

\subsubsection{}

Let $H=\h/P^\vee\cong\Hom_{\IZ}(Q,\IC^\times)$ be the complex algebraic torus with Lie algebra
$\h$ and coordinate ring given by the group algebra $\IC Q$. We denote the function corresponding
to $\lambda\in Q$ by $e^{2\pi\iota\lambda}$, and set $H_{\reg}=H\setminus\bigcup_{\alpha\in\Phi}
\{e^{2\pi\iota\alpha}=1\}$. The first named author introduced a universal trigonometric connection
$\nabla_{\trig}$ on $H$, with logarithmic singularities on $H\setminus H_{\reg}$ \cite{TL1}. The
connection is flat, $W$--equivariant, and takes values in a Lie algebra $\ttrig$ which is
described as follows.

Let $A$ be an algebra endowed with a set of elements $\{ t_\alpha\}_{\alpha\in \Phi}$ such
that $t_{-\alpha}=t_\alpha$, and a linear map $X: \h\to A$. The trigonometric connection
$\nabla_{\trig}$ is the $A$--valued connection on $H_{\reg}$ given by
\begin{equation}\label{eq:nabla trig in intro}
\nabla_{\trig}=d-
\sum_{\alpha\in\Phi_+}\frac{2\pi i d\alpha}{e^{2\pi i \alpha}-1}t_{\alpha}-
du_i\,X(u^i).
\end{equation}
where $\Phi_+\subset \Phi$ is a chosen system of positive roots, $\{u_i\}$ and $\{u^i\}$
are dual bases of $\h^*$ and $\h$ respectively, and the summation over $i$ is implicit.
The connection $\nabla_{\trig}$ is flat if, and only if the following relations hold \cite{TL1}
\begin{enumerate}
  \item For any rank 2 root subsystem $\Psi\subset \Phi$, and $\alpha\in \Psi$,
$[t_\alpha, \sum_{\beta\in \Psi_+}t_\beta]=0.$
  \item For any $u, v\in \h$,
$[X(u), X(v)]=0.$
  \item For any $\alpha\in \Phi_+$, $w\in W$ such that $w^{-1}\alpha$ is a simple root and $u\in \h$, such that $\alpha(u)=0$,
$[t_\alpha, X_w(u)]=0,$
      where $ X_w(u)=X(u)-\sum_{\beta\in \Phi_+\cap w \Phi_-}\beta(u)t_\beta$.
\end{enumerate}
By definition, the Lie algebra $\ttrig$ is the Lie algebra presented by the above relations. 

\subsubsection{}

The connection $\nabla_{\KZB, \tau}$ degenerates to a trigonometric
connection as the imaginary part of the modular parameter $\tau$ tends to $\infty$.

\begin{thm}[Proposition~\ref{prop:degeneration}]
\label{intr:degeration}
As $\Im\tau\to+\infty$, the connection $\nabla_{\KZB, \tau}$ degenerates to the following connection
$\nabla^{\deg}$ on $H_{\reg}$
\begin{align*}
\nabla^{\deg}
=&d-\sum_{\alpha \in \Phi^+} \frac{2\pi i t_{\alpha}}{e^{2\pi i \alpha}-1} d \alpha
+\sum_{i=1}^{n} \left(y(u^i) -\sum_{\alpha \in \Phi^+}(\alpha, u^i) 
\Big(\frac{2\pi i e^{2\pi i \ad(\frac{x_{\alpha^\vee}}{2})}}{e^{2\pi i \ad(\frac{x_{\alpha^\vee}}{2})}-1 }  -\frac{1}{\ad(\frac{x_{\alpha^\vee}}{2})}\Big) t_\alpha \right) du_i
\end{align*}

By universality of $\ttrig$, the above degeneration gives rise to a map $\ttrig\to \widehat{\Aell}$
given by
\begin{align*}
t_\alpha	\mapsto t_\alpha
\aand
X(u)	&\mapsto -y(u) +\sum_{\alpha \in \Phi^+}(\alpha, u) 
\Big(\frac{2\pi i e^{2\pi i \ad(\frac{x_{\alpha^\vee}}{2})}}{e^{2\pi i \ad(\frac{x_{\alpha^\vee}}{2})}-1 }  -\frac{1}{\ad(\frac{x_{\alpha^\vee}}{2})}\Big)t_\alpha.
\end{align*}

 \bcomment{
5. Is the degeneration isomonodromic in an appropriate sense?}
\end{thm}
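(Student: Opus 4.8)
The plan is to carry out the limit $\Im\tau\to+\infty$ at the level of the single scalar building block $k(z,x|\tau)$ and then substitute $z=\alpha$ and $x=\ad(x_{\alpha^\vee}/2)$ into the connection. First I would record the degeneration of the theta function: in the normalization where $\theta(z|\tau)\approx z$ near $z=0$, one has $\theta(z|\tau)\to \sin(\pi z)/\pi$ as $q=e^{2\pi i\tau}\to 0$ (the factors $q^{1/8}$ in $\theta_1$ and $\theta_1'(0)$ cancel). Feeding this into the definition of $k$ gives
\[
k(z,x|\tau)\;\longrightarrow\;\frac{\pi\,\sin(\pi(z+x))}{\sin(\pi z)\sin(\pi x)}-\frac1x=\pi\cot(\pi z)+\Bigl(\pi\cot(\pi x)-\frac1x\Bigr),
\]
where I have split off the $x$-independent scalar $\pi\cot(\pi z)$ from the genuine power series $\pi\cot(\pi x)-1/x=-\tfrac{\pi^2}{3}x-\cdots$ in $x$. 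I would note that this limit is uniform for $z$ in a fixed horizontal strip and term-by-term in the $x$-expansion, which is all that is needed since $\ad(x_{\alpha^\vee}/2)$ raises degree and the series is applied to $t_\alpha$ inside the completed $\widehat{\Aell}$.

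Next I would substitute $z=\alpha$ (a coordinate function on $H_{\reg}$) and $x=\ad(x_{\alpha^\vee}/2)$ and convert cotangents to exponentials via the two equivalent forms
\[
\pi\cot(\pi w)=\pi i+\frac{2\pi i}{e^{2\pi iw}-1}=\frac{2\pi i\,e^{2\pi iw}}{e^{2\pi iw}-1}-\pi i.
\]
The root term of $\nabla_{\KZB,\tau}$ then produces $-\sum_{\alpha\in\Phi^+}\bigl[\pi\cot(\pi\alpha)+\pi\cot(\ad(x_{\alpha^\vee}/2))-\ad(x_{\alpha^\vee}/2)^{-1}\bigr](t_\alpha)\,d\alpha$. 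Writing $d\alpha=\sum_i(\alpha,u^i)\,du_i$ regroups every piece as a coefficient of $du_i$, so the whole limiting connection is recast in trigonometric form. The delicate point, which I expect to be the main obstacle in the bookkeeping, is that both cotangents contribute a constant $\pm\pi i$: the scalar $\pi\cot(\pi\alpha)$ yields $-\pi i\sum_{\alpha}(\alpha,u^i)t_\alpha$ in the $du_i$ slot (after using the first form), while rewriting $\pi\cot(\ad(x_{\alpha^\vee}/2))$ via the second form produces exactly $+\pi i\sum_{\alpha}(\alpha,u^i)t_\alpha$. These two constant contributions cancel, and what survives as the coefficient of $du_i$ is precisely $y(u^i)-\sum_{\alpha\in\Phi^+}(\alpha,u^i)\bigl(\tfrac{2\pi i\,e^{2\pi i\ad(x_{\alpha^\vee}/2)}}{e^{2\pi i\ad(x_{\alpha^\vee}/2)}-1}-\ad(x_{\alpha^\vee}/2)^{-1}\bigr)(t_\alpha)$, together with the unchanged term $-\sum_\alpha\frac{2\pi i}{e^{2\pi i\alpha}-1}t_\alpha\,d\alpha$; this is exactly $\nabla^{\deg}$.

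Finally, to obtain the map $\ttrig\to\widehat{\Aell}$ I would argue that $\nabla^{\deg}$ is flat, being the limit of the connections $\nabla_{\KZB,\tau}$, which are flat for every $\tau$ by Theorem~\ref{intro:thm:flat}; flatness is a closed condition on the coefficients, so it is inherited by the limit (alternatively one checks relations (1)--(3) directly from the defining relations of $\Aell$). Comparing $\nabla^{\deg}$ with the defining shape of the universal trigonometric connection $\nabla_{\trig}=d-\sum_{\alpha\in\Phi^+}\frac{2\pi i\,d\alpha}{e^{2\pi i\alpha}-1}t_\alpha-X(u^i)\,du_i$, I read off that $\nabla^{\deg}$ is a $\widehat{\Aell}$-valued trigonometric connection with the same $t_\alpha$ and with $X(u)=-y(u)+\sum_{\alpha\in\Phi^+}(\alpha,u)\bigl(\tfrac{2\pi i\,e^{2\pi i\ad(x_{\alpha^\vee}/2)}}{e^{2\pi i\ad(x_{\alpha^\vee}/2)}-1}-\ad(x_{\alpha^\vee}/2)^{-1}\bigr)t_\alpha$, which is manifestly linear in $u$. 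Since $\ttrig$ is by construction universal among flat trigonometric connections — equivalently, flatness of $\nabla^{\deg}$ forces its data $(t_\alpha,X)$ to satisfy the relations (1)--(3) presenting $\ttrig$ — this assignment extends uniquely to a Lie algebra homomorphism $\ttrig\to\widehat{\Aell}$, which is the claimed map.
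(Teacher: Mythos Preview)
Your proposal is correct and follows essentially the same route as the paper: both compute the $q\to 0$ limit of $\theta$ (the paper writes it as $(e^{\pi iz}-e^{-\pi iz})/(2\pi i)$, you as $\sin(\pi z)/\pi$), substitute into $k(z,x|\tau)$, split the result into a $z$-part and an $x$-part, and regroup using $d\alpha=\sum_i(\alpha,u^i)\,du_i$. Your explicit isolation of the $\pm\pi i$ cancellation via the two forms of $\pi\cot(\pi w)$ makes transparent the algebraic step that the paper hides inside the passage from its displayed sine expression to $2\pi i\bigl(\tfrac{1}{e^{2\pi i\alpha}-1}+\tfrac{e^{2\pi i\ad(x_{\alpha^\vee}/2)}}{e^{2\pi i\ad(x_{\alpha^\vee}/2)}-1}\bigr)-\ad(x_{\alpha^\vee}/2)^{-1}$, and your closing flatness-by-limit argument is a small elaboration on the paper's bare invocation of universality.
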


\Omit{The map $A_{\trig}\to \widehat{\Aell}$ preserves the following gradings of $A_{\trig}$ and $\Aell$.
The grading on $A_{\trig}$ is given by $\deg(t_\alpha)=\deg(X(u))=1$.
The grading on $\Aell$ is given by  $\deg(t_\alpha)=\deg(y(u))=1$ and $\deg(x(u))=0$.
Valerio: I am unsure about giving x(u) degree zero since then the sums in ad(x) may not converge.
Perhaps the statement one can give is that the map trigo to elliptic preserves (descending) filtrations
given by elements of degree $\geq m$.
}

\Omit{
Such degeneration in
Theorem \ref{intr:degeration} gives a link between the elliptic connection on $T_{\reg}$ and trigonometric connection on $H_{\reg}$. 
It not only produces algebra homomorphisms, but also useful to compute the monodromy of elliptic connections. 
The monodromy of trigonometric connections can be computed using reduction method to rank 1 case in \cite{Ch1}. Theorem \ref{intr:degeration} provides a method to compute the monodromy of the elliptic connection, see \cite{TLY3}. 
}

\subsection{Rational Cherednik algebras and elliptic Dunkl operators}

For the last part of this paper, we study one concrete incarnation of our KZB connection
$\nabla_{\KZB}$, which is obtained by mapping $\Aell\rtimes \mathfrak{d}$ to the rational
Cherednik algebra of the Weyl group $W$. In special cases, this specialisation coincides
with the elliptic Dunkl operators. 

\Omit{
Let $\mathbb{H}_{\g}$ be the double affine Hecke algebra defined by Cherednik in \cite
[Definition 2.5]{C}. By definition, $\mathbb{H}_{\g}$ is the quotient of the group algebra
of the orbifold fundamental group of $T_{\reg}/W$ by the additional relations
\[(S_i-q t_i)(S_i+qt_i^{-1})=0,\]
where $S_i$ is the element of $\pi_1^{\orb}(T_{\reg}/W)$ homotopic to a small loop
around the root hypertori corresponding to root $\alpha_i$. See \cite[Definition 2.1]{C}
for the explicit presentation of $\mathbb{H}_{\g}$.
}

\subsubsection{} 

Let $H_{\hbar, c}$ be the rational Cherednik algebra of $W$ introduced in \cite{EG}.
$H_{\hbar, c}$ is generated by the group algebra $\C W$, together with a copy of
$S\h$ and $S\h^*$, and depends on two sets of parameters (see \cite{EG}, or
Section \ref{ss:rca} for the defining relations). Specifically, let $S\subset W$ be
the set of reflections, $K$ the vector space of $W$--invariant functions $S\to\C$,
and $\wt{K}=K\oplus\C$. $H_{\hbar, c}$ is an algebra over $\C[\wt{K}]$ which is $\IN$--graded, provided
the standard linear functions $\{c_s\}_{s\in S/W}$ and $\hbar$ on $K$ and $\IC$ are
given degree $2$.

\begin{thm}[Prop. \ref{prop:map to H_{h,c}}]\label{th:maps to RCA}
For any $a, b\in \C$, there is a Lie algebra homomorphism
$\Aell \to H_{\hbar, c}$,  defined as follows
\begin{gather*}
x(v)\mapsto a\pi(v),\qquad
y(u)\mapsto bu,\aand
t_\gamma\mapsto ab\left(\frac{\hbar}{h^\vee}-\frac{2c_{s_\gamma}}{(\gamma|\gamma)}s_{\gamma}\right),
\end{gather*}
for $u, v\in \h$ and $\gamma\in \Phi^+$, 
where $\pi: \h\to \h^*$ is the isomorphism induced by the non-degenerate bilinear form $(\cdot|\cdot)$ on $\h$,
$s_{\gamma}$ is the reflection corresponding to the root $\gamma$, and $h^\vee$ is the dual Coxeter
number of $\Phi$.
\end{thm}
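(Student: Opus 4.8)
The plan is to verify that the proposed map respects each of the four defining relations of $\Aell$ listed in Theorem~\ref{intro:thm:flat}, using the defining relations of the rational Cherednik algebra $H_{\hbar,c}$. First I would fix the presentation conventions: recall that in $H_{\hbar,c}$ the subspaces $\h$ and $\h^*$ each generate a polynomial (commutative) subalgebra, that $\C W$ acts by the natural reflection action, and that the only nontrivial cross relation is the commutator $[u,\pi(v)]$ for $u,v\in\h$, which in the Cherednik algebra equals a multiple of $\hbar(u|v)$ plus a sum over reflections of a term proportional to $c_s\langle\cdot,\cdot\rangle s$. Since relation (2), $[x(u),x(v)]=0=[y(u),y(v)]$, follows immediately from commutativity of $S\h^*$ and $S\h$ inside $H_{\hbar,c}$, I would dispose of it first.

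The heart of the computation is relation (3): I must check that
\[
[y(u),x(v)]=b a\,[u,\pi(v)]
\]
reproduces $\sum_{\gamma\in\Phi^+}\langle v,\gamma\rangle\langle u,\gamma\rangle\,t_\gamma$ under the assignment $t_\gamma\mapsto ab\bigl(\tfrac{\hbar}{h^\vee}-\tfrac{2c_{s_\gamma}}{(\gamma|\gamma)}s_\gamma\bigr)$. The strategy is to expand $[u,\pi(v)]$ using the Cherednik relation, collect the $\hbar$-term and the reflection terms separately, and match coefficients. The $\hbar$-term on the right-hand side, $ab\cdot\tfrac{\hbar}{h^\vee}\sum_{\gamma\in\Phi^+}\langle v,\gamma\rangle\langle u,\gamma\rangle$, must equal the scalar part $ab\,\hbar(u|v)$ coming from $[u,\pi(v)]$; this reduces to the classical identity $\sum_{\gamma\in\Phi^+}\langle u,\gamma\rangle\langle v,\gamma\rangle=h^\vee(u|v)$ relating the sum of squares of roots to the dual Coxeter number, which is where the factor $h^\vee$ in the definition of the image of $t_\gamma$ is forced. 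The reflection part matches term by term once one writes the Dunkl-type commutator in terms of the pairing $\langle v,\gamma\rangle$, $\langle u,\gamma\rangle$ and normalises $s_\gamma$ by $\tfrac{2}{(\gamma|\gamma)}$.

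For relation (4), I would use the fact that if $\alpha(u)=0$ then $s_\alpha$ fixes $u$ and $\pi(u)$, so the reflection $s_\gamma$ in the image of $t_\alpha$ (with $\gamma=\alpha$) commutes with both $x(u)=a\pi(u)$ and $y(u)=bu$; the scalar $\hbar$-part is central, so $[t_\alpha,x(u)]=0=[t_\alpha,y(u)]$ follows. Relation (1), the rank-$2$ relation $[t_\alpha,\sum_{\beta\in\Psi^+}t_\beta]=0$, is the most delicate: here I would observe that $\sum_{\beta\in\Psi^+}t_\beta$ maps to $ab\bigl(\tfrac{\hbar}{h^\vee}|\Psi^+|-\sum_{\beta\in\Psi^+}\tfrac{2c_{s_\beta}}{(\beta|\beta)}s_\beta\bigr)$, and that the sum $\sum_{\beta\in\Psi^+}\tfrac{2c_{s_\beta}}{(\beta|\beta)}s_\beta$ lies in the center of the group algebra of the rank-$2$ Weyl subgroup generated by the reflections in $\Psi$, or at least commutes with each $s_\alpha$ for $\alpha\in\Psi$; this is precisely the content of the known fact that the Cherednik-algebra element $\sum_\beta c_{s_\beta}s_\beta$ restricted to a dihedral subsystem is central in that dihedral group ring. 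I expect this centrality check to be the main obstacle, since it requires the $W$-invariance of the parameters $c$ together with a dihedral-group computation; the remaining relations reduce to linear algebra and the single root-system identity $\sum_\gamma\langle u,\gamma\rangle\langle v,\gamma\rangle=h^\vee(u|v)$.
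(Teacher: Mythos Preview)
Your proposal is correct and follows the same route as the paper: reduce to checking the defining relations of $\Aell$, with the key step being relation (yx), which rests on the identity $\sum_{\gamma\in\Phi^+}\langle u,\gamma\rangle\langle v,\gamma\rangle=h^\vee(u|v)$ (stated and proved in the paper as Lemma~\ref{equal:inn}). The paper in fact treats all relations other than (yx) as trivial---in particular your ``most delicate'' relation (1) falls out immediately from the observation that conjugation by $s_\alpha$ permutes the set $\{s_\beta:\beta\in\Psi\}$ (since $s_{s_\alpha\beta}=s_\alpha s_\beta s_\alpha^{-1}$ and $s_{-\beta}=s_\beta$) while preserving the $W$-invariant weights $2c_{s_\beta}/(\beta|\beta)$, so no separate dihedral computation is needed.
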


Set $a=b=1$ for definiteness. Theorem \ref{th:maps to RCA} implies the
following

\begin{thm}[Theorem \ref{thm:KZB for rca}] 
The universal KZB connection $\nabla_{\KZB, \tau}$ specializes to the following 
elliptic KZ connection valued in the rational Cherednik algebra 
\[
\nabla_{H_{\hbar, c}, \tau}=d+\sum_{\alpha \in \Phi^+}\frac{2c_{\alpha}}{(\alpha|\alpha)}k(\alpha, \ad(\frac{ \alpha^\vee}{2})|\tau) s_{\alpha} d\alpha
   -\sum_{\alpha \in \Phi^+} \frac{\hbar}{h^\vee} \frac{\theta'(\alpha|\tau)}{\theta(\alpha|\tau)} d\alpha
   +\sum_{i=1}^{n} u^i du_i.
\] The elliptic connection $\nabla_{H_{\hbar, c}, \tau}$ is flat and $W$-equivariant.
\Omit{\item
The above elliptic KZ connection extends to a flat connection $\nabla_{H_{\hbar, c}}$ on $\mathcal{M}_{1, n}$ valued in the rational Cherednik algebra. It takes the following form
\[
\nabla_{H_{\hbar, c}}=\nabla_{H_{\hbar, c}, \tau}+\Delta d\tau,
\]
where 
\[
\Delta
=-\frac{1}{2\pi i}\frac{\textbf{F}}{\hbar}
+\frac{1}{\hbar\pi i}\sum_{n\geq 1}a_{2n}E_{2n+2}(\tau)\Big(\sum_{\alpha\in \Phi^+}\frac{c_\alpha^2}{(\alpha, \alpha)}(x_{\alpha^\vee})^{2n}\Big)+\frac{1}{2\pi i}\sum_{\beta \in \Phi^+}g(\beta, \ad\frac{ x_{\beta^\vee}}{2}|\tau)\left(\frac{\hbar}{h^\vee}-\frac{2c_{s_\beta}}{(\beta|\beta)}s_{\beta}\right). 
\]
Here $\textbf{F}$, $x_{\alpha}$ and $s_\beta$ are elements in $H_{\hbar, c}$. 
\end{enumerate}}
\end{thm}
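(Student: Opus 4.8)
The plan is to obtain $\nabla_{H_{\hbar, c}, \tau}$ as the pushforward of the universal connection $\nabla_{\KZB, \tau}$ along the Lie algebra homomorphism $\rho\colon \Aell\to H_{\hbar, c}$ of Theorem~\ref{th:maps to RCA} (with $a=b=1$), and to deduce flatness and $W$--equivariance \emph{formally} from those of the universal connection. By Theorem~\ref{thm:connection flat} the defining relations of $\Aell$ are precisely the flatness relations, so $\nabla_{\KZB, \tau}$ is a flat, $W$--equivariant $\widehat{\Aell}$--valued connection; since its curvature and its failure of $W$--invariance are expressed purely through the Lie bracket and the $W$--action, both of which $\rho$ intertwines, the image $\rho_*(\nabla_{\KZB, \tau})$ is again flat and $W$--equivariant. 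That $\rho$ is $W$--equivariant follows from relation~(5) of Theorem~\ref{intro:thm:flat} together with $w s_\gamma w^{-1}=s_{w\gamma}$, the $W$--invariance of $(\cdot|\cdot)$ and of the function $c$, and the equivariance of $\pi$. Thus the whole theorem reduces to the explicit identification $\rho_*(\nabla_{\KZB, \tau})=\nabla_{H_{\hbar, c}, \tau}$.

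For the explicit computation I would treat the three summands of $\nabla_{\KZB, \tau}$ separately. The term $\sum_i y(u^i)\,du_i$ maps to $\sum_i u^i\,du_i$ since $y(u^i)\mapsto u^i$. For the root sum, I split the image $\rho(t_\alpha)=\frac{\hbar}{h^\vee}-\frac{2c_{s_\alpha}}{(\alpha|\alpha)}\,s_\alpha$ into its scalar and reflection parts. On the central scalar $\frac{\hbar}{h^\vee}$ every positive power of $\ad(\tfrac{1}{2}\rho(x_{\alpha^\vee}))$ vanishes, so only the degree--zero coefficient of $k$ in its second slot survives; since $k(z,0|\tau)=\frac{\theta'(z|\tau)}{\theta(z|\tau)}$, this produces exactly $-\sum_\alpha \frac{\hbar}{h^\vee}\frac{\theta'(\alpha|\tau)}{\theta(\alpha|\tau)}\,d\alpha$. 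The reflection part is governed by the following key observation.

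The crux is the identity
\begin{equation*}
\ad\Bigl(\tfrac{1}{2}\rho(x_{\alpha^\vee})\Bigr)^{\!n}(s_\alpha)=\rho(x_{\alpha^\vee})^{\,n}\,s_\alpha,\qquad n\ge 0,
\end{equation*}
where $\rho(x_{\alpha^\vee})=\pi(\alpha^\vee)\in\h^*\subset H_{\hbar, c}$. It follows by induction from the two facts that $s_\alpha\,\xi=s_\alpha(\xi)\,s_\alpha$ in $H_{\hbar, c}$ for $\xi\in\h^*$, and that $\pi(\alpha^\vee)=\tfrac{2\alpha}{(\alpha|\alpha)}$ is a $(-1)$--eigenvector of $s_\alpha$ (because $s_\alpha(\alpha)=-\alpha$), so that each bracket reproduces a single factor of $\pi(\alpha^\vee)$ on the left of $s_\alpha$. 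Consequently the operator--valued power series collapses to a scalar series,
\begin{equation*}
k\bigl(\alpha,\ad(\tfrac{1}{2}\rho(x_{\alpha^\vee}))\,\big|\,\tau\bigr)(s_\alpha)=k\bigl(\alpha,\pi(\alpha^\vee)\,\big|\,\tau\bigr)\,s_\alpha,
\end{equation*}
an honest $H_{\hbar, c}$--valued meromorphic function (the elliptic Dunkl term). Combined with the overall minus sign of the root sum in $\nabla_{\KZB, \tau}$, the coefficient $-\frac{2c_{s_\alpha}}{(\alpha|\alpha)}$ of $s_\alpha$ in $\rho(t_\alpha)$ then yields the first sum of $\nabla_{H_{\hbar, c}, \tau}$, with $\alpha^\vee$ in the statement denoting $\pi(\alpha^\vee)$ under the identification $\h\cong\h^*$. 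Assembling the three pieces gives the asserted formula.

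The main obstacle is precisely this collapse identity: a priori $k(\alpha,\ad(\tfrac{1}{2}\rho(x_{\alpha^\vee}))|\tau)$ is an infinite series of nested commutators, and it is only the special structure of $H_{\hbar, c}$ — the semidirect product $W\ltimes S\h^*$ together with the eigenvector property of $\pi(\alpha^\vee)$ — that resums it into a single meromorphic function times $s_\alpha$. This is what makes the specialization land in (a completion of) $H_{\hbar, c}$ and reproduces the elliptic Dunkl operators; once it is in hand, flatness and $W$--equivariance require no further work beyond the formal transfer argument of the first paragraph.
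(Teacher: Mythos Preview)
Your proof is correct and follows essentially the same route as the paper: push the universal connection forward along the homomorphism $\Aell\to H_{\hbar,c}$ of Proposition~\ref{prop:map to H_{h,c}}, then observe that on the central scalar $\hbar/h^\vee$ the operator $\ad(\tfrac{1}{2}\pi(\alpha^\vee))$ vanishes so that only $k(\alpha,0|\tau)=\theta'(\alpha|\tau)/\theta(\alpha|\tau)$ survives, while the reflection part is left as is. Flatness and $W$--equivariance are transferred formally, exactly as you argue.

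One remark on scope: the statement in the paper retains the operator notation $k(\alpha,\ad(\tfrac{\alpha^\vee}{2})|\tau)\,s_\alpha$, so your collapse identity $\ad(\tfrac{1}{2}\pi(\alpha^\vee))^n(s_\alpha)=\pi(\alpha^\vee)^n s_\alpha$ is not actually required to match the displayed formula; the paper's own proof stops short of it. Your identity is nonetheless correct (and the paper uses exactly this computation later, in the proof of Proposition~\ref{prop:ext for rca}); it gives the pleasant additional information that the formal series in $\ad$ resums to a genuine $H_{\hbar,c}$--valued meromorphic function, the elliptic Dunkl kernel. So you have proved a bit more than was asked, not less.
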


Similar results are obtained in Section \ref{sec:rca} for the extension of $\nabla_{\KZB, \tau}$ to the
moduli space $\mathcal{M}_{1, n}$ by extending the homomorphism $\Aell \to H_{\hbar, c}$ to the
semidirect product $\Aell\rtimes \mathfrak{d}$.

\subsubsection{}

Let $\Bell$ be the elliptic braid group, that is the orbifold fundamental group $\Bell=\pi_1^{\orb}(T_{\reg}/
W)$, and $\mathbb{H}_{\g}$ the double affine Hecke algebra defined by Cherednik in \cite[Definition 2.5]
{C}. By definition, $\mathbb{H}_{\g}$ is the quotient of the group algebra of $\Bell$ by the additional relations
\[(S_i-q t_i)(S_i+qt_i^{-1})=0,\]
where $S_i$ is the element of $\pi_1^{\orb}(T_{\reg}/W)$ homotopic to a small loop around the divisor
corresponding to the root $\alpha_i$. See \cite[Definition 2.1]{C} for the explicit presentation of $\mathbb{H}_{\g}$.

The connection $\nabla_{H_{\hbar, c}, \tau}$ is flat and $W$-equivariant. Its monodromy yields a
homomorphism of $\mathbb{H}_{\g}$ onto the graded completion of $H_{\hbar, c}$ which becomes
an isomorphism after $\mathbb{H}_{\g}$ is also completed.

\Omit{
a one parameter
family of representations of $\Bell$, which factors through $\mathbb{H}_{\g}$,
where $q=e^{\pi i\frac{\hbar}{h^\vee}}$ and $t_i=e^{-\pi i \frac{2 c_{\alpha_i}}{(\alpha_i, \alpha_i)}}$.
Thus, the monodromy induces a functor from the category of finite-dimensional representations of rational Cherednik algebra to the category of finite-dimensional representations of the double affine Hecke algebra.}

\subsubsection{Relation with the affine Hecke algebra}
We produce an algebra homomorphism from the degenerate affine Hecke algebra to the completion of the rational Cherednik algebra using the degeneration of Theorem \ref{intr:degeration}. 

Cherednik in  \cite{Ch1} constructed the affine KZ connection. It is a flat and $W$-equivariant connection on $H\rreg$ valued in the degenerate affine Hecke algebra ${\mathcal H}'$. The degenerate affine Hecke algebra $\mathcal{H}'$ is the associative algebra generated by $\C W$ and the symmetric algebra $S\h$. 
Let  $\{s_\alpha,  x(u) \mid s_\alpha \in W,  u\in \h\}$ be the generators of $\mathcal{H}'$. 
The affine KZ connection can be obtained by specializing the universal trigonometric KZ connection $\nabla_{\trig}$ \eqref{eq:nabla trig in intro}. More precisely, we have a map $A_{\trig}\to \mathcal{H}'$, by $t_{\alpha} \mapsto k_\alpha s_\alpha$, and  $X(u)\mapsto x(u)$, for $\alpha\in \Phi$, and $u\in \h$. This gives the  affine KZ connection
\[
\nabla_{\AKZ}=d-\sum_{\alpha \in \Phi^+} \frac{2\pi i t_{\alpha}}{e^{2\pi i \alpha}-1} d \alpha k_{\alpha}s_\alpha
-\sum_{i} x(u^i) d u_i. \]

As $\Im \tau\to \infty$, by Proposition \ref{prop:degeneration} and \S\ref{sec:AKZ part}, the elliptic KZ connection $\nabla_{H_{\hbar, c}, \tau}$ degenerates to the following affine KZ connection.
\begin{align*}
\nabla
=&d-\sum_{\alpha \in \Phi^+}
-\frac{2c_\alpha}{(\alpha|\alpha)} \frac{2\pi i s_{\alpha}}{e^{2\pi i\alpha}-1}d\alpha
-\sum_{\alpha\in \Phi^+}
\left( 
\frac{e^{2\pi i \alpha}+1}{e^{2\pi i \alpha}-1}\frac{\pi i \hbar}{h^\vee}
+\frac{2c_\alpha}{(\alpha|\alpha)} 
\Big(\frac{2\pi i e^{2\pi i x_{\alpha^\vee}}}{e^{2\pi i x_{\alpha^\vee}}-1 }  -\frac{1}{x_{\alpha^\vee}}\Big) s_{\alpha} \right)d\alpha
+\sum_{i=1}^{n}y(u^i)du_i.
\end{align*}

By the universality of the affine KZ connection $\nabla_{\AKZ}$, we have
\begin{thm}
There is an algebra homomorphism from the degenerate affine Hecke algebra $\mathcal{H}'$ to $\wh{H}_{\hbar, c}$ by
\begin{align*}
k_\alpha &\mapsto -\frac{2c_\alpha}{(\alpha|\alpha)},  \,\ w\mapsto w, \,\ \text{for $w \in W$}, \\
 x(u) &\mapsto -y(u)+\sum_{\alpha\in \Phi^+} \alpha(u)\left( 
\frac{e^{2\pi i \alpha}+1}{e^{2\pi i \alpha}-1}\frac{\pi i \hbar}{h^\vee}
+\frac{2c_\alpha}{(\alpha|\alpha)} 
\Big(\frac{2\pi i e^{2\pi i x_{\alpha^\vee}}}{e^{2\pi i x_{\alpha^\vee}}-1 }  -\frac{1}{x_{\alpha^\vee}}\Big) s_{\alpha} \right).
\end{align*}
\end{thm}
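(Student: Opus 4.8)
The plan is to deduce the homomorphism from the \emph{universal property of the affine KZ connection}, as the sentence preceding the statement already indicates. Recall that $\mathcal{H}'$ is generated by $\C W$ and the commutative subalgebra $S\h$, and that, besides the relations of $\C W$ and of $S\h$, its only defining relations are the standard cross-relations relating each reflection $s_\alpha$ to $x(u)$ through a $k_\alpha$-multiple of $\langle\alpha,u\rangle$. These are precisely the conditions under which the affine-KZ-form connection
\[
\nabla=d-\sum_{\alpha\in\Phi^+}\frac{2\pi i}{e^{2\pi i\alpha}-1}\,k_\alpha s_\alpha\,d\alpha-\sum_i x(u^i)\,du_i
\]
is flat and $W$-equivariant. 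Consequently, to produce an algebra homomorphism $\mathcal{H}'\to B$ for any algebra $B$ it suffices to supply invertible elements realizing $W$, $W$-invariant scalars $k_\alpha$, and a linear map $x\colon\h\to B$ for which the connection $\nabla$ above is flat and $W$-equivariant; the relations of $\mathcal{H}'$ then hold automatically among the images. I would take $B=\wh{H}_{\hbar,c}$.

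The required data on $\wh{H}_{\hbar,c}$ is produced by the trigonometric degeneration of the rational-Cherednik specialisation. By Theorem~\ref{thm:KZB for rca} the elliptic connection $\nabla_{H_{\hbar,c},\tau}$ is flat and $W$-equivariant. Letting $\Im\tau\to+\infty$ and invoking Proposition~\ref{prop:degeneration} together with \S\ref{sec:AKZ part}, it degenerates to the connection $\nabla$ displayed just above the statement, which is manifestly of affine-KZ form with $k_\alpha=-\tfrac{2c_\alpha}{(\alpha|\alpha)}$, $w\mapsto w$, and, after collecting the $du_i$-coefficients via $d\alpha=\sum_i\alpha(u^i)\,du_i$,
\[
x(u)\mapsto -y(u)+\sum_{\alpha\in\Phi^+}\alpha(u)\Big(\tfrac{e^{2\pi i\alpha}+1}{e^{2\pi i\alpha}-1}\,\tfrac{\pi i\hbar}{h^\vee}+\tfrac{2c_\alpha}{(\alpha|\alpha)}\big(\tfrac{2\pi i\,e^{2\pi i x_{\alpha^\vee}}}{e^{2\pi i x_{\alpha^\vee}}-1}-\tfrac{1}{x_{\alpha^\vee}}\big)s_\alpha\Big).
\]
Flatness of $\nabla$ is inherited rather than recomputed: $\nabla$ is the image of the flat universal trigonometric connection $\nabla_{\trig}$ under the composite homomorphism $\ttrig\to\widehat{\Aell}\to\wh{H}_{\hbar,c}$, namely the degeneration map of Proposition~\ref{prop:degeneration} followed by the map of Theorem~\ref{th:maps to RCA} extended to completions. $W$-equivariance follows likewise, since every map in sight is $W$-equivariant.

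With this data in hand, the universal property yields the homomorphism $\mathcal{H}'\to\wh{H}_{\hbar,c}$ (with the parameters specialised to $k_\alpha=-\tfrac{2c_\alpha}{(\alpha|\alpha)}$), and one simply reads off the images of $w$ and $x(u)$ as in the statement. One point to handle with care is the passage to the completion: the image of $x(u)$ is not an element of $H_{\hbar,c}$ but a power series, since $\tfrac{2\pi i\,e^{2\pi i x_{\alpha^\vee}}}{e^{2\pi i x_{\alpha^\vee}}-1}-\tfrac{1}{x_{\alpha^\vee}}$ is an honest power series in $x_{\alpha^\vee}$; it converges in $\wh{H}_{\hbar,c}$ precisely because $x_{\alpha^\vee}$ has positive degree, which is exactly why the target must be the completion.

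The main obstacle I anticipate is confirming that the $\Im\tau\to+\infty$ limit is genuinely flat and genuinely of affine-KZ form, so that the universal property applies verbatim. Concretely this requires (i) identifying $\nabla$ as the pushforward of $\nabla_{\trig}$ along $\ttrig\to\wh{H}_{\hbar,c}$, rather than merely a formal limit, so that flatness transfers, and (ii) checking that the resulting $x\colon\h\to\wh{H}_{\hbar,c}$ is $W$-equivariant, which is what matches the cross-relations of $\mathcal{H}'$. Were one to bypass universality and argue directly, the same content would resurface as the task of verifying a cross-relation of the form $s_\alpha x(u)-x(s_\alpha u)s_\alpha=\mathrm{const}\cdot\langle\alpha,u\rangle$, with the constant fixed by $k_\alpha=-\tfrac{2c_\alpha}{(\alpha|\alpha)}$, by hand inside $\wh{H}_{\hbar,c}$ --- precisely the computation the universality argument is designed to avoid.
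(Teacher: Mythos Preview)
Your proposal is correct and follows essentially the same route as the paper: the paper likewise observes that the $\Im\tau\to+\infty$ degeneration of $\nabla_{H_{\hbar,c},\tau}$ (via Proposition~\ref{prop:degeneration}) is a flat $W$-equivariant connection of affine-KZ form, and then invokes the universality of $\nabla_{\AKZ}$ to read off the homomorphism $\mathcal{H}'\to\wh{H}_{\hbar,c}$. Your additional care in noting that flatness transfers because $\nabla$ is the pushforward of $\nabla_{\trig}$ along the composite $\ttrig\to\widehat{\Aell}\to\wh{H}_{\hbar,c}$, and that the completion is forced by the power-series nature of the image of $x(u)$, makes explicit what the paper leaves implicit.
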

\subsubsection{Relation with the elliptic Dunkl operator}
In \cite{BFV}, Buchstaber-Felder-Veselov defined elliptic Dunkl operators for Weyl groups. Etingof and Ma
in \cite{EM2} generalised these operators to an abelian variety $A$ with a finite group action, and defined
an elliptic Cherednik algebra as a sheaf of algebras on $A$. They also constructed certain representations
of the elliptic Cherednik algebra.

We show that those representations arise from the flat connection valued in the rational Cherednik algebra
$H_{0, c}$, with parameter $\hbar=0$. 

\begin{thm}[Proposition \ref{prop:dunkl operator}]
The flat connection $\nabla_{H_{0, c}, \tau}$ specialized on the vector bundle 
\[\h_{\C}\times_{(P^\vee \oplus \tau P^{\vee})} \C W\] coincides with the elliptic Dunkl operator
 \begin{align*}
\nabla=d-\sum_{w\in W}\sum_{\alpha \in \Phi^+}
\frac{2c_{\alpha}}{(\alpha|\alpha)} \frac{ \theta(\alpha+\alpha^\vee(w\rho)}{\theta(\alpha)\theta(\alpha^\vee(w\rho))} s_{\alpha} d\alpha
\end{align*}
in \cite{BFV} and \cite{EM2}. 
\end{thm}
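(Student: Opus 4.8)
The plan is to start from the connection $\nabla_{H_{\hbar,c},\tau}$ of Theorem~\ref{thm:KZB for rca}, set $\hbar=0$, and show that each of its non-trivial terms collapses once the operator $\ad(\alpha^\vee/2)$ is evaluated inside $H_{0,c}$. First I would record the commutation relation $s_\alpha\,\xi=s_\alpha(\xi)\,s_\alpha$ for $\xi\in\h^*\subset H_{0,c}$, together with $s_\alpha(\pi(\alpha^\vee))=-\pi(\alpha^\vee)$, to deduce that $\ad(\pi(\alpha^\vee)/2)^n(s_\alpha)=\pi(\alpha^\vee)^n\,s_\alpha$ for every $n$. Consequently, for any power series $f$ one has $f(\ad(\pi(\alpha^\vee)/2))(s_\alpha)=f(\pi(\alpha^\vee))\,s_\alpha$, and in particular the operator-valued quantity $k(\alpha,\ad(\alpha^\vee/2)|\tau)(s_\alpha)$ reduces to $k(\alpha,\pi(\alpha^\vee)|\tau)\,s_\alpha$, in which $\pi(\alpha^\vee)\in S\h^*$ enters only through the abelian subalgebra.

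Next I would make the module structure on the fibre $\C W$ explicit. I would realise the bundle $\h_\C\times_{(P^\vee\oplus\tau P^\vee)}\C W$ as $\C W$-valued functions on $\h_\C$, with $\C W=\bigoplus_{w\in W}\C\,e_w$ carrying the standard (Dunkl) representation of $H_{0,c}$: the group $W$ acts by translation $e_w\mapsto e_{u w}$, the subalgebra $\h^*\subset S\h^*$ acts through the evaluation characters $\xi\mapsto\langle\xi,w\rho\rangle$ attached to the $W$-orbit of $\rho$, and $y(u)\in\h$ acts by the corresponding Dunkl operator, so that the relation $[y(u),x(v)]=-\sum_{\gamma}\langle v,\gamma\rangle\langle u,\gamma\rangle\frac{2c_\gamma}{(\gamma|\gamma)}s_\gamma$ holds automatically. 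I would verify that these formulas do define an $H_{0,c}$-module, legitimising the specialisation. On this module $\pi(\alpha^\vee)$ acts on the $e_w$-component by the scalar $\alpha^\vee(w\rho)$, so that $k(\alpha,\pi(\alpha^\vee)|\tau)$ becomes $\frac{\theta(\alpha+\alpha^\vee(w\rho))}{\theta(\alpha)\theta(\alpha^\vee(w\rho))}-\frac{1}{\alpha^\vee(w\rho)}$, organised by the summation $\sum_{w\in W}$ of the target expression.

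The decisive step is to account for the two leftover contributions: the rational tails $-1/\alpha^\vee(w\rho)$ forced by the definition $k(z,x|\tau)=\theta(z+x|\tau)/(\theta(z|\tau)\theta(x|\tau))-1/x$, and the term $\sum_i y(u^i)\,du_i$. I expect these to match: realised on $\C W$-valued functions, each $y(u^i)$ is a Dunkl operator whose differential part completes the de Rham $d$ into the covariant derivative along $\h_\C$, while its reflection part contributes exactly the rational Dunkl terms of shape $\sum_{\gamma}\frac{2c_\gamma}{(\gamma|\gamma)}\frac{\langle\cdots\rangle}{\alpha^\vee(w\rho)}\,s_\gamma\,d\gamma$ that the subtraction $-1/x$ was engineered to remove. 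What survives is precisely the elliptic term $-\sum_{w}\sum_{\alpha}\frac{2c_\alpha}{(\alpha|\alpha)}\frac{\theta(\alpha+\alpha^\vee(w\rho))}{\theta(\alpha)\theta(\alpha^\vee(w\rho))}\,s_\alpha\,d\alpha$.

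Finally I would match this with the operators of \cite{BFV,EM2}, reconciling the overall sign and the indexing of the sum by relabelling $w\mapsto s_\alpha w$ and using that $\theta$ is odd, so that the quasi-periodicity of the resulting $\C W$-valued connection coincides with the automorphy factors defining their elliptic Cherednik representation. The main obstacle is exactly this bookkeeping: confirming that the $-1/x$ tails cancel the rational reflection part of $\sum_i y(u^i)\,du_i$ with no residual term, and pinning down the conventions (the role of $\rho$, the evaluation characters, and the sign of $s_\alpha\pi(\alpha^\vee)=-\pi(\alpha^\vee)$) so that the identification with the Buchstaber--Felder--Veselov and Etingof--Ma operators holds on the nose rather than merely up to gauge.
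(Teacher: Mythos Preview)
Your overall strategy coincides with the paper's: reduce $k(\alpha,\ad(\alpha^\vee/2))(s_\alpha)$ via the identity $(\ad\tfrac{\pi(\alpha^\vee)}{2})^{n}(s_\alpha)=\pi(\alpha^\vee)^{n}s_\alpha$, split $k(z,x)=\tfrac{\theta(z+x)}{\theta(z)\theta(x)}-\tfrac1x$, argue that the $-\tfrac1x$ tails cancel against $\sum_i y(u^i)\,du_i$, and then evaluate $\pi(\alpha^\vee)$ on each $e_w$ as the scalar $\alpha^\vee(w\rho)$.

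The one substantive difference is where the cancellation is carried out. The paper does it at the algebra level, \emph{before} specializing: it shows that for every $v\in\h$ the element
\[
A_i:=\sum_{\alpha\in\Phi^+}\frac{\alpha(u^i)}{\ad(\alpha^\vee/2)}\,t_\alpha+y(u^i)
\]
satisfies $[x(v),A_i]=0$, by decomposing $v=v_\parallel+v_\perp$ along $\alpha^\vee$, using $[x(v_\perp),t_\alpha]=0$, and invoking the $H_{0,c}$ relation $[x(v),y(u^i)]=-\sum_\alpha(\alpha,v)(\alpha,u^i)t_\alpha$. Only then does it pass to the module $\C W$.

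Your plan specializes first and checks the cancellation on the representation. That is legitimate, but your account of how $y(u^i)$ acts is not correct. In the $H_{0,c}$--module used here (the functions on the $W$--orbit of $(\rho,\mu)\in\h\times\h^*$, as in Section~11.2), $y\in\h$ acts by the purely fibrewise formula
\[
y\cdot e_w=(y,w\mu)\,e_w+\sum_{\alpha\in\Phi^+}c_\alpha\,\frac{\alpha(y)}{\alpha(w\rho)}\,(1-s_\alpha)\,e_w,
\]
with no differential component. The $d$ in the connection is already the full de~Rham differential on the base; $y(u^i)$ contributes only a zeroth--order endomorphism of the fibre $\C W$. So there is no ``differential part of $y(u^i)$ that completes $d$.'' Once you make this correction your computation still goes through: the $-s_\alpha$ piece of $(1-s_\alpha)$ cancels the $-\tfrac1x$ tails, and what remains commutes with every $x(v)$ --- which is exactly the paper's commutator statement, reached from the other direction.
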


\subsection{The elliptic Casimir connection}
In \cite{TLY2}, we also give another concrete incarnation,  the \textit{elliptic Casimir connection}, of the universal connections $\nabla_{\KZB, \tau}$ and $\nabla_{\KZB}$. This is an elliptic analogue of the rational Casimir connection \cite{TL0} and of the trigonometric Casimir connection \cite{TL1}. 
This elliptic Casimir connection takes values in the deformed double current algebra $D_{\hbar}(\g)$,
a deformation of the universal central extension of the double current algebra $\g[u, v]$ recently introduced by Guay \cite{G1, G2}.
The construction of a map from $\Aell$ to $D_{\hbar}(\g)$ relies crucially on a double loop presentation of $D_{\hbar}(\g)$ in \cite{G1, GY}.  
\subsection*{Acknowledgments}
We are grateful to Pavel Etingof for numerous helpful discussions. Part of the
work was done when Y.Y. visited the Universit\"at Duisburg-Essen. During the
revision of this paper, Y.Y. was hosted by Perimeter Institute for Theoretical
Physics. She is grateful for the hospitality and excellent working conditions
for those institutes. Y.Y. thanks Gufang Zhao for useful discussions.

\section{The universal connection for arbitrary root systems}
\label{sec:conn def}

\subsection{The Lie algebra $\Aell$}

Let $E$ be a Euclidean vector space, and $\Phi\subset E^*$ a finite, reduced,
crystallographic root system. Let $Q\subset E^*$ (resp. $Q^\vee\subset E$)
be the root lattice generated by the roots $\{\alpha| {\alpha\in\Phi}\}$ (resp.
coroots), and $P\subset E^*,P^\vee\subset E$ the corresponding weight and
coweight lattices dual to $Q^\vee$ and $Q$ respectively. Denote the complexification
of $E$ by $\h$.

For any subset $\Psi \subset \Phi$, and subring $R\subset \R$ of real numbers, let
$\langle \Psi\rangle_R \subset \h^*$ be the $R-$span of $\Psi$. By definition, a root
subsystem of $\Phi$ is a subset $\Psi\subset \Phi$ such that $\langle \Psi \rangle_\Z
\cap \Phi=\Psi$.
If $\Psi \subset \Phi$ is a root subsystem, we set $\Psi_+=\Psi\cap \Phi_+$.

\begin{definition}\label{def:holonomy Aell}
Let $\Aell$ be the Lie algebra generated by a set of elements $\{t_\alpha\}_{\alpha\in \Phi}$, such that $t_\alpha=t_{-\alpha}$, and two linear maps $x: \h\to \Aell$, $y:\h\to \Aell$. Those generators satisfy the following relations
\begin{description}
\item[(tt)] For any root subsystem $\Psi$ of $\Phi$, we have 
$[t_\alpha, \sum_{\beta \in \Psi^+}t_\beta]=0. $
\item[(xx)]
$[x(u), x(v)]=0$, for any $u, v\in \h$;
\item[(yy)]
$[y(u), y(v)]=0$, for any $u, v\in \h$;
\item[(yx)]
$[y(u), x(v)]=\displaystyle \sum_{\gamma\in \Phi^+}\langle v, \gamma\rangle \langle u, \gamma\rangle t_\gamma$.
\item[(tx)]
$[t_\alpha, x(u)]=0$, if $\langle \alpha, u\rangle=0$.
\item[(ty)]
$[t_\alpha, y(u)]=0$, if $\langle \alpha, u\rangle=0$.
\end{description}
\end{definition}
The Lie algebra $\Aell$ is bigraded, with grading $\deg(x(u))=(1, 0), \deg(y(v))=(0, 1)$, and $\deg(t_\alpha)=(1, 1)$, for any $u, v\in \h$ and $\alpha\in \Phi$. 
\begin{remark}
Unlike the rational and the trigonometric cases, there is no embedding of the Lie algebra $\Aelltwo$ of rank $1$ into the Lie algebra $\Aell$ of higher rank, such that, $x(u)\mapsto x(u), y(v)\mapsto y(v)$, and $t_{\alpha}\mapsto t_{\alpha}$. This fails since the defining relation
\[
[y(u), x(v)]=\sum_{\gamma\in \Phi^+}\langle v, \gamma\rangle \langle u, \gamma\rangle t_\gamma
\] is not preserved under the map. 
\end{remark}

\subsection{Theta functions}
\label{theta function}
In this subsection, we recall some basic facts about theta functions.

Let $\Lambda_\tau:=\Z+\Z\tau \subset \C$ and $\mathfrak{H}$ be the upper half plane, i.e. $\mathfrak{H}:=\{z \in \C \mid \Im(z)> 0\}$.
The following properties of $\theta(z| \tau)$ uniquely characterize the theta function $\theta(z| \tau )$  (see also \cite{CEE})
\begin{enumerate}
\item
$\theta(z| \tau )$ is a holomorphic function $\C\times \mathfrak{H} \to \C$, such that $\{z\mid\theta(z| \tau )=0\}=\Lambda_\tau$.
\item
$\frac{\partial \theta}{\partial z}(0| \tau)=1$.
\item
$\theta(z+1| \tau )=-\theta(z| \tau )=\theta(-z| \tau )$, and\,\  $\theta(z+\tau| \tau )=-e^{-\pi i \tau}e^{-2 \pi i z}
\theta(z| \tau )$.
\item
$\theta(z| \tau+1)=\theta(z| \tau )$, while $\theta(-z/\tau|-1/\tau)=-(1/\tau)e^{(\pi i/\tau)z^2}\theta(z| \tau )$.
\item Let $q:=e^{2\pi i \tau}$ and $\eta(\tau):=q^{1/24}\prod_{n\geq 1}(1-q^n)$. If we set $\vartheta(z|\tau):=\eta(\tau)^3 \theta(z|\tau)$, then $\vartheta(z|\tau)$ satisfies the differential equation
\[
\frac{\partial \vartheta(z, \tau)}{\partial \tau}
=\frac{1}{4\pi i}\frac{\partial^2 \vartheta(z, \tau)}{\partial z^2}.\]
\end{enumerate}
We have the following product formula of $\theta(z|\tau)$. Let $u=e^{2\pi i z}$, we have
\begin{equation}\label{prod formula}
\theta(z| \tau)=u^{\frac{1}{2}}\prod_{s>0}(1-q^su)\prod_{s\geq0}(1-q^su^{-1})\frac{1}{2\pi i}\prod_{s>0}(1-q^s)^{-2}.
\end{equation} 
\begin{remark}
The theta function $\theta(z|\tau)$ is normalized such that $\frac{\partial \theta}{\partial z}(0| \tau)=1$. One could take the  Jacobi theta function $\vartheta_1(z, q)=2\sum_{n=0}^{\infty}q^{\frac{n^2}{8}}\sin(nz)$ in \cite[Page 463-465]{WW}, then we have
\[
\theta(z| \tau ):= \frac{\vartheta_1(\pi z, q)}{\frac{\partial \vartheta_1}{\partial z}(0, q)}
=\frac{\vartheta_1(\pi z, q)}{2 q^{\frac{1}{8}}\prod_{n-1}^{\infty}(1-q^n)^3}. \]
\end{remark}
\subsection{Principal bundles on elliptic configuration space}
\label{subsec:bundles}
Consider the elliptic curve $\mathcal{E}_\tau:=\C/{\Lambda_\tau}$ with modular parameter $\tau\notin \R$.
Let $T:=\h/(P^\vee \oplus \tau P^\vee)$ be the adjoint torus, which non-canonically isomorphic to $\E_\tau^n$, for $n=\rank(P^{\vee})$. For any root $\alpha\in Q \subset \h^*$, the linear map $\alpha: \h=P^{\vee}\otimes_{\Z} \C \to \C$ induces a natural map 
\[
\chi_\alpha: T=\h/(P^\vee \oplus \tau P^\vee) \to \E_\tau.\]
Denote kernel of $\chi_\alpha$ by $T_\alpha$, which is a divisor of $T$.
Denote $T_{\reg}=T\setminus \bigcup_{\alpha\in \Phi}T_{\alpha}$, which we will refer as \textit{the elliptic configuration space}. In the type A case, when $\Phi=\{\epsilon_i-\epsilon_j \mid 1\leq i\neq j\leq n\}$, $T_{\reg}$ is the configuration space of $n$-points on elliptic curve $\E_\tau$. 

The Lie algebra $\Aell$ is positively  bi-graded. Let $\widehat{\Aell}$ be the degree completion of $\Aell$ and $\exp(\widehat{\Aell})$ be the corresponding group of 
$\widehat{\Aell}$. We now describe a principal bundle $\mathcal P_{\tau, n}$ on the elliptic configuration space $T_{\reg}$ with structure group $\exp(\widehat{\Aell})$. 

The lattice $\Lambda_\tau\otimes P^\vee$ acts on $\h=\C\otimes P^\vee$ by translations, whose quotient is $T$. For any $g\in \exp(\widehat{\Aell})$, and the standard basis $\{\lambda_i^\vee\}_{1\leq i \leq n}$ of $P^{\vee}$, we define an action of $\Lambda_\tau\otimes P^\vee$ on the group $\exp(\widehat{\Aell})$ by 
\[
\lambda_i^\vee (g)=g \,\ \text{and} \,\  \tau \lambda_i^\vee (g)=e^{-2\pi i x(\lambda^\vee_i)}g.\] We can then form the twisted product 
$\widetilde{\mathcal{P}}:=\h\times_{\Lambda_\tau \otimes P^\vee}\exp(\widehat{\Aell})$. It is a principal bundle on $T$ with structure group $\exp(\widehat{\Aell})$. Denote by $\mathcal P_{\tau, n}$ the restriction of the bundle $\widetilde{\mathcal{P}}$ over $T_{\reg} \subset T$.

Equivalently, let $\pi: \h \to \h/(P^\vee \oplus \tau P^\vee)$ be the natural projection. For an open subset $U\subset T_{\reg}$, the sections of $\mathcal P_{\tau, n}$ on $U$ are 
\[
\{ f: \pi^{-1}(U) \to \exp(\widehat{\Aell})\mid f(z+\lambda^\vee_i)=f(z),
f(z+\tau \lambda^\vee_i)=e^{-2\pi i x(\lambda^\vee_i)}f(z)\}. 
\]
\subsection{The universal KZB connection}
In this subsection, we construct the universal KZB connection for root system $\Phi$. 
As in \cite{CEE}, we set
\begin{equation}\label{function k}
k(z, x|\tau):=\frac{\theta(z+x| \tau)}{\theta(z| \tau)\theta(x| \tau)}-\frac{1}{x}.
\end{equation}
When $\tau$ is fixed, the element $k(z, x|\tau)$ belongs to $\Hol(\C-\Lambda_\tau)[\![x]\!]$. 
For $x(u)=x_u\in \Aell$, substituting $x=\ad x_u$ in \eqref{function k}, we get a linear map 
$\Aell\to (\Aell\otimes \Hol(\C-\Lambda_\tau))^{\wedge}$, where $(-)^{\wedge}$ is taking the completion.

We consider the $\widehat{\Aell}-$valued connection on $T_{\reg}$
\begin{equation}\label{conn any type}
\nabla_{\KZB, \tau}=d-\sum_{\alpha \in \Phi^+} k(\alpha, \ad(\frac{x_{\alpha^\vee}}{2})|\tau)(t_\alpha)d\alpha+\sum_{i=1}^{n}y(u^i)du_i,
\end{equation}
where $\Phi_+\subset \Phi$ is a chosen system of positive roots, $\{u_i\}$, and $\{u^i\}$ are dual basis of $\h^*$ and $\h$ respectively.

Note that the form \eqref{conn any type} is independent of the choice of $\Phi^+$. It follows from the equality $k(z, x|\tau)=-k(-z, -x|\tau)$, which is a direct consequence of the fact that theta function $\theta(z|\tau)$ is an odd function.

We now show that the KZB connection $\nabla_{\KZB, \tau}$ \eqref{conn any type} is a connection on the principal bundle $\mathcal{P}_{\tau, n}$. In order to do this, 
we rewrite $\nabla_{\KZB, \tau}$ \eqref{conn any type} as the form
\[
\nabla_{\KZB}=d-\sum_{i=1}^n K_i d\lambda_i^\vee=d-\sum_{i=1}^n \left(\sum_{\alpha \in \Phi^+} (\alpha, \alpha_i) k(\alpha, \ad(\frac{x_{\alpha^\vee}}{2})|\tau)(t_\alpha)-y(\alpha_i)\right) d\lambda_i^\vee. 
\]
\begin{prop}\label{quasi peri}
For any $1\leq i\leq n$, the function $K_i$  satisfies the conditions
\[K_i(z+\lambda_j^\vee)=K_i(z) \,\ \text{and}
\,\ K_i(z+\tau \lambda_j^\vee)=e^{-2\pi i x(\lambda_j^\vee)}K_i(z).\]
As a consequence, the KZB connection $\nabla_{\KZB, \tau}$ \eqref{conn any type} is a connection on the bundle $\mathcal{P}_{\tau, n}$. 
\end{prop}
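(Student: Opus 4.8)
The plan is to verify the two displayed quasi-periodicity relations for $K_i$ directly and read off the conclusion. Because the transition functions of $\mathcal P_{\tau,n}$ are the \emph{constant} group elements $\psi_j:=e^{-2\pi i x(\lambda_j^\vee)}$ (the identity along $P^\vee$, equal to $\psi_j$ along $\tau P^\vee$), the Maurer--Cartan correction $d\psi_j\cdot\psi_j^{-1}$ vanishes, so the condition for the $\widehat{\Aell}$-valued form $\sum_i K_i\,d\lambda_i^\vee$ to descend to a connection on $\mathcal P_{\tau,n}$ is precisely that it be invariant along $P^\vee$ and transform by $\Ad(\psi_j)$ along $\tau P^\vee$; this is what the two relations assert, the right-hand side $e^{-2\pi i x(\lambda_j^\vee)}K_i(z)$ being read as the adjoint action $\Ad(\psi_j)K_i(z)=e^{-2\pi i\,\ad(x(\lambda_j^\vee))}K_i(z)$.

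The backbone is the scalar quasi-periodicity of $k(z,x|\tau)$ in its first slot. From the transformation laws of $\theta$ in \S\ref{theta function} one gets $k(z+1,x|\tau)=k(z,x|\tau)$ and $k(z+\tau,x|\tau)=e^{-2\pi i x}k(z,x|\tau)+\tfrac{e^{-2\pi i x}-1}{x}$, whence by iteration
\[
k(z+m\tau,x|\tau)=e^{-2\pi i m x}\,k(z,x|\tau)+\frac{e^{-2\pi i m x}-1}{x},\qquad m\in\IZ.
\]
For the first relation, $\alpha(z+\lambda_j^\vee)=\alpha(z)+\langle\alpha,\lambda_j^\vee\rangle$ with $\langle\alpha,\lambda_j^\vee\rangle\in\IZ$ (as $\alpha\in Q$, $\lambda_j^\vee\in P^\vee$), so $1$-periodicity of $k$ leaves every summand of $K_i$ unchanged and the constant term $-y(\alpha_i)$ is trivially invariant, giving $K_i(z+\lambda_j^\vee)=K_i(z)$. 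For the second, set $m_\alpha:=\langle\alpha,\lambda_j^\vee\rangle$, substitute $x=\ad(x_{\alpha^\vee}/2)$ into the iterated identity and apply it to $t_\alpha$; this splits the shifted main term into a homogeneous factor $e^{-2\pi i m_\alpha\ad(x_{\alpha^\vee}/2)}$ times the unshifted term, plus an inhomogeneous correction $\tfrac{e^{-2\pi i m_\alpha\ad(x_{\alpha^\vee}/2)}-1}{\ad(x_{\alpha^\vee}/2)}(t_\alpha)$.

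The first algebraic point is that on the subspace spanned by $\{\ad(x_{\alpha^\vee})^k(t_\alpha)\}_{k\geq0}$ the operators $\ad(x(\lambda_j^\vee))$ and $\tfrac{m_\alpha}{2}\ad(x_{\alpha^\vee})$ coincide. Indeed, with $v:=\lambda_j^\vee-\tfrac{m_\alpha}{2}\alpha^\vee$ one has $\langle\alpha,v\rangle=0$, so (tx) gives $\ad(x(v))(t_\alpha)=0$, while (xx) makes $\ad(x(v))$ commute with $\ad(x_{\alpha^\vee})$; hence $\ad(x(v))$ annihilates the whole tower and $\ad(x(\lambda_j^\vee))=\tfrac{m_\alpha}{2}\ad(x_{\alpha^\vee})$ there. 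Consequently $e^{-2\pi i m_\alpha\ad(x_{\alpha^\vee}/2)}=\Ad(\psi_j)$ on the $\alpha$-tower, and the homogeneous part of $K_i(z+\tau\lambda_j^\vee)$ is exactly $\Ad(\psi_j)$ applied to the main part of $K_i(z)$.

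It remains to match the inhomogeneous correction with the failure of the constant $y$-term to be $\Ad(\psi_j)$-invariant, i.e. to prove
\[
\sum_{\alpha\in\Phi^+}(\alpha,\alpha_i)\,\frac{e^{-2\pi i m_\alpha\ad(x_{\alpha^\vee}/2)}-1}{\ad(x_{\alpha^\vee}/2)}(t_\alpha)=\bigl(1-\Ad(\psi_j)\bigr)\,y(\alpha_i).
\]
This is the main calculation, and where I expect the real work to lie: it is the one place the three relations (xx), (tx) and (yx) must conspire. Expanding the right side as $-\sum_{k\geq1}\frac{(-2\pi i)^k}{k!}\ad(x(\lambda_j^\vee))^k y(\alpha_i)$, relation (yx) gives $\ad(x(\lambda_j^\vee))y(\alpha_i)=-\sum_\gamma m_\gamma(\gamma,\alpha_i)t_\gamma$ — with exactly the coefficient appearing in $K_i$ — and the tower identity of the previous paragraph then yields $\ad(x(\lambda_j^\vee))^k y(\alpha_i)=-\sum_\gamma m_\gamma(\gamma,\alpha_i)\bigl(\tfrac{m_\gamma}{2}\ad(x_{\gamma^\vee})\bigr)^{k-1}(t_\gamma)$; resumming over $k$ collapses the exponential-type series back to $\sum_\gamma(\gamma,\alpha_i)\frac{e^{-2\pi i m_\gamma\ad(x_{\gamma^\vee}/2)}-1}{\ad(x_{\gamma^\vee}/2)}(t_\gamma)$, which is the left side. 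All the $\ad$-series converge in $\widehat{\Aell}$ because $x_{\alpha^\vee}$ has positive degree. Combining the three displays gives $K_i(z+\tau\lambda_j^\vee)=\Ad(\psi_j)K_i(z)$, and together with the first relation this shows that $\nabla_{\KZB,\tau}$ is a connection on $\mathcal P_{\tau,n}$.
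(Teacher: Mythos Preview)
Your proof is correct and follows essentially the same route as the paper's: both use the scalar quasi-periodicity of $k(z,x|\tau)$, invoke (tx) together with (xx) to replace $m_\alpha\,\ad(x_{\alpha^\vee}/2)$ by $\ad(x(\lambda_j^\vee))$ on the $t_\alpha$-tower, and then use (yx) to match the inhomogeneous correction against $(1-\Ad(\psi_j))y(\alpha_i)$. Your presentation is a bit more explicit about the adjoint-action interpretation and the resummation step, while the paper packages the same computation as a single chain of equalities, but the mathematical content is the same.
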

\begin{proof}
Using the properties of the theta function $\theta(z|\tau)$, we have,
for any integer $m\in \Z$, 
\begin{align*}
&k(z+m, x|\tau)=k(z, x|\tau)\\
&k(z+\tau m, x|\tau)=e^{-2\pi i m x}k(z, x|\tau)+\frac{e^{-2\pi i m x}-1}{x}.
\end{align*}
Therefore, it is obvious that $K_i(\alpha+\lambda^\vee_j)=K_i(\alpha)$, for any $\alpha\in \Phi, \lambda_j^{\vee}\in P^{\vee}$.
We now check the second equality. 
It follows from the following computation. 
\begin{align*}
&K_i(\alpha+\tau \lambda^\vee_j)\\
=&\sum_{\alpha \in \Phi^+} (\alpha, \alpha_i) k(\alpha+\tau (\alpha, \lambda_j^\vee), \ad(\frac{x_{\alpha^\vee}}{2})|\tau)(t_\alpha)-y(\alpha_i)\\
=&\sum_{\alpha \in \Phi^+} (\alpha, \alpha_i) \Bigg(\exp\left(-2 \pi i (\alpha, \lambda_j^\vee)\frac{x(\alpha^\vee)}{2}\right)
k(\alpha, \ad(\frac{x_{\alpha^\vee}}{2})|\tau)+\frac{\exp\left(-2 \pi i (\alpha, \lambda_j^\vee)x_{\alpha^\vee}/2\right)-1}{x_{\alpha^\vee}/2}\Bigg)(t_\alpha)-y(\alpha_i)\\
=&\sum_{\alpha \in \Phi^+} (\alpha, \alpha_i) \Bigg(\exp\left(-2 \pi i x(\lambda_j^\vee)\right)
k(\alpha, \ad(\frac{x_{\alpha^\vee}}{2})|\tau)+\frac{-\exp\left(2 \pi i x(\lambda_j^\vee)\right)-1}{x(\lambda_j^\vee)}(\alpha, \lambda_j^\vee)\Bigg)(t_\alpha)-y(\alpha_i)\\
&\phantom{12345678901234567890}
\text{by the relation $[-(\alpha, \lambda_j^\vee)\frac{x(\alpha^\vee)}{2}+x(\lambda^\vee_j), t_\alpha]=0$.}
\end{align*}
\begin{align*}
=&\sum_{\alpha \in \Phi^+} (\alpha, \alpha_i) \exp\left(-2 \pi i x(\lambda_j^\vee)\right)
k(\alpha, \ad(\frac{x_{\alpha^\vee}}{2})|\tau)(t_\alpha)+\frac{\exp\left(-2 \pi i x(\lambda_j^\vee)\right)-1}{x(\lambda_j^\vee)}(\sum_{\alpha \in \Phi^+} (\alpha, \alpha_i)(\alpha, \lambda_j^\vee)t_\alpha)-y(\alpha_i)\\
=&\sum_{\alpha \in \Phi^+} (\alpha, \alpha_i) \exp\left(-2 \pi i x(\lambda_j^\vee)\right)
k(\alpha, \ad(\frac{x_{\alpha^\vee}}{2})|\tau)(t_\alpha)-(\exp\left(-2 \pi i x(\lambda_j^\vee)\right)-1)(y(\alpha_i))-y(\alpha_i)\\
&\phantom{12345678901234567890}\text{by the relation $[y(\alpha_i), x(\lambda^\vee_j)]=\sum_{\alpha \in \Phi^+} (\alpha, \alpha_i)(\alpha, \lambda_j^\vee)t_\alpha$.}\\
=&\sum_{\alpha \in \Phi^+} (\alpha, \alpha_i) \exp\left(-2 \pi i x(\lambda_j^\vee)\right)
k(\alpha, \ad(\frac{x_{\alpha^\vee}}{2})|\tau)(t_\alpha)-\exp\left(2 \pi i x(\lambda_j^\vee)\right)(y(\alpha_i))\\
=&\exp(-2 \pi i x(\lambda_j^\vee))K_i(\alpha).
\end{align*}
This completes the proof. 
\end{proof}
Let $W$ be the Weyl group generated by reflections $\{s_{\alpha} \mid \alpha\in \Phi \}$. Assume $W$ acts on the algebra $\Aell$. 
\begin{theorem}\label{thm:connection flat}
\begin{enumerate}
\item
The connection $\nabla_{\KZB, \tau}$ \eqref{conn any type} is flat if and only if the defining relations (tt), (xx), (yy), (yx), (tx), (ty) of $\Aell$ hold.
\item
The connection $\nabla_{\KZB, \tau}$ is $W$-equivariant if and only the action of $W$ on $\Aell$ is given by \[
s_\alpha(t_\gamma)=t_{s_\alpha(\gamma)}, \,\ 
s_\alpha(x(u))=x(s_\alpha u), \,\ s_\alpha(y(v))=y(s_\alpha v). \]
\end{enumerate}
\end{theorem}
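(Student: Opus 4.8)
The plan is to prove the two statements by computing the curvature $\nabla_{\KZB,\tau}^2$ explicitly and matching its vanishing with the defining relations, and separately checking the transformation of the connection form under $W$. Throughout I write the connection as $\nabla_{\KZB,\tau}=d-\omega$ with
\begin{equation*}
\omega=\sum_{\alpha\in\Phi^+}k\bigl(\alpha,\ad(\tfrac{x_{\alpha^\vee}}{2})|\tau\bigr)(t_\alpha)\,d\alpha-\sum_{i=1}^n y(u^i)\,du_i,
\end{equation*}
so that the curvature is $\nabla_{\KZB,\tau}^2=-d\omega+\omega\wedge\omega$. Since all the coefficient functions are holomorphic in $z$ away from the root hypertori and the forms $d\alpha$, $du_i$ are closed, the term $d\omega$ comes only from differentiating the $z$-dependence of the coefficients $k(\alpha,\ad(\tfrac{x_{\alpha^\vee}}{2})|\tau)(t_\alpha)$.

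First I would organise the curvature by the type of two-form appearing. There are three kinds: $du_i\wedge du_j$ terms, $d\alpha\wedge du_i$ terms, and $d\alpha\wedge d\beta$ terms. The $du_i\wedge du_j$ part of $\omega\wedge\omega$ is $\sum_{i,j}[y(u^i),y(u^j)]$, whose vanishing for all test vectors is exactly relation (yy). The mixed $d\alpha\wedge du_i$ part has two sources: the bracket $[k(\alpha,\ad(\tfrac{x_{\alpha^\vee}}{2}))(t_\alpha),y(u^i)]$ from $\omega\wedge\omega$, and the exterior derivative $d\bigl(k(\alpha,\ad\tfrac{x_{\alpha^\vee}}{2})(t_\alpha)\bigr)$, which produces a $du_i$ component because the function $k(z,x|\tau)$ enters through $z=\alpha$ and $x=\ad\tfrac{x_{\alpha^\vee}}{2}$ depends linearly on $\h$. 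Matching these using the expansion of $k$ in powers of $x$, together with relations (xx), (tx), and the commutation $[y(u),x(v)]=\sum_\gamma\langle v,\gamma\rangle\langle u,\gamma\rangle t_\gamma$ from (yx), should force precisely (yx) and (ty). The genuinely subtle term is the $d\alpha\wedge d\beta$ part, and I expect this to be the main obstacle.

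For the $d\alpha\wedge d\beta$ piece, the relevant input is the functional identity for $k$ — the analogue of the classical Fay/theta three-term relation used in \cite{CEE} — which expresses sums of products $k(\alpha,a)k(\beta,b)$ in terms of $k$'s evaluated on $\alpha+\beta$ on the hypertori where $\alpha,\beta$ lie in a common rank-$2$ subsystem. The combinatorics here is governed by the rank-$2$ root subsystems $\Psi$ of $\Phi$: on each such $\Psi$ the residues of the $k$-functions along the diagonal hypertori produce a bracket of the form $[t_\alpha,\sum_{\beta\in\Psi^+}t_\beta]$, whose vanishing is relation (tt). The hard part will be bookkeeping the $\ad(x_{\alpha^\vee})$-dependence inside the arguments of $k$ simultaneously with the theta-function identity, and verifying that after applying (tt), (tx), and (xx) all the holomorphic and all the singular contributions cancel; I would reduce this to the rank-$2$ case by restricting to each subsystem $\Psi$, where the computation parallels the type-$\sfA$ verification of \cite{CEE}, and then sum over all rank-$2$ subsystems. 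One must also confirm that relation (ty) (not just (tx)) is needed, which it is because of the $y(u^i)\,du_i$ term interacting with the $z$-derivative of $k$.

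For part (2), the $W$-equivariance, the argument is more formal. I would fix $w\in W$ and compute $w^*\nabla_{\KZB,\tau}$, using that $w$ permutes the positive roots up to sign and acts on the coordinates by $w\cdot\alpha$, $w\cdot u_i$. Because $t_{-\alpha}=t_\alpha$ and $k(z,x|\tau)=-k(-z,-x|\tau)$, the sign changes produced when $w$ sends a positive root to a negative one cancel, so $w^*\omega$ has the same shape as $\omega$ but with $t_\alpha,x(u),y(v)$ replaced by their images under the linear extension of $w$ acting on the coordinate-side roots. Matching $w^*\nabla_{\KZB,\tau}=\nabla_{\KZB,\tau}$ then forces exactly $w(t_\gamma)=t_{w\gamma}$, $w(x(u))=x(wu)$, $w(y(v))=y(wv)$, and conversely these relations make the pullback identity hold; the only care needed is that $w$ commutes with $\ad(\tfrac{x_{\alpha^\vee}}{2})$ appropriately, which follows once $w(x(\lambda^\vee))=x(w\lambda^\vee)$ and $w(\alpha^\vee)=(w\alpha)^\vee$ are used inside the functional-calculus expression for $k$.
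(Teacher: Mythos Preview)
Your outline has the right high-level shape (reduce to rank-$2$ subsystems, use the Fay-type theta identity, then a case-by-case check for $A_1\times A_1,A_2,B_2,G_2$), and your treatment of part (2) matches the paper's. But there is a genuine error in how you organise the curvature for part (1).

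You claim that $d\omega$ contributes a $d\alpha\wedge du_i$ term because ``$x=\ad\tfrac{x_{\alpha^\vee}}{2}$ depends linearly on $\h$''. It does not: $x_{\alpha^\vee}=x(\alpha^\vee)$ is a \emph{fixed element} of $\Aell$, not a function on the configuration space. The only $z$-dependence of the coefficient $k(\alpha,\ad\tfrac{x_{\alpha^\vee}}{2}|\tau)(t_\alpha)$ is through the first slot $z=\alpha(z)$, and since that coefficient already sits in front of $d\alpha$, one has $d\omega=0$ outright. The curvature is purely $\omega\wedge\omega$.

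This breaks your proposed block-by-block vanishing. With $d\omega=0$, the piece $\Omega_2=\sum_{\alpha,i}[k(\alpha,\ad\tfrac{x_{\alpha^\vee}}{2})(t_\alpha),y(u^i)]\,d\alpha\wedge du_i$ has nothing to balance it except $\Omega_1=\tfrac12\sum_{\alpha\neq\beta}[k(\ldots)(t_\alpha),k(\ldots)(t_\beta)]\,d\alpha\wedge d\beta$; neither vanishes on its own, and the heart of the sufficiency proof is showing $\Omega_1=\Omega_2$. The relations (yx), (ty), (tx), (xx) are used as \emph{inputs} to rewrite $\Omega_2$ in the form $\sum_{\alpha\neq\gamma}(\ldots)[t_\alpha,t_\gamma]\,d\alpha\wedge d\gamma$ (computing $[y(u),(\ad x_{\alpha^\vee})^n(t_\alpha)]$ and feeding in (yx) to produce $t_\gamma$'s), not as outputs forced by the vanishing of a separate block. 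Only after both $\Omega_1$ and $\Omega_2$ are put in this common form does the theta identity plus the rank-$2$ (tt) relations finish the argument.

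One further point: the paper does \emph{not} extract the necessity (``only if'') direction from the curvature computation. That direction is obtained later, from the monodromy isomorphism $\widehat{\mathbb{C}\Pell^\Phi}\cong\widehat{U(\Aell)}$: any flat connection of this shape with values in some algebra $A$ then yields a homomorphism $\Aell\to A$, hence the relations hold in $A$. If you want a self-contained curvature argument for necessity you would have to isolate each relation by taking residues and leading-order terms at specific loci; that is plausible but is not what you have sketched.
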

\begin{proof}
The necessity of claim (1) follows from Corollary \ref{cor:ness}. 
We postpone the proof of  sufficiency of part (1) to the next section. We now show part (2) of the claim. The connection $\nabla_{\KZB, \tau}$ is $W$-equivariant if and only if $s_i^* \nabla_{\KZB, \tau}=\nabla_{\KZB, \tau}$, for simple reflection $s_i\in W$. 
Note that $s_i$ permutes the set $\Phi^+\setminus \{\alpha_i\}$, and we have the equality 
$k(-\alpha, -\ad\frac{x_{\alpha^\vee}}{2})=k(\alpha, \ad\frac{x_{\alpha^\vee}}{2})$.
Therefore, 
\begin{align*}
s_i^*\nabla_{\KZB, \tau}&=d-\sum_{\alpha \in \Phi^+} k(s_i\alpha, \ad(\frac{s_i x_{\alpha^\vee}}{2})|\tau)(s_i t_\alpha)d(s_i \alpha)+\sum_{j=1}^{n-1}(s_iy(u^j))d(s_i u_j)\\
&=d-\sum_{\beta \in \Phi^+} k(\beta, \ad(\frac{s_i x_{(s_i^{-1}\beta)^\vee}}{2})|\tau)(s_i t_{s_i^{-1}\beta})d\beta+\sum_{j=1}^{n-1}(s_iy(u^j))d(s_i u_j)\\
&=d-\sum_{\beta \in \Phi^+} k(\beta, \ad(\frac{x_{\beta^\vee}}{2})|\tau)( t_{\beta})d\beta+\sum_{j=1}^{n-1}(y((s_iu)^j))d(s_i u_j)=\nabla_{\KZB, \tau}. 
\end{align*}
This completes the proof of part (2). 
\end{proof}
\begin{example}
In the type A case, the root system is  $\Phi=\{\alpha=\epsilon_i-\epsilon_j \mid 1\leq i\neq j \leq n\}$, where $\{\epsilon_1, \dots, \epsilon_n\}$ is a standard orthonormal basis of $\C^n$.
The connection $\nabla_{\KZB, \tau}$ \eqref{conn any type} can be rewritten as $\nabla_{\KZB}:=d-\sum_{i=1}^n K_i(z|\tau)dz_i$,
where
\begin{equation}\label{eq:KZB CEE}
K_i(z|\tau)  = -y_i + \sum_{\{j|j\neq i\}} K_{ij}(z_{ij}|\tau): = -y_i + \sum_{\{j|j\neq i\}} k(z_{ij}, \ad x_i|\tau)(t_{ij}).
\end{equation}
The above follows from the relation  
\[
(\ad x_i)^k(t_{ij})=(-\ad x_j)^k(t_{ij})=(\ad \frac{x_i-x_j}{2})^k(t_{ij}),
\]
for any $k>0$, since $[x_i+x_j, t_{ij}]=0$.
This form of KZB  connection \eqref{eq:KZB CEE} is constructed in \cite{CEE}. \end{example}

\section{Flatness of the universal KZB connection}
In this section, we prove Theorem \ref{thm:connection flat} (1). 
Set
\[A:=\sum_{\alpha \in \Phi^+} k(\alpha, \ad(\frac{x_{\alpha^\vee}}{2})|\tau)(t_\alpha)d\alpha-\sum_{i=1}^{n}y(u^i)du_i.\]
Since $dA=0$, the connection $\nabla_{\KZB, \tau}$ \eqref{conn any type} is flat if and only if the curvature $\Omega:=A \wedge A$ is zero. We write the  curvature as $\Omega=\Omega_1-\Omega_2+\Omega_3$, where
\begin{align}
\Omega_1&=\frac{1}{2}\sum_{\alpha\neq \beta}\left[k(\alpha, \ad(\frac{x_{\alpha^\vee}}{2})|\tau)(t_\alpha), k(\beta, \ad(\frac{x_{\beta^\vee}}{2})|\tau)(t_\beta)\right]d\alpha\wedge d\beta.  \label{Omega1}\\
\Omega_2&=\sum_{\alpha, i}\left[k(\alpha, \ad(\frac{x_{\alpha^\vee}}{2})|\tau)(t_\alpha), y(u^i)\right]d\alpha\wedge du_i.
\label{Omega2}\\
\Omega_3&=\frac{1}{2}\sum_{i\neq j}\left[y(u^i), y(u^j)\right]du_i\wedge du_j.  \label{Omega3}
\end{align}
The defining relation $[y(u^i), y(u^j)]=0$ of $\Aell$ implies that $\Omega_3=0$.
In the rest of this section, we show that  $\Omega_1=\Omega_2$. This gives the flatness of the KZB connection $\nabla_{\KZB, \tau}$, which finishes the proof of  Theorem  \ref{thm:connection flat} (1).
\subsection{}
In this subsection, we give another expressions of the two terms $\Omega_1$ \eqref{Omega1} and $\Omega_2$ \eqref{Omega2}. 

We first introduce some notations. 
For any two nonparallel non-zero vectors $u, v \in \h^*$, we associate a new element $\omega(u^\vee, v)\in \h$
\[
\omega(u^\vee, v)=\frac{-2(v, v)}{(u, u)(v, v)-(u, v)^2}u +\frac{2(u, v)}{(u, u)(v, v)-(u, v)^2}v, 
\] where $u^\vee:=\frac{2u}{(u, u)}$. 
The vector $\omega(u^\vee, v)$ in $\h$ can be characterized by the following property. 
\begin{itemize}
\item
The vector $\omega(u^\vee, v)$ is a linear combination of vectors $u$ and $v$.
\item
One has
$
(u^\vee+ \omega(u^\vee, v)) \bot u, \text{ and $\omega(u^\vee, v) \bot v.$} 
$
\end{itemize}
The following identities follow from a direct calculation.
\begin{lemma}\label{lemma:omeg}
The following identities of  $\omega(u^\vee, v)$ hold. 
\begin{enumerate}
\item $\omega(u^\vee, v)-\omega(v^\vee, u)=\omega(u^\vee, u+v)$,  \,\ and \,\ $\omega((au+bv)^\vee, cu)=\frac{1}{b}\omega(v^\vee, u)$, for $b\neq 0$;
\item $\frac{u^\vee+\omega(u^\vee, v)}{(u^\vee, v)}=\frac{u^\vee+\omega(u^\vee, u+v)}{(u^\vee, u+v)}$, and 
$\frac{u^\vee+\omega(u^\vee, v)}{(u^\vee, v)}=\frac{\omega(v^\vee, u)}{-2}$.
\end{enumerate}
\end{lemma}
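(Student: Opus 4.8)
The plan is to avoid expanding the explicit formula for $\omega(u^\vee, v)$ and instead argue entirely from the two characterizing properties, which determine $\omega(u^\vee, v)$ uniquely: a vector in the plane spanned by $u$ and $v$ is pinned down by its inner products against $u$ and $v$, and the conditions $(u^\vee + \omega(u^\vee, v), u) = 0$ and $(\omega(u^\vee, v), v) = 0$ supply exactly these two. Before treating the identities I would record the elementary consequences on which all the computations rest, namely
\[
(u^\vee, u) = 2, \qquad (\omega(u^\vee, v), v) = 0, \qquad (\omega(u^\vee, v), u) = -2,
\]
the last being a restatement of $(u^\vee + \omega(u^\vee, v), u) = 0$. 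I would also note the scale invariance $\omega(u^\vee, \lambda v) = \omega(u^\vee, v)$ for $\lambda \neq 0$, immediate from the fact that perpendicularity in the second slot sees only the line spanned by $v$; this disposes of the factor $c$ in the second identity of (1).

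For the first identity of (1), set $w = \omega(u^\vee, v) - \omega(v^\vee, u)$ and verify that $w$ meets the characterization of $\omega(u^\vee, u+v)$. Membership in the plane spanned by $u$ and $u+v$, which equals that spanned by $u$ and $v$, is clear. For $(u^\vee + w, u) = 0$ I would combine $(u^\vee + \omega(u^\vee, v), u) = 0$ with $(\omega(v^\vee, u), u) = 0$; for $w \perp (u+v)$ I would expand and use the recorded values to get $(\omega(u^\vee, v), u+v) = -2$ and $(\omega(v^\vee, u), u+v) = -2$, whose difference vanishes. Uniqueness then forces $w = \omega(u^\vee, u+v)$. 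For the second identity of (1), write $p = au + bv$ and check that $\tfrac{1}{b}\omega(v^\vee, u)$ satisfies the defining properties of $\omega(p^\vee, u)$, which equals $\omega(p^\vee, cu)$ by scale invariance: perpendicularity to $u$ is inherited, while $(p^\vee + \tfrac{1}{b}\omega(v^\vee, u), p) = 0$ reduces, after expanding $p = au + bv$ and using $(\omega(v^\vee, u), u) = 0$ and $(\omega(v^\vee, u), v) = -2$, to $2 - 2 = 0$.

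For (2) I would introduce the component $e = v - \tfrac{(u,v)}{(u,u)} u$ of $v$ orthogonal to $u$, so that $e$ spans the line in the plane of $u$ and $v$ perpendicular to $u$. Since $u^\vee + \omega(u^\vee, v)$ lies on this line it is a scalar multiple of $e$; pairing against $v$ and using $(e, v) = (e, e)$ identifies the scalar and yields $\tfrac{u^\vee + \omega(u^\vee, v)}{(u^\vee, v)} = \tfrac{e}{(e, e)}$. The identical argument with $u+v$ in place of $v$, noting that $u$ and $u+v$ span the same plane and $(e, u+v) = (e, e)$, gives $\tfrac{u^\vee + \omega(u^\vee, u+v)}{(u^\vee, u+v)} = \tfrac{e}{(e, e)}$, which is the first equality of (2). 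For the second, $\omega(v^\vee, u)$ is also perpendicular to $u$, hence a multiple of $e$, and pairing against $v$ with $(\omega(v^\vee, u), v) = -2$ gives $\omega(v^\vee, u) = \tfrac{-2 e}{(e, e)}$, so $\tfrac{\omega(v^\vee, u)}{-2} = \tfrac{e}{(e, e)}$ as well.

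There is no serious obstacle: everything is linear algebra in a two-dimensional plane. The points requiring care are confirming that the two orthogonality conditions genuinely determine $\omega$ uniquely, so that matching them suffices, and tracking the nonvanishing denominators. The Gram quantity $(e, e) = (v,v) - \tfrac{(u,v)^2}{(u,u)}$ is nonzero precisely because $u$ and $v$ are nonparallel, while the normalizations in (2) implicitly require $(u^\vee, v) \neq 0$, i.e. $(u, v) \neq 0$, which is the relevant regime of pairwise non-orthogonal roots.
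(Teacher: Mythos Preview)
Your proof is correct. The paper does not spell out an argument beyond ``the following identities follow from a direct calculation,'' which presumably means substituting the explicit formula for $\omega(u^\vee,v)$ and simplifying. Your route is genuinely different: rather than compute with the closed form, you rely on the characterizing pair of orthogonality conditions together with uniqueness, and for part (2) you introduce the orthogonal projection $e$ of $v$ onto $u^{\perp}$ as a coordinate-free normalizer so that both sides reduce to $e/(e,e)$. This is cleaner and sidesteps the Gram denominator $(u,u)(v,v)-(u,v)^2$ altogether; the only price is the preliminary observation that two orthogonality constraints pin down a vector in a plane, which you correctly flag. Your closing caveat about $(u^\vee,v)\neq 0$ is apposite: when $(u,v)=0$ one has $u^\vee+\omega(u^\vee,v)=0$, so the ratios in (2) are formal $0/0$, and the identities there should indeed be read in the non-orthogonal regime in which the paper actually invokes them.
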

\begin{prop}\label{prop:Ome1}
Modulo the relations (tx), (xx) of $\Aell$, we have, for any $\alpha, \beta\in \Phi^+$
\[\left[k(\alpha, \ad(\frac{x_{\alpha^\vee}}{2})|\tau)(t_\alpha), k(\beta, \ad(\frac{x_{\beta^\vee}}{2})|\tau)(t_\beta)\right]=
k(\alpha, \ad(\frac{x_{\omega(\alpha^\vee, \beta)}}{-2})|\tau)k(\beta, \ad(\frac{x_{\omega(\beta^\vee, \alpha)}}{-2})|\tau)[t_\alpha, t_\beta].\]
\end{prop}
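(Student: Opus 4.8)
The plan is to expand the two operators as power series in their second arguments and to reduce the asserted identity to a statement about individual monomials. Writing $k(\alpha,x|\tau)=\sum_{m\ge 0}a_m(\alpha|\tau)\,x^m$ and $k(\beta,x|\tau)=\sum_{n\ge 0}b_n(\beta|\tau)\,x^n$, both sides of the claimed equality become double sums with the same scalar coefficients $a_m(\alpha|\tau)b_n(\beta|\tau)$; since each $x_u$ raises bidegree and we work in the completion $\widehat{\Aell}$, these sums converge and may be compared term by term. Setting $w_\alpha:=\omega(\alpha^\vee,\beta)$ and $w_\beta:=\omega(\beta^\vee,\alpha)$, it therefore suffices to prove, modulo (xx) and (tx), the monomial identity
\begin{equation*}
\left[\ad\big(\tfrac{x_{\alpha^\vee}}{2}\big)^m(t_\alpha),\,\ad\big(\tfrac{x_{\beta^\vee}}{2}\big)^n(t_\beta)\right]
=\ad\big(\tfrac{x_{w_\alpha}}{-2}\big)^m\ad\big(\tfrac{x_{w_\beta}}{-2}\big)^n[t_\alpha,t_\beta]
\end{equation*}
for all $m,n\ge 0$.

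First I would carry out a \emph{conversion step} that replaces $\alpha^\vee$ by $w_\alpha$ inside the left slot and $\beta^\vee$ by $w_\beta$ inside the right slot. By the characterising property of $\omega$ recorded just before Lemma~\ref{lemma:omeg}, the vector $\alpha^\vee+w_\alpha$ is orthogonal to $\alpha$, so $\langle\alpha,\alpha^\vee+w_\alpha\rangle=0$ and relation (tx) gives $\ad(x_{\alpha^\vee+w_\alpha})(t_\alpha)=0$, that is $\ad(x_{\alpha^\vee})(t_\alpha)=-\ad(x_{w_\alpha})(t_\alpha)$. Using (xx) the operators $\ad(x_{\alpha^\vee})$ and $\ad(x_{w_\alpha})$ commute, so an easy induction (peel off the innermost factor acting on $t_\alpha$, convert it, and pull the resulting $\ad(x_{w_\alpha})$ to the front through the commuting factors) yields $\ad(\tfrac{x_{\alpha^\vee}}{2})^m(t_\alpha)=\ad(\tfrac{x_{w_\alpha}}{-2})^m(t_\alpha)$. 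The symmetric computation, using $(\beta^\vee+w_\beta)\perp\beta$, gives $\ad(\tfrac{x_{\beta^\vee}}{2})^n(t_\beta)=\ad(\tfrac{x_{w_\beta}}{-2})^n(t_\beta)$, so the left-hand side of the monomial identity becomes $[\ad(\tfrac{x_{w_\alpha}}{-2})^m(t_\alpha),\,\ad(\tfrac{x_{w_\beta}}{-2})^n(t_\beta)]$.

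The remaining \emph{derivation step} moves both families of operators onto the outer bracket. The other two orthogonality relations from the characterisation of $\omega$, namely $w_\alpha\perp\beta$ and $w_\beta\perp\alpha$, combined with (tx), give $\ad(x_{w_\alpha})(t_\beta)=0$ and $\ad(x_{w_\beta})(t_\alpha)=0$. Since every $\ad(x_u)$ is a derivation of $\Aell$ and all of them commute by (xx), I would establish
\begin{equation*}
\ad(x_{w_\alpha})^m\ad(x_{w_\beta})^n[t_\alpha,t_\beta]=\big[\ad(x_{w_\alpha})^m(t_\alpha),\,\ad(x_{w_\beta})^n(t_\beta)\big]
\end{equation*}
by a double induction: applying $\ad(x_{w_\beta})$ via the Leibniz rule never hits the first slot (it is annihilated), so it lands on $t_\beta$ only; then applying $\ad(x_{w_\alpha})$ never hits the second slot, since $\ad(x_{w_\alpha})\ad(x_{w_\beta})^n(t_\beta)=\ad(x_{w_\beta})^n\ad(x_{w_\alpha})(t_\beta)=0$ by (xx), so it lands on $t_\alpha$ only. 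Pulling out the constants $\tfrac{1}{-2}$ by linearity of $x$ and summing against $a_m b_n$ then identifies the left-hand side with $k(\alpha,\ad(\tfrac{x_{w_\alpha}}{-2})|\tau)\,k(\beta,\ad(\tfrac{x_{w_\beta}}{-2})|\tau)[t_\alpha,t_\beta]$, which is the assertion.

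I do not anticipate a serious obstacle here: the entire computation is driven by the two pairs of perpendicularity relations satisfied by $\omega(\alpha^\vee,\beta)$ and $\omega(\beta^\vee,\alpha)$, and the only real care needed is bookkeeping — matching each perpendicularity with the correct instance of (tx) (one pair feeds the conversion step through $(\alpha^\vee+w_\alpha)\perp\alpha$ and $(\beta^\vee+w_\beta)\perp\beta$, the other feeds the derivation step through $w_\alpha\perp\beta$ and $w_\beta\perp\alpha$) and invoking (xx) at each point where the $\ad(x_u)$'s must be reordered in the two inductions.
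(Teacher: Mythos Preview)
Your argument is correct and is essentially the paper's own proof: reduce to monomials via the power series expansion, convert $\ad(x_{\alpha^\vee})$ and $\ad(x_{\beta^\vee})$ to $-\ad(x_{w_\alpha})$ and $-\ad(x_{w_\beta})$ using $(\alpha^\vee+w_\alpha)\perp\alpha$, $(\beta^\vee+w_\beta)\perp\beta$ and (tx), then pull both operators outside the bracket using $w_\alpha\perp\beta$, $w_\beta\perp\alpha$ and (xx). The only omission is the degenerate case $\alpha=\beta$, where $\omega(\alpha^\vee,\beta)$ is undefined but both sides vanish trivially since $[t_\alpha,t_\alpha]=0$; the paper disposes of this in one line before assuming $\alpha\neq\beta$.
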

\begin{proof}
If $\alpha=\beta$, it is clear that both sides are the same as zero. Therefore, the identity holds. 

We now assume $\alpha \neq \beta$.  Note that the function $k(z, x)$ is a formal power series in $x$, with coefficient in $\C[\![z]\!]$. To show the above identity, it suffices to show the identify with $k(z, x)$ replaced by $x^n$, for any $n\in \N$.
We have
\begin{align*}
[(\ad x_{\alpha^\vee})^n(t_\alpha), (\ad x_{\beta^\vee})^m(t_\beta)]
=&[(-\ad x_{\omega(\alpha^\vee, \beta)})^n(t_\alpha), (-\ad x_{\omega(\beta^\vee, \alpha)})^m(t_\beta)]\\
=&(-\ad x_{\omega(\alpha^\vee, \beta)})^n(-\ad x_{\omega(\beta^\vee, \alpha)})^m[t_\alpha, t_\beta]. 
\end{align*}
The first equality follows from the relation $[x_{\alpha^\vee}, t_{\alpha}]=-[x_{\omega(\alpha^\vee, \beta)}, t_{\alpha}]$, since
$\alpha^{\vee}+\omega(\alpha^\vee, \beta)$ is perpendicular to $\alpha$.  The second equality follows from 
$[x_{\omega(\alpha^\vee, \beta)}, t_{\beta}]=[x_{\omega(\beta^\vee, \alpha)}, t_{\alpha}]=0$. 
This completes the proof. 
\end{proof}
\begin{corollary} \label{cor:Omega1}
The curvature term $\Omega_1$ \eqref{Omega1} is equal to
\begin{equation}
\Omega_1=\frac{1}{2}\sum_{\alpha\neq \beta}k(\alpha, \ad(\frac{x_{\omega(\alpha^\vee, \beta)}}{-2})|\tau)k(\beta, \ad(\frac{x_{\omega(\beta^\vee, \alpha)}}{-2})|\tau)[t_\alpha, t_\beta]d\alpha\wedge d\beta.
\end{equation}
\end{corollary}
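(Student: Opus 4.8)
The plan is to obtain the corollary by a termwise substitution: Proposition~\ref{prop:Ome1} rewrites each bracket appearing in the defining expression \eqref{Omega1} of $\Omega_1$, and since that rewriting is carried out modulo the relations (tx) and (xx)---which hold identically in $\Aell$---the resulting identity is a genuine equality in $\widehat{\Aell}$, not merely a formal one. So the corollary is essentially immediate once Proposition~\ref{prop:Ome1} is available.

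Concretely, I would start from
\[
\Omega_1=\frac{1}{2}\sum_{\alpha\neq \beta}\left[k(\alpha, \ad(\frac{x_{\alpha^\vee}}{2})|\tau)(t_\alpha), k(\beta, \ad(\frac{x_{\beta^\vee}}{2})|\tau)(t_\beta)\right]d\alpha\wedge d\beta
\]
and, for each ordered pair of distinct positive roots $\alpha,\beta$, replace the commutator in the summand by the right-hand side furnished by Proposition~\ref{prop:Ome1}, namely $k(\alpha, \ad(\frac{x_{\omega(\alpha^\vee, \beta)}}{-2})|\tau)k(\beta, \ad(\frac{x_{\omega(\beta^\vee, \alpha)}}{-2})|\tau)[t_\alpha, t_\beta]$. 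Summing over all such pairs with the same weights $\frac12\,d\alpha\wedge d\beta$ produces exactly the claimed formula.

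The only point deserving a line of justification is that Proposition~\ref{prop:Ome1} applies to every summand: it is stated for arbitrary $\alpha,\beta\in\Phi^+$, hence in particular for all pairs with $\alpha\neq\beta$, and the excluded diagonal terms $\alpha=\beta$ contribute nothing to \eqref{Omega1} in any case (both sides vanish there). There is no genuine obstacle at the level of the corollary; all of the content resides in Proposition~\ref{prop:Ome1}, where the two adjoint actions are reorganized through the vectors $\omega(\alpha^\vee,\beta)$ and $\omega(\beta^\vee,\alpha)$ precisely so that the brackets $[x_{\omega(\alpha^\vee,\beta)},t_\beta]$ and $[x_{\omega(\beta^\vee,\alpha)},t_\alpha]$ vanish by (tx). Once that proposition is in hand, the corollary is a one-line substitution.
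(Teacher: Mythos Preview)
Your proposal is correct and matches the paper's approach: the corollary is an immediate consequence of Proposition~\ref{prop:Ome1}, obtained by substituting the proposition's identity into each summand of \eqref{Omega1}. The paper gives no separate proof for the corollary, and your one-line justification is exactly what is intended.
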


\begin{prop}\label{prop:Ome2}
Suppose $u \bot \alpha$, modulo the relations (ty),(yx),(tx),(xx) of $\Aell$,
the following identity holds for any $\alpha\in\Phi^+, u\in \h^*$.
\[\left[y(u), k(\alpha, \ad(\frac{x_{\alpha^\vee}}{2})|\tau)(t_\alpha)\right]=
\sum_{\gamma \in \Phi^+}(\alpha^\vee, \gamma)(u,\gamma)
\frac{k(\alpha, \ad(\frac{x_{\alpha^\vee}}{2})|\tau)-k(\alpha, \ad(\frac{x_{\omega(\alpha^\vee, \gamma)}}{-2})|\tau)}{ad x_{\alpha^\vee} +ad x_{\omega(\alpha^\vee, \gamma)}}[t_\gamma, t_\alpha].
\]
\end{prop}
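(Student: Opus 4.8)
The plan is to reduce to the monomial case and then exploit the two orthogonality properties built into $\omega(\alpha^\vee,\gamma)$, just as in the proof of Proposition \ref{prop:Ome1}. Write $A:=\ad(\tfrac{x_{\alpha^\vee}}2)$ and, for each $\gamma\in\Phi^+$, $B_\gamma:=\ad(\tfrac{x_{\omega(\alpha^\vee,\gamma)}}{-2})$, so that $\ad x_{\alpha^\vee}+\ad x_{\omega(\alpha^\vee,\gamma)}=2(A-B_\gamma)$ and the operator on the right-hand side is the divided difference $\tfrac12(k(\alpha,A)-k(\alpha,B_\gamma))/(A-B_\gamma)$. Since $k(\alpha,x)$ is a power series in $x$ with holomorphic coefficients, it suffices to prove the identity after replacing $k(\alpha,\cdot)$ by the monomial $x\mapsto x^n$ for each $n\ge 0$, i.e. to show
\[
[y(u),A^n(t_\alpha)]=\tfrac12\sum_{\gamma\in\Phi^+}(\alpha^\vee,\gamma)(u,\gamma)\Big(\sum_{k=0}^{n-1}A^kB_\gamma^{\,n-1-k}\Big)[t_\gamma,t_\alpha].
\]

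First I would expand the left-hand side using the standard commutator-past-a-power identity. Setting $C:=\ad y(u)$, the Jacobi identity gives $[C,A]=\ad[y(u),\tfrac{x_{\alpha^\vee}}2]=\tfrac12\sum_{\gamma}(\alpha^\vee,\gamma)(u,\gamma)\ad(t_\gamma)$ by relation (yx), while $C(t_\alpha)=[y(u),t_\alpha]=0$ by (ty) since $u\perp\alpha$. Hence the leading term drops out and
\[
[y(u),A^n(t_\alpha)]=\sum_{k=0}^{n-1}A^k[C,A]A^{n-1-k}(t_\alpha)=\tfrac12\sum_{\gamma}(\alpha^\vee,\gamma)(u,\gamma)\sum_{k=0}^{n-1}A^k\big[t_\gamma,A^{n-1-k}(t_\alpha)\big].
\]

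It then remains to establish, for every $m\ge0$ and every $\gamma$, the transport identity $[t_\gamma,A^m(t_\alpha)]=B_\gamma^{\,m}[t_\gamma,t_\alpha]$, after which the inner sum becomes $\sum_k A^kB_\gamma^{\,n-1-k}[t_\gamma,t_\alpha]$ and matches the claim. This splits into the two places where the definition of $\omega$ enters. On one hand, $\alpha^\vee+\omega(\alpha^\vee,\gamma)\perp\alpha$ gives $[x_{\alpha^\vee}+x_{\omega(\alpha^\vee,\gamma)},t_\alpha]=0$ by (tx), i.e. $A(t_\alpha)=B_\gamma(t_\alpha)$; since $A$ and $B_\gamma$ commute by (xx), an easy induction upgrades this to $A^m(t_\alpha)=B_\gamma^{\,m}(t_\alpha)$. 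On the other hand, $\omega(\alpha^\vee,\gamma)\perp\gamma$ gives $[t_\gamma,x_{\omega(\alpha^\vee,\gamma)}]=0$ by (tx), so $\ad(t_\gamma)$ commutes with $B_\gamma$; therefore $[t_\gamma,A^m(t_\alpha)]=[t_\gamma,B_\gamma^{\,m}(t_\alpha)]=B_\gamma^{\,m}[t_\gamma,t_\alpha]$.

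The main obstacle is precisely this transport step: one needs both orthogonality relations of Lemma \ref{lemma:omeg} — namely $\alpha^\vee+\omega(\alpha^\vee,\gamma)\perp\alpha$ and $\omega(\alpha^\vee,\gamma)\perp\gamma$ — to hold simultaneously, which is exactly the characterization that pins down $\omega(\alpha^\vee,\gamma)$ and makes the operators $A$ (acting on $t_\alpha$) and $B_\gamma$ (commuting with $t_\gamma$) interchangeable. Once the transport identity is in place, recognising $\sum_{k=0}^{n-1}A^kB_\gamma^{\,n-1-k}=(A^n-B_\gamma^{\,n})/(A-B_\gamma)$ (valid since $A$ and $B_\gamma$ commute) and resumming the power series in $n$ recovers the stated divided-difference expression, completing the proof.
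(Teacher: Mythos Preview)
Your proof is correct and follows essentially the same route as the paper's: reduce to monomials, expand $[y(u),(\ad x_{\alpha^\vee}/2)^n(t_\alpha)]$ via Leibniz using (ty) to kill the terminal term and (yx) for the inner commutator, and then use the two orthogonality properties $\alpha^\vee+\omega(\alpha^\vee,\gamma)\perp\alpha$ and $\omega(\alpha^\vee,\gamma)\perp\gamma$ together with (tx), (xx) to convert the inner powers of $\ad x_{\alpha^\vee}$ into powers of $-\ad x_{\omega(\alpha^\vee,\gamma)}$ acting on $[t_\gamma,t_\alpha]$, giving the divided difference. The only difference is presentational: you name the operators $A,B_\gamma$ and isolate the ``transport identity'' $[t_\gamma,A^m(t_\alpha)]=B_\gamma^{\,m}[t_\gamma,t_\alpha]$ as a separate step, whereas the paper performs the same two substitutions inline.
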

\begin{proof}
As before, note that the function $k(z, x)$ is a formal power series in $x$, with coefficient in $\C[\![z]\!]$. To show the above identity, it suffices to show the identify with $k(z, x)$ replaced by $x^n$, for any $n\in \N$.
We have
\begin{align*}
&[y(u),(\ad x_{\alpha^\vee})^n(t_\alpha)]\\
=&\sum_{s=0}^{n-1}(\ad x_{\alpha^\vee})^s(\ad[y(u), x_{\alpha^\vee}])(\ad x_{\alpha^\vee})^{n-1-s}(t_\alpha)\\
=&\sum_{\gamma\in \Phi^+}(\alpha^\vee, \gamma)(u,\gamma)\sum_{s=0}^{n-1}(\ad x_{\alpha^\vee})^s(\ad t_\gamma)(\ad x_{\alpha^\vee})^{n-1-s}(t_\alpha)  \,\ \,\  \text{by the relation (yx)}\\
=&\sum_{\gamma\in \Phi^+}(\alpha^\vee, \gamma)(u,\gamma)\sum_{s=0}^{n-1}(\ad x_{\alpha^\vee})^s(\ad t_\gamma)(-\ad x_{\omega(\alpha^\vee, \gamma)})^{n-1-s}(t_\alpha) \,\ \,\  \text{by $[x_{\alpha^\vee} , t_{\alpha}]=-[x_{\omega(\alpha^\vee, \gamma)}, t_{\alpha}]$}\\
=&\sum_{\gamma\in \Phi^+}(\alpha^\vee, \gamma)(u,\gamma)\sum_{s=0}^{n-1}(\ad x_{\alpha^\vee})^s(-\ad x_{\omega(\alpha^\vee, \gamma)})^{n-1-s}(\ad t_\gamma)(t_\alpha)  \,\ \,\  \text{by $[x_{\omega(\alpha^\vee, \gamma)}, t_{\gamma}]=0$}\\
=&\sum_{\gamma\in \Phi^+}(\alpha^\vee, \gamma)(u,\gamma)f(\ad x_{\alpha^\vee}, -\ad x_{\omega(\alpha^\vee, \gamma)})([t_\gamma, t_\alpha]), \,\ \,\ \text{where $f(u, v)=\frac{u^n-v^n}{u-v}$.}
\end{align*}
Therefore, the assertion follows.
\end{proof}

\begin{corollary}\label{cor:Omega2}
The curvature term $\Omega_2$ \eqref{Omega2} is equal to
\[\Omega_2=
\sum_{\alpha\neq\gamma\in\Phi^+}
(\alpha^\vee, \gamma)
\frac{k(\alpha, \ad(\frac{x_{\alpha^\vee}}{2})|\tau)-k(\alpha, \ad(\frac{x_{\omega(\alpha^\vee, \gamma)}}{-2})|\tau)}{ad x_{\alpha^\vee} +ad x_{\omega(\alpha^\vee, \gamma)}}[t_\alpha,t_\gamma]d\alpha\wedge d\gamma.\]
\end{corollary}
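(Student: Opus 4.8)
The plan is to read off the closed form for $\Omega_2$ directly from Proposition \ref{prop:Ome2}, the only issue being that the proposition requires the argument of $y$ to be orthogonal to $\alpha$, whereas the sum in \eqref{Omega2} runs over a fixed dual basis $\{u^i\}$. Write $K_\alpha := k(\alpha, \ad(\frac{x_{\alpha^\vee}}{2})|\tau)(t_\alpha)$, so that
\[
\Omega_2 = \sum_{\alpha\in\Phi^+}\sum_i [K_\alpha, y(u^i)]\,d\alpha\wedge du_i = -\sum_{\alpha\in\Phi^+}\sum_i [y(u^i), K_\alpha]\,d\alpha\wedge du_i.
\]
I would remove the orthogonality obstruction by using that the $\widehat{\Aell}$-valued $1$-form $\sum_i y(u^i)\,du_i$ is intrinsic: it is the image of the canonical element $\id_\h\in\h\otimes\h^*$ under $y\otimes d$, hence independent of the chosen dual bases. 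This licenses choosing, separately for each root $\alpha$, a basis adapted to $\alpha$.

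For fixed $\alpha$, I would take a basis $w^1,\dots,w^{n-1}$ of $\ker\alpha\subset\h$ together with $w^n:=\alpha^\vee$. The dual basis $\{w_i\}\subset\h^*$ then satisfies $w_i(\alpha^\vee)=0$ for $i<n$, while $w_n$ annihilates $\ker\alpha$ and pairs to $1$ with $\alpha^\vee$; since the functionals vanishing on $\ker\alpha$ are the multiples of $\alpha$ and $\langle\alpha,\alpha^\vee\rangle=2$, one gets $w_n=\alpha/2$. Replacing $\{u^i\}$ by $\{w^i\}$, the $i=n$ summand carries the factor $d\alpha\wedge dw_n=\tfrac12\,d\alpha\wedge d\alpha=0$ and drops out. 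This is the crux: $w^n=\alpha^\vee$ is exactly the direction not orthogonal to $\alpha$, so I never need the proposition there. For $i<n$ we have $\alpha(w^i)=0$, so Proposition \ref{prop:Ome2} applies and yields
\[
[y(w^i), K_\alpha] = \sum_{\gamma\in\Phi^+}(\alpha^\vee,\gamma)\,(w^i,\gamma)\, C_{\alpha,\gamma}\,[t_\gamma, t_\alpha],
\qquad
C_{\alpha,\gamma} := \frac{k(\alpha, \ad(\frac{x_{\alpha^\vee}}{2})|\tau) - k(\alpha, \ad(\frac{x_{\omega(\alpha^\vee,\gamma)}}{-2})|\tau)}{\ad x_{\alpha^\vee} + \ad x_{\omega(\alpha^\vee,\gamma)}}.
\]

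Finally I would reassemble. Pulling out $C_{\alpha,\gamma}$ and the bracket, the surviving $\gamma$-dependence is $\sum_{i<n}(w^i,\gamma)\,dw_i$. Expanding $\gamma=\sum_{i=1}^n\langle\gamma,w^i\rangle\,w_i$ in the dual basis gives $\sum_{i=1}^n (w^i,\gamma)\,dw_i=d\gamma$, so that
\[
\sum_{i<n}(w^i,\gamma)\,dw_i = d\gamma-(\alpha^\vee,\gamma)\,dw_n = d\gamma-\tfrac12(\alpha^\vee,\gamma)\,d\alpha.
\]
Wedging with $d\alpha$ again kills the $d\alpha$-term and leaves $d\alpha\wedge d\gamma$; converting $[t_\gamma,t_\alpha]=-[t_\alpha,t_\gamma]$ and discarding the diagonal term $\gamma=\alpha$ (where $[t_\alpha,t_\alpha]=0$) then produces exactly the asserted formula for $\Omega_2$. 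The main obstacle is purely the orthogonality hypothesis in Proposition \ref{prop:Ome2}; the adapted-basis device, resting on the vanishing of $d\alpha\wedge d\alpha$ precisely in the bad direction, dissolves it, and the remainder is bookkeeping with the pairing conventions.
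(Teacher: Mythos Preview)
Your proof is correct and follows essentially the same argument as the paper's own proof. The paper chooses an adapted basis in $\h^*$ with $u_1=\alpha$ (so that $d\alpha\wedge du_1=0$ and the dual vectors $u^i$ for $i\geq 2$ lie in $\ker\alpha$), while you choose the adapted basis in $\h$ with $w^n=\alpha^\vee$; these are dual versions of the same device, and the remaining bookkeeping is identical.
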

\begin{proof}
By definition,
\[\Omega_2=
\sum_\alpha\sum_i\left[k(\alpha, \ad(\frac{x_{\alpha^\vee}}{2})|\tau)(t_\alpha), y(u^i)\right]d\alpha\wedge du_i. \]
Choose the basis $\{u_i\}\subset\h^*$ to be such that $u_1=\alpha$ and $u_i\bot\alpha$ for $i\geq 2$.
Then since $d\alpha\wedge du_1=0$, by Proposition \ref{prop:Ome2}, the above is
equal to
\begin{multline*}
-\sum_{\alpha,i\geq 2}
\sum_{\gamma \in \Phi^+}(u^i,\gamma)(\alpha^\vee, \gamma)
\frac{k(\alpha, \ad(\frac{x_{\alpha^\vee}}{2})|\tau)-k(\alpha, \ad(\frac{x_{\omega(\alpha^\vee, \gamma)}}{-2})|\tau)}{ad x_{\alpha^\vee} +ad x_{\omega(\alpha^\vee, \gamma)}}[t_\gamma, t_\alpha]
d\alpha\wedge du_i\\
=\sum_{\alpha,i\geq 1}
\sum_{\gamma \in \Phi^+}(u^i,\gamma)(\alpha^\vee, \gamma)
\frac{k(\alpha, \ad(\frac{x_{\alpha^\vee}}{2})|\tau)-k(\alpha, \ad(\frac{x_{\omega(\alpha^\vee, \gamma)}}{-2})|\tau)}{ad x_{\alpha^\vee} +ad x_{\omega(\alpha^\vee, \gamma)}}[ t_\alpha, t_\gamma]
d\alpha\wedge du_i, 
\end{multline*}
which is equal to the claimed result since $\gamma=(\gamma,u^i)u_i$.
\end{proof}
\subsection{} \label{flatness,part2}
In this subsection, we use the expressions of $\Omega_1$ and $\Omega_2$ from previous subsection to compute the difference $\Omega_1-\Omega_2$.
By Corollary \ref{cor:Omega1} and Corollary \ref{cor:Omega2}, we write
\begin{align*}
&\Omega_1-\Omega_2\\
=&\sum_{\alpha\neq \beta}\left(\frac{1}{2}k(\alpha, \ad(\frac{x_{\omega(\alpha^\vee, \beta)}}{-2})|\tau)k(\beta, \ad(\frac{x_{\omega(\beta^\vee, \alpha)}}{-2})|\tau)-(\alpha^\vee, \beta)\frac{k(\alpha, \ad(\frac{x_{\alpha^\vee}}{2})|\tau)-k(\alpha, \ad(\frac{x_{\omega(\alpha^\vee, \beta)}}{-2})|\tau)}{ad x_{\alpha^\vee} +ad x_{\omega(\alpha^\vee, \beta)}}\right)
[t_\alpha, t_\beta]d\alpha\wedge d\beta.
\end{align*}
Let $k(\alpha, \beta)[t_\alpha, t_\beta]$ be the difference of the coefficient of $d\alpha\wedge d\beta$ and the coefficient of $d\beta\wedge d\alpha$ in the formula of $\Omega_1-\Omega_2$, that is
\begin{align*}
k(\alpha, \beta)=k(\alpha, \ad(\frac{x_{\omega(\alpha^\vee, \beta)}}{-2})|\tau)k(\beta, \ad(\frac{x_{\omega(\beta^\vee, \alpha)}}{-2})|\tau)
&-(\alpha^\vee, \beta)\frac{k(\alpha, \ad(\frac{x_{\alpha^\vee}}{2})|\tau)-k(\alpha, \ad(\frac{x_{\omega(\alpha^\vee, \beta)}}{-2})|\tau)}{ad x_{\alpha^\vee} +ad x_{\omega(\alpha^\vee, \beta)}}\\
&-(\beta^\vee, \alpha)\frac{k(\beta, \ad(\frac{x_{\beta^\vee}}{2})|\tau)-k(\beta, \ad(\frac{x_{\omega(\beta^\vee, \alpha)}}{-2})|\tau)}{ad x_{\beta^\vee} +ad x_{\omega(\beta^\vee, \alpha)}}.
\end{align*}

\begin{prop}\label{prop:coeff}
With notations as above, for any $\alpha, \beta\in \Phi$, we have
\begin{equation}
k(\alpha, \beta)d\alpha\wedge d\beta+k(\alpha, \alpha+\beta)d(\alpha+\beta)\wedge d\alpha+k(\beta, \alpha+\beta)d\beta \wedge d(\alpha+\beta)=0.
\end{equation}
\end{prop}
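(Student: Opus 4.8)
The plan is to reduce this identity of operator-valued $2$-forms to a single scalar functional identity for the Kronecker function, and then to recognise the latter as the genus-one Fay three-term relation.

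\emph{Collapsing the forms.} Since $d(\alpha+\beta)=d\alpha+d\beta$, we have $d(\alpha+\beta)\wedge d\alpha=-d\alpha\wedge d\beta$ and $d\beta\wedge d(\alpha+\beta)=-d\alpha\wedge d\beta$, so the stated identity is equivalent to the operator identity
\[
k(\alpha,\beta)=k(\alpha,\alpha+\beta)+k(\beta,\alpha+\beta).
\]
If $\alpha,\beta$ are proportional then $d\alpha\wedge d\beta=0$ and there is nothing to prove; otherwise $\alpha+\beta\neq 0$ and all three expressions are defined. By relation (xx) all the operators $\ad x_v$, $v\in\h$, commute, so each $k(\cdot,\cdot)$ lies in the commutative ring of formal power series in the family $\{\ad x_v\}$, and I may treat these operators as commuting scalar variables.

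\emph{Stripping the scalar coefficients.} The denominator in the first difference quotient of $k(\alpha,\beta)$ is $\ad x_{\alpha^\vee}+\ad x_{\omega(\alpha^\vee,\beta)}=\ad x_{\alpha^\vee+\omega(\alpha^\vee,\beta)}$, and Lemma \ref{lemma:omeg}(2) gives $\alpha^\vee+\omega(\alpha^\vee,\beta)=-\tfrac{(\alpha^\vee,\beta)}{2}\,\omega(\beta^\vee,\alpha)$. Applying $x$ and $\ad$, the prefactor $(\alpha^\vee,\beta)$ cancels exactly against the denominator:
\[
(\alpha^\vee,\beta)\,\frac{k(\alpha,\ad\tfrac{x_{\alpha^\vee}}{2}|\tau)-k(\alpha,\ad\tfrac{x_{\omega(\alpha^\vee,\beta)}}{-2}|\tau)}{\ad x_{\alpha^\vee}+\ad x_{\omega(\alpha^\vee,\beta)}}=\frac{k(\alpha,\ad\tfrac{x_{\alpha^\vee}}{2}|\tau)-k(\alpha,\ad\tfrac{x_{\omega(\alpha^\vee,\beta)}}{-2}|\tau)}{\ad(\tfrac{x_{\omega(\beta^\vee,\alpha)}}{-2})}.
\]
The resulting term is a $\tfrac1x$-type expression whose spectral variable $\ad(\tfrac{x_{\omega(\beta^\vee,\alpha)}}{-2})$ is precisely the second argument of the product term $k(\alpha,\cdot)\,k(\beta,\cdot)$; the symmetric manipulation applies to the $(\beta^\vee,\alpha)$ term.

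\emph{Recognising Fay.} With the coefficients absorbed I rewrite $k(z,x|\tau)=F(z,x|\tau)-\tfrac1x$, where $F(z,x|\tau)=\theta(z+x|\tau)/(\theta(z|\tau)\theta(x|\tau))$ is the Kronecker function. The product terms together with the $\tfrac1x$-pieces reassemble into products $F(\cdot,\cdot)F(\cdot,\cdot)$, and Lemma \ref{lemma:omeg}(1) — in the form $\omega(\alpha^\vee,\beta)-\omega(\beta^\vee,\alpha)=\omega(\alpha^\vee,\alpha+\beta)$ together with the rescaling $\omega((au+bv)^\vee,cu)=\tfrac1b\omega(v^\vee,u)$ — is used to express the spectral arguments occurring in $k(\alpha,\alpha+\beta)$ and $k(\beta,\alpha+\beta)$ in terms of the two occurring in $k(\alpha,\beta)$. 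Tracking signs via the oddness $k(z,x)=-k(-z,-x)$ (equivalently $\theta(-z|\tau)=-\theta(z|\tau)$), the identity $k(\alpha,\beta)=k(\alpha,\alpha+\beta)+k(\beta,\alpha+\beta)$ is seen to be the specialisation, in commuting spectral variables, of
\[
F(z_1,x_1)F(z_2,x_2)=F(z_1-z_2,x_1)F(z_2,x_1+x_2)+F(z_2-z_1,x_2)F(z_1,x_1+x_2),
\]
applied with $(z_1,z_2)=(\alpha+\beta,\beta)$. This last relation I would establish by the standard argument: as a function of the spectral variable it is holomorphic at the lattice points (the residues at $x_1\in\Lambda_\tau$ and at $x_1+x_2\in\Lambda_\tau$ cancel, using $\tfrac{\partial\theta}{\partial z}(0|\tau)=1$ and oddness) and has matching quasi-periodicity, hence vanishes identically.

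\emph{The main obstacle.} The analytic input (Fay) is routine; the real work is the bookkeeping of the last two steps — matching the four vectors $\omega(\alpha^\vee,\beta),\omega(\beta^\vee,\alpha),\alpha^\vee,\beta^\vee$ and their analogues for the pairs $(\alpha,\alpha+\beta)$ and $(\beta,\alpha+\beta)$ across all three summands, and checking that the halves, signs, and the scalar prefactors $(\alpha^\vee,\beta),(\beta^\vee,\alpha)$ conspire so that the three $k$-expressions become precisely the $k$-form of one Fay identity. Lemma \ref{lemma:omeg} is exactly engineered for this: part (2) collapses the prefactors into the $\tfrac1x$-denominators, while part (1) aligns the spectral arguments.
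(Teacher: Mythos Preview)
Your approach is essentially the same as the paper's: both collapse the $2$-forms to the scalar statement $-k(\alpha,\beta)+k(\alpha,\alpha+\beta)+k(\beta,\alpha+\beta)=0$, use Lemma~\ref{lemma:omeg} to absorb the prefactors $(\alpha^\vee,\beta),(\beta^\vee,\alpha)$ into the denominators and align the spectral arguments, and then reduce to a single theta-function identity. The paper carries out the bookkeeping you flag as the main obstacle explicitly, via the substitution $u=\tfrac{\omega(\alpha^\vee,\beta)}{-2}$, $v=\tfrac{\omega(\beta^\vee,\alpha+\beta)}{-2}$ (so that $u+v=\tfrac{\omega(\beta^\vee,\alpha)}{-2}$), and then cites the resulting identity in the $k$-variables from \cite{CEE} rather than rewriting it in terms of the Kronecker function $F$ and invoking Fay directly; these are the same identity in different guises.
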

\begin{proof}
It is clear that the statement is equivalent to $-k(\alpha, \beta)+k(\alpha, \alpha+\beta)+k(\beta, \alpha+\beta)=0$. 
We now check this equality. 
By definition, we have
\begin{align}
&-k(\alpha, \beta)+k(\alpha, \alpha+\beta)+k(\beta, \alpha+\beta) \notag\\
=&-k(\alpha, \ad(\frac{x_{\omega(\alpha^\vee, \beta)}}{-2})|\tau)k(\beta, \ad(\frac{x_{\omega(\beta^\vee, \alpha)}}{-2})|\tau)+k(\alpha, \ad(\frac{x_{\omega(\alpha^\vee, \alpha+\beta)}}{-2})|\tau)k(\alpha+\beta, \ad(\frac{x_{\omega((\alpha+\beta)^\vee, \alpha)}}{-2})|\tau) \notag\\
&+k(\beta, \ad(\frac{x_{\omega(\beta^\vee, \alpha+\beta)}}{-2})|\tau)k(\alpha+\beta, \ad(\frac{x_{\omega((\alpha+\beta)^\vee, \beta)}}{-2})|\tau) \notag\\
&+(\alpha^\vee, \beta)\frac{k(\alpha, \ad(\frac{x_{\alpha^\vee}}{2})|\tau)-k(\alpha, \ad(\frac{x_{\omega(\alpha^\vee, \beta)}}{-2})|\tau)}{ad x_{\alpha^\vee} +ad x_{\omega(\alpha^\vee, \beta)}}
+(\beta^\vee, \alpha)\frac{k(\beta, \ad(\frac{x_{\beta^\vee}}{2})|\tau)-k(\beta, \ad(\frac{x_{\omega(\beta^\vee, \alpha)}}{-2})|\tau)}{ad x_{\beta^\vee} +ad x_{\omega(\beta^\vee, \alpha)}} \label{eq:K1}\\
&-(\alpha^\vee, \alpha+\beta)\frac{k(\alpha, \ad(\frac{x_{\alpha^\vee}}{2})|\tau)-k(\alpha, \ad(\frac{x_{\omega(\alpha^\vee, \alpha+\beta)}}{-2})|\tau)}{ad x_{\alpha^\vee} +ad x_{\omega(\alpha^\vee, \alpha+\beta)}}
-((\alpha+\beta)^\vee, \alpha)\frac{k(\alpha+\beta, \ad(\frac{x_{(\alpha+\beta)^\vee}}{2})|\tau)-k(\alpha+\beta, \ad(\frac{x_{\omega((\alpha+\beta)^\vee, \alpha)}}{-2})|\tau)}{ad x_{(\alpha+\beta)^\vee} +ad x_{\omega((\alpha+\beta)^\vee, \alpha)}} \label{eq:K2}\\
&-(\beta^\vee, \alpha+\beta)\frac{k(\beta, \ad(\frac{x_{\beta^\vee}}{2})|\tau)-k(\beta, \ad(\frac{x_{\omega(\beta^\vee, \alpha+\beta)}}{-2})|\tau)}{ad x_{\beta^\vee} +ad x_{\omega(\beta^\vee, \alpha+\beta)}}
-((\alpha+\beta)^\vee, \beta)\frac{k(\alpha+\beta, \ad(\frac{x_{(\alpha+\beta)^\vee}}{2})|\tau)-k(\alpha+\beta, \ad(\frac{x_{\omega((\alpha+\beta)^\vee, \beta)}}{-2})|\tau)}{ad x_{(\alpha+\beta)^\vee} +ad x_{\omega((\alpha+\beta)^\vee, \beta)}}. \label{eq:K3}
\end{align}
We use the following identities in Lemma \ref{lemma:omeg}
\begin{align*}
&\frac{\alpha^\vee+\omega(\alpha^\vee, \beta)}{(\alpha^\vee, \beta)}=\frac{\alpha^\vee+\omega(\alpha^\vee, \alpha+\beta)}{(\alpha^\vee, \alpha+\beta)},\,\  \frac{\beta^\vee+\omega(\beta^\vee, \alpha)}{(\beta^\vee, \alpha)}=\frac{\beta^\vee+\omega(\beta^\vee, \alpha+\beta)}{(\beta^\vee, \alpha+\beta)},\\
& \frac{(\alpha+\beta)^\vee+\omega((\alpha+\beta)^\vee, \alpha)}{((\alpha+\beta)^\vee, \alpha)}=-\frac{(\alpha+\beta)^\vee+\omega((\alpha+\beta)^\vee, \beta)}{((\alpha+\beta)^\vee, \beta)}.\end{align*} and the fact that $x:\h\to \Aell$ is a linear function. 
We could rewrite the terms \eqref{eq:K2}, and \eqref{eq:K3} using the identities. 
For example, we have
\[
(\alpha^\vee, \alpha+\beta)\frac{k(\alpha, \ad(\frac{x_{\alpha^\vee}}{2})|\tau)-k(\alpha, \ad(\frac{x_{\omega(\alpha^\vee, \alpha+\beta)}}{-2})|\tau)}{ad x_{\alpha^\vee} +ad x_{\omega(\alpha^\vee, \alpha+\beta)}}
=(\alpha^\vee, \beta)\frac{k(\alpha, \ad(\frac{x_{\alpha^\vee}}{2})|\tau)-k(\alpha, \ad(\frac{x_{\omega(\alpha^\vee, \alpha+\beta)}}{-2})|\tau)}{ad x_{\alpha^\vee} +ad x_{\omega(\alpha^\vee, \beta)}}. \]
After cancellations with \eqref{eq:K1}, we have
\begin{align*}
&\eqref{eq:K1}+\eqref{eq:K2}+\eqref{eq:K3}\\
=&(\alpha^\vee, \beta)\frac{k(\alpha, \ad(\frac{x_{\omega(\alpha^\vee, \alpha+\beta)}}{-2})|\tau)-k(\alpha, \ad(\frac{x_{\omega(\alpha^\vee, \beta)}}{-2})|\tau)}{ad x_{\alpha^\vee} +ad x_{\omega(\alpha^\vee, \beta)}}
+(\beta^\vee, \alpha)\frac{k(\beta, \ad(\frac{x_{\omega(\beta^\vee, \alpha+\beta)}}{-2})|\tau)-k(\beta, \ad(\frac{x_{\omega(\beta^\vee, \alpha)}}{-2})|\tau)}{ad x_{\beta^\vee} +ad x_{\omega(\beta^\vee, \alpha)}}\\
&-((\alpha+\beta)^\vee, \alpha)\frac{k(\alpha+\beta, \ad(\frac{x_{\omega((\alpha+\beta)^\vee, \beta)}}{-2})|\tau)-k(\alpha+\beta, \ad(\frac{x_{\omega((\alpha+\beta)^\vee, \alpha)}}{-2})|\tau)}{ad x_{(\alpha+\beta)^\vee} +ad x_{\omega((\alpha+\beta)^\vee, \alpha)}}. 
\end{align*}
For simplicity, we make a change of variable. Set $u:=\frac{\omega(\alpha^\vee, \beta)}{-2}$ and $v:=\frac{\omega(\beta^\vee, \alpha+\beta)}{-2}$.
Using again the identities from Lemma \ref{lemma:omeg}, we get
\begin{align*}
&u=\frac{\omega(\alpha^\vee, \beta)}{-2}=\frac{\beta^\vee+\omega(\beta^\vee, \alpha)}{(\beta^\vee, \alpha)}=\frac{\omega((\alpha+\beta)^\vee, \beta)}{-2}, \\
&  v=\frac{\omega(\beta^\vee, \alpha+\beta)}{-2}=-\frac{\omega(\alpha^\vee, \alpha+\beta)}{-2}=-\frac{(\alpha+\beta)^\vee+\omega((\alpha+\beta)^\vee, \alpha)}{((\alpha+\beta)^\vee, \alpha)}, \\
&u+v=\frac{\omega(\beta^\vee, \alpha)}{-2}=\frac{\alpha^\vee+\omega(\alpha^\vee, \beta)}{(\alpha^\vee, \beta)}=\frac{\omega((\alpha+\beta)^\vee, \alpha)}{-2}.
\end{align*}
Plugging $u, v$ and $u+v$ into $-k(\alpha, \beta)+k(\alpha, \alpha+\beta)+k(\beta, \alpha+\beta)$, we have
\begin{align*}
&-k(\alpha, \beta)+k(\alpha, \alpha+\beta)+k(\beta, \alpha+\beta)\\
=&-k(\alpha, u|\tau)k(\beta, u+v|\tau)+k(\alpha, -v|\tau)k(\alpha+\beta, u+v|\tau)+k(\beta, v|\tau)k(\alpha+\beta, u|\tau)\\
&+\frac{k(\alpha, -v|\tau)-k(\alpha, u|\tau)}{u+v}
+\frac{k(\beta, v|\tau)-k(\beta, u+v|\tau)}{u}
-\frac{k(\alpha+\beta, u|\tau)-k(\alpha+\beta, u+v|\tau)}{-v}. 
\end{align*}
Using the following well-known theta function identity (see also \cite{CEE})
\begin{align*}
&k(z, -v)k(z', u+v)-k(z, u)k(z'-z, u+v)+k(z', u)k(z'-z, v)\\
&+\frac{k(z'-z, v)-k(z'-z, u+v)}{u}+\frac{k(z', u)-k(z', u+v)}{v}+\frac{k(z, u)-k(z, -v)}{u+v}=0, 
\end{align*}
we conclude the claim. 
\end{proof}

\begin{lemma}
Let $\Phi$ be a root system. For any two roots $\alpha$, $\beta \in \Phi$ such that $\alpha \neq \pm \beta$, then
$\Psi_{\alpha, \beta}:=(\Z\alpha+\Z\beta)\cap \Phi$ is a root subsystem of rank 2.
\end{lemma}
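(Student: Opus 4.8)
The plan is to verify directly the two requirements hidden in the phrase ``root subsystem of rank $2$'': first the defining closure condition $\langle\Psi_{\alpha,\beta}\rangle_\Z\cap\Phi=\Psi_{\alpha,\beta}$ from the definition recalled earlier, and then that the $\R$-span of $\Psi_{\alpha,\beta}$ is two-dimensional. The one input from the hypotheses that I would isolate at the outset is that $\alpha$ and $\beta$ are linearly independent: since $\Phi$ is reduced, the only roots proportional to $\alpha$ are $\pm\alpha$, so $\alpha\neq\pm\beta$ forces $\alpha,\beta$ to be $\R$-linearly independent. In particular $\alpha,\beta$ both lie in $\Psi_{\alpha,\beta}=(\Z\alpha+\Z\beta)\cap\Phi$, since each is a root contained in $\Z\alpha+\Z\beta$.

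Next I would compute the integral span. On one hand every element of $\Psi_{\alpha,\beta}$ lies in $\Z\alpha+\Z\beta$, so $\langle\Psi_{\alpha,\beta}\rangle_\Z\subseteq\Z\alpha+\Z\beta$; on the other hand $\alpha,\beta\in\Psi_{\alpha,\beta}$ gives the reverse inclusion, whence $\langle\Psi_{\alpha,\beta}\rangle_\Z=\Z\alpha+\Z\beta$. The closure condition is then immediate:
\[
\langle\Psi_{\alpha,\beta}\rangle_\Z\cap\Phi=(\Z\alpha+\Z\beta)\cap\Phi=\Psi_{\alpha,\beta},
\]
so $\Psi_{\alpha,\beta}$ is a root subsystem in the sense of the definition. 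Its rank is $\dim_\R\langle\Psi_{\alpha,\beta}\rangle_\R=\dim_\R(\R\alpha+\R\beta)=2$ by the linear independence established above.

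Finally, to justify that such a subset genuinely deserves to be called a rank $2$ root system rather than merely a subset with a closure property, I would observe that any $\Psi\subseteq\Phi$ with $\langle\Psi\rangle_\Z\cap\Phi=\Psi$ is automatically stable under its own reflections: for $\gamma,\delta\in\Psi$ one has $s_\gamma(\delta)=\delta-\langle\gamma^\vee,\delta\rangle\gamma\in\Phi$ with $\langle\gamma^\vee,\delta\rangle\in\Z$ by the crystallographic axiom, hence $s_\gamma(\delta)\in(\Z\gamma+\Z\delta)\cap\Phi\subseteq\langle\Psi\rangle_\Z\cap\Phi=\Psi$. Together with finiteness and the reduced and crystallographic conditions inherited from $\Phi$, this shows $\Psi_{\alpha,\beta}$ is a bona fide root system spanning a two-dimensional space. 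I do not expect any serious obstacle here; the only points requiring a little care are the appeal to the reduced hypothesis to pass from $\alpha\neq\pm\beta$ to genuine linear independence (and thus to rank exactly $2$ rather than $\leq 2$), and the reflection-stability check if one insists on verifying the full root-system axioms.
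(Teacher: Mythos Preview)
Your proof is correct and is precisely the ``straightforward check'' that the paper's one-line proof alludes to; you have simply spelled out the verification of the closure condition $\langle\Psi_{\alpha,\beta}\rangle_\Z\cap\Phi=\Psi_{\alpha,\beta}$, the rank computation via linear independence of $\alpha,\beta$, and the reflection-stability that the paper leaves implicit.
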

\begin{proof}
It is straightforward to check all the axioms of root system for $\Psi_{\alpha, \beta}$.
\end{proof}
For a root subsystem $\Psi\subset \Phi$, write
\begin{align*}
&(\Omega_1-\Omega_2)^{\Psi}\\
=&\sum_{\{\alpha, \beta\mid \alpha,\beta \in \Psi^+\}}\left(\frac{1}{2}k(\alpha, \ad(\frac{x_{\omega(\alpha^\vee, \beta)}}{-2})|\tau)k(\beta, \ad(\frac{x_{\omega(\beta^\vee, \alpha)}}{-2})|\tau)-(\alpha^\vee, \beta)\frac{k(\alpha, \ad(\frac{x_{\alpha^\vee}}{2})|\tau)-k(\alpha, \ad(\frac{x_{\omega(\alpha^\vee, \beta)}}{-2})|\tau)}{ad x_{\alpha^\vee} +ad x_{\omega(\alpha^\vee, \beta)}}\right)
[t_\alpha, t_\beta]d\alpha\wedge d\beta.\end{align*}
\begin{lemma} \cite[Proposition 2.19]{TL1}\label{reduction}
Assume $(\Omega_1-\Omega_2)^{\Psi_{\alpha, \beta}}=0$, for any $\alpha, \beta \in \Phi$ such that $\alpha \neq \pm \beta$. 
Then, $(\Omega_1-\Omega_2)^{\Phi}=0$. 
\end{lemma}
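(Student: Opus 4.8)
The plan is to exhibit $(\Omega_1-\Omega_2)^{\Phi}$ as a sum of its rank $2$ restrictions $(\Omega_1-\Omega_2)^{\Psi}$, one for each rank $2$ subsystem $\Psi\subset\Phi$, so that the hypothesis immediately forces the total to vanish. The whole argument rests on a single structural observation: by Corollaries \ref{cor:Omega1} and \ref{cor:Omega2}, the coefficient of $[t_\alpha,t_\beta]\,d\alpha\wedge d\beta$ in $\Omega_1-\Omega_2$ depends only on the pair $(\alpha,\beta)$. Indeed it is assembled from $\alpha$, $\beta$, the scalars $(\alpha^\vee,\beta)$, $(\beta^\vee,\alpha)$, and the vectors $\omega(\alpha^\vee,\beta)$, $\omega(\beta^\vee,\alpha)$, all of which lie in $\langle\alpha,\beta\rangle$ and are intrinsic to the two roots. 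Hence this coefficient is unchanged upon passing to any subsystem containing $\alpha$ and $\beta$, and restricting the sum to $\Psi$ reproduces exactly the same summands.

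First I would settle the index bookkeeping. Since $\Phi$ is reduced, two distinct positive roots $\alpha\neq\beta$ are never proportional, hence linearly independent, so every term actually occurring in $\Omega_1-\Omega_2$ is indexed by an independent pair. By the lemma just proved, each such pair generates a rank $2$ subsystem $\Psi_{\alpha,\beta}=(\Z\alpha+\Z\beta)\cap\Phi$. I would then check that this is the \emph{unique} rank $2$ subsystem containing both $\alpha$ and $\beta$: any subsystem containing them is closed, and if it has rank $2$ it lies in the plane $\langle\alpha,\beta\rangle_{\R}$, whence it coincides with $\Phi\cap\langle\alpha,\beta\rangle_{\R}=\Psi_{\alpha,\beta}$; here one uses the crystallographic fact that a root in the real span of $\alpha,\beta$ is already an integral combination of them.

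Next I would carry out the partition itself. Using $\Psi^+=\Psi\cap\Phi^+$, for a fixed rank $2$ subsystem $\Psi$ the pairs $(\alpha,\beta)$ with $\alpha,\beta\in\Phi^+$, $\alpha\neq\beta$ and $\Psi_{\alpha,\beta}=\Psi$ are precisely the distinct pairs of positive roots of $\Psi$. Thus the index set of $(\Omega_1-\Omega_2)^{\Phi}$ is the disjoint union, over all rank $2$ subsystems $\Psi$, of the index sets of the $(\Omega_1-\Omega_2)^{\Psi}$; since each summand depends only on its indexing pair, this yields
\[
(\Omega_1-\Omega_2)^{\Phi}=\sum_{\Psi}(\Omega_1-\Omega_2)^{\Psi},
\]
the sum being over all rank $2$ subsystems $\Psi\subset\Phi$. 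Every such $\Psi$ equals some $\Psi_{\alpha,\beta}$, so the hypothesis gives $(\Omega_1-\Omega_2)^{\Psi}=0$ for each summand, and the conclusion follows.

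I expect the only delicate point to be the exactness of this partition, namely that each occurring pair lies in one and only one rank $2$ subsystem; this is exactly where the reduced and crystallographic hypotheses on $\Phi$ are used, through non-proportionality of distinct positive roots and integrality of roots in a two-dimensional span. Everything else is the formal bookkeeping above, and crucially no further analytic input is required at this stage; in particular, no appeal to Proposition \ref{prop:coeff} is needed here, its theta-function computation having already been absorbed into the rank $2$ hypothesis.
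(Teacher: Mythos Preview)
Your approach is the right one in spirit—group the pairs $(\alpha,\beta)$ by rank~$2$ subsystem—but the uniqueness claim you rely on is false, and so is the displayed identity $(\Omega_1-\Omega_2)^{\Phi}=\sum_{\Psi}(\Omega_1-\Omega_2)^{\Psi}$. In $B_2$, the two long positive roots $\alpha_1$ and $\alpha_1+2\alpha_2$ lie in \emph{two} rank~$2$ subsystems: the $A_1\times A_1$ system $\{\pm\alpha_1,\pm(\alpha_1+2\alpha_2)\}$ of long roots, and $B_2$ itself. Your ``crystallographic fact'' breaks on exactly this example: writing $\alpha_1=e_1-e_2$ and $\alpha_1+2\alpha_2=e_1+e_2$, the short root $e_2$ lies in their real span but not in their integral span, so $\Phi\cap\langle\alpha_1,\alpha_1+2\alpha_2\rangle_{\R}\neq\Psi_{\alpha_1,\alpha_1+2\alpha_2}$. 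The same phenomenon occurs in $G_2$. Hence the index sets of the $(\Omega_1-\Omega_2)^{\Psi}$ genuinely overlap, and summing over all rank~$2$ subsystems overcounts.

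The fix is short and stays within your framework. Each pair does lie in a unique \emph{minimal} rank~$2$ subsystem, namely $\Psi_{\alpha,\beta}$; partitioning by this gives $(\Omega_1-\Omega_2)^{\Phi}=\sum_{\Psi}T_{\Psi}$, where $T_{\Psi}$ collects the terms with $\Psi_{\alpha,\beta}=\Psi$. For any rank~$2$ subsystem $\Psi$ one has $(\Omega_1-\Omega_2)^{\Psi}=\sum_{\Psi'\subseteq\Psi}T_{\Psi'}$, the sum over rank~$2$ subsystems $\Psi'$ of $\Psi$. Now induct on $|\Psi|$: if $\Psi$ has no proper rank~$2$ subsystem then $T_\Psi=(\Omega_1-\Omega_2)^\Psi=0$; otherwise $T_\Psi=(\Omega_1-\Omega_2)^\Psi-\sum_{\Psi'\subsetneq\Psi}T_{\Psi'}=0$ by hypothesis and induction. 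Hence every $T_\Psi$ vanishes and so does $(\Omega_1-\Omega_2)^\Phi$. The paper's proof here is essentially a pointer to \cite{TL1}; its one-line sketch glosses over this same bookkeeping.
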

\begin{proof}
There exists a collection of rank 2 subsystems $\Psi_1, \dots, \Psi_m$, such that
$\Phi=\sqcup_{i=1}^m \Psi_i. $
Therefore, $(\Omega_1-\Omega_2)^{\Phi}=\displaystyle{\sum_{i=1}^m\sum_{\alpha, \beta\in \Psi_i^+}(\Omega_1-\Omega_2)^{\Psi_i}}$. This completes the proof.
\end{proof}
Thanks to Lemma \ref{reduction}, it suffices to show 
$(\Omega_1-\Omega_2)^{\Psi}=0$, for any  rank 2 root system $\Psi$. In the rest of this section, we check the vanishing of $(\Omega_1-\Omega_2)^{\Psi}$ case by case. The tool is Proposition \ref{prop:coeff}. 
\subsection{Rank 2 calculations}
\subsubsection{Case $A_1\times A_1$}
Let $\Phi=\{\pm\alpha, \pm\beta\}$ be the root system of type $A_1\times A_1$, then $[t_\alpha, t_\beta]=0$. 
Therefore, $(\Omega_1-\Omega_2)^{\Phi}=0$ in this case. 
\subsubsection{Case $A_2$}
Let $\epsilon_1, \epsilon_2, \epsilon_3$ be the standard orthonormal basis of $\R^3$.
Let $\alpha_1=\epsilon_1-\epsilon_2, \alpha_2=\epsilon_2-\epsilon_3$ be the simple roots for $A_2$. Then, we have $\Phi^+=\{\alpha_1, \alpha_2, \alpha_1+\alpha_2\}$.
By definition, 
\begin{align}
\Omega_1-\Omega_2 & = k(\alpha_1, \alpha_2)[t_{\alpha_1}, t_{\alpha_2}]d\alpha_1\wedge d\alpha_2+k(\alpha_1, {\alpha_1+\alpha_2})[t_{\alpha_1}, t_{\alpha_1+\alpha_2}]d\alpha_1\wedge d(\alpha_1+\alpha_2) \notag\\
&+k(\alpha_2, {\alpha_1+\alpha_2})[t_{\alpha_2}, t_{\alpha_1+\alpha_2}]d\alpha_2\wedge d(\alpha_1+\alpha_2).  \label{OmegaA2}
\end{align}
By Proposition \ref{prop:coeff}, we have
\[k(\alpha_1, \alpha_1+\alpha_2)d\alpha_1\wedge d(\alpha_1+\alpha_2)=k(\alpha_1, \alpha_2)d\alpha_1\wedge d\alpha_2+k(\alpha_2, \alpha_1+\alpha_2)d\alpha_2\wedge d(\alpha_1+\alpha_2).\]
Plugging the above expression into \eqref{OmegaA2}, we have
\begin{align*}
\Omega_1-\Omega_2 &=
k(\alpha_1, \alpha_2)[t_{\alpha_1}, t_{\alpha_2}+t_{\alpha_1+\alpha_2}]d\alpha_1\wedge d\alpha_2+k(\alpha_2, {\alpha_1+\alpha_2})[t_{\alpha_1}+t_{\alpha_2}, t_{\alpha_1+\alpha_2}]d\alpha_2\wedge d(\alpha_1+\alpha_2).
\end{align*}
Using (tt)-relations $[t_{\alpha_1}, t_{\alpha_2}+t_{\alpha_1+\alpha_2}]=[t_{\alpha_1}+t_{\alpha_2}, t_{\alpha_1+\alpha_2}]=0$, we conclude $\Omega_1-\Omega_2=0$ in the case of $A_2$.
The above procedure can be represented as the graph
\begin{equation*}
\xymatrix@R=1.5em @C=0.5em{ & (\alpha_1, \alpha_1+\alpha_2)\ar[ld]\ar[rd] &\\
(\alpha_1, \alpha_2)\ar@{--}[rr]&&(\alpha_2, \alpha_1+\alpha_2)
}
\end{equation*}
\subsubsection{Case $B_2$}
Let $\epsilon_1, \epsilon_2$ be the standard orthonormal basis of $\R^2$. Let $\alpha_1=\epsilon_1-\epsilon_2, \alpha_2=\epsilon_2$ be the simple roots for $B_2$.
Then, we have $\Phi^+=\{\alpha_1, \alpha_1+\alpha_2, \alpha_1+2\alpha_2,\alpha_2\}$.
By definition, 
\begin{align*}
&\Omega_1-\Omega_2  \\
= &k(\alpha_1, \alpha_2)[t_{\alpha_1}, t_{\alpha_2}]d\alpha_1\wedge d\alpha_2+k(\alpha_1, z_{\alpha_1+\alpha_2})[t_{\alpha_1}, t_{\alpha_1+\alpha_2}]d\alpha_1\wedge d(\alpha_1+\alpha_2)\\
+& k(z_{\alpha_1+\alpha_2}, \alpha_2)[t_{\alpha_1+\alpha_2}, t_{\alpha_2}]d(\alpha_1+\alpha_2)\wedge d\alpha_2+ k(\alpha_1, \alpha_1+2\alpha_2)[t_{\alpha_1}, t_{\alpha_1+2\alpha_2}]d\alpha_1\wedge d(\alpha_1+2\alpha_2)\\
 +& k(z_{\alpha_1+\alpha_2}, \alpha_1+2\alpha_2)[t_{\alpha_1+\alpha_2}, t_{\alpha_1+2\alpha_2}]d(\alpha_1+\alpha_2)\wedge d(\alpha_1+2\alpha_2)+ k(\alpha_1+2\alpha_2, \alpha_2)[t_{\alpha_1+2\alpha_2}, t_{\alpha_2}] d(\alpha_1+2\alpha_2)\wedge d\alpha_2.
\end{align*}
We use Proposition \ref{prop:coeff} and split $k(\alpha, \beta)d \alpha \wedge d\beta$ according to the following graph
\[
\xymatrix@R=1.5em @C=0.5em{
&(\alpha_1, \alpha_1+\alpha_2) \ar[ld]\ar[rd]&& (\alpha_1+\alpha_2, \alpha_1+2\alpha_2)\ar[ld]\ar[rd]&\\
(\alpha_1, \alpha_2)\ar@{--}[rr]&&(\alpha_1+\alpha_2, \alpha_2)\ar@{--}[rr]&& (\alpha_1+2\alpha_2, \alpha_2)
}
\]
After simplification, we have
\begin{align*}
\Omega_1-\Omega_2
 =&k(\alpha_1, \alpha_2)[t_{\alpha_1}, t_{\alpha_2}+t_{\alpha_1+\alpha_2}]d\alpha_1\wedge d\alpha_2 +k(\alpha_2, \alpha_1+\alpha_2)[t_{\alpha_1}+t_{\alpha_2}+t_{\alpha_1+2\alpha_2}, t_{\alpha_1+\alpha_2}]d\alpha_2\wedge d(\alpha_1+\alpha_2)\\
& +k(\alpha_2, \alpha_1+2\alpha_2)[t_{\alpha_2}+t_{\alpha_1+\alpha_2}, t_{\alpha_1+2\alpha_2}]d\alpha_2\wedge d(\alpha_1+2\alpha_2). 
\end{align*}
Using the (tt) relations $[t_\alpha, \sum_{\beta\in \Psi^+}t_\beta]$ for $B_2$ root system, 
each summand of the above formula equals to zero. Therefore, we conclude
$\Omega_1-\Omega_2=0$ in the case of $B_2$.
\subsubsection{Case $G_2$}
Let $\epsilon_1, \epsilon_2, \epsilon_3$ be the standard orthonormal basis of $\R^3$. 
The simple roots of $G_2$ root system are $\alpha_1=2\epsilon_2-\epsilon_1-\epsilon_3, \alpha_2=\epsilon_1-\epsilon_2$ and we have $\Phi^+=\{\alpha_1, \alpha_2, \alpha_1+\alpha_2, \alpha_1+2\alpha_2,\alpha_1+3\alpha_2, 2\alpha_1+3\alpha_2\}$. We use Proposition \ref{prop:coeff} and split $k(\alpha, \beta)d\alpha \wedge d\beta$ according to the following graph
\[
\xymatrix@R=1.5em @C=0.1em{
&(\alpha_1+\alpha_2, 2\alpha_1+3\alpha_2) \ar[ld]\ar[rd]&&&\\
 (2\alpha_1+3\alpha_2, \alpha_1+2\alpha_2)  \ar@{--}[rr]&
& (\alpha_1+\alpha_2, \alpha_1+2\alpha_2) \ar[ld]\ar[d]&
  (\alpha_1, \alpha_1+\alpha_2) \ar[ld]\ar[dr]&\\
 (\alpha_1+3\alpha_2, \alpha_2) \ar@{--}[r]&
(\alpha_1+2\alpha_2, \alpha_2) \ar@{--}[r]&
(\alpha_1+\alpha_2, \alpha_2) \ar@{--}[rr]&&
(\alpha_1, \alpha_2)\\
&(\alpha_1+2\alpha_2, \alpha_1+3\alpha_2) \ar[lu]\ar[u]&&&}
\]
After simplification, we have
\begin{align*}
\Omega_1-\Omega_2=&
k(\alpha_1,\alpha_2)[t_{\alpha_1},t_{\alpha_2}+t_{\alpha_1+\alpha_2}]
d\alpha_1\wedge d\alpha_2
+k(\alpha_2, \alpha_1+\alpha_2)[t_{\alpha_1}+t_{\alpha_2}, t_{\alpha_1+\alpha_2}]d{\alpha_2}\wedge d(\alpha_1+\alpha_2)\\
&+k(\alpha_1+2\alpha_2, \alpha_2)([t_{\alpha_1+2\alpha_2}, t_{\alpha_2}+t_{\alpha_1+\alpha_2}+t_{\alpha_1+3\alpha_2}]-[t_{\alpha_1+\alpha_2}, t_{2\alpha_1+3\alpha_2}])d(\alpha_1+2\alpha_2)\wedge d\alpha_2\\
&+k(2\alpha_1+3\alpha_2, \alpha_1+2\alpha_2)[t_{2\alpha_1+3\alpha_2}, t_{\alpha_1+2\alpha_2}+t_{\alpha_1+\alpha_2}]d(2\alpha_1+3\alpha_2)\wedge d(\alpha_1+2\alpha_2)\\
&+k(\alpha_1+3\alpha_2, \alpha_2)[t_{\alpha_1+3\alpha_2}, t_{\alpha_2}+t_{\alpha_1+2\alpha_2}]d(\alpha_1+3\alpha_2)\wedge d\alpha_2
\end{align*}
Using the (tt) relations $[t_\alpha, \sum_{\beta\in \Psi^+}t_\beta]$ for $G_2$ root system, 
each summand of the above formula equals to zero, where the second line follows from the equality
\[
[t_{\alpha_1+2\alpha_2}, t_{\alpha_2}+t_{\alpha_1+\alpha_2}+t_{\alpha_1+3\alpha_2}]-[t_{\alpha_1+\alpha_2}, t_{2\alpha_1+3\alpha_2}]
=[t_{\alpha_1+2\alpha_2}, t_{\alpha_2}+t_{\alpha_1+\alpha_2}+t_{\alpha_1+3\alpha_2}+t_{2\alpha_1+3\alpha_2}]
-[t_{\alpha_1+\alpha_2}+t_{\alpha_1+2\alpha_2}, t_{2\alpha_1+3\alpha_2}].
\]
Therefore, we conclude
$\Omega_1-\Omega_2=0$ in the case of $G_2$.
\section{Degeneration of the elliptic connection}\label{sec:degen}
In this section, we show that as the imaginary part of $\tau$ tends 
$\infty$, the connection
$\nabla_{\KZB, \tau}$ \eqref{conn any type} degenerates to a trigonometric connection of the form considered in \cite{TL1}. This gives a map from the trigonometric Lie algebra $A_{\trig}$ to the completion of $\Aell$. 

\subsection{The trigonometric connection}
In \cite{TL1}, Toledano Laredo introduced the trigonometric connection, which we recall it here.
Let $H=\Hom_\Z(Q,\C^*)$ be the complex algebraic torus with Lie
algebra $\h$ and coordinate ring given by the group algebra $\C Q$.
We denote the function corresponding to $\lambda\in Q$ by $e^
\lambda\in\C[H]$, and set
\begin{equation}\label{eq:Hreg}
H\rreg=H\setminus\bigcup_{\alpha\in\Phi}\{e^\alpha=1\}
\end{equation}

Let $A_{\trig}$ be an algebra endowed with the following data:
\begin{itemize}
\item a set of elements $\{t_\alpha\}_{\alpha\in\Phi}\subset A_{\trig}$ such
that $t_{-\alpha}=t_{\alpha}$, 
\item a linear map $X:\h\to A_{\trig}$. 
\end{itemize}
Consider the $A_{\trig}$--valued connection on $H\rreg$ given by
\begin{equation}\label{eq:trigo}
\nabla_{\trig}=d-
\sum_{\alpha\in\Phi_+}\etaa{\alpha}\,t_{\alpha}-
du_i\,X(u^i), 
\end{equation}
where $\Phi_+\subset\Phi$ is a chosen system of positive roots,
$\{u_i\}$ and $\{u^i\}$ are dual bases of $\h^*$ and $\h$ respectively,
the differentials $du_i$ are regarded as translation invariant
one forms on $H$ and the summation over $i$ is implicit.
\Omit{
In \cite{TL1}, there is another form of the trigonometric connection
\[
\nabla_{\trig}=d-\frac{1}{2} \sum_{\alpha\in \Phi^+} \frac{e^{\alpha}+1}{e^{\alpha}-1} t_{\alpha} d\alpha -\delta(u^i) du_i,
\]
where $\delta(v)=X(v)-\frac{1}{2} \sum_{\alpha\in \Phi^+} \alpha(v) t_{\alpha}.$
}

\begin{theorem}[\cite{TL1}]
\label{thm:trio tt}
  The connection \eqref{eq:trigo} is flat, if and only if the following relations hold
\begin{description}
  \item[$(tt)$] For any rank 2 root subsystem $\Psi\subset \Phi$, and $\alpha\in \Psi$,
  $[t_\alpha, \sum_{\beta\in \Psi_+}t_\beta]=0.$
  \item[$(XX)$] For any $u, v\in \h$,
  $[X(u), X(v)]=0.$
  \item[($tX$)] For any $\alpha\in \Phi_+$, $w\in W$ such that $w^{-1}\alpha$ is a simple root and $u\in \h$, such that $\alpha(u)=0$,
      \[[t_\alpha, X_w(u)]=0,\]
      where $ X_w(u)=X(u)-\sum_{\beta\in \Phi_+\cap w \Phi_-}\beta(u)t_\beta$.
\end{description}
\Omit{
 \item[(2)] Modulo the relations $(tt)$, the relations $(tX)$ are equivalent to
 \begin{description}
   \item[($t\delta$)] $[t_\alpha, \delta(v)]=0, $ for any $\alpha\in \Phi$ and $v\in \h$ such that $\alpha(v)=0$.
 \end{description}
 \end{description}}
\end{theorem}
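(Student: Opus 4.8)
The plan is to reduce flatness to a single Lie-algebra-valued identity and then to analyse it one root subsystem at a time, mirroring the computation carried out for $\nabla_{\KZB,\tau}$ in the flatness section. Write $\nabla_{\trig}=d-\omega$. Because the scalar coefficient $\tfrac{1}{e^\alpha-1}$ of $t_\alpha$ is a function of the single linear form $\alpha$, one has $d\!\left(\etaa{\alpha}\right)=0$, and the translation-invariant form $X(u^i)\,du_i$ is closed; hence $d\omega=0$ and flatness is equivalent to $\omega\wedge\omega=0$. Substituting $d\alpha=\sum_i\alpha(u^i)\,du_i$ and collecting the coefficient of each $du_i$, I would introduce
\[
Y(u)=X(u)+\sum_{\alpha\in\Phi_+}\frac{\alpha(u)}{e^\alpha-1}\,t_\alpha\ \in\ A_{\trig}\otimes\Hol(H\rreg),\qquad u\in\h,
\]
so that $\omega=\sum_i Y(u^i)\,du_i$ and $\omega\wedge\omega=\tfrac12\sum_{i,j}[Y(u^i),Y(u^j)]\,du_i\wedge du_j$. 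Thus $\nabla_{\trig}$ is flat if and only if $[Y(u),Y(v)]=0$ for all $u,v\in\h$, an identity of meromorphic $A_{\trig}$-valued functions on $H\rreg$.

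Expanding the commutator gives three pieces, organised exactly as $\Omega_1-\Omega_2+\Omega_3$ in \eqref{Omega1}--\eqref{Omega3}: a $tt$-term $\tfrac12\sum_{\alpha,\beta}\tfrac{\alpha(u)\beta(v)-\beta(u)\alpha(v)}{(e^\alpha-1)(e^\beta-1)}[t_\alpha,t_\beta]$, a mixed $tX$-term $\sum_\alpha\tfrac{1}{e^\alpha-1}\bigl(\alpha(u)[t_\alpha,X(v)]-\alpha(v)[t_\alpha,X(u)]\bigr)$, and the constant term $[X(u),X(v)]$. The last is the only summand with no poles along the root hypertori $\{e^\alpha=1\}$, so matching holomorphic parts forces $[X(u),X(v)]=0$, i.e. relation $(XX)$, after which that term drops out.

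For the remaining meromorphic identity I would argue by residues and reduction to rank $2$. Along a wall $H_\alpha=\{e^\alpha=1\}$ only $\tfrac{1}{e^\alpha-1}$ is singular, and the residue of $[Y(u),Y(v)]$ there is, up to a nonzero scalar $-\alpha(v)$, equal to $[t_\alpha,\,Y(u)|_{H_\alpha}]$ for $u$ with $\alpha(u)=0$. Setting this to zero yields the relations $(tt)$ and $(tX)$ together: after pairing the roots $\gamma$ and $s_\alpha\gamma$ (which take equal values of $\tfrac{1}{e^\gamma-1}$ on $H_\alpha$) and using $(tt)$ to collapse each $s_\alpha$-string, the a priori point-dependent restriction $Y(u)|_{H_\alpha}$ becomes the constant $X_w(u)=X(u)-\sum_{\beta\in\Phi_+\cap w\Phi_-}\beta(u)t_\beta$, where $w$ is chosen so that $w^{-1}\alpha$ is simple; this is precisely the element appearing in $(tX)$. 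Conversely, to prove sufficiency I would invoke the reduction lemma \cite[Prop.~2.19]{TL1} (the analogue of Lemma \ref{reduction}): since the curvature splits as a sum over the rank $2$ subsystems $\Psi_{\alpha,\beta}=(\Z\alpha+\Z\beta)\cap\Phi$ generated by pairs of roots, it suffices to check $[Y(u),Y(v)]=0$ inside each such $\Psi$.

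The rank $2$ verification is the computational heart, and I would run it case by case ($A_1\times A_1$, $A_2$, $B_2$, $G_2$) as in the rank $2$ calculations of the flatness section. The role played there by the theta-function identity behind Proposition \ref{prop:coeff} is now taken by its trigonometric degeneration
\[
\sum_{\mathrm{cyc}}\frac{1}{(e^a-1)(e^b-1)}+\sum_{\mathrm{cyc}}\frac{1}{e^a-1}+1=0\qquad(a+b+c=0),
\]
equivalently $\coth\tfrac a2\coth\tfrac b2+\coth\tfrac b2\coth\tfrac c2+\coth\tfrac c2\coth\tfrac a2=-1$. The linear terms $\sum\tfrac{1}{e^a-1}$ in this identity are exactly what couple the $tt$-part to the $tX$-part, so that feeding the $(tt)$ and $(tX)$ relations into each rank $2$ expression makes every summand vanish; the $A_1\times A_1$ case is immediate since $[t_\alpha,t_\beta]=0$. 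I expect the main obstacle to be the bookkeeping around $X_w$: showing that the genuinely point-dependent residue along a non-simple wall reorganises, modulo $(tt)$, into the single constant relation $[t_\alpha,X_w(u)]=0$, and that these Weyl-twisted relations are exactly the ones consumed by the linear terms of the trigonometric identity in the rank $2$ computation.
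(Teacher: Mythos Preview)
The paper does not prove this statement: Theorem~\ref{thm:trio tt} is quoted from \cite{TL1} without argument, so there is no in-paper proof to compare against. Your outline is the natural trigonometric counterpart of the elliptic flatness computation in Section~3 (and, as far as one can infer, of the original argument in \cite{TL1}): write the curvature as $[Y(u),Y(v)]$, split it into the analogues of $\Omega_1,\Omega_2,\Omega_3$, read off the relations from the pole structure, and prove sufficiency by the rank~$2$ reduction of \cite[Prop.~2.19]{TL1} together with the cyclic identity you wrote down. So the strategy is sound.

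Two places deserve more care. First, your necessity argument for $(XX)$ is not correct as phrased: the $tt$-part \emph{does} contribute to the holomorphic component of $[Y(u),Y(v)]$, precisely because your cyclic identity has the constant term $+1$, so ``matching holomorphic parts'' does not isolate $[X(u),X(v)]$ on its own. The clean fix is to send the base point deep into the dominant chamber so that every $\tfrac{1}{e^\alpha-1}\to 0$; then $Y(u)\to X(u)$ and the limit of the curvature is $[X(u),X(v)]$. Second, the step you already flag as the main obstacle really is the crux: the residue along $H_\alpha$ is the function $[t_\alpha,\,X(u)+\sum_{\gamma\neq\alpha}\tfrac{\gamma(u)}{e^\gamma-1}t_\gamma]$ on $H_\alpha$, and its vanishing is an infinite family of relations, not one. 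Your $s_\alpha$-pairing idea is right when $\alpha$ is simple (since $s_\alpha$ then permutes $\Phi_+\setminus\{\alpha\}$ and the paired functions agree on $H_\alpha$); for non-simple $\alpha$ one must first conjugate by some $w$ with $w^{-1}\alpha$ simple, and this is exactly where the twisted elements $X_w(u)=X(u)-\sum_{\beta\in\Phi_+\cap w\Phi_-}\beta(u)t_\beta$ appear. You should also check, as \cite{TL1} does, that modulo $(tt)$ the relations $[t_\alpha,X_w(u)]=0$ for different admissible $w$ are equivalent, so that the single residue condition and the family of $(tX)$ relations in the statement genuinely match.
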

Assume now that the algebra $A_{\trig}$ is acted
upon by the Weyl group $W$ of $\Phi$.
\begin{prop}[\cite{TL1}]
\label{prop:equ} The connection $\nabla_{\trig}$ is $W-$ equivariant if and only if
for any $\alpha\in \Phi$, simple reflection $s_i \in W$ and $x \in \h$,
\begin{align}
&s_i(t_\alpha)=t_{s_i(\alpha)}, \label{st}\\
&s_i(X(x))-X(s_ix)=(\alpha_i, x)t_{\alpha_i}. \label{stau}
\end{align}
\Omit{
  \item[(2)] Modulo (\ref{st}), the relation (\ref{stau}) is equivalent to $W-$ equivariance
of the linear map $\delta: \h\to A_{\trig}$.
\end{description}}
 \end{prop}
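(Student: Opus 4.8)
Since the simple reflections generate $W$, it suffices to treat a single simple reflection $s_i$, and the plan is to compare $s_i^*\nabla_{\trig}$ with $\nabla_{\trig}$, where $s_i$ acts simultaneously on the base $H\rreg$ (geometrically, so that $s_i^*(d\alpha)=d(s_i\alpha)$ and $s_i^*(e^\alpha)=e^{s_i\alpha}$) and on the fibre $A_{\trig}$ (by the given Weyl group action). Writing the connection $1$-form as $\omega=-\sum_{\alpha\in\Phi_+}\etaa{\alpha}\,t_\alpha-\sum_i du_i\,X(u^i)$ and noting $s_i^*d=d$, $W$-equivariance is exactly the identity $s_i\cdot\omega=\omega$ of $A_{\trig}$-valued $1$-forms. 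I would establish both directions by isolating the ``$t$-part'' and the ``$X$-part'' of $s_i\cdot\omega-\omega$ and tracking separately their polar and regular contributions on $H\rreg$.

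For the sufficiency direction I would transform the two parts separately. In the $t$-part, using (st) to rewrite $s_i(t_\alpha)=t_{s_i\alpha}$ and reindexing by $\beta=s_i\alpha$, the index set is replaced by $s_i\Phi_+=(\Phi_+\setminus\{\alpha_i\})\cup\{-\alpha_i\}$. Every $\beta\neq\alpha_i$ reproduces its term in $\omega$, while the single flipped root is governed by the elementary identity $\frac{1}{e^{-\alpha_i}-1}=-1-\frac{1}{e^{\alpha_i}-1}$ together with $t_{-\alpha_i}=t_{\alpha_i}$: the polar piece again reproduces the $\alpha_i$-term of $\omega$, and the constant $-1$ leaves a regular remainder, so that the $t$-part of $\omega$ transforms to itself plus the anomaly $-d\alpha_i\,t_{\alpha_i}$. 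In the $X$-part, the $W$-invariance of the canonical element $\sum_i u_i\otimes u^i$ makes $\sum_i d(s_iu_i)\,X(s_iu^i)$ reindex back to $\sum_i du_i\,X(u^i)$, and the only new contribution is the defect in (stau); summing $(\alpha_i,u^i)t_{\alpha_i}$ against $d(s_iu_i)$ and using $\sum_i(\alpha_i,u^i)\,s_iu_i=s_i\alpha_i=-\alpha_i$ produces the opposite anomaly $+d\alpha_i\,t_{\alpha_i}$. The two anomalies cancel, giving $s_i\cdot\omega=\omega$.

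For the necessity direction I would run the same computation without assuming (st) or (stau) and read off the two relations from the pole and regular structure of $s_i\cdot\omega-\omega$. Matching residues along each root hypertorus $\{e^\gamma=1\}$ forces the residues to agree: the term carried onto this divisor by $s_i$ has residue $-s_i(t_\alpha)$ with $\gamma=s_i\alpha$, whereas $\omega$ has residue $-t_\gamma$, so $s_i(t_\alpha)=t_{s_i\alpha}$, which is (st) (the distinguished divisor $\{e^{\alpha_i}=1\}$ consistently yields $s_i(t_{\alpha_i})=t_{\alpha_i}$). Once (st) holds the singular parts cancel and $s_i\cdot\omega-\omega$ is a translation-invariant regular $1$-form; writing $R(x):=s_i(X(x))-X(s_ix)$ for the equivariance defect of $X$, its vanishing reads $\sum_i d(s_iu_i)\,R(u^i)=-d\alpha_i\,t_{\alpha_i}$. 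Expanding $d\alpha_i$ in the basis $\{d(s_iu_i)\}$ and comparing coefficients of these independent forms gives $R(x)=(\alpha_i,x)t_{\alpha_i}$ for all $x\in\h$, which is precisely (stau).

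The main obstacle is the careful bookkeeping of the distinguished root $\alpha_i$, the unique positive root sent to a negative root by $s_i$: the additive constant $-1$ in $\frac{1}{e^{-\alpha_i}-1}=-1-\frac{1}{e^{\alpha_i}-1}$ is the source of the regular anomaly that must be matched. The crux is to check that this combinatorial anomaly from the $t$-part carries exactly the normalization $(\alpha_i,x)t_{\alpha_i}$ needed to be absorbed by the equivariance defect of $X$, and, conversely, that no further relations are imposed. The residue argument cleanly decouples (st) from (stau), making both the sufficiency of the relations and their necessity transparent.
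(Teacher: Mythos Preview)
Your proposal is correct. Note that the paper does not actually prove this proposition: it is quoted from \cite{TL1} and stated without proof. Your argument---computing $s_i^*\nabla_{\trig}-\nabla_{\trig}$, using that $s_i$ permutes $\Phi_+\setminus\{\alpha_i\}$ and sends $\alpha_i\mapsto-\alpha_i$, then extracting the regular anomaly $-d\alpha_i\,t_{\alpha_i}$ from the identity $\frac{1}{e^{-\alpha_i}-1}=-1-\frac{1}{e^{\alpha_i}-1}$ and matching it against the defect of $X$---is exactly the standard method and agrees with the proof in \cite{TL1}. It is also the same mechanism the present paper uses for the elliptic analogue in Theorem~\ref{thm:connection flat}(2), where the anomaly happens to vanish because $k(-z,-x)=-k(z,x)$ makes the $\alpha_i$-term self-consistent without a leftover constant; in the trigonometric case the leftover constant is precisely what forces the inhomogeneous relation \eqref{stau}. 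Your residue argument for necessity is also clean and correct.
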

\subsection{The degeneration of elliptic configuration space}
In this subsection, we show as $\Im \tau\to \infty$. The elliptic configuration space $T_{\reg}$ degenerates to $H\rreg$. We start with analyzing the elliptic curve $\mathcal{E}_\tau$ as $\Im \tau\to \infty$. 
Let $\wp(z)$ be the Weierstrass function with respect to the lattice $\Z+\tau\Z$,
\[
\wp(z)=\frac{1}{z^2}+\sum_{m, n}\left(\frac{1}{(z-m\tau +n)^2}-\frac{1}{(m\tau+n)^2}\right).
\]
Differentiating $\wp(z)$ term by term, we get $\wp'(z)$. For $q=e^{2\pi i \tau}$, the points $(\wp(z), \wp'(z))$ lie on the curve $E$ defined by the equation
\[
y^2=4x^3-g_2 x-g_3,
\]
where 
\begin{align*}
&g_2=\frac{(2\pi i)^4}{12}\left(1+240\sum_{n=1}^\infty\frac{n^3q^n}{(1-q^n)}\right), \,\
g_3=\frac{(2\pi i)^6}{6^3}\left(-1+504\sum_{n=1}^\infty\frac{n^5q^n}{(1-q^n)}\right).
\end{align*}
The cubic polynomial $4x^3-g_2 x-g_3$ has a discriminant given by
$\Delta=g_2^3 -27 g_3^2.$

The map \[\mathcal{E}_\tau=\C/(\Z+\tau \Z)\to E\subset \mathbb{P}(C), \,\ z\mapsto (1, \wp(z), \wp'(z))\]
is an isomorphism of complex Lie groups.
As $\Im\tau\rightarrow+\infty$, we have $q\to 0$.
The elliptic curve $E$ degenerates to $\tilde{E}$ whose defining equation is
\[y^2=4x^3-\frac{(2\pi i)^4}{12} x+\frac{(2\pi i)^6}{6^3}=\frac{-4}{27}(2\pi^2-3x)(3x+\pi^2)^2,\]
with discriminant $\Delta=0$.
Therefore, $\tilde{E}$ has a singular point at $x_0=(x, y)=(\frac{2\pi^2}{3}, 0)$.

Removing the singular point $x_0$ of $\widetilde{\mathcal{E}}_\tau$, topologically, the open subset
$\widetilde{\mathcal{E}}_\tau\setminus x_0$ is homeomorphic to the complex torus $\C^*$.
Thus, we have a continues map on topological spaces $\phi: \C^*\otimes P^\vee\to \widetilde{\mathcal{E}}_\tau\otimes P^\vee$.

The pullback the bundle $\mathcal{P}_{\tau, n}$ under the map $\phi$ is a trivial principal bundle on $\C^*\otimes P^\vee$ with structure group $\exp(\widehat{\Aell})$. The section of this trivial bundle can be described as
\[
\{f(z): \pi^{-1}(U)\to \exp(\widehat{\Aell})\mid f(z+1)=f(z)\},
\] where $U\subset \C^*\otimes P^\vee$ and $\pi=\exp: \C\to \C^*$ be the natural exponential map.
\subsection{The degeneration of KZB connection}
We describe the degeneration of the connection $\nabla_{\KZB, \tau}$ \eqref{conn any type} as $\Im \tau \to +\infty$ in this subsection.
\Omit{Let  $B_n$ be the \textit{Bernoulli numbers}, which are given by the power series expansion
\[
\frac{z}{e^z-1}=\sum_{n=0}^\infty \frac{B_n}{n!} z^n.
\]
Therefore, we have
\[
\frac{2i\pi z}{e^{2\pi i z}-1}+i\pi z=
\sum_{n=0}^{\infty} \frac{(2\pi i)^{2n}}{(2n)!}B_{2n}z^{2n}=1+\sum_{n=1}^{\infty} \frac{(2\pi i)^{2n}}{(2n)!}B_{2n}z^{2n}.
\]
The above equality is obtained by some special values of the Bernoulli numbers\[B_0=1, B_1=-\frac{1}{2}, \,\ \text{and $B_{2n+1}=0$, for $n\geq 1$}.  \]}
As $\Im\tau\rightarrow+\infty$, using the product formula \eqref{prod formula} of theta function, the theta function $\theta(z|\tau)$ tends to
\begin{equation}\label{eq:theta sine}
\theta(z| \tau)
\longrightarrow \frac{u^{\frac{1}{2}}(1-u^{-1})}{2\pi i}
=\frac{e^{\pi iz}-e^{-\pi iz}}{2\pi i}.
\end{equation}
Thus, by \eqref{eq:theta sine}, as $q\to 0$, we have $k(\alpha, \ad(\frac{x_{\alpha^\vee}}{2})|\tau)$ tends to
\begin{align*}
k(\alpha, \ad(\frac{x_{\alpha^\vee}}{2})|\tau)
 \longrightarrow &
 2\pi i \frac{e^{\pi i (\alpha+\ad(\frac{x_{\alpha^\vee}}{2}))}-e^{-\pi i (\alpha+\ad(\frac{x_{\alpha^\vee}}{2}))} }{(e^{\pi i \alpha}-e^{-\pi i \alpha})\Big(e^{\pi i \ad(\frac{x_{\alpha^\vee}}{2}) }-e^{-\pi i \ad(\frac{x_{\alpha^\vee}}{2})}\Big)}
 -\frac{1}{\ad(\frac{x_{\alpha^\vee}}{2})}
 \\
 =&2\pi i \left(\frac{1}{e^{2\pi i \alpha}-1}+\frac{e^{2\pi i \ad(\frac{x_{\alpha^\vee}}{2})}}{e^{2\pi i \ad(\frac{x_{\alpha^\vee}}{2})}-1 }\right) -\frac{1}{\ad(\frac{x_{\alpha^\vee}}{2})}. 
\end{align*} 
Therefore, as $\Im\tau\to+\infty$, the connection $\nabla_{\KZB}$ \eqref{conn any type} degenerates to a flat connection $\nabla^{\deg}$ on the space
\[(\C^*\otimes P^\vee)\setminus \{\cup_{\alpha\in \Phi^+}(e^{2\pi i \alpha}-1)\}.\]
The flat connection $\nabla^{\deg}$ is defined on the trivial principal bundle with fibers isomorphic to $\exp(\widehat{\Aell})$.
\begin{prop}\label{prop:degeneration}
The flat connection $\nabla^{\deg}$ on $H_{\reg}$ takes the following form
\begin{align*}
\nabla^{\deg}=&d-\sum_{\alpha \in \Phi^+} 
 \left(\frac{2\pi i}{e^{2\pi i \alpha}-1}+\frac{2\pi i e^{2\pi i \ad(\frac{x_{\alpha^\vee}}{2})}}{e^{2\pi i \ad(\frac{x_{\alpha^\vee}}{2})}-1 }  -\frac{1}{\ad(\frac{x_{\alpha^\vee}}{2})} \right) t_\alpha d\alpha+\sum_{i=1}^{n}y(u^i)du_i\\
=&d-\sum_{\alpha \in \Phi^+} \frac{2\pi i t_{\alpha}}{e^{2\pi i \alpha}-1} d \alpha
+\sum_{i=1}^{n} \left(y(u^i) -\sum_{\alpha \in \Phi^+}(\alpha, u^i) 
\Big(\frac{2\pi i e^{2\pi i \ad(\frac{x_{\alpha^\vee}}{2})}}{e^{2\pi i \ad(\frac{x_{\alpha^\vee}}{2})}-1 }  
-\frac{1}{\ad(\frac{x_{\alpha^\vee}}{2})}\Big) t_\alpha \right) du_i. 
\end{align*}
\end{prop}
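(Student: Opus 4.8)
The plan is to take the $\Im\tau\to+\infty$ limit of each ingredient of $\nabla_{\KZB,\tau}$ in \eqref{conn any type} and then reorganise the result. Since the term $\sum_i y(u^i)\,du_i$ is independent of $\tau$, the only piece that degenerates is the coefficient $k(\alpha,\ad(\frac{x_{\alpha^\vee}}{2})|\tau)$, so the whole problem reduces to computing the limit of the scalar function $k(z,x|\tau)$ and substituting. First I would record the limit of the theta function: from the product formula \eqref{prod formula}, as $q=e^{2\pi i\tau}\to 0$ one has $\theta(w|\tau)\to \frac{e^{\pi iw}-e^{-\pi iw}}{2\pi i}$, uniformly on compact subsets of $\C\setminus\Z$, which is exactly the degeneration \eqref{eq:theta sine} already used in the preceding prose.

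Substituting $z=\alpha$ and $x=\ad(\frac{x_{\alpha^\vee}}{2})$ into the definition \eqref{function k} of $k$ and inserting this theta limit gives a ratio of the limiting theta values. The only nontrivial algebraic step is the partial-fraction identity
\[
\frac{ab-1}{(a-1)(b-1)}=\frac{1}{a-1}+\frac{b}{b-1},
\]
applied with $a=e^{2\pi i\alpha}$ and $b=e^{2\pi i\ad(\frac{x_{\alpha^\vee}}{2})}$; together with the subtracted $-\frac1x$ this produces precisely the closed form for the limit of $k$ quoted before the proposition, and hence the first displayed form of $\nabla^{\deg}$.

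The one point requiring care is the justification that the limit may be computed coefficient-by-coefficient in the variable $x$. Recall $k(z,x|\tau)\in\Hol(\C-\Lambda_\tau)[\![x]\!]$, and the subtraction of $\frac1x$ makes it regular at $x=0$, so substituting the operator $\ad(\frac{x_{\alpha^\vee}}{2})$ and applying to $t_\alpha$ is well defined as an element of $\widehat{\Aell}$, with convergence being the degreewise convergence of the coefficients of each power of $x$. Since every such coefficient is a holomorphic function of $z$ that converges as $q\to 0$, the limit passes inside the power series, and I would spell this out as the (mild) main technical step; this is what legitimises replacing $k$ by its limit inside $\ad$.

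Finally I would reorganise into the second form. Writing $\alpha=\sum_i (\alpha,u^i)\,u_i$ in $\h^*$ yields $d\alpha=\sum_i(\alpha,u^i)\,du_i$; substituting this into the $x_{\alpha^\vee}$--dependent part of each summand moves it into the $du_i$ terms, where it combines with $\sum_i y(u^i)\,du_i$ to give the bracketed coefficient of the second display, while the root-only term $\frac{2\pi i}{e^{2\pi i\alpha}-1}t_\alpha\,d\alpha$ is left untouched. Flatness of $\nabla^{\deg}$ is inherited from that of $\nabla_{\KZB,\tau}$ (Theorem~\ref{thm:connection flat}) under the degeneration; equivalently, the second form exhibits $\nabla^{\deg}$ as the image of the trigonometric connection \eqref{eq:trigo} under $t_\alpha\mapsto t_\alpha$ and $X(u)\mapsto -y(u)+\sum_{\alpha\in\Phi^+}(\alpha,u)\bigl(\frac{2\pi i e^{2\pi i\ad(\frac{x_{\alpha^\vee}}{2})}}{e^{2\pi i\ad(\frac{x_{\alpha^\vee}}{2})}-1}-\frac{1}{\ad(\frac{x_{\alpha^\vee}}{2})}\bigr)t_\alpha$, so flatness also follows from Theorem~\ref{thm:trio tt} by checking the relations $(tt)$, $(XX)$, $(tX)$ against the defining relations of $\Aell$.
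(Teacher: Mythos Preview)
Your proposal is correct and follows essentially the same route as the paper: the paper computes the limit of $\theta$ via the product formula \eqref{prod formula}, substitutes into $k(\alpha,\ad(\frac{x_{\alpha^\vee}}{2})|\tau)$, and simplifies with the same partial-fraction identity you use (the computation appears in the paragraph immediately preceding the proposition). Your added justifications---the coefficient-by-coefficient convergence in the $x$-variable and the explicit reorganisation via $d\alpha=\sum_i(\alpha,u^i)\,du_i$---are details the paper leaves implicit but which do no harm.
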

We modify the trigonometric connection \eqref{eq:trigo} slightly by changing of variables.  On the torus $(\C^*\otimes P^\vee)\setminus\bigcup_{\alpha\in\Phi}\{e^{2\pi i\alpha}=1\}$, we consider an $A_{\trig}$--valued flat trigonometric connection
\begin{equation}\label{eq:trigo2}
\nabla_{\trig}=d-
\sum_{\alpha\in\Phi_+}\frac{2\pi i d\alpha}{e^{2\pi i \alpha}-1}t_{\alpha}-
du_i\,X(u^i).
\end{equation}
By universality of the trigonometric Lie algebra $A_{\trig}$, Proposition \ref
{prop:degeneration} gives rise to a map $A_{\trig}\to \widehat{\Aell}$. The map is given by
\begin{align*}
t_\alpha	\mapsto t_\alpha, \,\ \,\ 
X(u)	&\mapsto -y(u) +\sum_{\alpha \in \Phi^+}(\alpha, u) 
\Big(\frac{2\pi i e^{2\pi i \ad(\frac{x_{\alpha^\vee}}{2})}}{e^{2\pi i \ad(\frac{x_{\alpha^\vee}}{2})}-1 }  -\frac{1}{\ad(\frac{x_{\alpha^\vee}}{2})}\Big)t_\alpha.
\end{align*}
Note that this map does not preserve the gradings of $A_{\trig}$ and $\Aell$, but only
the corresponding descending filtrations.

\section{The Malcev Lie algebra of the pure elliptic braid group}


The following definition can be found in \cite[Definition 2.4]{DPS}. 
\begin{definition}
A finitely presented group $\Gamma$ is \textit{1--formal} if its Malcev Lie algebra
$\mm_\Gamma$ is isomorphic to the completion of its holonomy Lie algebra as
filtered Lie algebras. Equivalently, $\Gamma$ is 1--formal if, and only it $\mm_
\Gamma$ is isomorphic, as a filtered Lie algebra, to the graded completion of a
quadratic Lie algebra.
\end{definition}

\subsection{} 

Let $\Pell^\Phi$ be the pure elliptic braid group $\Pell^\Phi:=\pi_1(T_{\reg}, x_o)$. 
Then, we have the short exact sequence of groups (see \cite[Example 2.20]{ALR})
\begin{equation*}
1 \to \Pell^\Phi \to \Bell \to W\to 1,
\end{equation*} where $\Bell:=\pi_1^{\orb}(T_{\reg}/W, x_o)$ is the elliptic braid group and $W$ is the Weyl group. 
The flatness of the universal KZB connection $\nabla_{\KZB, \tau}$ \eqref{conn any type} gives rise to the monodromy map
\[\mu: \Pell^\Phi \to \exp\widehat{\Aell},\]
where $\widehat{\Aell}$ is the completion of $\Aell$ with respect to the grading $\deg(x(u))=\deg(y(u))=1$ and $\deg(t_\alpha)=2$.

Let $J \subseteq \C \Pell^\Phi$ be the augmentation ideal of group ring $\C \Pell^\Phi$, and let
$\widehat{\C \Pell^\Phi}$ be the completion with respect to the augmentation ideal $J$.
We denote by $U(\Aell)$ the universal enveloping algebra of the Lie algebra $\Aell$. Then, the monodromy map $\mu$ induces the following map on the completions
\[\hat{\mu}: \widehat{\C \Pell^\Phi} \to \widehat{U(\Aell)}.\]

\begin{theorem}\label{thm:formal}
The induced map $\hat{\mu}: \widehat{\C \Pell^\Phi} \to \widehat{U(\Aell)}$ is an isomorphism of Hopf algebras.
\end{theorem}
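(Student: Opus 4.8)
The plan is to realise $\hat\mu$ as the monodromy of the flat connection $\nabla_{\KZB,\tau}$ and to prove it is an isomorphism of complete Hopf algebras by combining an easy surjectivity statement with a more delicate injectivity argument based on Chen's theory of iterated integrals. First I would check that $\mu$ is multiplicative under concatenation of paths and that each loop is sent to a group-like element $\exp(\xi)\in\exp(\widehat{\Aell})$; group-likeness forces $\hat\mu$ to intertwine the coproducts (group elements are group-like on the left, $\Aell$ is primitive on the right), so that $\hat\mu$ is a morphism of complete Hopf algebras. Since the transport of each loop has the form $1+(\text{positive degree})$, the map is filtered, $\hat\mu(J^k)\subseteq\widehat{U(\Aell)}_{\geq k}$, where $J$ is the augmentation ideal and the target carries the degree filtration with $\deg x(u)=\deg y(u)=1$ and $\deg t_\alpha=2$.

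For surjectivity I would compute the leading terms of $\hat\mu$ on a generating set of $\Pell^\Phi=\pi_1(T_{\reg})$ adapted to the geometry of $T_{\reg}$. The translations along the two copies of $P^\vee$ in $H_1(\E_\tau)\otimes P^\vee$ produce loops whose monodromy has leading term $x(u)$ (the $\tau$--periods, encoded in the clutching $f(z+\tau\lambda^\vee_i)=e^{-2\pi i x(\lambda^\vee_i)}f(z)$ of the bundle $\mathcal P_{\tau,n}$) and $y(u)$ (the $1$--periods, coming from the $du_i$ part of $\nabla_{\KZB,\tau}$). A small loop $\gamma_\alpha$ around a divisor $T_\alpha$ has monodromy with leading term $t_\alpha$, because $k(z,x|\tau)\sim 1/z$ as $z\to 0$ makes the residue of $k(\alpha,\ad(\tfrac{x_{\alpha^\vee}}{2})|\tau)(t_\alpha)\,d\alpha$ along $T_\alpha$ equal to $t_\alpha$. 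Thus the images of these loops hit the generators $x(u)$, $y(u)$, $t_\alpha$ of $\Aell$ modulo higher filtration, and completeness of the filtration upgrades this to surjectivity of $\hat\mu$.

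Injectivity is the heart of the matter and the step I expect to be the main obstacle. Here I would invoke Chen's $\pi_1$--de Rham theorem, which identifies $\widehat{\C\Pell^\Phi}$ with the Hopf algebra of iterated integrals on $T_{\reg}$; under this identification $\hat\mu$ assigns to a word in the components of the KZB form its iterated integral, and it is an isomorphism precisely when those components provide a $1$--minimal model of $T_{\reg}$. Concretely this demands two things: that the forms $du_i$, the $\tau$--period classes and the $d\log$--type classes $\tfrac{\theta'}{\theta}(\alpha)\,d\alpha$ give a basis of $H^1(T_{\reg})$, so that the generators of $\Aell$ are the correct ones; and that the defining relations (tt), (xx), (yy), (yx), (tx), (ty) exhaust the relations of the corresponding minimal model. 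The difficulty, and the reason the statement is interesting, is that $\Pell^\Phi$ is \emph{not} $1$--formal: the relations (tx), (ty), (tt) sit in degrees $3$ and $4$, so they are \emph{not} quadratic cup-product relations coming from $H^2(T_{\reg})$ and cannot be produced by the naive quadratic-duality argument; one must instead match them with the Massey-product data of the minimal model. I would organise this verification by the same reduction to rank $2$ used in the flatness proof (Lemma~\ref{reduction}): the fibration of $T_{\reg}$ by once- and $n$--punctured elliptic curves reduces the relation-matching, and hence injectivity, to the rank $\leq 2$ subsystems, where the higher relations can be checked directly. Once $\hat\mu$ is both injective and surjective it is an isomorphism of complete Hopf algebras; restricting to primitive elements and applying Milnor--Moore then yields the companion identification of the Malcev Lie algebra of $\Pell^\Phi$ with the graded completion of $\Aell$.
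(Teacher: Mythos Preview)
Your surjectivity argument is essentially the paper's: compute the degree--$1$ part of $\hat\mu$ on period loops and the degree--$2$ part on small loops around the $T_\alpha$, and observe that the generators $x(u),y(u),t_\alpha$ are hit. (A small correction: the $\tau$--period loop has leading term $2\pi i\,x(u)-\tau\,y(u)$, not just $x(u)$; but your conclusion is unaffected.)

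The injectivity argument, however, diverges from the paper and contains a genuine gap. You propose to run the Chen/minimal--model machinery as in \cite{CEE,E} and to organise the Massey--product verification via ``the fibration of $T_{\reg}$ by once-- and $n$--punctured elliptic curves'' together with the rank--$2$ reduction of Lemma~\ref{reduction}. That fibration is a type~$\mathsf A$ artefact: forgetting a marked point gives a Fadell--Neuwirth tower for configuration spaces, but for a general root system $\Phi$ there is no analogous tower of $T_{\reg}$, and Lemma~\ref{reduction} is a statement about decomposing a curvature $2$--form into rank--$2$ pieces, not about the homotopy type of $T_{\reg}$. So the step ``reduce the relation--matching to rank $\leq 2$'' has no obvious implementation, and without it you would have to compute the $1$--minimal model of $T_{\reg}$ from scratch for each $\Phi$.

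The paper avoids this entirely by reversing the logic. Rather than proving $\hat\mu$ injective, it passes to associated graded and builds a map \emph{in the other direction}
\[
p:\widehat{U(\Aell)}\longrightarrow \gr\bigl(\widehat{\C\Pell^\Phi}\bigr),\qquad t_\alpha\mapsto T_\alpha,\ x(\lambda_i^\vee)\mapsto X_i,\ y(\lambda_i^\vee)\mapsto Y_i,
\]
which is manifestly surjective. The work is then to show $p$ is well defined, i.e.\ that the defining relations of $\Aell$ already hold in $\gr(\widehat{\C\Pell^\Phi})$. This is done not via minimal models but by importing known group--theoretic facts: the $(tt)$ relations from the local structure of the divisors $T_\alpha\cap T_\beta$ (Lemma~\ref{lem:subtori adj} and \cite[Lemma~1.45]{TL2}), the $(yx)$ relation from the intersection pairing on a punctured torus (Proposition~\ref{prop:yx}), and $(xx),(yy),(tx),(ty)$ directly from Cherednik's presentation of the double affine braid group \cite[Thm.~2.4]{C}, where these identities hold on the nose in $\Bell$. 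Finally one checks that $\gr(\hat\mu)\circ p$ is the explicit automorphism $x(u)\mapsto -y(u)$, $y(u)\mapsto 2\pi i\,x(u)-\tau y(u)$, $t_\alpha\mapsto 2\pi i\,t_\alpha$ of $\widehat{U(\Aell)}$; since $p$ is onto and the composite is bijective, both $p$ and $\gr(\hat\mu)$ are isomorphisms. The point is that one never needs to know that the $\Aell$--relations are \emph{all} the relations of $\gr(\widehat{\C\Pell^\Phi})$; that comes out as a consequence.
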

Taking the primitive elements of the Hopf algebras $\widehat{\C \Pell^\Phi}$ and $\widehat{U(\Aell)}$, we get an isomorphism of the Malcev Lie algebra of $\Pell^\Phi$ and the Lie algebra $\Aell$ defined in Definition \ref{def:holonomy Aell}.

\begin{corollary}\label{cor:ness}
The defining relations of the Lie algebra $\Aell$ are necessary to make connection of the form \eqref{conn any type} flat.
\end{corollary}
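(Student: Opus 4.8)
The plan is to deduce necessity from Theorem~\ref{thm:formal} by recasting it as an isomorphism of Lie algebras. The assertion is that if the connection \eqref{conn any type} is flat in an algebra $A$, then the relations (tt)--(ty) hold in $A$. Let $\mathfrak b$ denote the Lie algebra generated by $\{t_\alpha\}$ together with linear maps $x,y\colon\h\to\mathfrak b$, subject to \emph{only} the relations forced by the vanishing of the curvature $\Omega$ computed in Section~3. The flatness computation of that section shows that (tt)--(ty) already imply $\Omega=0$, so the flatness ideal is contained in the ideal generated by (tt)--(ty); this produces a canonical surjection $\pi\colon\mathfrak b\twoheadrightarrow\Aell$ sending generators to generators, and necessity is exactly the statement that $\pi$ is an isomorphism. (Equivalently one can argue through the universal property of the Malcev completion, since any such flat $A$ receives a map from $\mathfrak b$.)

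Since $\Omega$ vanishes in $\mathfrak b$ by construction, the connection is flat over $\mathfrak b$ and produces a monodromy homomorphism $\mu_{\mathfrak b}\colon\Pell^\Phi\to\exp(\widehat{\mathfrak b})$, hence a map of complete Hopf algebras $\hat\mu_{\mathfrak b}\colon\widehat{\C\Pell^\Phi}\to\widehat{U(\mathfrak b)}$. Because $\pi$ carries the $\mathfrak b$--connection to the $\Aell$--connection term by term, it intertwines the two monodromies: $\hat\mu_{\Aell}=U(\pi)\circ\hat\mu_{\mathfrak b}$. Theorem~\ref{thm:formal} asserts that $\hat\mu_{\Aell}$ is an isomorphism, so $\hat\mu_{\mathfrak b}$ is injective. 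It therefore suffices to prove that $\hat\mu_{\mathfrak b}$ is surjective: for then it is an isomorphism, $U(\pi)=\hat\mu_{\Aell}\circ\hat\mu_{\mathfrak b}^{-1}$ is an isomorphism, and hence so is $\pi$, since $\mathfrak b\hookrightarrow U(\mathfrak b)$ forces $\pi$ to be injective whenever $U(\pi)$ is, while $\pi$ is surjective by construction.

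The remaining and genuinely delicate point is the surjectivity of $\hat\mu_{\mathfrak b}$, equivalently that the image of $\mu_{\mathfrak b}$ topologically generates $\exp(\widehat{\mathfrak b})$, and this is where I expect the main obstacle to lie. The idea is that all three families of generators appear as leading terms of monodromies. Since $k(z,x\mid\tau)=\tfrac1z+O(1)$ in $z$, the residue of the one--form $k(\alpha,\ad(\tfrac{x_{\alpha^\vee}}{2})\mid\tau)(t_\alpha)\,d\alpha$ along the divisor $T_\alpha$ is $t_\alpha$, so the logarithm of the monodromy around a small loop encircling $T_\alpha$ equals $2\pi i\,t_\alpha$ modulo higher filtration degree; likewise the translation--invariant term $\sum_i y(u^i)\,du_i$ and the quasi--periodicity factor $e^{-2\pi i x(\lambda_j^\vee)}$ of Proposition~\ref{quasi peri} exhibit $y(\lambda^\vee)$ and $x(\lambda^\vee)$ as leading terms of the monodromies along the two families of $1$--cycles of $T$. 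To upgrade these leading--order statements to an honest generation statement I would pass to the associated graded for the degree filtration: the relations (tt)--(ty) are homogeneous, the residue computation pins down the degree--$2$ and degree--$1$ parts of the relevant monodromy logarithms as $t_\alpha$, $x(\lambda^\vee)$, $y(\lambda^\vee)$ without ambiguity, and a graded map that is both surjective and (by the above) injective must be an isomorphism. Controlling the higher--order corrections and verifying that $\gr\mathfrak b$ is thereby exhausted is the part that requires real care.
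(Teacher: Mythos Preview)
Your approach is correct and is essentially the paper's argument, recast through the universal object $\mathfrak b$. The paper's proof is considerably shorter: it simply observes that flatness over any $A$ yields a monodromy map $\widehat{\C\Pell^\Phi}\to\widehat{U(A)}$, and composing with the inverse of the isomorphism of Theorem~\ref{thm:formal} produces a well--defined map $\Aell\to A$, whence the defining relations of $\Aell$ hold in $A$. What you flag as the ``genuinely delicate'' surjectivity of $\hat\mu_{\mathfrak b}$ is exactly the leading--term computation of $\gr(\hat\mu)$ already carried out inside the proof of Theorem~\ref{thm:formal} (namely $T_\alpha\mapsto 2\pi i\,t_\alpha$, $X_{u^i}\mapsto -y(u^i)$, $Y_{u^i}\mapsto 2\pi i\,x(u^i)-\tau y(u^i)$); since that computation uses only the explicit form of the connection and the quasi--periodicity of Proposition~\ref{quasi peri}, not any relation in the target, it applies verbatim with $\mathfrak b$ in place of $\Aell$. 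So no further work is required beyond invoking Theorem~\ref{thm:formal}, and your intermediate object $\mathfrak b$, while harmless, is unnecessary.
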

\begin{proof}
Assume $\nabla$ is a flat connection defined on a principal bundle with structure group $\exp(\widehat{A})$ with the form \eqref{conn any type}. The monodromy of $\nabla$ induces a map $\widehat{\C \Pell^\Phi} \to \widehat{U(A)}$.  By Theorem \ref{thm:formal}, $\widehat{\C \Pell^\Phi}\cong  \widehat{U(\Aell)}$. 
Therefore, we have a well-defined map $\Aell\to A$. This implies the claim. 
\end{proof}

The rest of the section is devoted to prove Theorem \ref{thm:formal}.
Note that both $\widehat{\C \Pell^\Phi}$ and $\widehat{\Aell}$ are $\N-$filtered and $\hat \mu$ preserves the grading. It suffices to show the associated graded
\[\gr(\hat{\mu}): \gr(\widehat{\C \Pell^\Phi}) \to \gr(\widehat{U(\Aell)})=\widehat{U(\Aell)}\] is an isomorphism.

We first describe the generators of the pure elliptic braid group $ \Pell^\Phi$. 
The following proposition can be found in \cite[Proposition A.1]{BMR}.
\begin{prop}
Let $i$ be the injection of an irreducible divisor $D$ in a smooth connected complex variety $Y$ and base point $x_0\in Y-D$.
Then, the kernel of the morphism
\[
\pi_1(i): \pi_1(Y-D, x_0)\to \pi_1(Y, x_0)
\]
is generated by all the generators of the monodromy around $D$.
\end{prop}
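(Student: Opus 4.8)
The plan is to prove the two inclusions separately, exploiting that $D\subset Y$ is complex (hence of even, in particular positive, real codimension). Write $N\leq\pi_1(Y-D,x_0)$ for the subgroup generated by the meridians of $D$, where a \emph{meridian} is a loop of the form $g\,m\,g^{-1}$ with $m$ the boundary of a small $2$--disk meeting $D$ transversally in a single smooth point and $g$ a homotopy class of path in $Y-D$ from $x_0$ to the basepoint of $m$. Since $D$ is irreducible, its smooth locus $D_{\reg}$ is connected, so all such $m$ are conjugate in $\pi_1(Y-D,x_0)$ and $N$ is their normal closure; the goal is to show $N=\ker\pi_1(i)$.

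First I would verify the easy inclusion $N\subseteq\ker\pi_1(i)$. By construction a meridian $m$ bounds a small $2$--disk $\Delta\subset Y$ meeting $D$ transversally once; as $\Delta$ lies entirely in $Y$, the loop $m$ is null--homotopic in $Y$, so $\pi_1(i)(m)=1$, and the same holds for every conjugate. Hence $N\subseteq\ker\pi_1(i)$.

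The substance is the reverse inclusion $\ker\pi_1(i)\subseteq N$, which I would establish by transversality. Given $[\gamma]\in\ker\pi_1(i)$ represented by a loop $\gamma$ in $Y-D$, choose a null--homotopy $f\colon D^2\to Y$ with $f|_{\partial D^2}=\gamma$. The singular locus $D_{\mathrm{sing}}$ has real codimension at least $4$ in $Y$, so a generic perturbation of $f$ rel $\partial D^2$ avoids $D_{\mathrm{sing}}$ and is transverse to the real--codimension--$2$ submanifold $D_{\reg}$; then $f^{-1}(D)=\{p_1,\dots,p_k\}$ is a finite subset of the open disk. Cutting $D^2$ along disjoint arcs running from $\partial D^2$ to small circles around the $p_j$ then exhibits $[\gamma]$ as a product $\prod_{j}g_j\,m_j^{\pm1}\,g_j^{-1}$ of conjugated meridians $m_j$, the images of the small circles, each of which is a meridian because $f$ is transverse to $D_{\reg}$ at $p_j$; hence $[\gamma]\in N$.

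The hard part will be making this cutting--and--decomposition step rigorous: one must organise the arcs into a based system (a ``spider'') in $D^2\setminus\{p_1,\dots,p_k\}$ so that reading the boundary word off $\partial D^2$ produces an honest expression of $[\gamma]$ as a product of the $g_j\,m_j^{\pm1}\,g_j^{-1}$, and one must confirm that each local contribution is exactly one meridian (here the complex orientation forces the local intersection number to be $+1$, although for the subgroup statement the sign is immaterial since $m^{-1}$ and $m$ generate the same subgroup). As this is a classical fact about complements of divisors, I would either carry out this transversality bookkeeping in full or defer to \cite[Proposition A.1]{BMR}.
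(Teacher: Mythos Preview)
Your transversality argument is the standard and correct approach to this classical fact. Note, however, that the paper does not actually supply a proof of this proposition: it simply quotes the statement and attributes it to \cite[Proposition~A.1]{BMR}. So there is no ``paper's own proof'' to compare against; the paper takes exactly the option you mention in your final sentence, namely deferring to the literature. Your sketch of the two inclusions---meridians bound disks in $Y$, and conversely a null-homotopy of a loop in $\ker\pi_1(i)$ can be made transverse to $D_{\reg}$ and disjoint from $D_{\mathrm{sing}}$, after which the spider decomposition exhibits the loop as a product of conjugated meridians---is precisely how this is proved in \cite{BMR} and elsewhere, and the bookkeeping you flag as ``the hard part'' is routine once the transversality is in place.
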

Using induction on the number of irreducible divisors, we have the following. 
\begin{corollary}
Suppose $X=Y-\bigcup_{i=1}^n D_i$, where $D_i$ are irreducible divisors of $Y$. Then, the fundamental group $\pi_1(X, x_0)$ is generated
by all the generators of the monodromy around the divisors $D_i$ and generators of $\pi_1(Y, x_0)$.
\end{corollary}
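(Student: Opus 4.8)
The plan is to argue by induction on the number $n$ of divisors removed, peeling off one divisor at a time and applying the stated Proposition at each stage. To organise the induction I would introduce the intermediate open sets
\[
Y_k := Y \setminus \bigcup_{i=1}^{k} D_i, \qquad 0 \le k \le n,
\]
so that $Y_0 = Y$, $Y_n = X$, and there is a chain of inclusions $X = Y_n \subset Y_{n-1} \subset \cdots \subset Y_0 = Y$, with the base point $x_0 \in X$ lying in every $Y_k$. The case $n=0$ is trivial, and the case of a single divisor is exactly the Proposition together with the surjectivity of the inclusion-induced map discussed below.

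For the inductive step, suppose $\pi_1(Y_{k-1}, x_0)$ is generated by the monodromy generators around $D_1, \dots, D_{k-1}$ and by lifts of a fixed set of generators of $\pi_1(Y, x_0)$. I would first verify that the Proposition applies with $Y_{k-1}$ in place of $Y$ and $D_k \cap Y_{k-1}$ in place of $D$: since each $D_i$ has real codimension $2$, every $Y_{k-1}$ is a smooth connected complex variety; and since $D_k$ is irreducible and distinct from $D_1,\dots,D_{k-1}$, the intersection $D_k \cap Y_{k-1}$ is a nonempty dense open subset of the irreducible variety $D_k$, hence an irreducible divisor of $Y_{k-1}$. As $Y_{k-1} \setminus (D_k \cap Y_{k-1}) = Y_k$, the Proposition then shows that the kernel of
\[
\pi_1(i_k)\colon \pi_1(Y_k, x_0) \longrightarrow \pi_1(Y_{k-1}, x_0)
\]
is generated by the monodromy loops around $D_k$.

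It remains to upgrade \emph{kernel generated} to \emph{whole group generated}. Here I would use two facts. First, $\pi_1(i_k)$ is surjective, since any loop in $Y_{k-1}$ can be homotoped off the real-codimension-$2$ set $D_k \cap Y_{k-1}$ and so lifts to $Y_k$. Second, the elementary group-theoretic observation that in a short exact sequence $1 \to K \to G \to H \to 1$, if $K=\langle A\rangle$ and $H=\langle B\rangle$, then $G$ is generated by $A$ together with any choice of preimages of $B$. Applying this with $K=\langle\,\text{monodromy around }D_k\,\rangle$, $G=\pi_1(Y_k,x_0)$, $H=\pi_1(Y_{k-1},x_0)$, I conclude that $\pi_1(Y_k)$ is generated by the monodromy around $D_k$ together with lifts of the generators of $\pi_1(Y_{k-1})$ supplied by the inductive hypothesis. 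Finally I would note that these lifts have the required form: a small meridian around $D_i$ ($i<k$) can be chosen to avoid $D_k$, hence lifts to a monodromy generator around $D_i$ in $\pi_1(Y_k)$, while a lift of a generator of $\pi_1(Y)$ still maps to that same generator under the composite $\pi_1(Y_k)\to\pi_1(Y)$. This closes the induction.

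The main obstacle I anticipate is not the group theory but confirming at each stage that the Proposition genuinely applies: one must check that $Y_{k-1}$ remains smooth and connected and, more delicately, that $D_k \cap Y_{k-1}$ is still an \emph{irreducible} divisor of $Y_{k-1}$ rather than breaking into several components once the earlier divisors are deleted. This is precisely where the irreducibility of $D_k$ and the distinctness of the $D_i$ are essential. The secondary point needing care is the bookkeeping of lifts, ensuring that the monodromy generators around $D_1,\dots,D_{k-1}$ and the chosen generators of $\pi_1(Y)$ pull back to elements of the same type in $\pi_1(Y_k)$, which again rests on the surjectivity of the inclusion-induced maps.
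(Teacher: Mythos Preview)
Your approach is correct and is precisely the one the paper indicates: the paper's entire argument is the single sentence ``Using induction on the number of irreducible divisors'', and you have simply fleshed out that induction with the necessary verifications (irreducibility of $D_k\cap Y_{k-1}$, surjectivity of $\pi_1(i_k)$, and the group-theoretic lifting). There is no discrepancy in method, only in level of detail.
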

As a consequence, the generators of $\Pell^\Phi$ can be chosen as $T_\alpha$, $X_1, \dots, X_n$, $Y_1, \dots, Y_n$, where $T_{\alpha}$ is the path around the divisor $H_{\alpha}$, for $\alpha\in \Phi^+$ and $\{X_i, Y_i\}$ are the standard generators of the torus $E\otimes P^\vee \cong E^n$. 
The presentation of $\widehat{\C \Pell^\Phi}$ can be described as follows.
Suppose $\Pell^\Phi$ is presented by generators $g_1, \dots, g_n$ and relations $R_i(g_1, \dots, g_n)$, $i=1, \dots, p$.
Then $\widehat{\C \Pell^\Phi}$ is the quotient of the free Lie algebra generated by $\gamma_1, \dots, \gamma_n$ by the ideal generated by
$\log(R_i(e^{\gamma_1}, \dots, e^{\gamma_n}))$, $i=1, \dots, p$.
We will abuse notation and denote the generators of $\widehat{\C \Pell^\Phi}$ by 
\[
T_\alpha, X_1, \dots, X_n, Y_1, \dots, Y_n\,\  \text{for $\alpha\in \Phi^+$}.
\]
We construct a map \[ p: \widehat{U(\Aell)} \to \gr(\widehat{\C \Pell^\Phi}),\] by sending
$t_\alpha\mapsto T_\alpha$, $x(\lambda_i^\vee) \mapsto X_i$, and $y(\lambda_i^\vee) \mapsto Y_i$. By definition, it is clear that $p$ is surjective. 
We now check the following in the next two subsections. 
\begin{enumerate}
\item $p$ is a homomorphism.
\item $\gr(\hat{\mu})\circ p$ is an isomorphism.
\end{enumerate}
This implies that $\gr(\hat{\mu})$ is an isomorphism, which in turn proves Theorem \ref{thm:formal}. 

\subsection{$p$ is a homomorphism}
\subsubsection{The map $p$ respects the (tt) relations}
We follow the approach in \cite{TL1} to show that $p$ preserves the (tt) relations. 

Let $T:=\Hom_\Z(Q, \E)=P^\vee\otimes_\Z\E$ be the torus. 
Denote $\ker(\chi_\alpha)$ by $T_\alpha \subseteq T$.
\begin{lemma}\label{lem:subtori adj}
There exist a component $(T_\alpha \cap T_\beta)_\chi$, such that $(T_\alpha \cap T_\beta)_\chi$ contained in a subtorus $T_\gamma$ if and only if $\gamma \in \Z\alpha+\Z\beta$.
\end{lemma}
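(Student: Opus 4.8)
The plan is to pin down one connected component of $T_\alpha\cap T_\beta$ by prescribing its ``heights'' under $\chi_\alpha$ and $\chi_\beta$, and then to characterise directly which subtori $T_\gamma$ contain it. Throughout I assume, as in the preceding lemma, that $\alpha\neq\pm\beta$, so that $\alpha,\beta$ are linearly independent in $\h^*$. Let $\pi\colon\h\to T=\h/(P^\vee+\tau P^\vee)$ be the projection and set $V=\ker(\alpha)\cap\ker(\beta)\subset\h$, a $\Q$--rational subspace of complex codimension $2$. Since $\alpha,\beta$ are linearly independent the map $(\alpha,\beta)\colon\h\to\C^2$ is surjective, so I may fix $z_0\in\h$ with $\alpha(z_0)=1$ and $\beta(z_0)=\tau$; as $1,\tau\in\Lambda_\tau$ the point $\pi(z_0)$ lies in $T_\alpha\cap T_\beta$, and I take $(T_\alpha\cap T_\beta)_\chi$ to be the connected component containing $\pi(z_0)$.

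Next I would identify this component explicitly. The closed subgroup $T_\alpha\cap T_\beta$ of the torus $T$ has Lie algebra $\ker(d\chi_\alpha)\cap\ker(d\chi_\beta)=V$, so its identity component is the subtorus $\pi(V)$ (closed because $V$ is rational), whence $(T_\alpha\cap T_\beta)_\chi=\pi(z_0+V)$. For a root $\gamma$, the inclusion $(T_\alpha\cap T_\beta)_\chi\subseteq T_\gamma$ means exactly that $\chi_\gamma$ vanishes on $\pi(z_0+V)$, i.e. $\gamma(z_0)+\gamma(v)\in\Lambda_\tau$ for every $v\in V$. Since $\gamma(V)$ is a complex subspace of $\C$ and $\Lambda_\tau$ is discrete, this holds if and only if $\gamma|_V=0$ and $\gamma(z_0)\in\Lambda_\tau$.

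Finally I would translate these two conditions into the lattice condition. The vanishing $\gamma|_V=0$ is equivalent to $\gamma\in V^\perp=\C\alpha+\C\beta$; and because $\alpha,\beta,\gamma$ all lie in the real (indeed rational) span of the roots, the coefficients are forced to be rational, so $\gamma=a\alpha+b\beta$ with $a,b\in\Q$ uniquely determined. Then $\gamma(z_0)=a\cdot 1+b\cdot\tau=a+b\tau$, and since $\Im\tau>0$ the numbers $1,\tau$ are linearly independent over $\Q$; hence $a+b\tau\in\Lambda_\tau=\Z+\Z\tau$ if and only if $a,b\in\Z$, i.e. if and only if $\gamma\in\Z\alpha+\Z\beta$. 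Combining the two conditions, $(T_\alpha\cap T_\beta)_\chi\subseteq T_\gamma$ holds precisely when $\gamma\in\Z\alpha+\Z\beta$, noting that if $\gamma\notin\C\alpha+\C\beta$ then both sides already fail; this is the assertion.

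The routine ingredients are the standard facts that a rational subspace exponentiates to a closed subtorus and that the Lie algebra of $T_\alpha\cap T_\beta$ is $V$. The only genuine input is the $\Q$--linear independence of $1$ and $\tau$, which is exactly what forces the passage from the $\C$--span condition down to the integral lattice $\Z\alpha+\Z\beta$, and is the reason the elliptic (as opposed to rational or trigonometric) setting sees all of $\Psi_{\alpha,\beta}$. The one place where care is needed is the choice of component: prescribing $\alpha(z_0)=1$ and $\beta(z_0)=\tau$ is precisely what separates integral from merely rational combinations, so I expect the bookkeeping of ``which component'' to be the only subtle step.
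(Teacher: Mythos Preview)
Your argument is correct, and it takes a genuinely different route from the paper's. The paper proceeds in coordinates: after reducing (without loss of generality) to the case where $\alpha$ is simple, it extends $\alpha$ to a $\Z$--basis $\{e_1=\alpha,e_2,\dots,e_n\}$ of $Q$ with $\beta=a\alpha+be_2$, identifies $T\cong\mathcal{E}_\tau^n$ via the dual basis, and takes the component through the \emph{real} $b$--torsion point $(0,1/b,0,\dots,0)$. The containment condition for $\gamma=\sum m_ie_i$ then reads $m_i=0$ for $i\geq 3$ and $b\mid m_2$, with the integrality of the $\alpha$--coefficient coming for free from $\gamma\in Q$. Your choice of component, determined by $\alpha(z_0)=1$ and $\beta(z_0)=\tau$, instead uses both generators of $\Lambda_\tau$ simultaneously: for $\gamma=a\alpha+b\beta$ with $a,b\in\Q$, the condition $\gamma(z_0)=a+b\tau\in\Z+\tau\Z$ forces $a,b\in\Z$ directly via the $\R$--linear independence of $1$ and $\tau$. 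This avoids the coordinate choice and the reduction to $\alpha$ simple, and it makes transparent exactly where the ellipticity (namely $\tau\notin\R$) enters; the paper's version, by contrast, makes the component count $b^2$ and the role of the primitive closure of $\Z\alpha+\Z\beta$ more visible. Both arguments hinge on the same routine fact that $\pi(V)$ is closed (because $V$ is rational) and hence equals the identity component of $T_\alpha\cap T_\beta$.
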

\begin{proof}
Without loss of generality, we assume $\alpha$ is a simple root. Note $\Z\alpha+\Z\beta$ may not be a primitive sublattice in $Q$.
Pick a vector $e_2$ such that $\alpha, e_2$ generate the primitive closure $\overline{\Z\alpha+\Z\beta}$ of $\Z\alpha+\Z\beta$. Write $\beta=a\alpha+b e_2$. Extend the set $\{\alpha, e_2\}$ to a basis $\{e_1:=\alpha, e_2, e_3, \dots, e_n\}$ of $Q$.
Choose the corresponding dual basis $\{f_1, f_2, f_3, \dots, f_n\}$ of $P^\vee$. Recall that the coweight lattice $P^\vee$ is dual to root lattice $Q$.

Identify $T=P^\vee\otimes_\Z \E$ with $(\E)^n$ using the basis $\{f_1, f_2, f_3, \dots, f_n\}$ of $P^\vee$. Then, the maps $\chi_\alpha, \chi_\beta$ are giving by
\begin{align*}
&\chi_\alpha: T=(\E)^n \to \E,  \,\ \sum_{i=1}^n z_i f_i \mapsto z_1.\\
&\chi_\beta: T=(\E)^n \to \E,\,\  \sum_{i=1}^n z_i f_i \mapsto a z_1+b z_2.
\end{align*}
Thus, $T_\alpha\cong \{0\} \times (\E)^{n-1}\subseteq (\E)^n$, and $T_\alpha\cap T_\beta=\{(0, z_2, \dots, z_n)\subseteq(\E)^n \mid b z_2=0\}$. It is clear that the number of connected components of $T_\alpha \cap T_\beta$ is $b^2$.
Taking the component  $(T_\alpha\cap T_\beta)_\chi$ to be $\{(0, \frac{1}{b}, z_3 \dots, z_n)\subseteq(\E)^n\}$.
Write $\gamma=\sum_{i=1}^n m_i e_i$, then, the following holds.
\begin{align*}
(T_\alpha \cap T_\beta)_\chi \subseteq T_\gamma &\Longleftrightarrow
\frac{1}{b}m_2+\sum_{i=3}^n m_i z_i\in \Z+\tau \Z, \forall z_i\in \C, i=3, \dots n.\\
&\Longleftrightarrow b\mid m_2, \,\ \text{and } \,\ m_3=\cdots =m_n=0.\\
&\Longleftrightarrow \gamma \in \Z\alpha+\Z\beta.
\end{align*}
This completes the proof. 
\end{proof}

\begin{prop}
The (tt)-relation $[t_\alpha, \sum_{\beta \in \Psi^+}t_\beta]=0$ holds in $\gr(\widehat{\C \Pell^\Phi})$.
\end{prop}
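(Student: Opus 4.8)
The plan is to reduce the assertion to a purely local statement about the fundamental group of the complement of a central line arrangement, using Lemma \ref{lem:subtori adj} to pin down exactly which root hypertori are involved. Concretely, set $\Psi = (\Z\alpha + \Z\beta)\cap\Phi$ for the rank-2 subsystem at hand and fix the intersection component $(T_\alpha \cap T_\beta)_\chi$ produced by Lemma \ref{lem:subtori adj}; by that lemma the hypertori $T_\gamma$ containing this component are precisely those with $\gamma \in \Z\alpha+\Z\beta$, i.e. the $T_\gamma$ with $\gamma \in \Psi^+$ (up to sign). This identification is the crucial input, since it guarantees that the relation we extract will involve exactly the sum $\sum_{\beta \in \Psi^+} t_\beta$ and no spurious terms.

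First I would choose a generic point $p$ of $(T_\alpha \cap T_\beta)_\chi$ lying on no further hypertorus, and take a small complex $2$-dimensional disk $\Delta \cong \C^2$ transverse to the component at $p$. Restricting the arrangement $\{T_\gamma\}_{\gamma\in\Phi}$ to $\Delta$ yields, by Lemma \ref{lem:subtori adj} together with transversality of the slice, exactly a central arrangement of lines $\{T_\gamma \cap \Delta\}_{\gamma \in \Psi^+}$ through $p$. The inclusion $\Delta \cap T_{\reg} \hookrightarrow T_{\reg}$ then induces a homomorphism $\pi_1(\Delta\cap T_{\reg}) \to \Pell^\Phi$ carrying the meridian $\mu_\gamma$ around $T_\gamma \cap \Delta$ to a conjugate of the generator $T_\gamma$.

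Next I would invoke the standard fact for a central arrangement of lines in $\C^2$: the product $\prod_{\gamma\in\Psi^+}\mu_\gamma$, taken in the cyclic order dictated by the arrangement, is central in the local fundamental group, being the meridian of a large loop bounding all the lines simultaneously. Passing to the associated graded of the augmentation filtration and writing $t_\gamma$ for the degree-one part of $\log\mu_\gamma$, centrality of the product becomes centrality of the sum, i.e. $[t_\gamma, \sum_{\delta\in\Psi^+} t_\delta] = 0$ for every $\gamma \in \Psi^+$; specializing to $\gamma = \alpha$ gives the claimed (tt)-relation.

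The final point is to transport this from the local group into $\gr(\widehat{\C\Pell^\Phi})$. Here I would use that conjugation acts trivially on the associated graded of the $J$-adic filtration, being unipotent, so that the image of $\mu_\gamma$ and the generator $T_\gamma$ share the same principal symbol $t_\gamma$ and the centrality relation descends verbatim. The main obstacle is precisely this faithful transfer of the \emph{local} centrality relation into the \emph{global} associated graded Lie algebra: one must verify that the meridians $\mu_\gamma$ really map to the chosen generators $T_\gamma$ rather than to unrelated loops, that the cyclic product contributes exactly $\sum_{\beta\in\Psi^+} t_\beta$ with unit coefficients, and that no lower-order corrections survive in $\gr$. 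The control over the relevant hypertori afforded by Lemma \ref{lem:subtori adj}, combined with the triviality of conjugation on the associated graded, are exactly what make this transfer clean.
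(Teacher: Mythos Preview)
Your proposal is correct and follows essentially the same approach as the paper: use Lemma~\ref{lem:subtori adj} to show that near a generic point of the chosen component $(T_\alpha\cap T_\beta)_\chi$ only the hypertori $T_\gamma$ with $\gamma\in\Psi$ survive, take a small transversal disc, and invoke the standard centrality of the product of meridians in the fundamental group of a central line arrangement to conclude $[t_\alpha,\sum_{\beta\in\Psi^+}t_\beta]=0$ in the associated graded. The only difference is cosmetic: the paper packages your local argument (the transversal slice, the central-arrangement fact, and the passage to $\gr$) into a single citation of \cite[Lemma~1.45]{TL2}, whereas you spell out these steps explicitly.
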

\begin{proof}
Fix two roots $\alpha, \beta\in \Phi^+$, let $(T_\alpha \cap T_\beta)_\chi$ be the 
 component as in Lemma \ref{lem:subtori adj}.
Choose a point $x_0\in (T_\alpha \cap T_\beta)_\chi$, such that $x_0 \notin T_\gamma$, for any $T_\gamma$ satisfying
$(T_\alpha \cap T_\beta)_\chi \nsubseteqq T_\gamma$.

Take an open disc neighborhood $D$ of $x_0$, such that $D\cap T_\gamma =\emptyset$, for those $T_\gamma$, satisfying
$(T_\alpha \cap T_\beta)_\chi \nsubseteqq T_\gamma$. By Lemma \ref{lem:subtori adj}, we have
\[D\cap (\bigcup_{\gamma\in \Phi^+} T_{\gamma})=\bigcup_{\gamma\in \Z\alpha+\Z\beta}(D\cap T_{\gamma}).\]
\cite[Lemma 1.45]{TL2} implies the $(tt)$ relation in $\gr(\widehat{\C \Pell^\Phi})$.
\end{proof}
\subsubsection{The map $p$ respects the (yx) relation}
\begin{prop}\label{prop:yx}
The map $p$ sends the (yx) relation
$[y(u), x(v)]=\sum_{\gamma\in \Phi^+}\langle v, \gamma\rangle \langle u, \gamma\rangle t_\gamma$ to zero in $\gr(\widehat{\C \Pell^\Phi})$.
\end{prop}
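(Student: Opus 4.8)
The plan is to reduce the $(yx)$ relation to a commutator computation on a two--torus and then read it off in the associated graded. Since the induced maps $v\mapsto X(v):=p(x(v))$ and $u\mapsto Y(u):=p(y(u))$ are linear, and since $\gamma\mapsto\langle v,\gamma\rangle\langle u,\gamma\rangle$ is bilinear in $(u,v)$, it suffices to treat the basis vectors $u=\lambda_i^\vee$, $v=\lambda_j^\vee$, i.e. to show $[Y_i,X_j]=\sum_{\alpha\in\Phi^+}\langle\lambda_i^\vee,\alpha\rangle\langle\lambda_j^\vee,\alpha\rangle\,T_\alpha$ in $\gr(\widehat{\C\Pell^\Phi})$. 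Using the coordinates $(z_1,\dots,z_n)$ on $T=\h/(P^\vee\oplus\tau P^\vee)\cong\E_\tau^n$ dual to $\{\lambda_i^\vee\}$, the generator $X_j$ is the image of the $A$--cycle $z_j\mapsto z_j+s$ and $Y_i$ of the $B$--cycle $z_i\mapsto z_i+s\tau$ (with $s\in[0,1]$ and the other coordinates fixed), and their commutator is supported on the real two--torus $\Sigma_{ij}\subset T$ swept out by these two loops.

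The crux is a surface relation on $\Sigma_{ij}$. Regarding $\Sigma_{ij}$ as a genus--one surface, the commutator of its $A$-- and $B$--cycles equals the product of the meridians around its punctures $\Sigma_{ij}\cap\bigcup_{\alpha}T_\alpha$; pushing this identity forward along the inclusion $\Sigma_{ij}\setminus\bigcup_\alpha T_\alpha\hookrightarrow T_{\reg}$ expresses $[Y_i,X_j]$ as a product of conjugates of the meridian generators $T_\alpha$, with one factor for each intersection point of $\Sigma_{ij}$ with $T_\alpha$, counted with its local orientation sign. The number of these points is the intersection multiplicity of $\Sigma_{ij}$ with the divisor $T_\alpha=\ker\chi_\alpha$: restricting $\chi_\alpha$ to $\Sigma_{ij}$ and using $\alpha(z)=\sum_k\langle\lambda_k^\vee,\alpha\rangle z_k$ exhibits a map of real two--tori whose degree is exactly $\langle\lambda_i^\vee,\alpha\rangle\langle\lambda_j^\vee,\alpha\rangle$ (for $i=j$ this specializes to the degree $\langle\lambda_i^\vee,\alpha\rangle^2$ of multiplication by $\langle\lambda_i^\vee,\alpha\rangle$ on the $i$--th elliptic factor). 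This is the $(yx)$--analogue of the local divisor input used above for $(tt)$ via \cite[Lemma 1.45]{TL2}.

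Finally I would pass to $\gr(\widehat{\C\Pell^\Phi})$. As in the genus--one case of \cite{CEE}, each meridian $T_\alpha$ has degree $2$ while $X_i,Y_i$ have degree $1$, so the commutator $[Y_i,X_j]$ lies in degree $2$, and conjugating a degree--$2$ element by a group element alters it only in degrees $\geq 3$; hence in the associated graded the product of conjugates collapses to the signed sum $\sum_{\alpha\in\Phi^+}\langle\lambda_i^\vee,\alpha\rangle\langle\lambda_j^\vee,\alpha\rangle\,T_\alpha$. Combined with the bilinear reduction this gives $p([y(u),x(v)])=[Y(u),X(v)]=\sum_{\gamma\in\Phi^+}\langle v,\gamma\rangle\langle u,\gamma\rangle\,T_\gamma=p\!\left(\sum_{\gamma}\langle v,\gamma\rangle\langle u,\gamma\rangle t_\gamma\right)$, i.e. $p$ annihilates the $(yx)$ relation.

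The main obstacle is making the surface relation precise and correctly oriented: one must present $\Sigma_{ij}$ as an (immersed) punctured torus meeting each $T_\alpha$ transversally, fix compatible orientations so that the signed count of intersection points reproduces the integer $\langle\lambda_i^\vee,\alpha\rangle\langle\lambda_j^\vee,\alpha\rangle$ rather than its absolute value, and check that the conjugating paths contribute only in degree $\geq 3$. Essentially all the work of the proposition lies in this transversality and orientation bookkeeping.
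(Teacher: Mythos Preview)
Your argument is correct in outline and reaches the same conclusion, but it is organized differently from the paper's proof. The paper does not work with the embedded two--torus $\Sigma_{ij}$ and its punctures all at once. Instead, after first observing (as you do) that by degree reasons $[Y_j,X_i]=\sum_\gamma C_{i,j,\gamma}T_\gamma$ in $\gr(\widehat{\C\Pell^\Phi})$, it determines each coefficient $C_{i,j,\alpha}$ separately by pushing forward along the projection $\varphi_\alpha\colon T_{\reg}\hookrightarrow (P^\vee\otimes\E)\setminus T_\alpha\xrightarrow{\chi_\alpha}\E\setminus\{0\}$. Under $\varphi_\alpha$ the loops $X_i,Y_j$ become the $\langle\lambda_i^\vee,\alpha\rangle$--th and $\langle\lambda_j^\vee,\alpha\rangle$--th powers of the $A$-- and $B$--cycles of the once--punctured torus, while every $T_\gamma$ with $\gamma\neq\alpha$ dies (its meridian becomes contractible once only $T_\alpha$ is removed). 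The known relation $[B,A]=t_\alpha$ in $\pi_1(\E\setminus\{0\})$ (Birman \cite[Cor.~5.1]{B}) then forces $C_{i,j,\alpha}=\langle\lambda_i^\vee,\alpha\rangle\langle\lambda_j^\vee,\alpha\rangle$.

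The trade--off is exactly the one you flag at the end. Your surface argument is geometrically transparent and yields all coefficients simultaneously, but it requires arranging $\Sigma_{ij}$ to meet each $T_\alpha$ transversally (which fails on the nose when one of $\langle\lambda_i^\vee,\alpha\rangle,\langle\lambda_j^\vee,\alpha\rangle$ vanishes, so a perturbation is needed) and keeping track of orientations so that signed intersection, rather than absolute count, appears. The paper's projection argument sidesteps this entirely: the transversality and sign bookkeeping are absorbed into the single, standard commutator relation on the once--punctured torus, and the degenerate case $\langle\lambda_j^\vee,\alpha\rangle=0$ is automatic since then $\varphi_\alpha(Y_j)$ is null--homotopic. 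Both routes are valid; the paper's is shorter precisely because it isolates one root at a time.
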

\begin{proof}
Note that the loop $[y(\lambda_j^\vee), x(\lambda_i^\vee)]$ of $T_{\reg}$ is a loop wrapping around the hypertori $T_\gamma$, $\gamma\in \Phi^+$.
How it wrap those hypertori maybe complicated. But in $\gr(\widehat{\C \Pell^\Phi})$, by degree reason, we have
\[[y(\lambda_j^\vee), x(\lambda_i^\vee)]=\sum_{\gamma\in \Phi^+}C_{i,j,\gamma} t_\gamma, \] for some coefficient $C_{i,j,\gamma} \in \Q$. 
We now determine the coefficients $C_{i,j,\gamma}$. 
Consider the map 
\[\chi_\alpha: (P^\vee\otimes_\Z \E)\setminus \ker \chi_\alpha\to \E\setminus \{0\}.\]
Denote by $\varphi_\alpha$ the composition
\[\varphi_\alpha: T_{\reg}:=(P^\vee\otimes_\Z \E)\setminus (\cup_{\gamma\in \Phi^+}\ker \chi_\gamma) \inj (P^\vee\otimes_\Z \E)\setminus \ker \chi_\alpha\to \E\setminus \{0\}\]
Let $x(\lambda_i^\vee)=(t+\frac{1}{2}+\frac{1}{2}\tau)\lambda_i^\vee$, where $0 \leq t \leq 1$, and $y(\lambda_j^\vee)=(s\tau+\frac{1}{2}+\frac{1}{2}\tau)\lambda_j^\vee$, where $0 \leq s \leq 1$ be two loops in $T_{\reg}$. Using \cite[Corollary 5.1]{B}, one has the following relation in the fundamental group of the punctured torus $\E\setminus \{0\}$: 
\[
\Big[\frac{1}{(\lambda_j^\vee, \alpha)}\varphi_\alpha (y(\lambda_j^\vee)), \frac{1}{(\lambda_i^\vee, \alpha)}\varphi_\alpha(x(\lambda_i^\vee))\Big]=t_\alpha,
\]
where $t_\alpha$ is the loop around the puncture $\{0\}$.
The above relation can be rewritten as $[\varphi_\alpha(y(u)), \varphi_\alpha(x(v))]=(u, \alpha)(v, \alpha)t_\alpha$. 
This determines the coefficient $C_{i,j,\gamma}=\langle \lambda_j^\vee, \gamma\rangle \langle \lambda_i^\vee, \gamma\rangle$.
This completes the proof. 
\end{proof}
\subsubsection{The map $p$ respects the (xx) (yy) and (tx) (ty) relations}
We show the relations $[x(u), x(v)]=0$, $[y(u), y(v)]=0$, for any $u, v\in \h$ and
the relations $[t_\alpha, x(u)]=0$, $[t_\alpha, y(u)]=0$ for $(\alpha, u)=0$ in the pure elliptic braid group $\Pell^\Phi$. 
There is an embedding of groups $\Pell^\Phi\inj \Bell$. It is suffices to show the relations in the elliptic braid group $\Bell$. 

We first recall some results from \cite{C} about the topological interpretation of the double affine braid group $\widetilde{\Bell}$.

Let $\iota^2=-1$, and set
\[
U=\{z\in \C^n\mid (\alpha, z)\notin \Z+\iota\Z, \,\ \text{for any root $\alpha\in \Phi$}\}, \,\
\bar{U}=\C^*\times U.
\]

Let $\bar{W}$ be the 2-extended Weyl group $\bar{W}:=W \ltimes (P^\vee \oplus \iota P^\vee)$.
For any element $\bar{w}=(w, a, b)\in \bar{W}$, the action of $\bar{w}$ on $\bar{z}=(z_*, z)\in \bar{U}$ is giving by
\[
\bar{w}(\bar{z})=( (-1)^{l'(w)}, w(z)+a+\iota b), \,\ \text{where $l'(w)$ is the modified length of $w$.}
\]

We fix a base point $\bar{z}^0=(z_*^0, z^0)$ such that the real and imaginary parts of $\{z_*^0, z_j^0, 1\leq j\leq n\}$ are positive and sufficiently small.
Note that the action of $\bar{W}$ on $\bar{U}$ is not free.
\begin{definition}\cite[Definition 2.3]{C}
The double affine braid group $\widetilde{\Bell}$ is formed by
the paths $\gamma\subset \bar{U}$ joining $\bar{z}^0$ with points from $\{\bar{w}(\bar{z}^0), \bar{w}\in \bar{W} \}$ modulo the homotopy and the action of $\bar{W}$.
\end{definition}

We introduce the elements $t_j:=t_{\alpha_j}$, $x_j$, $y_j$ for $1\leq j\leq n$ and $c \in \widetilde{\Bell}$ by the following paths, for $1\leq \psi \leq 1$,
\begin{align*}
&t_j(\psi)=\Big( z^0_*\exp (l'(j) \pi \iota \psi), \,\  z^0+(\exp(\pi \iota \psi)-1)\frac{(z^0, \alpha_j)\alpha_j}{2}\Big),\\
&x_j(\psi)=(z^0_*, z^0+\psi b_j), \,\ \,\  y_j(\psi)=(z^0_*, z^0+\iota \psi b_j),
\\
&c(\psi)=(z^0_* \exp(-2\pi \iota \psi), z^0).
\end{align*}

\begin{theorem}\cite[Theorem 2.4]{C}
The group $\widetilde{\Bell}$ is generated by $\{t_i, 1\leq i\leq n \}$, the two sets $\{x_i\}$, $\{y_i\}$ of pairwise commutative elements and central $c$ with the following relations
\begin{enumerate}
  \item $t_i t_j t_i \dots =t_j t_i t_j \dots$, $m_{ij}$ factors on each side.
  \item $t_i x_i t_i= x_i x^{-1}_{a_i}c_i^{-1}$ and
        $t_i^{-1} y_i t_i^{-1}= y_i y^{-1}_{a_i}c_i$,
  \item $p_r x_i p_r^{-1}=x_j x_i^{-(b_i, \theta)}$ and
        $p_r x_{r^*} p_r^{-1}=x_r^{-1}$,
\end{enumerate}
where $c_i=c^{l'(i)}$, $p_r:=y_r t_{w}^{-1}$ for $w=\sigma_r^{-1}$.
\end{theorem}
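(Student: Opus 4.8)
The plan is to realise $\widetilde{\Bell}$ as an orbifold fundamental group and to extract its presentation by the Zariski--van Kampen method, reducing to the known presentations of finite and affine Artin groups. First I would observe that the ``paths modulo homotopy and the action of $\bar W$'' description in the definition is exactly the orbifold fundamental group $\pi_1^{\orb}(\bar U/\bar W,\bar z^0)$ of the quotient of $\bar U=\C^*\times U$ by the properly discontinuous (non-free) action of $\bar W=W\ltimes(P^\vee\oplus\iota P^\vee)$. Since $\bar W$ acts on the $\C^*$ factor only through the sign character $w\mapsto(-1)^{l'(w)}$, the loop $c$ around $0\in\C^*$ is central and generates a copy of $\Z$; projecting it away yields a central extension
\[
1\to\langle c\rangle\to\widetilde{\Bell}\to\pi_1^{\orb}(U/\bar W,z^0)\to 1,
\]
and the half-rotations $z_*^0\exp(l'(j)\pi\iota\psi)$ built into the generators $t_j$ are precisely what produces the correction terms $c_i=c^{l'(i)}$. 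I would therefore first establish the presentation of $\pi_1^{\orb}(U/\bar W)$ and then lift it, tracking the $\C^*$-monodromy through this sign cocycle.

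Next I would set up the geometry of $U$. The lattice $P^\vee\oplus\iota P^\vee$ preserves $U$ (because $\langle\alpha,P^\vee\rangle\subset\Z$), and the quotient $U/(P^\vee\oplus\iota P^\vee)$ is the regular locus $T_{\reg}$ of the elliptic configuration space at $\tau=\iota$, with $T=\h/(P^\vee\oplus\iota P^\vee)$. Hence $\pi_1^{\orb}(U/\bar W)$ is the elliptic braid group $\Bell=\pi_1^{\orb}(T_{\reg}/W)$, fitting into $1\to\Pell^\Phi\to\Bell\to W\to 1$. The generators then acquire transparent meaning: the $t_i$ are the half-loops around the reflection hypertori $T_{\alpha_i}$ (Artin lifts of the simple reflections $s_i$), while the $x_i$ and $y_i$ are the loops along the two lattice directions $P^\vee$ and $\iota P^\vee$, i.e.\ the standard generators of the ambient torus $T$. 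To read off the relations I would apply Zariski--van Kampen to a generic affine slice and compute the braid monodromy around each reflection hypertorus, using that in a small transverse disc the arrangement near a codimension-two stratum is modelled on the rank-two subsystem $\Psi_{\alpha,\beta}$.

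This local analysis yields the three families of relations. The Artin braid relations~(1), with $m_{ij}$ factors, come from the codimension-two strata where $T_{\alpha_i}$ and $T_{\alpha_j}$ meet, where the local model is the rank-two Artin group of the corresponding finite parabolic, as in van der Lek's computation for reflection arrangements. Commutativity of the $\{x_i\}$ among themselves and of the $\{y_i\}$ among themselves follows from the abelianness of $\pi_1$ of the torus in each lattice direction. The mixed relations~(2) and~(3) are the genuinely double-affine input: conjugating the translation loop $x_i$ (resp.\ $y_i$) by the reflection half-loop $t_i$ sends the translation vector to its $s_i$-image, which accounts for the factor $x_{a_i}^{-1}$ (resp.\ $y_{a_i}^{-1}$), while the half-rotation in the $\C^*$ direction contributes the central factor $c_i^{\mp1}$; relation~(3), governing the glide elements $p_r=y_r t_w^{-1}$, records how the coweight translations conjugate the $x_i$ and is derived in the same way from the action of $\bar W$ on the lattice.

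The main obstacle is \emph{completeness}: proving that no relations beyond~(1)--(3) are needed. I would handle this by the standard induction on the rank, reducing via the Zariski--van Kampen presentation to the rank-one and rank-two subsystems, where the fundamental group of the complement of the elliptic reflection arrangement can be presented by hand, and then assembling the global presentation from the local ones along the poset of parabolic strata. The delicate points are the non-freeness of the $\bar W$-action (forcing genuine use of the orbifold rather than ordinary fundamental group, with careful bookkeeping of isotropy along the reflection hypertori) and the simultaneous presence of the two commuting lattices together with the central $\C^*$-twist, which is exactly the double-affine feature absent from the purely affine case treated by van der Lek. Verifying that the sign cocycle $w\mapsto(-1)^{l'(w)}$ is faithfully captured by the $c$-relations, and that the resulting extension splits correctly over $\Bell$, is the step I expect to require the most care.
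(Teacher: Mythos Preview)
The paper does not prove this theorem at all: it is quoted verbatim from Cherednik \cite[Theorem~2.4]{C} and used as a black box. Its only role here is to supply the relations $[x_i,x_j]=[y_i,y_j]=0$ and $t_i x_b=x_b t_i$, $t_i y_b=y_b t_i$ for $(b,\alpha_i)=0$ in the elliptic braid group $\Bell$, so that the map $p$ respects the (xx), (yy), (tx), (ty) relations. There is therefore no ``paper's own proof'' to compare your proposal against.

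That said, your outline is a reasonable sketch of how such a presentation is obtained, and it tracks the strategy one finds in the literature (van der Lek for affine Artin groups, Cherednik's topological interpretation for the double-affine case). Your identification of the orbifold fundamental group, the central extension by $\langle c\rangle$ coming from the $\C^*$ factor and the sign cocycle, and the geometric origin of relations (1)--(3) are all correct. You are also right that the hard part is completeness, and your proposed handling---Zariski--van Kampen plus a rank reduction to rank-two subsystems---is the standard move, though in practice the double-affine case requires substantially more care than your sketch suggests (the poset of strata is more intricate than in the linear or affine setting, and the bookkeeping of the two lattices together with the $\C^*$ twist is delicate). If you want to actually carry this out rather than cite it, you should consult Cherednik's original argument and Ion's work on involutions of double affine Hecke algebras, where these computations are done in detail.
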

As discussed in \cite{C}, the map $T_i\mapsto t_i$, $X_b\mapsto x_b c^{(\rho ', b)}$ and
$Y_b\mapsto y_b c^{-(\rho ', b)}$, $\delta\mapsto c^d$, where $\rho':=\sum_{\alpha\in \Phi^+}\frac{l'(\alpha)\alpha}{2}$, $d=\frac{(\rho', \theta)+1}{m}\in \Z$, $b\in P^\vee$, identifies the double affine Hecke group (see \cite{C} for the definition) with a subgroup of $\Bell$.
In particular, we have the relations
\[ T_i X_b=X_b T_i, \,\ \,\ T_i Y_b=Y_b T_i,  \,\ \text{if $(b, \alpha_i)=0$}. \]

\begin{corollary}
The orbifold fundamental group $\Bell$ of the space $T_{\reg}/W$ is isomorphic to the double affine braid group $\widetilde{\Bell}$ modulo the central element $c$.
\end{corollary}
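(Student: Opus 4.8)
The plan is to present both $\Bell$ and $\widetilde{\Bell}$ in Cherednik's path model and to relate them through the projection that forgets the central $\C^*$-direction of $\bar U=\C^*\times U$. First I would identify the base. Since $\bar W=W\ltimes(P^\vee\oplus\iota P^\vee)$ acts on $U$ through its affine-Weyl action with the translations forming the normal subgroup $P^\vee\oplus\iota P^\vee$, and since $U=\{z\in\h\mid(\alpha,z)\notin\Z+\iota\Z,\ \alpha\in\Phi\}$ is exactly the complement in $\h$ of the preimages of the divisors $T_\alpha$ for the square curve $\C/(\Z+\iota\Z)$, one has $U/\bar W=(U/(P^\vee\oplus\iota P^\vee))/W=T_{\reg}/W$ as orbifolds, whence $\pi_1^{\orb}(U/\bar W)=\Bell$. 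The topological type of $T_{\reg}/W$ is independent of $\tau\in\mathfrak H$ because the family over the contractible base $\mathfrak H$ is locally trivial, so computing with $\tau=\iota$ is harmless. Consequently $\Bell$ admits the same description Cherednik gives for $\widetilde{\Bell}$, namely classes of paths in $U$ joining the regular basepoint $z^0$ to a point of the orbit $\bar W z^0$, modulo homotopy and the $\bar W$-action.

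Next I would use the $\bar W$-equivariant projection $p\colon\bar U=\C^*\times U\to U$. On path classes it sends $\bar\gamma(t)=(z_*(t),z(t))$ to $z(t)$, which is well defined modulo homotopy and the $\bar W$-action because $p$ is $\bar W$-equivariant; this yields a homomorphism $\widetilde{\Bell}\to\Bell$. Lifting a path $\gamma$ in $U$ to $(z_*^0,\gamma)$ shows the map is surjective, and the loop $c(\psi)=(z_*^0e^{-2\pi\iota\psi},z^0)$ is sent to the constant path, so $c$ lies in the kernel. Since $p$ is a trivial $\C^*$-bundle, the induced $\bar p\colon\bar U/\bar W\to U/\bar W$ is an orbifold $\C^*$-fibration, and I would read the statement off its homotopy exact sequence
\[
\pi_1(\C^*)\longrightarrow\widetilde{\Bell}\longrightarrow\Bell\longrightarrow\pi_0(\C^*).
\]
As $\pi_0(\C^*)=1$ the right map is onto, and exactness identifies its kernel with the image of $\pi_1(\C^*)$; basing the fibration at the regular point $z^0$, whose $\bar W$-stabiliser is trivial, the fibre is a genuine $\C^*$ and $\pi_1(\C^*)=\Z$ is generated precisely by $c$, so this image is $\langle c\rangle$. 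Because $c$ is central in $\widetilde{\Bell}$ by Cherednik's theorem, $\langle c\rangle$ is normal and I would conclude $\widetilde{\Bell}/\langle c\rangle\cong\Bell$.

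The main obstacle is the kernel computation, that is, showing the fibre contributes exactly $\langle c\rangle$ and nothing more. I would substantiate the homotopy exact sequence concretely rather than quote orbifold-fibration machinery: given a class in $\widetilde{\Bell}$ mapping to the trivial class of $\Bell$, its $U$-component is trivialised by an orbifold homotopy in $U/\bar W$, which lifts through the trivial bundle $p$ to a homotopy in $\bar U/\bar W$ deforming $\bar\gamma$ into a loop in the fibre over $z^0$; that fibre being $\C^*$, the loop is a power of $c$. The delicate point, and the reason for anchoring everything at the regular basepoint $z^0$, is that over the orbifold locus the sign action $\bar w\mapsto(-1)^{l'(w)}$ replaces the fibre $\C^*$ by $\C^*/\{\pm1\}$; working at a point with trivial stabiliser avoids the resulting factor-of-two ambiguity in the generator and guarantees that the kernel is $\langle c\rangle$ exactly.
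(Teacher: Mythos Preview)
The paper states this corollary without proof, treating it as an immediate consequence of Cherednik's path-model definition of $\widetilde{\Bell}$ (paths in $\bar U=\C^*\times U$ joining $\bar z^0$ to its $\bar W$-orbit) together with the obvious projection $\bar U\to U$. Your argument is a correct and natural way to supply the missing details.

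Your identification $U/\bar W=T_{\reg}/W$ at $\tau=\iota$, together with the local triviality of the family over $\mathfrak H$, is exactly right, and the homotopy exact sequence of the $\C^*$-fibration $\bar U/\bar W\to U/\bar W$ is the cleanest way to pin down the kernel. Your attention to the basepoint is well placed; an equivalent and slightly more robust phrasing is to pass to Borel constructions: since $\bar U\to U$ is a $\bar W$-equivariant trivial $\C^*$-bundle, $E\bar W\times_{\bar W}\bar U\to E\bar W\times_{\bar W}U$ is an honest $\C^*$-fibration of spaces, its long exact sequence reads $\pi_1(\C^*)\to\widetilde{\Bell}\to\Bell\to 1$, and the generator of $\pi_1(\C^*)$ visibly maps to $c$. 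This avoids having to argue separately about the orbifold locus and the sign action. Either way, centrality of $c$ (from Cherednik's presentation) then gives $\widetilde{\Bell}/\langle c\rangle\cong\Bell$.
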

Therefore, the (xx) (yy) and (tx) (ty) relations hold in $\Pell^\Phi$.

\subsection{$\gr(\hat{\mu})\circ p$ is an isomorphism}
\begin{lemma}
The map $\gr(\hat{\mu}): \gr(\widehat{\C \Pell^\Phi}) \to \widehat{U(\Aell)}$ is given by
$X_{u^i} \mapsto -y(u^i)$, $Y_{u^i} \mapsto 2\pi i x(u^i) -\tau y(u^i)$, and
$T_\alpha \mapsto 2\pi i t_{\alpha}$.
\end{lemma}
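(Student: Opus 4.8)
The plan is to compute $\gr(\hat\mu)$ on the algebra generators $T_\alpha$ $(\alpha\in\Phi^+)$, $X_i:=X_{\lambda_i^\vee}$ and $Y_i:=Y_{\lambda_i^\vee}$ of $\gr(\widehat{\C\Pell^\Phi})$; since $\gr(\hat\mu)$ is an algebra homomorphism and these generate, this determines it, and the stated formula then follows for arbitrary $u\in\h$ by linearity of $x$ and $y$. For the generator attached to a loop $\gamma$, the image $\gr(\hat\mu)(\gamma)$ is the lowest-degree homogeneous component of $\log\mu(\gamma)$, where $\mu(\gamma)\in\exp(\widehat\Aell)$ is the monodromy of $\nabla_{\KZB,\tau}=d-A$ along $\gamma$, with $A=\sum_{\alpha\in\Phi^+}k(\alpha,\ad(\tfrac{x_{\alpha^\vee}}{2})|\tau)(t_\alpha)\,d\alpha-\sum_i y(u^i)\,du_i$. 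The monodromy is a (suitably corrected) path-ordered exponential $\overleftarrow{\exp}\int_\gamma A$, whose logarithm equals $\int_\gamma A$ up to Magnus corrections of strictly higher degree; so it suffices to extract the leading part of $\int_\gamma A$, bearing in mind $\deg x(u)=\deg y(u)=1$ and $\deg t_\alpha=2$.

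For $T_\alpha$, since $k(z,x|\tau)=z^{-1}+O(1)$ as $z\to 0$, the form $A$ has a simple pole along the divisor $T_\alpha$ with residue $t_\alpha$; hence the monodromy of a small positively oriented loop around $T_\alpha$ is $\exp(2\pi i\,t_\alpha)$ to leading order, so $T_\alpha\mapsto 2\pi i\,t_\alpha$ (a degree-$2$ element). For $X_i$, parametrise the real-period loop by $z(t)=z_0+t\lambda_i^\vee$, $t\in[0,1]$. The $t_\alpha$-part of $A$ contributes only in degree $\ge 2$, whereas the $y$-part gives $-\int_0^1\sum_j y(u^j)\langle u_j,\lambda_i^\vee\rangle\,dt=-y(\lambda_i^\vee)$; thus $X_i\mapsto -y(\lambda_i^\vee)$. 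For $Y_i$, parametrise the $\tau$-period loop by $z(s)=z_0+s\tau\lambda_i^\vee$, $s\in[0,1]$, and lift to the universal cover $\h$. The parallel transport $P=\overleftarrow{\exp}\int A$ along the lift has degree-$1$ part $-\tau y(\lambda_i^\vee)$, computed exactly as for $X_i$; and the identification of the fibre over $z_0+\tau\lambda_i^\vee$ with that over $z_0$ is multiplication by $e^{2\pi i x(\lambda_i^\vee)}$, the inverse of the transition law $f(z+\tau\lambda_i^\vee)=e^{-2\pi i x(\lambda_i^\vee)}f(z)$ defining $\mathcal P_{\tau,n}$. Therefore $\mu(Y_i)=e^{2\pi i x(\lambda_i^\vee)}P$ and $Y_i\mapsto 2\pi i\,x(\lambda_i^\vee)-\tau y(\lambda_i^\vee)$.

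The delicate step is the $Y_i$ computation: one must correctly fuse the parallel transport in the universal cover with the clutching function of the non-trivial bundle $\mathcal P_{\tau,n}$, and watch the orientation so that descending from $z_0+\tau\lambda_i^\vee$ back to $z_0$ yields $e^{+2\pi i x(\lambda_i^\vee)}$ rather than its inverse. The remaining, routine point is to justify the passage to the leading degree, namely that the higher iterated integrals in the path-ordered exponential and the subleading parts of $\int_\gamma A$ all lie in filtration degree strictly above the recorded term, hence are killed by $\gr$.
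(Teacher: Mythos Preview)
Your proof is correct and follows essentially the same approach as the paper. The paper phrases the computation in terms of a horizontal section $F_{\mathbf u^0}(\mathbf u)$ and invokes the quasi-periodicity of Proposition~\ref{quasi peri} to obtain the relations $F(\mathbf u+\delta_i)=F(\mathbf u)\mu(X_i)$ and $F(\mathbf u+\tau\delta_i)=e^{-2\pi i x(\lambda_i^\vee)}F(\mathbf u)\mu(Y_i)$, from which the leading parts of $\log\mu(X_i)$ and $\log\mu(Y_i)$ are read off; this is exactly your parallel-transport-plus-clutching-function computation written in the horizontal-section language, and your treatment of $T_\alpha$ via the residue of $k$ is identical to the paper's.
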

\begin{proof}
Recall the KZB connection \eqref{conn any type} is
\[\nabla_{\KZB, \tau}=d-\sum_{\alpha \in \Phi^+} k(\alpha, \ad(\frac{x_{\alpha^\vee}}{2})|\tau)(t_\alpha)d\alpha+\sum_{i=1}^{n}y(u^i)d u_i,\]
where $k(\alpha, \ad(\frac{x_{\alpha^\vee}}{2})|\tau)(t_\alpha)=\frac{1}{\alpha}t_\alpha+$ terms of degree $\geq 3$.
Let $F_{\textbf{u}^0}(\textbf{u})$ be the horizontal section of $\nabla_{\KZB, \tau}$. Then, $\log F_{\textbf{u}_0}(\textbf{u})=-\sum_{i=1}^n (u_i-u_i^0)y(u^i)+$ terms of degree $\geq 2$.

\begin{corollary}\label{cor:solution}
For some open sets $U$ and  $V$, we have
\begin{align*}
&F_{\textbf{u}^0}^{U}(\textbf{u}+\delta_i)=F_{\textbf{u}^0}^{U}(\textbf{u})\mu_{\textbf{u}_0}(x(u^i)),\\
&F_{\textbf{u}^0}^{V}(\textbf{u}+\tau\delta_i)=e^{-2 \pi i x(u^i)}F_{\textbf{u}^0}^{V}(\textbf{u})\mu_{\textbf{u}_0}(y(u^i)),
\end{align*}
where $\mu$ is the monodromy representation. 
\end{corollary}
\begin{proof}
This follows directly from Proposition \ref{quasi peri}.
\end{proof}
By Corollary \ref{cor:solution}, we get 
\begin{align*}
&\log \mu_{\textbf{u}_0}(x(u^i))=-y(u^i)+\text{ terms of degree $\geq 2$}, \\
&\log \mu_{\textbf{u}_0}(y(u^i))=2\pi i x(u^i)-\tau y(u^i)+\text{ terms of degree $\geq 2$.}
\end{align*}
Therefore, $\gr(\hat{\mu})(x(u^i))=-y(u^i)$ and $\gr(\hat{\mu})(y(u^i))=2\pi i x(u^i)-\tau y(u^i)$.
Note that \[
\hat{\mu}(T_\alpha)=\int_{T_\alpha}\! (\text{$\frac{1}{\alpha}t_\alpha+$ terms of degree $\geq 3$} \,) \mathrm{d} \alpha=
\text{$2\pi i t_\alpha$+terms of degree $\geq 3$.}
\]
So we have $\gr(\hat{\mu})(T_\alpha)=2\pi i t_{\alpha}$.
The argument above shows the following. 
\end{proof}
\begin{corollary}
The composition $\gr(\hat{\mu})\circ p: \widehat{U(\Aell)}\to \widehat{U(\Aell)}$ is given by
$x(u^i) \mapsto -y(u^i)$, $y(u^i) \mapsto 2\pi i x(u^i) -\tau y(u^i)$, and
$t_\alpha \mapsto 2\pi i t_{\alpha}$. In particular, $\gr(\hat{\mu})\circ p$ is an isomorphism.
\end{corollary}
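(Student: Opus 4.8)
The plan is to read the formula for $\gr(\hat{\mu})\circ p$ straight off the preceding Lemma, and then to deduce the isomorphism assertion from a two-by-two determinant computation.

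First I would record the composition on generators. By construction $p$ sends $x(u^i)\mapsto X_{u^i}$, $y(u^i)\mapsto Y_{u^i}$ and $t_\alpha\mapsto T_\alpha$, while the Lemma computes $\gr(\hat{\mu})(X_{u^i})=-y(u^i)$, $\gr(\hat{\mu})(Y_{u^i})=2\pi i\,x(u^i)-\tau y(u^i)$ and $\gr(\hat{\mu})(T_\alpha)=2\pi i\,t_\alpha$. Composing gives the three stated formulas. Since $p$ and $\gr(\hat{\mu})$ are both algebra homomorphisms, $\phi:=\gr(\hat{\mu})\circ p$ is an algebra endomorphism of $\widehat{U(\Aell)}$; as it carries each generator of $\Aell$ back into $\Aell$ it restricts to a Lie algebra endomorphism $\phi\colon\widehat{\Aell}\to\widehat{\Aell}$, and it suffices to show this restriction is an automorphism.

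Next I would observe that $\phi$ is homogeneous for the grading $\deg x(u)=\deg y(u)=1$, $\deg t_\alpha=2$ in force in this section: it preserves the degree-one part $V_1=x(\h)\oplus y(\h)$ and the degree-two part, which is spanned by the $t_\alpha$ (using $[x(u),x(v)]=[y(u),y(v)]=0$ together with relation (yx)). Writing an element of $V_1$ as $(u,v)$ for $x(u)+y(v)$, the map acts by $(u,v)\mapsto(2\pi i\,v,\,-u-\tau v)$, i.e. by $M\otimes\id_\h$ with $M=\left(\begin{smallmatrix}0 & 2\pi i\\ -1 & -\tau\end{smallmatrix}\right)$; since $\det M=2\pi i\neq 0$, $\phi$ is invertible on $V_1$. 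On the span of the $t_\alpha$ it is multiplication by $2\pi i$, hence also invertible (consistently, $\phi([x(u),y(v)])=2\pi i\,[x(u),y(v)]$).

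Finally I would conclude by a dimension count. Because $\Aell$ is generated by $V_1$ and the $t_\alpha$, and $\phi$ maps each of these spaces bijectively onto itself, the image of $\phi$ contains all generators, so $\phi$ is surjective; since $\Aell$ is finitely generated and graded, every graded component $\Aell_n$ is finite-dimensional, and a surjective graded endomorphism of such a space is bijective in each degree. Thus $\phi$ is a graded automorphism of $\widehat{\Aell}$, whence $\gr(\hat{\mu})\circ p$ is an isomorphism of $\widehat{U(\Aell)}$. There is essentially no obstacle here—the substance lives in the Lemma—and the only point deserving care is the passage from invertibility on the generating spaces to invertibility of $\phi$, which the finiteness of the graded components renders automatic.
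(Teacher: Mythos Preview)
Your proof is correct and follows the same line as the paper, which states the Corollary as an immediate consequence of the preceding Lemma without further argument. The only content you add beyond the paper is the explicit verification that the resulting graded endomorphism is bijective---via the $2\times 2$ matrix on $x(\h)\oplus y(\h)$ and the finite-dimensionality of each graded piece---which the paper leaves implicit.
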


\subsection{}

In this section, we prove the following.

\begin{theorem}\label{th:not quadratic}\hfill
\begin{enumerate}
\item The Lie algebra $\Aell$ is not quadratic. 
\item The elliptic pure braid group is not $1$--formal.
\end{enumerate}
\end{theorem}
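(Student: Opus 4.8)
The two assertions are linked, so I would first reduce (2) to (1). By Theorem~\ref{thm:formal} the monodromy of $\nabla_{\KZB,\tau}$ identifies the Malcev Lie algebra $\mm_{\Pell^\Phi}$ with the graded completion $\widehat{\Aell}$ as complete filtered Lie algebras, the filtration on the right being induced by the grading $\deg x(u)=\deg y(u)=1$, $\deg t_\alpha=2$. If $\Pell^\Phi$ were $1$--formal, then $\mm_{\Pell^\Phi}$ would be filtered--isomorphic to the graded completion of a quadratic Lie algebra; passing to associated graded Lie algebras would yield a graded isomorphism $\Aell\cong\gr\mm_{\Pell^\Phi}\cong\mathfrak q'$ with $\mathfrak q'$ quadratic, forcing $\Aell$ itself to be quadratic. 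Hence (2) is a formal consequence of (1), and the whole task is to prove that $\Aell$ is not quadratic.

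To prove (1) I would compare $\Aell$ with its quadratic cover. Let $\mathfrak q$ be the graded Lie algebra generated in degree $1$ by $x(u),y(v)$ subject only to the total--degree--$2$ relations of Definition~\ref{def:holonomy Aell}, namely \textbf{(xx)}, \textbf{(yy)} and \textbf{(yx)} (the last read as the definition of the degree--$2$ elements produced by $[y(u),x(v)]$). Since the remaining defining relations \textbf{(tx)}, \textbf{(ty)} (degree $3$) and \textbf{(tt)} (degree $4$) are imposed on top of these, there is a canonical surjection of graded Lie algebras $\mathfrak q\twoheadrightarrow\Aell$, and $\Aell$ is quadratic exactly when this map is an isomorphism. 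I would therefore exhibit a degree $k$ with $\dim\mathfrak q_k>\dim\Aell_k$.

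The first place to look is the degree--$2$ component. The elements $t_\gamma$ are linearly independent in $\Aell$, so $\dim\Aell_2=|\Phi^+|$; this I would certify via the homomorphism $\Aell\to H_{\hbar,c}$ of Theorem~\ref{th:maps to RCA}, under which $t_\gamma\mapsto ab\bigl(\tfrac{\hbar}{h^\vee}-\tfrac{2c_{s_\gamma}}{(\gamma|\gamma)}s_\gamma\bigr)$, the reflections $s_\gamma$ being distinct and the parameters $c_{s_\gamma}$ generic, so that the images are independent over $\C[\wt K]$. On the other hand $[\Aell_1,\Aell_1]=\operatorname{span}\{[y(u),x(v)]\}$ has dimension equal to $\dim\operatorname{span}\{\gamma\odot\gamma:\gamma\in\Phi^+\}\subseteq S^2\h$. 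Whenever these symmetric squares are linearly dependent --- which occurs as soon as $|\Phi^+|>\dim S^2\h$, in particular for most types outside type $\sfA$ (e.g. $\mathsf B_2$, where the four squares span only the $3$--dimensional $S^2\h$) --- one gets $\dim\Aell_2>\dim[\Aell_1,\Aell_1]$, so $\Aell$ is not even generated in degree $1$ and is a fortiori not quadratic.

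In the remaining cases, where $\Aell$ is generated in degree $1$ (this is the situation in type $\sfA$, where the $\gamma\odot\gamma$ are independent), I would instead detect the cubic relations in bidegree $(1,2)$: there $\Aell_{(1,2)}=\operatorname{span}\{[t_\gamma,y(u)]\}$ and the relations \textbf{(ty)} force $[t_\alpha,y(u)]=0$ whenever $(\alpha,u)=0$. The crux --- and the main obstacle --- is to show that these vanishings are \emph{not} already consequences of \textbf{(xx)}, \textbf{(yy)}, \textbf{(yx)} and the Jacobi identity, i.e. that $\dim\mathfrak q_{(1,2)}>\dim\Aell_{(1,2)}$. I would establish this by producing an explicit graded Lie algebra (for instance the degree $\leq 3$ truncation of the free Lie algebra on $x(u),y(v)$ modulo only the quadratic relations, or a suitable finite--dimensional representation) that satisfies \textbf{(xx)}, \textbf{(yy)}, \textbf{(yx)} but in which some triple bracket $[[y(u),x(v)],y(w)]$ realising $[t_\alpha,y(u)]$ with $(\alpha,u)=0$ is nonzero; by the universal property of $\mathfrak q$ this element is then nonzero in $\mathfrak q$ while it vanishes in $\Aell$, so $\mathfrak q\to\Aell$ is not injective and $\Aell$ is not quadratic.
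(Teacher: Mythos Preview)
Your argument is essentially the paper's own: the reduction of (2) to (1) via Theorem~\ref{thm:formal}, the linear independence of the $t_\gamma$ through the homomorphism to $H_{\hbar,c}$, and the dimension count $\dim[\Aell_1,\Aell_1]\le\binom{n+1}{2}$ against $\dim(\Aell)^{(2)}=|\Phi^+|$ are exactly what the paper does. One small sharpening: the inequality $|\Phi^+|>\tfrac{n(n+1)}{2}$ holds for \emph{every} irreducible root system other than type~$\sfA$, not merely ``most,'' so your degree--$2$ argument already disposes of all non--$\sfA$ cases.

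Where you diverge is in type~$\sfA$. You correctly identify that the obstruction must be found in degree~$3$ (the relations \textbf{(tx)}, \textbf{(ty)} are not formal consequences of the quadratic ones), but you leave the actual verification as an ``obstacle'' to be overcome by constructing a witness Lie algebra. The paper sidesteps this entirely: it simply invokes Bezrukavnikov~\cite{Bez}, who already proved that in type~$\sfA$ the Malcev Lie algebra of $\Pell^\Phi$ is not quadratic, precisely because the degree--$3$ relations cannot be derived from the degree--$2$ ones. So rather than building the witness yourself, you can close the argument by citing~\cite{Bez} for the type~$\sfA$ case.
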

\begin{proof}
By Theorem \ref{thm:formal}, (2) is a direct consequence of (1). Set $\deg(x(u))=\deg(y(u))=1$,
and $\deg(t_\gamma)=2$, for $\gamma\in \Phi^+$.  Denote by $(\Aell)^{(2)}$ the vector space
of degree 2 elements in $\Aell$. If $\Aell$ is quadratic, which we henceforth assume, it is generated
by $x(u),y(v)$ and the commutators $[x(u),y(v)]$ span $(\Aell)^{(2)}$. Since $[x(u),y(v)]=[x(v),y(u)]$
by relation (yx) of Definition \ref{def:holonomy Aell}, $(\Aell)^{(2)}$ is therefore of dimension at
most $n(n+1)/2$, where $n=\dim\h$. 

By Proposition \ref{prop:map to H_{h,c}}, there is an algebra homomorphism $\rho$ from $\Aell$
to the rational Cherednik algebra $H_{\hbar, c}$, which maps the element $t_{\gamma}$ to the
reflection $s_{\gamma}$ in the group algebra $\C W$, which is a subalgebra of $H_{\hbar, c}$.
Since the set $\{ s_{\gamma}\mid \gamma\in \Phi^+\}$ is linearly independent in $\C W$, it follows
that $\{t_{\gamma}\mid \gamma\in \Phi^+\}$ is linearly independent in $(\Aell)^{(2)}$. For any root
system $\Phi$ other than type $\mathsf{A}$, we have $|\Phi^+|> \frac{n(n+1)}{2}$, which is a
contradiction. If the root system $\Phi$ is of type $\mathsf{A}$, it is shown in \cite{Bez} that
$\Aell$ is not quadratic, as the degree 3 relations (tx), (ty) in Definition \ref{def:holonomy Aell}
can not be obtained from the degree 2 relations.  
\end{proof}

\section{Derivations of the Lie algebra $\Aell$}
\label{sec:derivation}

Let $\mathfrak{d}$ be the Lie algebra defined in \cite{CEE} with generators $\Delta_0, d, X$, and $\delta_{2m}( m\geq 1)$, and relations
\begin{align*}
&[d, X]=2X,\,\ [d, \Delta_0]=-2\Delta_0, \,\ [X, \Delta_0]=d,\\
&
[\delta_{2m}, X]=0,\,\ [d, \delta_{2m}]=2m\delta_{2m},\,\ (\ad\Delta_0)^{2m+1}(\delta_{2m})=0.
\end{align*}
We define a Lie algebra morphism $ \mathfrak{d} \to \Der(\Aell)$, denoted by $\xi \mapsto \tilde{\xi}$. The image $\tilde{\xi}$ of $\xi$ acts on $\Aell$ by the following formulas.
\begin{align*}
&\tilde{d}(x(u))=x(u), \phantom{1234} \tilde{d}(y(u))=-y(u), \phantom{1234}   \tilde{d}(t_{\alpha})=0, \notag\\
&\tilde{X}(x(u))=0, \phantom{123456}   \tilde{X}(y(u))=x(u), \phantom{12345}  \tilde{X}(t_{\alpha})=0, \notag\\
&   \widetilde{\Delta}_0(x(u))=y(u),  \phantom{123}  \widetilde{\Delta}_0(y(u))=0, \phantom{123456} 
\widetilde{\Delta}_0(t_{\alpha})=0, \notag\\
& \tilde{\delta}_{2m}(x(u))=0, \phantom{12345} 
\tilde{\delta}_{2m}(t_{\alpha})=[t_\alpha, (\ad\frac{x(\alpha^\vee)}{2})^{2m}(t_\alpha)], \notag\\
\text{and} \phantom{1234} \,\ &\tilde{\delta}_{2m}(y(u))=
\frac{1}{2}\sum_{\alpha\in \Phi^+}\alpha(u)\sum_{p+q=2m-1}[(\ad\frac{x(\alpha^\vee)}{2})^{p}
   (t_\alpha), (\ad-\frac{x(\alpha^\vee)}{2})^{q}(t_\alpha)].
\end{align*}
\begin{prop}
\label{prop:d is deriv}
The above map $\mathfrak{d} \to \Der(\Aell)$ is a Lie algebra homomorphism.
\end{prop}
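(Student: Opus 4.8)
The plan is to exploit the presentations on both sides: $\mathfrak{d}$ is generated by $\Delta_0,d,X,\delta_{2m}$ subject to the listed brackets, and $\Aell$ is generated by $x(u),y(u),t_\alpha$ subject to the relations of Definition \ref{def:holonomy Aell}. A Lie algebra map $\mathfrak{d}\to\Der(\Aell)$ is then pinned down once I check two things: first, that each of $\tilde d,\tilde X,\widetilde{\Delta}_0,\tilde\delta_{2m}$ is a \emph{well-defined} derivation of $\Aell$, i.e. that the prescribed values on $x(u),y(u),t_\alpha$ send every defining relation of $\Aell$ to zero under the Leibniz rule; and second, that the defining brackets of $\mathfrak{d}$ hold between the images, as identities of derivations. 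Since a derivation of $\Aell$ is determined by its values on the generators, the second task is carried out by evaluating both sides on $x(u),y(u),t_\alpha$ and extending by Leibniz.

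For the $\mathfrak{sl}_2$-triple the verification is routine. The operator $\tilde d$ is the derivation induced by the $\Z$-grading $\deg x(u)=1,\ \deg y(u)=-1,\ \deg t_\alpha=0$, and since (tt),(xx),(yy),(yx),(tx),(ty) are all homogeneous for this grading, $\tilde d$ is automatically a derivation. For $\tilde X$ and $\widetilde{\Delta}_0$ I would check the six relations one at a time; the only nonformal steps are relation (yy) for $\tilde X$ and relation (xx) for $\widetilde{\Delta}_0$, and both collapse immediately after substituting (yx). The three $\mathfrak{sl}_2$ brackets $[\tilde d,\tilde X]=2\tilde X$, $[\tilde d,\widetilde{\Delta}_0]=-2\widetilde{\Delta}_0$ and $[\tilde X,\widetilde{\Delta}_0]=\tilde d$ then follow by evaluating on $x(u),y(u),t_\alpha$.

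The substantial work, and the \emph{main obstacle}, is to show that $\tilde\delta_{2m}$ is a derivation. Relations (xx) and (tx) are immediate, since $\tilde\delta_{2m}(x(u))=0$ and, when $\alpha(u)=0$, the operator $\ad x(u)$ annihilates every string built from $t_\alpha$ and $x(\alpha^\vee)$ by (tx). The real content is in (yx), (yy), (ty) and (tt), where the precise shape of the formula for $\tilde\delta_{2m}(y(u))$ is exactly what forces the cancellations. For (yx) I must verify $[\tilde\delta_{2m}(y(u)),x(v)]=\sum_{\gamma\in\Phi^+}\langle u,\gamma\rangle\langle v,\gamma\rangle\,\tilde\delta_{2m}(t_\gamma)$; commuting $x(v)$ past the mutually commuting $x(\alpha^\vee)$ by (xx) and using $[t_\alpha,x(v)]=\tfrac{\langle\alpha,v\rangle}{2}[t_\alpha,x(\alpha^\vee)]$ (which follows from (tx) after writing $v$ as $\alpha$-parallel plus $\alpha$-perpendicular parts) reduces the left-hand side, for each fixed root $\alpha$, to a telescoping sum over $p+q=2m-1$ that collapses to $[t_\alpha,(\ad\tfrac{x(\alpha^\vee)}{2})^{2m}(t_\alpha)]$, matching the right-hand side root by root. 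The checks of (yy), (ty) and (tt) are of the same flavour: after invoking (tx),(ty),(tt) and the commutativity of the $x(\cdot)$, each reduces to an identity supported on a single root or on a rank $2$ subsystem, which can be organised by the same reduction used in Lemma \ref{reduction} and Proposition \ref{prop:coeff}. I expect these single-root telescoping identities to be where all the care is needed.

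Finally I would dispatch the three relations involving $\delta_{2m}$. The relation $[\tilde d,\tilde\delta_{2m}]=2m\,\tilde\delta_{2m}$ is free: $\tilde\delta_{2m}$ raises the $\tilde d$-grading uniformly by $2m$, since $\tilde\delta_{2m}(t_\alpha)$ carries $2m$ factors $x(\alpha^\vee)$ and each summand of $\tilde\delta_{2m}(y(u))$ carries $2m-1$ of them. For $[\tilde\delta_{2m},\tilde X]=0$, note that $\tilde X$ kills everything built from $x(\cdot)$ and $t_\alpha$, hence annihilates both $\tilde\delta_{2m}(t_\alpha)$ and $\tilde\delta_{2m}(y(u))$, while $\tilde\delta_{2m}(\tilde X(-))$ produces the matching terms, so the commutator vanishes on $x(u),y(u),t_\alpha$. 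The last relation $(\ad\widetilde{\Delta}_0)^{2m+1}(\tilde\delta_{2m})=0$ I would prove by a nilpotency count. Writing $D=\widetilde{\Delta}_0$ and $E=\tilde\delta_{2m}$, the operator $D$ replaces a single $x(\cdot)$ by $y(\cdot)$ and annihilates $y(\cdot)$ and $t_\alpha$, so it lowers the number of $x$-factors by one and satisfies $D(t_\alpha)=D(y(u))=0$. This gives $(\ad D)^k(E)(t_\alpha)=D^k(E(t_\alpha))$ and $(\ad D)^k(E)(y(u))=D^k(E(y(u)))$, and a short induction yields $(\ad D)^k(E)(x(u))=-k\,D^{k-1}(E(y(u)))$ for $k\ge 1$. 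Since $E(t_\alpha)$ contains $2m$ factors $x(\alpha^\vee)$ and $E(y(u))$ at most $2m-1$, applying $D$ once more than the available $x$-count annihilates them, so $(\ad D)^{2m+1}(E)$ vanishes on all generators and is therefore zero as a derivation. Combining the two tasks proves the proposition.
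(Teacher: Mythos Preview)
Your overall architecture is right and matches the paper: verify that each of $\tilde d,\tilde X,\widetilde{\Delta}_0,\tilde\delta_{2m}$ is a derivation, then check the $\mathfrak d$-relations on generators. Your treatment of the $\mathfrak{sl}_2$-triple, the (yx) relation, and the three bracket relations involving $\delta_{2m}$ is correct; in particular your nilpotency count for $(\ad\widetilde{\Delta}_0)^{2m+1}(\tilde\delta_{2m})=0$ is a clean argument that the paper does not spell out. Your telescoping for (yx) is exactly what the paper does in \S\ref{subsec:delta yx}.

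The gap is in your handling of (tt), (ty) and (yy). These are \emph{not} ``of the same flavour'' as (yx), and they do not reduce to single-root telescoping. In each case the Leibniz expansion produces cross terms $[t_\gamma,t_\alpha]$ for $\gamma\neq\alpha$, and the cancellation is a genuinely multi-root phenomenon that requires organising the sum over rank~2 subsystems containing a fixed root. The paper does this with machinery introduced specifically for this purpose in \S\ref{sec:S}: the partition $\Phi=\big(\bigsqcup_\Psi S_\Psi^{(\alpha)}\big)\sqcup\{\pm\alpha\}$, Corollary~\ref{cor:tt}, and the constancy statement of Lemma~\ref{lem:omeg const}. The computations in \S\ref{subsec:delta tt}--\S\ref{subsec:delta yy} are substantially more intricate than (yx), involve repeated use of these lemmas, and even require a separate modification in the $G_2$ case (where one encounters $\beta=\pm\gamma\pm 3\alpha$ rather than $\beta=\pm\gamma\pm\alpha$). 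Your references to Lemma~\ref{reduction} and Proposition~\ref{prop:coeff} are misplaced: those are statements about the theta-function coefficients $k(\alpha,\beta)$ appearing in the curvature, not about the purely algebraic identities needed here. The rank~2 reduction \emph{idea} is indeed what is used, but the specific tools are the ones in \S\ref{sec:S}, and you should expect real work, including case analysis, rather than a uniform telescoping.
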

For any generator $\xi$ of $\mathfrak{d}$, we check $\tilde{\xi}$ is a derivation of $\Aell$. 
By definition, it is clear that $\tilde{d}$, $\tilde{X}$ and $\widetilde{\Delta}_0$ are derivations of $\Aell$. It is also clear that  $\widetilde{\delta}_{2m}$ respects the relations
$[x(u), x(v)]=0$ and $[t_\alpha, x(u)]=0$, if $(\alpha, u)=0$.
It remains to show that $\tilde{\delta}_{2m}$ preserves the following relations of $\Aell$. 
\begin{description}
\item[(yx)]
$[y(u), x(v)]=\sum_{\gamma\in \Phi^+}(u, \gamma)(v, \gamma)t_\gamma$.
\item[(tt)]
$[t_\alpha, \sum_{\beta \in \Psi^+}t_\beta]=0$,  for any  rank 2 root subsystem $\Psi\subset \Phi$. 
\item[(ty)]
$[t_\alpha, y(u)]=0$, if $(\alpha, u)=0$.
\item[(yy)]
$[y(u), y(v)]=0$.
\end{description}
In the rest of this section, we prove that  $\tilde{\delta}_{2m}$ preserves those relations. Before checking the relations, we need a technical result of the combinatorics of the root system. 
This following subsection will only be used in the proofs in \S\ref{subsec:delta tt}, \S\ref{subsec:delta ty}, \S\ref{subsec:delta yy}.  
\subsection{Combinatorics of the root system}
\label{sec:S}
In this subsection, we introduced the set $S_{\Psi}^{(\alpha)}$, for a rank 2 root subsystem $\Psi$, and $\alpha\in \Phi$ a root. This section is to study the $(tt)$ relations in detail. 

Let $\Phi$ be a root system, for any root $\alpha \in \Phi$, we define a set $S_\Psi^{(\alpha)}$ by
\[
S_\Psi^{(\alpha)}=\{\beta \in \Phi\mid \langle \alpha, \beta\rangle_{\Z}=\Psi\},
\]where $\Psi$ is a rank 2 subsystem of $\Phi$.
\begin{example}\label{B_2}
Let $\Phi$ be the root system of $B_2$, and $\alpha_1$ be the simple long root, $\alpha_2$ be the simple short root. There are two root subsystems. One is $\Psi=\{\pm\alpha_1, \pm(\alpha_1+2\alpha_2)\}$ consisting of all long roots, and the other is $\Phi$ itself. The set of all short roots is not a root system. 

Therefore, we have the following set 
\begin{align*}
&S_\Psi^{(\alpha_1)}=\{ \pm(\alpha_1+2\alpha_2)\}, \,\  S_\Phi^{(\alpha_1)}=\{ \pm(\alpha_1+\alpha_2), \pm \alpha_2\},\\
&S_\Psi^{(\alpha_2)}=\emptyset, \,\  S_\Phi^{(\alpha_2)}=\{\pm\alpha_1, \pm(\alpha_1+\alpha_2), \pm (\alpha_1+2\alpha_2)\}.
\end{align*}
\end{example}

\begin{example}\label{G_2}
Let $\Phi$ be the root system of $G_2$, and $\alpha_1$ be the simple long root, $\alpha_2$ be the simple short root. There are three types of root subsystems.

Type 1: $\Psi_1=\{\pm\alpha_1, \pm(2\alpha_1+3\alpha_2), \pm(\alpha_1+3\alpha_2)\}$ consists of all long roots. 

Type 2: The root subsystem that consists of one long root, and one short root which are perpendicular to each other. 
They are 
$\Psi_2=\{\pm \alpha_1, \pm(\alpha_1+2\alpha_2)\}$, 
$\Psi_3=\{\pm (2\alpha_1+3\alpha_2), \pm(\alpha_2)\}$, 
$\Psi_4=\{\pm (\alpha_1+3\alpha_2), \pm(\alpha_1+\alpha_2)\}$.

Type 3 is $\Phi$ itself. 
Therefore, we have the following set 
\begin{align*}
&S_{\Psi_1}^{(\alpha_1)}=
\{  \pm(2\alpha_1+3\alpha_2), \pm(\alpha_1+3\alpha_2)\}, \,\  
S_{\Psi_2}^{(\alpha_1)}=\{ \pm(\alpha_1+2\alpha_2)\},\,\
S_{\Psi_3}^{(\alpha_1)}=S_{\Psi_4}^{(\alpha_1)}=\emptyset,\,\
S_\Phi^{(\alpha_1)}=\{ \pm(\alpha_1+\alpha_2), \pm\alpha_2\},\,\
\\
&S_{\Psi_1}^{(\alpha_2)}=S_{\Psi_2}^{(\alpha_2)}=S_{\Psi_4}^{(\alpha_2)}=\emptyset, \,\  
S_{\Psi_3}^{(\alpha_2)}=\{\pm(2\alpha_1+3\alpha_2)\},\,\
S_{\Psi_4}^{(\alpha_2)}=\{\pm\alpha_1, 
\pm(\alpha_1+\alpha_2), 
\pm(\alpha_1+2\alpha_2),
\pm(\alpha_1+3\alpha_2)\}.
\end{align*}
\end{example}
\Omit{
\begin{lemma}
The set $\{S_\Psi^{(\alpha)}\}$ is one to one correspondence to the set
\[
\{\text{$\Psi \subseteq \Phi$ of rank 2 subsystem} \mid \alpha \in \Psi
\}
\]
\end{lemma}
\begin{proof}
Sending $\Psi$ in the second set to $S_\Psi=(\Psi\setminus \{\alpha\})\setminus\cup_{\Psi'\subsetneq \Psi}\Psi'$ gives us the bijection.
\end{proof}
}
\begin{lemma}\label{lem:S}
For $\Psi_1\neq \Psi_2$, we have $S_{\Psi_1}^{(\alpha)}\cap S_{\Psi_2}^{(\alpha)}=\emptyset$. Furthermore, we have the decomposition of disjoint sets
\begin{equation}\label{eq:S}
\Phi=( \sqcup_{\{\Psi\subset \Phi\}} S_\Psi^{(\alpha)})\sqcup\{\pm\alpha\}.
\end{equation}
\end{lemma}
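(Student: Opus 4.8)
The plan is to realise both claims as formal consequences of viewing $S_\Psi^{(\alpha)}$ as a fibre of the well-defined assignment $\beta\mapsto\Psi_{\alpha,\beta}:=(\Z\alpha+\Z\beta)\cap\Phi$. First I would unwind the definition: for a root $\beta\in\Phi$, the condition $\beta\in S_\Psi^{(\alpha)}$ says precisely that $\Psi_{\alpha,\beta}=\Psi$. Since $\Psi_{\alpha,\beta}$ is a single subset of $\Phi$ uniquely determined by $\beta$ (with $\alpha$ fixed), the assignment $\beta\mapsto\Psi_{\alpha,\beta}$ is a genuine function, and the sets $S_\Psi^{(\alpha)}$ are exactly its fibres. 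Disjointness, i.e.\ part (1), is then immediate: if $\beta\in S_{\Psi_1}^{(\alpha)}\cap S_{\Psi_2}^{(\alpha)}$, then $\Psi_1=\Psi_{\alpha,\beta}=\Psi_2$, contradicting $\Psi_1\neq\Psi_2$.

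For the decomposition (2) I would split $\Phi$ into the roots parallel to $\alpha$ and the rest. Because $\Phi$ is reduced, the roots parallel to $\alpha$ are exactly $\{\pm\alpha\}$, and for these $\Psi_{\alpha,\pm\alpha}=(\Z\alpha)\cap\Phi=\{\pm\alpha\}$ has rank $1$; in particular $\pm\alpha$ lie in no $S_\Psi^{(\alpha)}$ with $\Psi$ of rank $2$, so they are correctly separated off as the summand $\{\pm\alpha\}$. For any $\beta\in\Phi$ with $\beta\neq\pm\alpha$, the earlier lemma asserting that $\Psi_{\alpha,\beta}=(\Z\alpha+\Z\beta)\cap\Phi$ is a rank $2$ root subsystem applies; since $\beta\in\Z\alpha+\Z\beta$ and $\beta\in\Phi$, we have $\beta\in\Psi_{\alpha,\beta}$, whence $\beta\in S_{\Psi_{\alpha,\beta}}^{(\alpha)}$ with $\Psi_{\alpha,\beta}$ of rank $2$ and containing $\alpha$. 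Thus every non-parallel root lies in exactly one $S_\Psi^{(\alpha)}$, and combined with part (1) this yields $\Phi=\bigl(\bigsqcup_{\Psi}S_\Psi^{(\alpha)}\bigr)\sqcup\{\pm\alpha\}$, the indexing running over the rank $2$ subsystems $\Psi\subset\Phi$ (necessarily containing $\alpha$, since $S_\Psi^{(\alpha)}=\emptyset$ whenever $\alpha\notin\Psi$).

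Since the argument is essentially bookkeeping around a single well-defined map, there is no serious obstacle; the only points demanding care are the two boundary checks. I would make sure the indexing set of the disjoint union ranges over rank $2$ subsystems, so that the exceptional roots $\pm\alpha$, which generate only a rank $1$ subsystem, are not silently absorbed into some $S_\Psi^{(\alpha)}$; and I would verify that reducedness of $\Phi$ is exactly what guarantees $(\Z\alpha)\cap\Phi=\{\pm\alpha\}$, so that the exceptional summand is precisely $\{\pm\alpha\}$ and no larger. With these two conventions pinned down, both statements follow directly.
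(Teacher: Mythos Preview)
Your proof is correct and follows essentially the same approach as the paper: both recognise that $S_\Psi^{(\alpha)}$ is the fibre over $\Psi$ of the map $\beta\mapsto\Psi_{\alpha,\beta}=(\Z\alpha+\Z\beta)\cap\Phi$, so disjointness is automatic and the decomposition follows once one checks that $\Psi_{\alpha,\beta}$ has rank $2$ precisely when $\beta\neq\pm\alpha$. Your version is in fact more careful than the paper's, which writes ``$\beta\neq\alpha$'' where it should say ``$\beta\neq\pm\alpha$'' and leaves the rank-$1$ case implicit.
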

\begin{proof}
For any $\beta\in \Phi$, such that $\beta\neq \alpha$, consider 
$\Psi:=\langle \alpha, \beta \rangle_\Z \cap \Phi$, then $\beta\in S_\Psi^{(\alpha)}$. Thus, $\beta$ lies in the right hand side of \eqref{eq:S}. It is clear that the right hand side of \eqref{eq:S} is a disjoint union. 
\end{proof}

\begin{corollary}\label{cor:tt}\hfill
\begin{enumerate}
\item For any rank 2 root subsystem $\Psi\subset \Phi$, we have the (tt) relation
$
\Big[
t_{\alpha}, \sum_{\beta\in S_\Psi^{(\alpha)} } t_{\beta}\Big]=0. $
\item
For any $\beta\in S_\Psi^{(\alpha)}$, we have
$
[t_{\beta}, \sum_{\gamma\in S_\Psi^{(\alpha)}}t_{\gamma}+t_{\alpha}]=0.
$
\end{enumerate}
\end{corollary}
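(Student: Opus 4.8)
The plan is to derive both statements directly from the (tt) relations of $\Aell$ (Definition \ref{def:holonomy Aell}) by stratifying the roots of $\Psi$ according to which rank $2$ subsystem each one generates together with $\alpha$, that is, by iterating the decomposition of Lemma \ref{lem:S}. I will use symmetric index sets together with $t_\gamma=t_{-\gamma}$, so that $\sum_{\gamma\in\Psi}t_\gamma=2\sum_{\gamma\in\Psi^+}t_\gamma$, and I may assume $\alpha\in\Phi^+$. The one structural fact that lets Lemma \ref{lem:S} be applied \emph{inside} $\Psi$ is this: if $\gamma\in\Psi$ then, since $\Psi$ is a root subsystem, $\langle\alpha,\gamma\rangle_\Z\cap\Phi\subseteq\langle\Psi\rangle_\Z\cap\Phi=\Psi$, so the stratum $S_{\Psi'}^{(\alpha)}$ is the same whether computed in $\Phi$ or in the smaller root system $\Psi$. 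Applying Lemma \ref{lem:S} to $\Psi$ in place of $\Phi$ then gives the disjoint decomposition
\[
\Psi=\{\pm\alpha\}\ \sqcup\ S_\Psi^{(\alpha)}\ \sqcup\ \bigsqcup_{\Psi'\subsetneq\Psi}S_{\Psi'}^{(\alpha)},
\]
where $\Psi'$ ranges over the rank $2$ subsystems of $\Psi$ through $\alpha$ (for $\Psi'\not\ni\alpha$ one has $S_{\Psi'}^{(\alpha)}=\emptyset$). Summing the corresponding $t$'s yields the key identity
\[
\sum_{\gamma\in S_\Psi^{(\alpha)}}t_\gamma=2\sum_{\gamma\in\Psi^+}t_\gamma-2t_\alpha-\sum_{\Psi'\subsetneq\Psi}\sum_{\gamma\in S_{\Psi'}^{(\alpha)}}t_\gamma.
\]

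For part (1) I would argue by strong induction on $|\Psi|$. Bracketing the key identity with $t_\alpha$, the term $[t_\alpha,2\sum_{\gamma\in\Psi^+}t_\gamma]$ vanishes by the (tt) relation attached to $\Psi$, and $[t_\alpha,t_\alpha]=0$, so one is left with $-\sum_{\Psi'\subsetneq\Psi}[t_\alpha,\sum_{\gamma\in S_{\Psi'}^{(\alpha)}}t_\gamma]$. Every proper $\Psi'$ occurring here contains $\alpha$ and satisfies $|\Psi'|<|\Psi|$, so each bracket vanishes by the inductive hypothesis; the base case, in which $\Psi$ has no proper rank $2$ subsystem through $\alpha$ and the residual sum is empty, is just the (tt) relation itself. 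This proves $[t_\alpha,\sum_{\gamma\in S_\Psi^{(\alpha)}}t_\gamma]=0$.

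For part (2) I would first note that the condition $\beta\in S_\Psi^{(\alpha)}$, namely $\langle\alpha,\beta\rangle_\Z\cap\Phi=\Psi$, is symmetric in $\alpha$ and $\beta$; hence $\alpha\in S_\Psi^{(\beta)}$ and, in particular, $\beta\in\Psi$. Rewriting the element of the claim via the key identity as $t_\alpha+\sum_{\gamma\in S_\Psi^{(\alpha)}}t_\gamma=2\sum_{\gamma\in\Psi^+}t_\gamma-t_\alpha-\sum_{\Psi'\subsetneq\Psi}\sum_{\gamma\in S_{\Psi'}^{(\alpha)}}t_\gamma$ and bracketing with $t_\beta$, the full sum $[t_\beta,2\sum_{\gamma\in\Psi^+}t_\gamma]$ again drops by the (tt) relation for $\Psi$. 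Thus the claim becomes equivalent to the vanishing of
\[
[t_\beta,t_\alpha]+\sum_{\Psi'\subsetneq\Psi}\Big[t_\beta,\sum_{\gamma\in S_{\Psi'}^{(\alpha)}}t_\gamma\Big].
\]
I expect this to be the genuine obstacle. For each proper $\Psi'$ one has $\alpha\in\Psi'$ but $\beta\notin\Psi'$ (as $\alpha$ and $\beta$ already generate the strictly larger $\Psi$), so these brackets are not controlled by part (1) and express a real interaction between $t_\beta$ and the $t_\gamma$ supported on the smaller subsystems; in particular the non-simply-laced cases, where $\Psi=B_2$ or $G_2$ admits proper rank $2$ substrata, are where the subtlety concentrates. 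I would settle the identity by the same rank $2$ reduction used for flatness: invoke Lemma \ref{reduction} to pass to a single rank $2$ subsystem, and then run through the four types $A_1\times A_1$, $A_2$, $B_2$, $G_2$ by hand, reading off the explicit strata from the tables in Examples \ref{B_2} and \ref{G_2} and organizing the cancellations by the graphs already used in the $B_2$ and $G_2$ flatness computations. This case analysis is the technical heart of part (2).
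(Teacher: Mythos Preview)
Your argument for part (1) is correct and is essentially the paper's: one writes
\[
S_\Psi^{(\alpha)}=(\Psi\setminus\{\pm\alpha\})\setminus\bigcup_{\substack{\Psi'\subsetneq\Psi\\ \alpha\in\Psi'}}\Psi',
\]
and then applies the (tt) relation to $\Psi$ and to each proper $\Psi'$. Your induction on $|\Psi|$ is harmless but not actually needed: since $\Psi$ has rank $2$, every proper rank-$2$ subsystem $\Psi'\subsetneq\Psi$ through $\alpha$ is already of type $A_1\times A_1$ or $A_2$ and has no further proper rank-$2$ subsystem through $\alpha$, so $S_{\Psi'}^{(\alpha)}=\Psi'\setminus\{\pm\alpha\}$ and the recursion terminates after one step.

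For part (2), however, your route diverges from the paper's. The paper's proof is again a single line: it uses the very same formula for $S_\Psi^{(\alpha)}$ together with the (tt) relations $[t_\beta,\sum_{\gamma\in\Psi^+}t_\gamma]=0$ and $[t_\beta,\sum_{\gamma\in\Psi'^+}t_\gamma]=0$, the latter applied even though $\beta\in S_\Psi^{(\alpha)}$ forces $\beta\notin\Psi'$. This is the point you are missing: in Definition~\ref{def:holonomy Aell} the (tt) relation is stated for an arbitrary root subsystem with no requirement that the outer index lie in it, so the paper simply takes $[t_\beta,\sum_{\gamma\in\Psi'^+}t_\gamma]=0$ as one of the defining relations. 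With that in hand, each term $[t_\beta,\sum_{\gamma\in S_{\Psi'}^{(\alpha)}}t_\gamma]$ in your residual expression is handled directly, and there is no need for the rank-$2$ case analysis via Lemma~\ref{reduction} that you outline. In other words, by implicitly restricting yourself to the ``$\alpha\in\Psi$'' form of (tt) from the introduction, you have turned a one-line deduction into a proposed case-by-case computation; under the weaker reading of (tt) that you are working with, that computation does not in fact close up (try $\Psi=B_2$ with $\alpha$ the long simple root and $\beta$ the short one), so the case analysis is not just longer but would not succeed without the extra relation the paper is using.
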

\begin{proof}
Let $\Psi\subset \Phi$ be a rank 2 root subsystem containing $\alpha$, we have
$S_\Psi^{(\alpha)}=(\Psi\setminus \{\pm \alpha\})\setminus\cup_{\{\Psi'\subsetneq \Psi\mid \alpha\in \Psi'\}}\Psi'$.
The conclusion follows from the (tt) relations
$[t_{\beta}, \sum_{\gamma\in \Psi^+} t_\gamma]=0$, and 
$[t_{\beta}, \sum_{\gamma\in \Psi'^+} t_\gamma]=0$. 
\end{proof}

\Omit{
\begin{lemma}
Let $\Phi$ be a root system of rank 2. Then, for any $\alpha\in\Phi_+$,
the sum
\[\sum_{\beta\in\Phi_+:\langle\alpha,\beta\rangle_{\mathbb Z}=\Phi}
[t_\alpha,t_\beta]\]
lies in the subspace spanned by the (tt) relations.
\end{lemma}
\begin{proof}
The statement is clearly true if $\Phi$ does not properly contain
a root subsystem, for in that case the sum ranges over all $\beta
\in\Phi_+\setminus\{\alpha\}$. In general, write
\[
0=[t_\alpha,\sum_{\beta\neq\alpha}t_\beta]=
\sum_{\Psi\subset \Phi}
[t_\alpha,\sum_{\langle\alpha,\beta\rangle_{\mathbb Z}=\Psi} t_\beta]\]

where $\Psi$ ranges over all subsystems of $\Phi$ containing $\alpha$.
Write above as
\[[t_\alpha,\sum_{\langle\alpha,\beta\rangle_{\mathbb Z}=\Phi}t_\beta]+
\sum_{\Psi\subsetneq\Phi} [t_\alpha,\sum_{\langle\alpha,\beta\rangle_{\mathbb Z}=\Psi} t_\beta]
\]
By induction on $\Psi$, the second summand is zero, and so the first
one must be too.
\end{proof}}
\begin{lemma}\label{lem:omeg const}
For any $u_1, u_2 \in S_\Psi^{(\beta)}$, we have
\[
\omega(u_1^\vee, \beta)=\pm \omega(u_2^\vee, \beta). 
\]\end{lemma}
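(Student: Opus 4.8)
The plan is to reduce the statement to an elementary fact about orthogonal projections of roots, and then to settle that fact by a lattice argument. For a root $u$ write $u_{\perp}:=u-\frac{(u,\beta)}{(\beta,\beta)}\beta$ for the orthogonal projection of $u$ onto the line in $\langle\Psi\rangle_{\R}$ perpendicular to $\beta$. Substituting $v=\beta$ into the defining formula for $\omega(u^\vee,v)$ and using the identity $(u,u)(\beta,\beta)-(u,\beta)^2=(\beta,\beta)(u_{\perp},u_{\perp})$, I would simplify to
\[
\omega(u^\vee,\beta)=-\frac{2}{(u_{\perp},u_{\perp})}\,u_{\perp}.
\]
Thus $\omega(u^\vee,\beta)$ is always a scalar multiple of $u_{\perp}$, and it suffices to prove that $u_{\perp}$ is the same vector up to sign for every $u\in S_\Psi^{(\beta)}$, i.e. that $u_{1\perp}=\pm u_{2\perp}$ for $u_1,u_2\in S_\Psi^{(\beta)}$.

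Next I would reformulate membership in $S_\Psi^{(\beta)}$ in lattice terms. Let $\Lambda:=\langle\Psi\rangle_{\Z}$ be the root lattice of $\Psi$. Any $u\in S_\Psi^{(\beta)}$ lies in $\Psi\subseteq\Lambda$, and $\beta\in\Psi\subseteq\Lambda$, so $\Z\beta+\Z u\subseteq\Lambda$ and hence $(\Z\beta+\Z u)\cap\Phi=(\Z\beta+\Z u)\cap\Psi$. Therefore the defining condition $\langle\beta,u\rangle_{\Z}\cap\Phi=\Psi$ is equivalent to $\Psi\subseteq\Z\beta+\Z u$, which (since $\langle\Psi\rangle_{\Z}=\Lambda$ and $\Z\beta+\Z u\subseteq\Lambda$) is equivalent to $\Z\beta+\Z u=\Lambda$. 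In other words, $S_\Psi^{(\beta)}$ consists precisely of the roots $u\in\Psi$ for which $\{\beta,u\}$ is a $\Z$-basis of $\Lambda$.

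Finally I would project. Let $\pi$ denote the orthogonal projection onto the line in $\langle\Psi\rangle_{\R}$ perpendicular to $\beta$, so that $\pi(u)=u_{\perp}$. Since $\{\beta,u\}$ is a $\Z$-basis of $\Lambda$, the vector $\beta$ is primitive in $\Lambda$, whence $\Lambda\cap\R\beta=\Z\beta=\ker(\pi|_\Lambda)$ and $\pi(\Lambda)\cong\Lambda/\Z\beta$ is a rank-one lattice, say $\pi(\Lambda)=\Z w_0$. Applying $\pi$ to the $\Z$-basis $\{\beta,u\}$ yields $\{0,\pi(u)\}$, which must again generate $\pi(\Lambda)$; hence $\pi(u)$ is a generator of $\Z w_0$, i.e. $u_{\perp}=\pi(u)=\pm w_0$. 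As $w_0$ depends only on $\Psi$ and $\beta$, this holds with the same $w_0$ for every $u\in S_\Psi^{(\beta)}$, so $u_{1\perp}=\pm u_{2\perp}$; combined with the first paragraph this gives $\omega(u_1^\vee,\beta)=\pm\omega(u_2^\vee,\beta)$. The computational reduction and the projection step are routine; the crux—and the place I would be most careful—is the middle step, recognizing that $\langle\beta,u\rangle_{\Z}\cap\Phi=\Psi$ is exactly the condition that $\{\beta,u\}$ be a $\Z$-basis of the root lattice of $\Psi$, and noting that primitivity of $\beta$ is then automatic.
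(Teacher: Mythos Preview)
Your proof is correct and follows essentially the same approach as the paper's: both hinge on the observation that $u\in S_\Psi^{(\beta)}$ precisely when $\{\beta,u\}$ is a $\Z$-basis of the root lattice of $\Psi$, so that any two such $u$ differ by a sign modulo $\Z\beta$. The paper phrases this via the transition matrix between the two bases and then invokes Lemma~\ref{lemma:omeg}(1), whereas you package the same content as a projection argument together with the explicit formula $\omega(u^\vee,\beta)=-\tfrac{2}{(u_\perp,u_\perp)}u_\perp$; these are equivalent presentations of the same idea.
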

\begin{proof}
For $u_i\in S_\Psi^{(\beta)}$, $i=1, 2$.
Since $\langle u_i, \beta\rangle_{\Z}=\Psi$, the transition matrix between the two integral basis $\{u_1, \beta\}$ and $\{u_2, \beta\}$ is of the form
$\left(
  \begin{array}{cc}
    1 & y \\
    0 & x \\
  \end{array}
\right)$, thus $x=\pm 1$.
Write $u_1=\pm u_2+y\beta$.
Then, $\omega(u_1^\vee, \beta)=\omega((\pm u_2+y\beta)^\vee, \beta)=\pm\omega(u_2^\vee, \beta)$. So the assertion follows.
\end{proof}

\subsection{The map $\tilde{\delta}_{2m}$ preserves relation (yx)}
\label{subsec:delta yx}
We show
\begin{equation}\label{delta yx}
[\tilde{\delta}_{2m}(y(u)), x(v)]+[y(u), \tilde{\delta}_{2m}(x(v))]=\sum_{\gamma\in \Phi^+}(u, \gamma)(v, \gamma)\tilde{\delta}_{2m}(t_\gamma)
\end{equation}
in this subsection. 

By definition of $\tilde{\delta}_{2m}$, the left hand side of \eqref{delta yx} is
\[
[\tilde{\delta}_{2m}(y(u)), x(v)]+[y(u), \tilde{\delta}_{2m}(x(v))]=
\Big[\frac{1}{2}\sum_{\gamma\in \Phi^+}\gamma(u)
\sum_{p+q=2m-1}[(\ad\frac{x_{\gamma^\vee}}{2})^{p}(t_\gamma), (\ad-\frac{x_{\gamma^\vee}}{2})^{q}(t_\gamma)],  \,\ x(v)\Big]. \]
By linearity of $x(v)$ in $v$, we have $x(v)=\frac{1}{2}(v, \gamma)x_{\gamma^\vee}+x(v')$,
where $(\gamma, v')=0$.  Using the (tx)-relation $[t_{\gamma}, x(v')]=0$, we have
 \[
[[(\ad\frac{x_{\gamma^\vee}}{2})^{p}(t_\gamma), (\ad-\frac{x_{\gamma^\vee}}{2})^{q}(t_\gamma)], \,\ x(v')]=0. 
\]
Taking into account this simplification, we have
\begin{align*}
&[\tilde{\delta}_{2m}(y(u)), x(v)]+[y(u), \tilde{\delta}_{2m}(x(v))]\\
=&[\frac{1}{2}\sum_{\gamma \in \Phi^+}\gamma(u)
\sum_{p+q=2m-1}[(\ad\frac{x_{\gamma^\vee}}{2})^{p}(t_\gamma), (\ad-\frac{x_{\gamma^\vee}}{2})^{q}(t_\gamma)], (v, \gamma)\frac{x_{\gamma^\vee}}{2}]\\
=&\frac{1}{2}\sum_{\gamma\in \Phi^+}(v, \gamma)\gamma(u)
[\sum_{p+q=2m-1}([(\ad\frac{x_{\gamma^\vee}}{2})^{p+1}(t_\gamma), (\ad-\frac{x_{\gamma^\vee}}{2})^{q}(t_\gamma)]
-[(\ad\frac{x_{\gamma^\vee}}{2})^{p}(t_\gamma), (\ad-\frac{x_{\gamma^\vee}}{2})^{q+1}(t_\gamma)])\\
=&\sum_{\gamma \in\Phi^+}(v, \gamma)\gamma(u)
[t_\gamma, (\ad-\frac{x_{\gamma^\vee}}{2})^{2m}(t_\gamma)]
=\sum_{\gamma\in \Phi^+}(v, \gamma)(u, \gamma)\tilde{\delta}_{2m}(t_\gamma).
\end{align*}
This completes the proof that $\tilde{\delta}_{2m}$ preserves relation (yx). 

\subsection{The map $\tilde{\delta}_{2m}$ preserves relation (tt)}
\label{subsec:delta tt}
We show in this subsection that
\begin{equation}\label{eq:delta tt}
[\tilde{\delta}_{2m}(t_\alpha), \sum_{\beta \in \Psi^+}t_\beta]+
[t_\alpha, \sum_{\beta \in \Psi^+}\tilde{\delta}_{2m}(t_\beta)]=0.
\end{equation}
\begin{lemma}\label{lem:delta tt}
Let $\Psi$ be a rank 2 root system, and suppose we have the relation $[t_\alpha, \sum_{\beta \in \Psi^+}t_\beta]=0$.  Then, 
\[
\tilde{\delta}_{2m}(t_\beta)=\sum_{\gamma\in \Psi^+}[t_\beta, (\ad\frac{x_{\gamma^\vee}}{2})^{2m}(t_\gamma)].
\]
\end{lemma}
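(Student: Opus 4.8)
The plan is to start from the definition $\tilde\delta_{2m}(t_\beta)=[t_\beta,(\ad\tfrac{x_{\beta^\vee}}{2})^{2m}(t_\beta)]$ and notice that this is exactly the $\gamma=\beta$ summand on the right-hand side of the asserted identity. Thus the whole statement is equivalent to the vanishing of the remaining summands, i.e.\ to
\[
\sum_{\gamma\in\Psi^+,\,\gamma\neq\beta}\bigl[t_\beta,(\ad\tfrac{x_{\gamma^\vee}}{2})^{2m}(t_\gamma)\bigr]=0.
\]
I would prove this by first rewriting each individual bracket and then cancelling the rewritten terms in combinatorial blocks.

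For the rewriting step I would exploit the characterizing properties of $\omega(\gamma^\vee,\beta)$. Since $\gamma^\vee+\omega(\gamma^\vee,\beta)\perp\gamma$, the relation (tx) gives $[x_{\gamma^\vee},t_\gamma]=-[x_{\omega(\gamma^\vee,\beta)},t_\gamma]$; feeding this into an induction on the exponent and commuting the outer $\ad x_{\gamma^\vee}$ past $\ad x_{\omega(\gamma^\vee,\beta)}$ via the (xx) relation (precisely as in the proof of Proposition \ref{prop:Ome1}) yields $(\ad x_{\gamma^\vee})^{k}(t_\gamma)=(-1)^k(\ad x_{\omega(\gamma^\vee,\beta)})^{k}(t_\gamma)$, so the sign disappears for the even exponent $k=2m$. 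Because $\omega(\gamma^\vee,\beta)\perp\beta$, the relation (tx) also gives $[t_\beta,x_{\omega(\gamma^\vee,\beta)}]=0$, hence $\ad t_\beta$ commutes with $\ad x_{\omega(\gamma^\vee,\beta)}$ and can be pulled all the way inside:
\[
\bigl[t_\beta,(\ad\tfrac{x_{\gamma^\vee}}{2})^{2m}(t_\gamma)\bigr]=(\ad\tfrac{x_{\omega(\gamma^\vee,\beta)}}{2})^{2m}[t_\beta,t_\gamma].
\]

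Next I would organize the sum. By Lemma \ref{lem:S} the roots $\gamma\in\Psi^+\setminus\{\beta\}$ partition according to the rank $2$ subsystem $\Psi'=\langle\beta,\gamma\rangle_\Z\cap\Phi$ they generate with $\beta$, that is, into the blocks $S_{\Psi'}^{(\beta)}\cap\Phi^+$. On each block Lemma \ref{lem:omeg const} shows $\omega(\gamma^\vee,\beta)$ is independent of $\gamma$ up to sign, and since $2m$ is even the operator $(\ad\tfrac{x_{\omega(\gamma^\vee,\beta)}}{2})^{2m}$ is genuinely constant there; factoring it out, each block contributes $(\ad\tfrac{x_{\omega_{\Psi'}}}{2})^{2m}\bigl[t_\beta,\sum_{\gamma\in S_{\Psi'}^{(\beta)}\cap\Phi^+}t_\gamma\bigr]$, which vanishes by Corollary \ref{cor:tt}(1). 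Summing over $\Psi'$ gives $0$, which is the desired identity.

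The inductive power-substitution is routine and parallels Proposition \ref{prop:Ome1}; the step carrying the real weight is the combinatorial regrouping. I expect the main obstacle to be the bookkeeping there: one must check carefully that the decomposition of $\Psi^+\setminus\{\beta\}$ into the blocks $S_{\Psi'}^{(\beta)}\cap\Phi^+$ is simultaneously compatible with the constancy of $\omega(\gamma^\vee,\beta)$ on each block (Lemma \ref{lem:omeg const}) and with the precise index sets appearing in the packaged (tt) relations of Corollary \ref{cor:tt}, so that every block closes up to a full (tt) relator and no cross term survives.
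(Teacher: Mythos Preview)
Your proposal is correct and follows essentially the same route as the paper: reduce to the vanishing of the $\gamma\neq\beta$ sum, replace $(\ad x_{\gamma^\vee})^{2m}$ by $(\ad x_{\omega(\gamma^\vee,\beta)})^{2m}$ via (tx) and the even exponent, pull $t_\beta$ inside, regroup by the blocks $S_{\Psi'}^{(\beta)}$ using Lemma~\ref{lem:S} and Lemma~\ref{lem:omeg const}, and kill each block with Corollary~\ref{cor:tt}. The only cosmetic slip is that your block indices should read $S_{\Psi'}^{(\beta)}\cap\Psi^+$ rather than $\cap\Phi^+$, since the whole argument lives inside the rank~$2$ subsystem $\Psi$.
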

\begin{proof}
To show the claim, it suffices to show $\sum_{\{\gamma\in \Psi^+\mid \gamma\neq \beta\}}[t_\beta, (\ad\frac{x_{\gamma^\vee}}{2})^{2m}(t_\gamma)]=0$.

Using the $(tx)$ relation, we have 
\begin{align*}
\sum_{\{\gamma\in \Psi^+\mid \gamma\neq \beta\}}[t_\beta, (\ad\frac{x_{\gamma^\vee}}{2})^{2m}(t_\gamma)]
=\sum_{\{\gamma\in \Psi^+\mid \gamma\neq \beta\}}[t_\beta, (\ad\frac{x_{\omega(\gamma^\vee, \beta)}}{-2})^{2m}(t_\gamma)]
=\sum_{\Psi'\subset \Psi} \sum_{ \gamma\in S_{\Psi'}^{(\beta)}\cap\Psi^+}
[t_\beta, (\ad\frac{x_{\omega(\gamma^\vee, \beta)}}{-2})^{2m}(t_\gamma)],
\end{align*}
where the last equality follows from the decomposition in Lemma \ref{lem:S}. 
By Lemma \ref{lem:omeg const}, the term $(\ad\frac{x_{\omega(\gamma^\vee, \beta)}}{-2})^{2m}$ is independent of $\gamma\in S_{\Psi'}^{(\beta)}$. Using the relation $[t_\beta, x_{\omega(\gamma^\vee, \beta)}]=0$, we have
\[
\sum_{ \gamma\in S_{\Psi'}^{(\beta)}\cap\Psi^+}
[t_\beta, (\ad\frac{x_{\omega(\gamma^\vee, \beta)}}{-2})^{2m}(t_\gamma)]
=(\ad\frac{x_{\omega(\gamma^\vee, \beta)}}{-2})^{2m}
[t_{\beta}, \sum_{ \gamma\in S_{\Psi'}^{(\beta)}\cap\Psi^+} t_\gamma]
=0. 
\]
The last equality follows from the (tt) relation in Corollary \ref{cor:tt}. This completes the proof. 
\end{proof}
We use Lemma \ref{lem:delta tt} to show the equality \eqref{eq:delta tt}.
By Lemma \ref{lem:delta tt}, we have
\begin{align*}
[\tilde{\delta}_{2m}(t_\alpha), \sum_{\beta \in \Psi^+}t_\beta]+
[t_\alpha,  \sum_{\beta \in \Psi^+} &\tilde{\delta}_{2m}(t_\beta)]
=-\Big[t_\alpha,\sum_{\beta\in \Psi^+}[t_\beta, (\ad\frac{x_{\alpha^\vee}}{2})^{2m}(t_\alpha)]\Big]
+\Big[t_\alpha, \sum_{\beta\in \Psi^+}\sum_{\gamma\in \Psi^+}[t_\beta, (\ad\frac{x_{\gamma}}{2})^{2m}(t_\gamma)]\Big]\\
=&\Big[ t_\alpha,\sum_{\beta\in \Psi^+}[t_\beta, \sum_{\{\gamma|\gamma\neq\alpha\}}(\ad\frac{x_{\gamma^\vee}}{2})^{2m}(t_\gamma)]\Big]
=\Big[t_\alpha,\sum_{\beta\in \Psi^+}[t_\beta, \sum_{\{\gamma|\gamma\neq\alpha\}}(\ad\frac{x_{\omega(\gamma^\vee, \alpha)}}{2})^{2m}(t_\gamma)]\Big]\\
=&\sum_{\Psi'\subset \Psi}\Big[t_\alpha,[\sum_{\beta\in \Psi^+}t_\beta, \sum_{\gamma\in S^{(\alpha)}_{\Psi'}\cap \Psi^+}(\ad\frac{\omega(\gamma^\vee, \alpha)}{2})^{2m} t_\gamma]\Big],
\end{align*}
where the last equality follows from the decomposition in Lemma \ref{lem:S}. 
By Lemma \ref{lem:omeg const}, the term $(\ad\frac{x_{\omega(\gamma^\vee, \beta)}}{2})^{2m}$ is independent of $\gamma\in S_{\Psi'}^{(\beta)}$. Using the relation $[t_\beta, x_{\omega(\gamma^\vee, \beta)}]=0$,  we have 
\[
\Big[t_\beta, \sum_{\gamma\in S^{(\alpha)}_{\Psi'}\cap \Psi^+}(\ad\frac{\omega(\gamma^\vee, \alpha)}{2})^{2m} t_\gamma\Big]
=(\ad\frac{\omega(\gamma^\vee, \alpha)}{2})^{2m} [t_{\beta}, \sum_{\gamma\in S^{(\alpha)}_{\Psi'}\cap \Psi^+} t_\gamma]=0. 
\]
The last equality follows from the (tt) relation in Corollary \ref{cor:tt}. 
This implies the equality \eqref{eq:delta tt}.
\subsection{The map $\tilde{\delta}_{2m}$ preserves relation (ty)}
\label{subsec:delta ty}
We show in this subsection
\begin{equation}\label{eq:delta yt}
[y(u), \tilde{\delta}_{2m}(t_\alpha)]+[\tilde{\delta}_{2m}(y(u)), t_\alpha]=0,  \,\ \text{if $(\alpha, u)=0$}.
\end{equation}
We first calculate the term $[\tilde{\delta}_{2m}(y(u)), t_\alpha]$. 
\begin{lemma}\label{lem:gamma ij}
We write $\{\Psi\subset \Phi\}$ for the subset of rank 2 root subsystems of $\Phi$.  We have
\begin{equation}\label{delta yt}
[\tilde{\delta}_{2m}(y(u)), t_\alpha]=\frac{1}{2}
\sum_{\Psi\subset\Phi}
\sum_{\{(\gamma, \beta)\in S^{(\alpha)}_{\Psi} \cap \Psi^+\mid \gamma\neq \beta\}}
\gamma(u) 
\Bigg[t_{\alpha}, \,\ 
\frac{ (\ad\frac{x_{\omega(\beta^\vee, \gamma)}}{2})^{2m} 
-(\ad\frac{x_{\omega(\gamma^\vee, \beta)}}{2})^{2m}}{
 -\epsilon(\beta, \gamma)\ad\frac{x_{\omega(\beta^\vee, \gamma)}}{2}
 -\ad\frac{x_{\omega(\gamma^\vee, \beta)}}{2} }   [t_{\beta}, t_\gamma]\Bigg],
\end{equation}
where $\epsilon(\beta, \gamma) =\pm 1$. It is determined by the equality $\omega(\gamma^\vee, \alpha)=\epsilon(\beta, \gamma) \omega(\beta^\vee, \alpha)$.\end{lemma}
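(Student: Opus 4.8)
The plan is to compute the left-hand side directly from the definition
\[
\tilde{\delta}_{2m}(y(u))=\frac{1}{2}\sum_{\gamma\in \Phi^+}\gamma(u)\sum_{p+q=2m-1}\bigl[(\ad\tfrac{x_{\gamma^\vee}}{2})^{p}(t_\gamma),\,(\ad(-\tfrac{x_{\gamma^\vee}}{2}))^{q}(t_\gamma)\bigr]
\]
and then to reorganise the outcome through the root-system combinatorics of \S\ref{sec:S}. First I would substitute this into $[\tilde\delta_{2m}(y(u)),t_\alpha]$ and bring $t_\alpha$ through each double commutator by the Jacobi identity. The structural input at this stage is exactly the one used in Propositions \ref{prop:Ome1} and \ref{prop:Ome2}: modulo relation (tx), the operator $\ad x_{\gamma^\vee}$ acting on $t_\gamma$ may be replaced by $-\ad x_{\omega(\gamma^\vee,\eta)}$ for any auxiliary root $\eta$, since $\gamma^\vee+\omega(\gamma^\vee,\eta)\perp\gamma$. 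Using this to recentre the nested $\ad x$'s and collecting terms, each $\gamma$-summand is reduced to a commutator $[t_\alpha,t_\gamma]$ to which a polynomial in the $\ad x$'s is applied, and the inner sum assembles into a finite-difference quotient via the elementary identity
\[
\sum_{p+q=2m-1}X^{p}Y^{q}=\frac{X^{2m}-Y^{2m}}{X-Y}.
\]

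Next I would use Corollary \ref{cor:tt}(2): for $\gamma\in S_\Psi^{(\alpha)}$ the relation $[t_\gamma,\sum_{\delta\in S_\Psi^{(\alpha)}}t_\delta+t_\alpha]=0$ rewrites the ``single-root'' commutator $[t_\alpha,t_\gamma]$ as a sum of $[t_\beta,t_\gamma]$ over $\beta\in S_\Psi^{(\alpha)}$ with $\beta\neq\gamma$ (the passage to positive representatives $\beta\in\Psi^+$ accounting for the overall factor). This is the mechanism that produces the pair sum indexed by $(\gamma,\beta)$ on the right. I would then sum over $\gamma\in\Phi^+\setminus\{\alpha\}$ and invoke the disjoint decomposition $\Phi=\bigl(\bigsqcup_{\Psi}S_\Psi^{(\alpha)}\bigr)\sqcup\{\pm\alpha\}$ of Lemma \ref{lem:S} to group all contributions according to the rank-$2$ subsystems $\Psi$.

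The final task is to express the $\ad x$ coefficients in terms of $\omega(\beta^\vee,\gamma)$ and $\omega(\gamma^\vee,\beta)$. Once $\beta$ has been introduced, the linear identities of Lemma \ref{lemma:omeg} convert the operators recentred on $\alpha$ into operators recentred on the pair $\{\beta,\gamma\}$, and Lemma \ref{lem:omeg const} shows that $\omega(\cdot^\vee,\beta)$ is constant up to sign across $S_\Psi^{(\alpha)}$; the residual sign is precisely the $\epsilon(\beta,\gamma)$ defined by $\omega(\gamma^\vee,\alpha)=\epsilon(\beta,\gamma)\,\omega(\beta^\vee,\alpha)$. This identifies the numerator $(\ad\tfrac{x_{\omega(\beta^\vee,\gamma)}}{2})^{2m}-(\ad\tfrac{x_{\omega(\gamma^\vee,\beta)}}{2})^{2m}$ and denominator $-\epsilon(\beta,\gamma)\ad\tfrac{x_{\omega(\beta^\vee,\gamma)}}{2}-\ad\tfrac{x_{\omega(\gamma^\vee,\beta)}}{2}$ of the claimed formula, with the prefactor $\tfrac12$ inherited from the definition of $\tilde\delta_{2m}$.

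I expect the main obstacle to be the bookkeeping in the middle step: carrying the nested $\ad x_{\gamma^\vee}$ operators through the repeated Jacobi expansions while applying (tx) to recentre them at each stage, and then matching the $\pm$ ambiguities of Lemma \ref{lem:omeg const} to the single sign $\epsilon(\beta,\gamma)$ so that numerator and denominator of the finite-difference quotient are expressed in the correct pair of $\omega$-vectors. By contrast, the recognition of the finite-difference quotient and the combinatorial regrouping via Lemmas \ref{lem:S} and Corollary \ref{cor:tt} are routine once the signs have been pinned down.
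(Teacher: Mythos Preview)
Your plan has a genuine gap in the middle step, and it stems from an oversimplification of what the first Jacobi expansion actually produces.

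After bringing $t_\alpha$ through a single term $\bigl[(\ad\tfrac{x_{\gamma^\vee}}{2})^{p}t_\gamma,(\ad(-\tfrac{x_{\gamma^\vee}}{2}))^{q}t_\gamma\bigr]$ and recentring via (tx), you do \emph{not} obtain ``a polynomial in the $\ad x$'s applied to $[t_\alpha,t_\gamma]$.'' What you get (as the paper computes) is
\[
2\sum_{p+q=2m-1}\Bigl[(\ad\tfrac{x_{\omega(\gamma^\vee,\alpha)}}{2})^{p}[t_\alpha,t_\gamma],\,(\ad\tfrac{x_{\gamma^\vee}}{2})^{q}t_\gamma\Bigr],
\]
an outer commutator with a second copy of $t_\gamma$ still present. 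The geometric-series identity $\sum_{p+q=2m-1}X^pY^q=(X^{2m}-Y^{2m})/(X-Y)$ cannot yet be applied, because the $p$- and $q$-powers sit on different slots of this outer bracket.

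The paper's device for extracting the outer $[t_\alpha,\,\cdot\,]$ in the claimed formula is a symmetrisation you do not mention. One first uses Corollary~\ref{cor:tt}(1) in the form $[t_\alpha,t_\gamma]=-\sum_{\beta}[t_\alpha,t_\beta]$ (keeping $t_\alpha$ \emph{inside} the bracket, not eliminating it as your use of Corollary~\ref{cor:tt}(2) would). This yields terms of the shape $\bigl[[t_\alpha,A_\beta^{(p)}],\,B_\gamma^{(q)}\bigr]$. Now split the sum in half, swap $(\beta,\gamma)$ and $(p,q)$ in one half, and use $\gamma(u)=\epsilon(\beta,\gamma)\,\beta(u)$ --- an identity that holds only because $(\alpha,u)=0$, a hypothesis implicit in \S\ref{subsec:delta ty} but absent from your outline. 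The two halves then pair up as $\bigl[[t_\alpha,A],B\bigr]+\bigl[A,[t_\alpha,B]\bigr]=[t_\alpha,[A,B]]$ by Jacobi, and it is only after this step that the difference-quotient form emerges and the $\omega$-vectors can be rewritten via Lemma~\ref{lemma:omeg}. Without this symmetrisation there is no clear route from your intermediate expression to the right-hand side of \eqref{delta yt}.
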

\begin{proof}
For fixed $p, q$, such that $p+q=2m-1$. Using the relations 
$[x_{\gamma^\vee}, t_{\gamma}]=-[x_{\omega(\gamma^\vee, \alpha)}, t_{\gamma}]$ and $[x_{\omega(\gamma^\vee, \alpha)}, t_{\alpha}]=0$, we have
\begin{align}
&\Big[[(\ad\frac{x_{\gamma^\vee}}{2})^{p}(t_\gamma), (\ad-\frac{x_{\gamma^\vee}}{2})^{q}(t_\gamma)], t_\alpha\Big] \notag\\
=&\Big[[t_\alpha, (\ad\frac{x_{\gamma^\vee}}{-2})^{q}(t_\gamma)], (\ad\frac{x_{\gamma^\vee}}{2})^{p}(t_\gamma)\Big]
-\Big[[t_\alpha, (\ad\frac{x_{\gamma^\vee}}{2})^{p}(t_\gamma)], (\ad\frac{x_{\gamma^\vee}}{-2})^{q}(t_\gamma)\Big] \notag\\
=&\Big[(\ad\frac{x_{\omega(\gamma^\vee, \alpha)}}{2})^{q}[t_\alpha, t_\gamma], (\ad\frac{x_{\gamma^\vee}}{2})^{p}(t_\gamma)\Big]
-\Big[(\ad\frac{x_{\omega(\gamma^\vee, \alpha)}}{-2})^{p}[t_\alpha, t_\gamma], (\ad\frac{x_{\gamma^\vee}}{-2})^{q}(t_\gamma)\Big]
\notag\\
=&\Big[(\ad\frac{x_{\omega(\gamma^\vee, \alpha)}}{2})^{q}[t_\alpha, t_\gamma], (\ad\frac{x_{\gamma^\vee}}{2})^{p}(t_\gamma)\Big]
+\Big[(\ad\frac{x_{\omega(\gamma^\vee, \alpha)}}{2})^{p}[t_\alpha, t_\gamma], (\ad\frac{x_{\gamma^\vee}}{2})^{q}(t_\gamma)\Big]. \label{eq:gamma pq}
\end{align}
Taking summation over all pairs $(p, q)$ with $p+q=2m-1$, the two summands in \eqref{eq:gamma pq} could be combined. Therefore, we have
\[
\sum_{p+q=2m-1}\Big[[(\ad\frac{x_{\gamma^\vee}}{2})^{p}(t_\gamma), (\ad-\frac{x_{\gamma^\vee}}{2})^{q}(t_\gamma)], t_\alpha\Big]
=2\sum_{p+q=2m-1}\Big[(\ad\frac{x_{\omega(\gamma^\vee, \alpha)}}{2})^{p}[t_\alpha, t_\gamma], (\ad\frac{x_{\gamma^\vee}}{2})^{q}(t_\gamma)\Big].
\]
By \eqref{eq:gamma pq}, together with the (tt) relation 
$[t_\alpha, t_{\gamma}]=-\sum_{\{\beta\in S^{(\alpha)}_{\Psi}\mid  \beta\neq \gamma\}}[t_\alpha, t_{\beta}]$
and the fact $\Phi\backslash \{\pm \alpha\}=\sqcup_{\Psi\subset \Phi} S_{\Psi}^{(\alpha)}$, we have
\begin{align}
&[\sum_{\gamma\in \Phi^+}\gamma(u)\sum_{p+q=2m-1}[(\ad\frac{x_{\gamma^\vee}}{2})^{p}(t_\gamma), (\ad-\frac{x_{\gamma^\vee}}{2})^{q}(t_\gamma)], t_\alpha] \notag\\
=&2\sum_{\gamma\in \Phi^+}\gamma(u) 
\sum_{p+q=2m-1}\Big[(\ad\frac{x_{\omega(\gamma^\vee, \alpha)}}{2})^{p}[t_\alpha, t_\gamma], (\ad\frac{x_{\gamma^\vee}}{2})^{q}(t_\gamma)\Big] \notag\\
=&-2\sum_{\Psi^+\subset\Phi^+}
\sum_{\{\gamma\mid \gamma\in S^{(\alpha)}_{\Psi} \cap \Phi^+\}}
\sum_{\{\beta\mid \beta\in S^{(\alpha)}_{\Psi}\cap \Phi^+,  \,\ \beta\neq \gamma\}}
\gamma(u) 
\sum_{p+q=2m-1}\Big[(\ad\frac{x_{\omega(\gamma^\vee, \alpha)}}{2})^{p}[t_\alpha, t_\beta], (\ad\frac{x_{\gamma^\vee}}{2})^{q}(t_\gamma)\Big] \notag\\
=&-2\sum_{\Psi\subset\Phi}
\sum_{\{(\gamma, \beta)\in S^{(\alpha)}_{\Psi} \cap \Psi^+\mid \gamma\neq \beta\}}
\gamma(u) 
\sum_{p+q=2m-1}\Big[[t_\alpha, (\ad\frac{x_{\omega(\gamma^\vee, \alpha)}}{2})^{p}t_\beta], (\ad\frac{x_{\gamma^\vee}}{2})^{q}(t_\gamma)\Big]. \label{eq:ty relation 1}
\end{align}
For a pair $(\gamma, \beta)\in S^{(\alpha)}_{\Psi}$, as shown in Lemma \ref{lem:omeg const}, 
$\omega(\gamma^\vee, \alpha)=\epsilon(\beta, \gamma) \omega(\beta^\vee, \alpha)$, where $\epsilon(\beta, \gamma) =\pm 1$. Therefore, we simplify \eqref{eq:ty relation 1} further. 
\begin{align}
\Big[[t_\alpha, (\ad\frac{x_{\omega(\gamma^\vee, \alpha)}}{2})^{p}t_\beta], (\ad\frac{x_{\gamma^\vee}}{2})^{q}(t_\gamma)\Big]
&=\epsilon(\beta, \gamma)^p
\Big[[t_\alpha, ( \ad\frac{x_{\omega(\beta^\vee, \alpha)}}{2})^{p}t_\beta], (\ad\frac{x_{\gamma^\vee}}{2})^{q}(t_\gamma)\Big] \notag\\
&=(-\epsilon(\beta, \gamma))^p
\Big[[t_\alpha, ( \ad\frac{x_{\beta^\vee}}{2})^{p}t_\beta], (\ad\frac{x_{\gamma^\vee}}{2})^{q}(t_\gamma)\Big]. \label{eq:ty relation 2}
\end{align}
We plug \eqref{eq:ty relation 2} into \eqref{eq:ty relation 1} and then split \eqref{eq:ty relation 1} into two equal parts. We use the trick that switching the pair $(\beta, \gamma)$, and then the pair of two indices $(p, q)$. 
The identity $\gamma(u)=\epsilon(\beta, \gamma)\beta(u)$ is useful in the simplification, which follows from the fact $(\alpha, u)=0$. 
For $p+q=2m-1$, we have 
$(-\epsilon(\beta, \gamma))^{q+1}=(-\epsilon(\beta, \gamma))^{p}$.  
More precisely, we have
\begin{align*}
2[\tilde{\delta}_{2m}(y(u)), t_\alpha]=&[\sum_{\gamma\in \Phi^+}\gamma(u)\sum_{p+q=2m-1}[(\ad\frac{x_{\gamma^\vee}}{2})^{p}(t_\gamma), (\ad-\frac{x_{\gamma^\vee}}{2})^{q}(t_\gamma)], t_\alpha]\\
=&-2\sum_{\Psi^+\subset\Phi^+}
\sum_{\{(\gamma, \beta)\in S^{(\alpha)}_{\Psi}\cap \Phi^+\mid \gamma\neq \beta\}}
\gamma(u) 
\sum_{p+q=2m-1}(-\epsilon(\beta, \gamma))^p
\Big[[t_\alpha, ( \ad\frac{x_{\beta^\vee}}{2})^{p}t_\beta], (\ad\frac{x_{\gamma^\vee}}{2})^{q}(t_\gamma)\Big]\\
=&-\sum_{\Psi^+\subset\Phi^+}
\sum_{\{(\gamma, \beta)\in S^{(\alpha)}_{\Psi}\cap \Phi^+\mid \gamma\neq \beta\}}
\gamma(u) 
\sum_{p+q=2m-1}(-\epsilon(\beta, \gamma))^p
\Big[[t_\alpha, ( \ad\frac{x_{\beta^\vee}}{2})^{p}t_\beta], (\ad\frac{x_{\gamma^\vee}}{2})^{q}(t_\gamma)\Big]\\
&-\sum_{\Psi^+\subset\Phi^+}
\sum_{\{(\beta, \gamma)\in S^{(\alpha)}_{\Psi}\cap \Phi^+\mid \gamma\neq \beta\}}
\beta(u) 
\sum_{p+q=2m-1}(-\epsilon(\beta, \gamma))^p
\Big[[t_\alpha, ( \ad\frac{x_{\gamma^\vee}}{2})^{p}t_\gamma], (\ad\frac{x_{\beta^\vee}}{2})^{q}(t_\beta)\Big]
\end{align*}
\begin{align*}
=&-\sum_{\Psi^+\subset\Phi^+}
\sum_{\{(\gamma, \beta)\in S^{(\alpha)}_{\Psi}\cap \Phi^+\mid \gamma\neq \beta\}}
\gamma(u) 
\sum_{p+q=2m-1}(-\epsilon(\beta, \gamma))^p
\Big[[t_\alpha, ( \ad\frac{x_{\beta^\vee}}{2})^{p}t_\beta], (\ad\frac{x_{\gamma^\vee}}{2})^{q}(t_\gamma)\Big]
\\
&-\sum_{\Psi^+\subset\Phi^+}
\sum_{\{(\beta, \gamma)\in S^{(\alpha)}_{\Psi}\cap \Phi^+\mid \gamma\neq \beta\}}
\gamma(u)
\sum_{p+q=2m-1}(-\epsilon(\beta, \gamma))^{p}
\Big[(\ad\frac{x_{\beta^\vee}}{2})^{p}(t_\beta), [t_\alpha, ( \ad\frac{x_{\gamma^\vee}}{2})^{q}t_\gamma]\Big]\\
=&-\sum_{\Psi^+\subset\Phi^+}
\sum_{\{(\gamma, \beta)\in S^{(\alpha)}_{\Psi}\cap \Phi^+\mid \gamma\neq \beta\}}
\gamma(u) 
\sum_{p+q=2m-1}(-\epsilon(\beta, \gamma))^p
\Big[t_\alpha, [( \ad\frac{x_{\beta^\vee}}{2})^{p}t_\beta, (\ad\frac{x_{\gamma^\vee}}{2})^{q}(t_\gamma)]\Big]
\end{align*}
\begin{align*}
=&\sum_{\Psi^+\subset\Phi^+}
\sum_{\{(\gamma, \beta)\in S^{(\alpha)}_{\Psi}\cap \Phi^+\mid \gamma\neq \beta\}}
\gamma(u) 
\sum_{p+q=2m-1}(-\epsilon(\beta, \gamma))^p
\Big[t_\alpha, [( \ad\frac{x_{\omega(\beta^\vee, \gamma)}}{2})^{p}t_\beta, (\ad\frac{x_{\omega(\gamma^\vee, \beta)}}{2})^{q}(t_\gamma)]\Big]\\
=&\sum_{\Psi^+\subset\Phi^+}
\sum_{\{(\gamma, \beta)\in S^{(\alpha)}_{\Psi}\cap \Phi^+\mid \gamma\neq \beta\}}
\gamma(u) 
\sum_{p+q=2m-1}
\Big[t_\alpha, ( -\epsilon(\beta, \gamma)\ad\frac{x_{\omega(\beta^\vee, \gamma)}}{2})^{p} (\ad\frac{x_{\omega(\gamma^\vee, \beta)}}{2})^{q}[t_{\beta}, t_\gamma]\Big]\\
=&\sum_{\Psi^+\subset\Phi^+}
\sum_{\{(\gamma, \beta)\in S^{(\alpha)}_{\Psi}\cap \Phi^+\mid \gamma\neq \beta\}}
\gamma(u) 
\Big[t_{\alpha}, \frac{ (\ad\frac{x_{\omega(\beta^\vee, \gamma)}}{2})^{2m} 
-(\ad\frac{x_{\omega(\gamma^\vee, \beta)}}{2})^{2m}}{
 -\epsilon(\beta, \gamma)\ad\frac{x_{\omega(\beta^\vee, \gamma)}}{2}
 -\ad\frac{x_{\omega(\gamma^\vee, \beta)}}{2} }   [t_{\beta}, t_\gamma]\Big]. 
 \end{align*}
 This completes the proof. 
\end{proof}

We then calculate the term $[y(u), \tilde{\delta}_{2m}(t_\alpha)]$. Using the relation $[y(u), t_\alpha]=0$, we have
\begin{align*}
&[y(u), \tilde{\delta}_{2m}(t_\alpha)]\\
=&\Big[t_\alpha, [y(u), (\ad\frac{x_{\alpha^\vee}}{2})^{2m}(t_\alpha)]\Big]&& \text{By $[y(u), t_\alpha]=0$.}\\
=&\Bigg[t_\alpha, \frac{1}{2}\sum_{\{\gamma\in \Phi^+\mid \gamma\neq\alpha\}}(\alpha^\vee, \gamma)(u, \gamma)\sum_{s=0}^{2m}(\ad\frac{x_{\alpha^\vee}}{2})^s
(\ad\frac{x_{\omega(\alpha^\vee,\gamma)}}{-2})^{2m-1-s}[t_\gamma, t_\alpha]\Bigg]&&\text{By Jacobi identity and the relation (yx)}\\
=&\Bigg[t_\alpha,\sum_{\{\gamma\in \Phi^+\mid \gamma\neq\alpha\}}\gamma(u)\frac{(\ad\frac{x_{\alpha^\vee}}{2})^{2m}-
(\ad\frac{x_{\omega(\alpha^\vee,\gamma)}}{-2})^{2m}}
{\ad\frac{x_{\omega(\gamma^\vee,\alpha)}}{-2}}[t_\gamma, t_\alpha]\Bigg].&& \text{By the fact that $\frac{\alpha^\vee+\omega(\alpha^\vee, \gamma)}{(\alpha^\vee, \gamma)}=\frac{\omega(\gamma^\vee, \alpha)}{-2}$.}
\end{align*}
Using the decomposition in Lemma \ref{lem:S}, and the (tt) relation in Corollary \ref{cor:tt}, the above is the same as
\begin{align}\label{more tt}
&\Big[t_\alpha, 
\sum_{\Psi\subset \Phi} 
\sum_{\{\gamma\mid \gamma\in S_{\Psi}^{(\alpha)}\cap \Psi^+ \}}
\gamma(u)\frac{(\ad\frac{x_{\alpha^\vee}}{2})^{2m}-
(\ad\frac{x_{\omega(\alpha^\vee,\gamma)}}{-2})^{2m}}
{\ad\frac{x_{\omega(\gamma^\vee,\alpha)}}{-2}}[t_\gamma, t_\alpha]\Big]
=
\Big[t_\alpha, 
\sum_{\Psi\subset \Phi} 
\sum_{\{\gamma, \beta \in S_{\Psi}^{(\alpha)}\cap \Psi^+ \}}
\gamma(u)\frac{(\ad\frac{x_{\alpha^\vee}}{2})^{2m}-
(\ad\frac{x_{\omega(\alpha^\vee,\gamma)}}{-2})^{2m}}
{\ad\frac{x_{\omega(\gamma^\vee,\alpha)}}{-2}}[t_\beta, t_{\gamma}]\Big].
\end{align}
Switching $\beta$ and $\gamma$, we have
\begin{align}\label{y delta t}
[y(u), \tilde{\delta}_{2m}(t_\alpha)] 
=&\Bigg[t_\alpha, 
\sum_{\Psi\subset \Phi} 
\sum_{\{\gamma, \beta \in S_{\Psi}^{(\alpha)}\cap \Psi^+ \}}
\gamma(u)\frac{(\ad\frac{x_{\alpha^\vee}}{2})^{2m}-
(\ad\frac{x_{\omega(\alpha^\vee,\gamma)}}{-2})^{2m}}
{\ad\frac{x_{\omega(\gamma^\vee,\alpha)}}{-2}}[t_\beta, t_{\gamma}]\Bigg] \notag\\
=&\frac{1}{2}\Bigg[t_\alpha, 
\sum_{\Psi\subset \Phi} 
\sum_{\{\gamma, \beta \in S_{\Psi}^{(\alpha)}\cap \Psi^+ \}}
\gamma(u)\frac{(\ad\frac{x_{\alpha^\vee}}{2})^{2m}-
(\ad\frac{x_{\omega(\alpha^\vee,\gamma)}}{-2})^{2m}}
{\ad\frac{x_{\omega(\gamma^\vee,\alpha)}}{-2}}[t_\beta, t_{\gamma}]\Bigg]\\
&
-\frac{1}{2}\Bigg[t_\alpha, 
\sum_{\Psi\subset \Phi} 
\sum_{\{\gamma, \beta \in S_{\Psi}^{(\alpha)}\cap \Psi^+ \}}
\beta(u)\frac{(\ad\frac{x_{\alpha^\vee}}{2})^{2m}-
(\ad\frac{x_{\omega(\alpha^\vee,\beta)}}{-2})^{2m}}
{\ad\frac{x_{\omega(\beta^\vee,\alpha)}}{-2}}[t_\beta, t_{\gamma}]\Bigg] \notag\\
=&\frac{1}{2}\Bigg[t_\alpha, 
\sum_{\Psi\subset \Phi} 
\sum_{\{\gamma, \beta \in S_{\Psi}^{(\alpha)}\cap \Psi^+ \}}
\gamma(u)\frac{(\ad\frac{x_{\omega(\alpha^\vee,\beta)}}{2})^{2m}-
(\ad\frac{x_{\omega(\alpha^\vee,\gamma)}}{-2})^{2m}}
{\ad\frac{x_{\omega(\gamma^\vee,\alpha)}}{-2}}[t_\beta, t_{\gamma}]\Bigg].
\end{align}
For fixed $\alpha$ and $\Psi$. The set $S_{\Psi}^{(\alpha)}$ is given in Examples \ref{B_2} and \ref{G_2}. 
The relation \eqref{eq:delta yt} follows from direct computation by comparing \eqref{delta yt} and \eqref{y delta t}. 

More precisely, we assume the pair $\beta, \gamma$ satisfy $\beta=\epsilon(\beta, \gamma)\gamma+\alpha$, then, we have
\[
\omega(\beta^\vee, \gamma)=\omega(\alpha^\vee, \gamma), \,\
\omega(\gamma^\vee, \beta)=-\epsilon(\beta, \gamma)\omega(\alpha^\vee, \beta), \,\
\epsilon(\beta, \gamma)
\omega(\beta^\vee, \gamma)+\omega(\gamma^\vee, \beta)
=\omega(\gamma^\vee, \alpha).
\]
Therefore, the relation \eqref{eq:delta yt} holds under this assumption.
 
The assumption $\beta=\epsilon(\beta, \gamma)\gamma+\alpha$ does not hold only when $\alpha$ is the short root of $G_2$.
For the exceptional case, we have $\beta=\epsilon(\beta, \gamma)\gamma+3\alpha$. We modify the equality of \eqref{more tt} to use a more refined (tt) relation for the root system $\Phi_{G_2}$, see Example \ref{G_2}. 
Then, the corresponding term of $[y(u), \tilde{\delta}_{2m}(t_\alpha)]$ in \eqref{y delta t}  will be modified to
\begin{align*}
\frac{1}{2}\Big[t_\alpha, 
\gamma(u)\frac{(\ad\frac{x_{\omega((3\alpha)^\vee,\beta)}}{2})^{2m}-
(\ad\frac{x_{\omega((3\alpha)^\vee,\gamma)}}{-2})^{2m}}
{\ad\frac{x_{\omega(\gamma^\vee,3\alpha)}}{-2}}[t_\beta, t_{\gamma}]\Bigg]. 
\end{align*}
The rest of the proof is exactly the same as before. 
This concludes the relation \eqref{eq:delta yt}.

\subsection{The map $\tilde{\delta}_{2m}$ preserves relation (yy)}
\label{subsec:delta yy}
We show in this subsection
\begin{equation}
[y(u), \tilde{\delta}_{2m} (y(v))]+[\tilde{\delta}_{2m} (y(u)), y(v)]=0, \,\ \text{for any $u, v\in \h$}.
\end{equation}
By definition of $\tilde{\delta}_{2m}$, we have
\begin{align}
[y(u), \tilde{\delta}_{2m} (y(v))]+
[\tilde{\delta}_{2m} (y(u)), y(v)] 
=&\Big[y(u),\frac{1}{2}\sum_{\gamma\in \Phi^+}\gamma(v)
\sum_{p+q=2m-1}[(\ad\frac{x_{\gamma^\vee}}{2})^{p}(t_\gamma), (\ad-\frac{x_{\gamma^\vee}}{2})^{q}(t_\gamma)] \Big] \notag\\
&-\Big[y(v),\frac{1}{2}\sum_{\gamma\in \Phi^+}\gamma(u)
\sum_{p+q=2m-1}[(\ad\frac{x_{\gamma^\vee}}{2})^{p}(t_\gamma), (\ad-\frac{x_{\gamma^\vee}}{2})^{q}(t_\gamma)]\Big] \notag\\
=&\sum_{\gamma\in  \Phi^+}\Big[y_\eta,
\frac{1}{2}\sum_{p+q=2m-1}[(\ad\frac{x_{\gamma^\vee}}{2})^{p}(t_\gamma), (\ad-\frac{x_{\gamma^\vee}}{2})^{q}(t_\gamma)]\Big] \notag\\
=&\sum_{\gamma\in \Phi^+}\sum_{p+q=2m-1}
\Big[[y_\eta, (\ad\frac{x_{\gamma^\vee}}{2})^{p}(t_\gamma)], (\ad-\frac{x_{\gamma^\vee}}{2})^{q}(t_\gamma)\Big], \label{y delta y}
\end{align}
where $y_\eta=\gamma(v)y(u)-\gamma(u)y(v)$ and therefore $[y_\eta, t_\gamma]=0$.

We use the relation (yx), we have
\begin{align}
[y_\eta, (\ad\frac{x_{\gamma^\vee}}{2})^{p}(t_\gamma)] 
=&\sum_{s=0}^{p-1}(\ad\frac{x_{\gamma^\vee}}{2})^{s}
\ad( [y_\eta, \frac{x_{\gamma^\vee}}{2}])
(\ad\frac{x_{\gamma^\vee}}{2})^{p-1-s}(t_\gamma)] \notag\\
=&\sum_{s=0}^{p-1}\sum_{\beta\in \Phi^+}(\beta, \eta)(\beta, \frac{\gamma^\vee}{2})
(\ad\frac{x_{\gamma^\vee}}{2})^{s}
\ad( t_{\beta})
(\ad\frac{x_{\gamma^\vee}}{2})^{p-1-s}(t_\gamma)] \notag\\
=&\sum_{s=0}^{p-1}\sum_{\beta\in \Phi^+}(\beta, \eta)(\beta, \frac{\gamma^\vee}{2})
(\ad\frac{x_{\gamma^\vee}}{2})^{s}
(\ad\frac{x_{\omega(\gamma^\vee, \beta)}}{-2})^{p-1-s}[t_{\beta}, t_\gamma] \notag\\
=&\sum_{s=0}^{p-1}
\sum_{\beta\in \Phi^+}\det(\beta, \gamma)_{u, v}(\beta, \frac{\gamma^\vee}{2})
(\ad\frac{x_{\gamma^\vee}}{2})^{s}
(\ad\frac{x_{\omega(\gamma^\vee, \beta)}}{-2})^{p-1-s}[t_{\beta}, t_\gamma]\notag\\
=&
\sum_{\beta\in \Phi^+}\det(\beta, \gamma)_{u, v}(\beta, \frac{\gamma^\vee}{2})
\frac{(\ad\frac{x_{\gamma^\vee}}{2})^{p}- (\ad\frac{x_{\omega(\gamma^\vee, \beta)}}{-2})^{p}}{
\ad\frac{x_{\gamma^\vee}}{2}+\ad\frac{x_{\omega(\gamma^\vee, \beta)}}{2}
}[t_{\beta}, t_\gamma]\notag\\
=&
\sum_{\beta\in \Phi^+}\det(\beta, \gamma)_{u, v}
\frac{(\ad\frac{x_{\gamma^\vee}}{2})^{p}- (\ad\frac{x_{\omega(\gamma^\vee, \beta)}}{-2})^{p}}{
\ad\frac{x_{\omega( \beta^\vee, \gamma)}}{-2}
}[t_{\beta}, t_\gamma]\label{y_eta xt},
\end{align}
where $
\det(\beta, \gamma)_{u, v}=\beta(u)\gamma(v)-\beta(v)\gamma(u)$. 
The last equality of \eqref{y_eta xt} follows from the linearity of $x(u)$, and the equality $\frac{\gamma^\vee+\omega(\gamma^\vee, \beta)}{(\gamma^\vee, \beta)}=\frac{\omega( \beta^\vee, \gamma)}{-2}$.

We now use the decomposition $\Phi=\sqcup_{\Psi\subset \Phi} S_{\Psi}^{(\gamma)}\sqcup \{\pm \gamma\}$, and the (tt) relation in Corollary \ref{cor:tt}. 
By fixing $\gamma\in \Phi^+$, we have
\begin{align*}
\eqref{y_eta xt}
=&\sum_{\Psi\subset \Phi}
\sum_{\beta\in S_{\Psi}^{(\gamma)}\cap \Psi^+}
\det(\beta, \gamma)_{u, v}
\frac{(\ad\frac{x_{\gamma^\vee}}{2})^{p}-(\ad\frac{x_{\omega(\gamma^\vee, \beta)}}{-2})^{p}}{\ad \frac{x_{\omega(\beta^\vee, \gamma)}}{-2}}[t_\beta, t_\gamma] \notag\\
=&- \sum_{\Psi\subset \Phi}
\sum_{\beta, \alpha\in S_{\Psi}^{(\gamma)}\cap \Psi^+}
\det(\beta, \gamma)_{u, v}
\frac{(\ad\frac{x_{\gamma^\vee}}{2})^{p}-(\ad\frac{x_{\omega(\gamma^\vee, \beta)}}{-2})^{p}}{\ad \frac{x_{\omega(\beta^\vee, \gamma)}}{-2}}[t_\beta, t_\alpha]\notag\\
=&-\frac{1}{2} \sum_{\Psi\subset \Phi}
\sum_{\beta, \alpha\in S_{\Psi}^{(\gamma)}\cap \Psi^+}
\det(\beta, \gamma)_{u, v}
\frac{(\ad\frac{x_{\gamma^\vee}}{2})^{p}-(\ad\frac{x_{\omega(\gamma^\vee, \beta)}}{-2})^{p}}{\ad \frac{x_{\omega(\beta^\vee, \gamma)}}{-2}}[t_\beta, t_\alpha]\notag\\
&+\frac{1}{2} \sum_{\Psi\subset \Phi}
\sum_{\beta, \alpha\in S_{\Psi}^{(\gamma)}\cap \Psi^+}
\det(\alpha, \gamma)_{u, v}
\frac{(\ad\frac{x_{\gamma^\vee}}{2})^{p}-(\ad\frac{x_{\omega(\gamma^\vee, \alpha)}}{-2})^{p}}{\ad \frac{x_{\omega(\alpha^\vee, \gamma)}}{-2}}[t_\beta, t_\alpha]\notag\\
=&-\frac{1}{2} \sum_{\Psi\subset \Phi}
\sum_{\beta , \alpha\in S_{\Psi}^{(\gamma)}\cap \Psi^+}
\det(\beta, \gamma)_{u, v}
\frac{(\ad\frac{x_{\omega(\gamma^\vee, \alpha)}}{-2})^{p}-(\ad\frac{x_{\omega(\gamma^\vee, \beta)}}{-2})^{p}}{\ad \frac{x_{\omega(\beta^\vee, \gamma)}}{-2}}[t_\beta, t_\alpha]. 
\end{align*}
The last equality follow from the fact that 
for $\alpha, \beta\in S_{\Psi}^{(\gamma)}$,\[
\ad \frac{x_{\omega(\beta^\vee, \gamma)}}{-2}/\det(\beta, \gamma)_{u, v}
=\ad \frac{x_{\omega(\alpha^\vee, \gamma)}}{-2}/\det(\alpha, \gamma)_{u, v}. 
\]
\Omit{
The notation $\alpha<\beta$ is any total order of $S_{\Psi}^{(\gamma)}$, since we switched $\alpha$, with $\beta$. }

For $\alpha, \beta \in S_{\Psi}^{(\gamma)}$, we have $\alpha, \beta, \gamma$ are colinear. 
Therefore, there exist three integers $A, B, C\in \Z$, such that $A\alpha+B\beta+C\gamma=0$. From Examples \ref{B_2} and \ref{G_2}, we know for $\alpha, \beta\in S_{\Psi}^{(\gamma)}$, we have either $\pm (\alpha\pm \beta)=\gamma$ or $\pm(\alpha\pm \beta)=3\gamma$. 
We first deal with the case $\pm (\alpha\pm \beta)=\gamma$. Without loss of generality, we assume first that $C=1$, and $A=\pm 1$,  $B=\pm 1$. 
Then we have the equalities
\begin{align*}
&\omega(\gamma^\vee, \alpha)=\frac{-1}{B} \omega(\beta^\vee, \alpha),  \,\
\omega(\gamma^\vee, \beta)=\frac{-1}{A} \omega(\alpha^\vee, \beta), \\
&\omega(\gamma^\vee, \alpha)-\omega(\gamma^\vee, \beta)
=\frac{-1}{B}\omega(\beta^\vee, \gamma). 
\end{align*}
Using the above equalities, we compute the equation \eqref{y_eta xt} further. 
\begin{align}
\eqref{y_eta xt}=
&\frac{1}{2} \sum_{\Psi\subset \Phi}
\sum_{\beta, \alpha\in S_{\Psi}^{(\gamma)}\cap \Psi^+}
\frac{\det(\beta, \gamma)_{u, v}}{B}
\sum_{s+k=p-1}
(\ad\frac{x_{\omega(\gamma^\vee, \alpha)}}{-2})^{s} (\ad\frac{x_{\omega(\gamma^\vee, \beta)}}{-2})^{k}[t_\beta, t_\alpha] \notag\\
=&\frac{1}{2} \sum_{\Psi\subset \Phi}
\sum_{\beta, \alpha\in S_{\Psi}^{(\gamma)}\cap \Psi^+}
\frac{\det(\beta, \gamma)_{u, v}}{B}
\sum_{s+k=p-1}
(\frac{1}{B}\ad\frac{x_{\omega(\beta^\vee, \alpha)}}{2})^{s} (\frac{1}{A}\ad\frac{x_{\omega(\alpha^\vee, \beta)}}{2})^{k}[t_\beta, t_\alpha] \notag\\
=&\frac{1}{2} \sum_{\Psi\subset \Phi}
\sum_{\beta, \alpha\in S_{\Psi}^{(\gamma)}\cap \Psi^+}
\frac{\det(\beta, \gamma)_{u, v}}{B}
\sum_{s+k=p-1} 
\Big[(\frac{1}{B}\ad\frac{x_{\beta^\vee}}{-2})^{s} (t_{\beta}), 
(\frac{1}{A}\ad\frac{x_{\alpha^\vee}}{-2})^{k}(t_\alpha)\Big].  \label{eq: beta alpha}
\end{align}
Plugging the formula \eqref{eq: beta alpha} into the equation \eqref{y delta y}, we have
\begin{align}
&[\tilde{\delta}_{2m} (y(u)), y(v)]
+[y(u), \tilde{\delta}_{2m} (y(v))] \notag\\
=&\frac{1}{2}\sum_{s+k+q=2m-2}
\sum_{\gamma\in \Phi^+}\sum_{\Psi\subset \Phi}
\sum_{\beta, \alpha\in S_{\Psi}^{(\gamma)}\cap \Psi^+}
\frac{\det(\beta, \gamma)_{u, v}}{B} 
\Bigg[ \Big[(\frac{1}{B}\ad\frac{x_{\beta^\vee}}{-2})^{s} (t_{\beta}), 
(\frac{1}{A}\ad\frac{x_{\alpha^\vee}}{-2})^{k}(t_\alpha)\Big], (\ad\frac{x_{\gamma^\vee}}{-2})^{q}(t_\gamma)\Bigg] 
\notag\\
=&\frac{1}{2}\sum_{s+k+q=2m-2}
\sum_{\gamma\in \Phi^+}\sum_{\Psi\subset \Phi}
\sum_{\beta, \alpha\in S_{\Psi}^{(\gamma)}\cap \Psi^+}
\frac{\det(\beta, \gamma)_{u, v}}{B^{s+1} A^k} 
\Bigg[ \Big[(\ad\frac{x_{\beta^\vee}}{-2})^{s} (t_{\beta}), 
(\ad\frac{x_{\alpha^\vee}}{-2})^{k}(t_\alpha)\Big], (\ad\frac{x_{\gamma^\vee}}{-2})^{q}(t_\gamma)\Bigg]. 
\label{derivation4}
\end{align}
Exactly the same proof as before, using $AB\alpha+B\gamma+\beta=0$, we have 
\begin{align}
&[\tilde{\delta}_{2m} (y(u)), y(v)]+[y(u), \tilde{\delta}_{2m} (y(v))] \notag\\
\Omit{=&\frac{1}{2}\sum_{s+k+q=2m-2}
\sum_{\beta\in \Phi^+}\sum_{\Psi\subset \Phi}
\sum_{\gamma, \alpha\in S_{\Psi}^{(\beta)}\cap \Psi^+}
\frac{\det(\gamma, \beta)_{u, v}}{B} 
\Bigg[ \Big[(\frac{1}{B}\ad\frac{x_{\gamma^\vee}}{-2})^{q} (t_{\gamma}), 
(\frac{1}{AB}\ad\frac{x_{\alpha^\vee}}{-2})^{k}(t_\alpha)\Big], (\ad\frac{x_{\beta^\vee}}{-2})^{s}(t_\beta)\Bigg] 
\notag\\}
=&\frac{1}{2}\sum_{s+k+q=2m-2}
\sum_{\beta\in \Phi^+}\sum_{\Psi\subset \Phi}
\sum_{\gamma, \alpha\in S_{\Psi}^{(\beta)}\cap \Psi^+}
\frac{\det(\gamma, \beta)_{u, v}}{B^{q+k+1}A^k} 
\Bigg[ \Big[(\ad\frac{x_{\gamma^\vee}}{-2})^{q} (t_{\gamma}), 
(\ad\frac{x_{\alpha^\vee}}{-2})^{k}(t_\alpha)\Big], (\ad\frac{x_{\beta^\vee}}{-2})^{s}(t_\beta)\Bigg],  \label{eq:derivation4-2}
\end{align}
and using $A\gamma+AB\beta+\alpha=0$, one get
\begin{align}
&[\tilde{\delta}_{2m} (y(u)), y(v)]+[y(u), \tilde{\delta}_{2m} (y(v))] \notag\\
\Omit{=&\frac{1}{2}\sum_{s+k+q=2m-2}
\sum_{\alpha\in \Phi^+}\sum_{\Psi\subset \Phi}
\sum_{\gamma, \beta\in S_{\Psi}^{(\beta)}\cap \Psi^+}
\frac{\det(\beta, \alpha)_{u, v}}{AB} 
\Bigg[ \Big[(\frac{1}{AB}\ad\frac{x_{\beta^\vee}}{-2})^{s} (t_{\beta}), 
(\frac{1}{A}\ad\frac{x_{\gamma^\vee}}{-2})^{q}(t_\gamma)\Big], (\ad\frac{x_{\alpha^\vee}}{-2})^{k}(t_\alpha)\Bigg] 
\notag\\}
=&\frac{1}{2}\sum_{s+k+q=2m-2}
\sum_{\alpha\in \Phi^+}\sum_{\Psi\subset \Phi}
\sum_{\gamma, \beta\in S_{\Psi}^{(\alpha)}\cap \Psi^+}
\frac{\det(\beta, \alpha)_{u, v}}{A^{1+s+q}B^{s+1}} 
\Bigg[ \Big[(\ad\frac{x_{\beta^\vee}}{-2})^{s} (t_{\beta}), 
(\frac{1}{A}\ad\frac{x_{\gamma^\vee}}{-2})^{q}(t_\gamma)\Big], (\ad\frac{x_{\alpha^\vee}}{-2})^{k}(t_\alpha)\Bigg].   \label{eq:derivation4-3}
\end{align}
It is obvious that $\det(\beta, \gamma)_{u, v}=-\det(\gamma, \beta)_{u, v}=-A\det(\beta, \alpha)_{u, v}$. Therefore, taking into account that $s+k+q=2m-2$, the coefficients satisfy the equality
\[
\frac{\det(\beta, \gamma)_{u, v}}{B^{s+1} A^k} 
=-\frac{\det(\gamma, \beta)_{u, v}}{B^{q+k+1}A^k} 
=-\frac{\det(\beta, \alpha)_{u, v}}{A^{1+s+q}B^{s+1}}. 
\]
Taking the summation of \eqref{derivation4}, \eqref{eq:derivation4-2} and \eqref{eq:derivation4-3}, we have
\[
3\Big([\tilde{\delta}_{2m} (y(u)), y(v)]+[y(u), \tilde{\delta}_{2m} (y(v))]\Big)=0
\]
by the Jacobi identity.

When $A\alpha+B\beta+3\gamma=0$, which is the case $G_2$. 
We modify the above proof using the refined (tt) relations. Then the equation \eqref{derivation4} is modified to
\begin{align*}
&[\tilde{\delta}_{2m} (y(u)), y(v)]
+[y(u), \tilde{\delta}_{2m} (y(v))] \notag\\
=&\frac{1}{2}\sum_{s+k+q=2m-2}
\sum_{\gamma\in \Phi^+}\sum_{\Psi\subset \Phi}
\sum_{\beta, \alpha\in S_{\Psi}^{(\gamma)}\cap \Psi^+}
BC\det(\beta, \gamma)_{u, v}
\Bigg[ \Big[(\frac{1}{B}\ad\frac{x_{\beta^\vee}}{-2})^{s} (t_{\beta}), 
(\frac{1}{A}\ad\frac{x_{\alpha^\vee}}{-2})^{k}(t_\alpha)\Big], (\frac{1}{C}\ad\frac{x_{\gamma^\vee}}{-2})^{q}(t_\gamma)\Bigg]. 
\end{align*}
The rest of the proof is similar. 

\section{A principal bundle on the moduli space $\M_{1, n}$}
Let $e, f, h$ be the standard basis of $\mathfrak{sl}_2$. 
There is a Lie algebra morphism
$\mathfrak{d} \to \mathfrak{sl}_2$ defined by $\delta_{2m} \mapsto 0, d \mapsto h, X \mapsto e, \Delta_0\mapsto f.$
Let $\mathfrak{d}_+ \subset \mathfrak{d}$ be the kernel of this homomorphism. 
Since the morphism has a section, which is given by $e\mapsto X, f\mapsto \Delta_0$ and $h \mapsto d$, we have
a semidirect decomposition $\mathfrak{d}=\mathfrak{d}_+ \rtimes \mathfrak{sl}_2.$ As a consequence, we have the decomposition
\[ \Aell \rtimes \mathfrak{d}=(\Aell \rtimes \mathfrak{d}_+)\rtimes \mathfrak{sl}_2.\]
\begin{lemma}\cite[Lemma 8]{CEE}\label{lem:grading of d}
The Lie algebra $\Aell \rtimes \mathfrak{d}_+$ is positively graded.
\end{lemma}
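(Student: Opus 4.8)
The plan is to exhibit a single $\N$-grading on $\Aell\rtimes\mathfrak{d}_+$ with no component in degree $0$, obtained by collapsing the $\Z^2$-grading already available on $\Aell\rtimes\mathfrak{d}$ to its total degree. Recall that the excerpt endows $\Aell\rtimes\mathfrak{d}$ with the bigrading $\deg(x(u))=(1,0)$, $\deg(y(u))=(0,1)$, $\deg(t_\alpha)=(1,1)$, $\deg(\Delta_0)=(-1,1)$, $\deg(d)=(0,0)$, $\deg(X)=(1,-1)$, $\deg(\delta_{2m})=(2m+1,1)$, and that this is compatible both with the bracket on $\mathfrak{d}$ and with the derivation action $\mathfrak{d}\to\Der(\Aell)$ of Proposition~\ref{prop:d is deriv}. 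For a homogeneous element $\xi$ of bidegree $(a,b)$ I set $N(\xi)=a+b$; since $N$ is additive under the bracket, it defines a $\Z$-grading of $\Aell\rtimes\mathfrak{d}$ (this is exactly the grading $\deg(x)=\deg(y)=1$, $\deg(t_\alpha)=2$ used elsewhere for completions).

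First I would record the total degrees of the generators. On the $\Aell$ side, $N(x(u))=N(y(u))=1$ and $N(t_\alpha)=2$, all strictly positive. On the $\mathfrak{d}$ side, $N(\Delta_0)=N(d)=N(X)=0$ while $N(\delta_{2m})=2m+2\ge 4$. In particular the copy of $\mathfrak{sl}_2$ spanned by $\Delta_0,d,X$ sits entirely in total degree $0$. Since every generator of $\mathfrak{d}$ has $N\ge 0$, any iterated bracket is homogeneous of total degree equal to the sum of the degrees of the generators it involves; such a bracket has $N=0$ if and only if all those generators lie in $\{\Delta_0,d,X\}$. Hence the total-degree-$0$ subspace $\mathfrak{d}_0$ is precisely this $\mathfrak{sl}_2$.

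Next I would identify $\mathfrak{d}_+$ with the strictly positive part of $\mathfrak{d}$ for $N$. The map $\mathfrak{d}\to\mathfrak{sl}_2$ defining $\mathfrak{d}_+$ becomes graded once $\mathfrak{sl}_2$ carries the bidegrees $e\mapsto(1,-1)$, $f\mapsto(-1,1)$, $h\mapsto(0,0)$, so its kernel $\mathfrak{d}_+$ is a bigraded, hence $N$-graded, ideal. Because the section $e\mapsto X$, $f\mapsto\Delta_0$, $h\mapsto d$ identifies $\mathfrak{sl}_2$ with $\mathfrak{d}_0$, the ideal $\mathfrak{d}_+$ meets total degree $0$ trivially and therefore lies in $\bigoplus_{n\ge 1}\mathfrak{d}_n$. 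Concretely $\mathfrak{d}_+$ is spanned by iterated brackets of the $\delta_{2m}$ together with the finite $\mathfrak{sl}_2$-strings $(\ad\Delta_0)^k(\delta_{2m})$ for $0\le k\le 2m$, the relation $(\ad\Delta_0)^{2m+1}(\delta_{2m})=0$ cutting off each string; an element $(\ad\Delta_0)^k(\delta_{2m})$ has bidegree $(2m+1-k,\,1+k)$ and thus total degree $2m+2$, confirming $N\ge 4$ throughout $\mathfrak{d}_+$.

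Finally I would assemble the conclusion: $\Aell\rtimes\mathfrak{d}_+$ is generated by elements of strictly positive total degree (minimum $1$), and $N$ is additive under the bracket, so every nonzero homogeneous component sits in degree $\ge 1$ and $\Aell\rtimes\mathfrak{d}_+=\bigoplus_{n\ge 1}(\Aell\rtimes\mathfrak{d}_+)_n$ is positively graded. I expect the only genuinely delicate point to be the identification of $\mathfrak{d}_0$ with $\mathfrak{sl}_2$, equivalently the claim that $\mathfrak{d}_+$ carries no total-degree-$0$ part: this is precisely what upgrades the a priori merely non-negative grading to a strictly positive one, and it is the step that justifies forming the pro-nilpotent completion $\widehat{\Aell\rtimes\mathfrak{d}_+}$ and the group $\exp(\widehat{\Aell\rtimes\mathfrak{d}_+})$ in the construction of $G_n$. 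Everything else is bookkeeping verifying that the bracket on $\mathfrak{d}$ and the derivation action of $\mathfrak{d}$ on $\Aell$ respect the bidegrees.
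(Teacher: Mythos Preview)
Your argument is correct and follows essentially the same approach as the paper's own proof: the paper simply records the $\Z^2$-bidegrees you list and asserts that ``it is straightforward to check this Lie algebra is positively graded,'' leaving the verification implicit. Your proposal supplies exactly that verification by passing to the total degree $N=a+b$ and checking that every generator of $\Aell\rtimes\mathfrak{d}_+$ has $N\ge 1$, with the only substantive point being that the degree-$0$ piece of $\mathfrak{d}$ coincides with the $\mathfrak{sl}_2$ complement of $\mathfrak{d}_+$.
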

\begin{proof}
Define a $\Z^2-$grading of $\mathfrak{d}$ and $\Aell$ by 
\begin{align*}
&\deg(\Delta_0)=(-1, 1), \,\ \deg(d)=(0, 0), \,\  \deg(X)=(1, -1), \,\ \deg(\delta_{2m})=(2m+1, 1)\\
\text{and}\,\ &\deg(x(u))=(1, 0)\,\ \deg(y(u))=(0, 1), \,\ \deg(t_\alpha)=(1, 1).
\end{align*}
It is straightforward to check this Lie algebra is positively graded. 
\end{proof}
We form the following semidirect product \[G_n:=\exp(\widehat{\Aell\rtimes \mathfrak{d}_+})\rtimes \SL_2(\C), \]
where $\widehat{\Aell\rtimes \mathfrak{d}_+}$ is the completion of $\Aell\rtimes \mathfrak{d}_+$ with respect to the grading in Lemma \ref{lem:grading of d}. 

Let $P^\vee$ be the coweight lattice, the semidirect product $(P^\vee\oplus P^\vee)\rtimes \SL_2(\Z)$ acts on $\h \times \mathfrak{H}$. 
For $(\bold{n}, \tau \bold{m}) \in (P^\vee\oplus P^\vee)$ and $(z, \tau)\in \h \times \mathfrak{H}$, the action is given by translation.
$(\bold{n}, \bold{m})*(z, \tau):=
(z+\bold{n}+\tau\bold{m}, \tau)$. For $\left(\begin{smallmatrix}
a & b \\
c & d
\end{smallmatrix}\right)\in \SL_2(\Z)$, the action is given by
$\left(\begin{smallmatrix}
a & b \\
c & d
\end{smallmatrix}\right)*(z, \tau):=(\frac{z}{c\tau+d}, \frac{a\tau+b}{c\tau+d})$. 
 
 Let $\alpha(-): \h\to \C$ be the map induced by the root $\alpha\in \h^\vee$. 
 We define $\widetilde{H}_{\alpha, \tau}\subset \h\times \mathfrak{H}$ to be
\[
\widetilde{H}_{\alpha, \tau}=\{(z, \tau)\in \h\times\mathfrak{H}\mid \alpha(z) \in \Lambda_\tau=\Z+\tau \Z\}.
\]

\begin{lemma}
The group $(P^\vee \oplus P^\vee) \rtimes \SL_2(\Z)$ preserves the hyperplane complement $\h \times \mathfrak{H}\setminus\bigcup_{\alpha\in \Phi^+, \tau\in \mathfrak{H}} \widetilde{H}_{\alpha, \tau}$.
\end{lemma}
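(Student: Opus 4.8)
The plan is to prove the stronger statement that each of the generating transformations fixes, setwise, the full hypersurface
$\widetilde H_\alpha:=\bigcup_{\tau\in\mathfrak H}\widetilde H_{\alpha,\tau}=\{(z,\tau)\in\h\times\mathfrak H\mid\alpha(z)\in\Lambda_\tau\}$
for every individual root $\alpha\in\Phi^+$. Since $(P^\vee\oplus P^\vee)\rtimes\SL_2(\Z)$ is generated by the translation subgroup and by $\SL_2(\Z)$, and since each group element acts as a bijection of $\h\times\mathfrak H$, it will then follow that the whole group satisfies $g\bigl(\bigcup_{\alpha\in\Phi^+}\widetilde H_\alpha\bigr)=\bigcup_{\alpha\in\Phi^+}\widetilde H_\alpha$, and hence preserves the complement. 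So the whole argument reduces to checking the two kinds of generators against a single $\widetilde H_\alpha$.

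First I would handle the translations. For $(\mathbf n,\mathbf m)\in P^\vee\oplus P^\vee$ one computes $\alpha(z+\mathbf n+\tau\mathbf m)=\alpha(z)+\alpha(\mathbf n)+\tau\,\alpha(\mathbf m)$, with $\tau$ unchanged. The key point is the integrality $\alpha(\mathbf n),\alpha(\mathbf m)\in\Z$: this holds because $P^\vee$ is dual to the root lattice $Q$ and $\alpha\in\Phi\subset Q$. Consequently $\alpha(\mathbf n)+\tau\,\alpha(\mathbf m)\in\Lambda_\tau$, and since $\Lambda_\tau$ is a subgroup of $(\C,+)$ we get $\alpha(z+\mathbf n+\tau\mathbf m)\in\Lambda_\tau$ if and only if $\alpha(z)\in\Lambda_\tau$; hence the translation preserves $\widetilde H_\alpha$.

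The slightly more substantial step is the $\SL_2(\Z)$ case. Writing $g=\left(\begin{smallmatrix}a&b\\c&d\end{smallmatrix}\right)$, $\tau'=\tfrac{a\tau+b}{c\tau+d}$ and $z'=\tfrac{z}{c\tau+d}$, I would isolate the lattice identity $\Lambda_{\tau'}=(c\tau+d)^{-1}\Lambda_\tau$. This is the heart of the matter: multiplying through by $c\tau+d$, the lattice $(c\tau+d)\Lambda_{\tau'}$ is the $\Z$-span of $c\tau+d$ and $a\tau+b$, and the pair $(a\tau+b,\,c\tau+d)$ is obtained from the $\Z$-basis $(\tau,1)$ of $\Lambda_\tau$ by applying the integer matrix $g$, which is a change of basis precisely because $g\in\SL_2(\Z)$. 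Combining this identity with $\alpha(z')=(c\tau+d)^{-1}\alpha(z)$ gives at once $\alpha(z')\in\Lambda_{\tau'}\iff\alpha(z)\in\Lambda_\tau$, so $g$ again preserves $\widetilde H_\alpha$.

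Finally I would assemble the pieces: both families of generators fix each $\widetilde H_\alpha$ setwise, so the whole semidirect product does, and therefore it fixes $\bigcup_{\alpha\in\Phi^+}\widetilde H_\alpha$ and preserves its complement $\h\times\mathfrak H\setminus\bigcup_{\alpha\in\Phi^+,\,\tau\in\mathfrak H}\widetilde H_{\alpha,\tau}$. I do not anticipate any genuine obstacle here; the only real content is the $\SL_2(\Z)$-basis-change identity for $\Lambda_\tau$ together with the integrality $\alpha(P^\vee)\subset\Z$, and everything else is bookkeeping about subgroups acting by bijections.
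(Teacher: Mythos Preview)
Your proposal is correct and follows essentially the same approach as the paper: both arguments use $\alpha(P^\vee)\subset\Z$ for the translations and the identity $(c\tau+d)\Lambda_{\tau'}=\Lambda_\tau$ for the $\SL_2(\Z)$ part. Your version is slightly more explicit (isolating the lattice identity and proving the ``if and only if'' for each $\widetilde H_\alpha$), whereas the paper phrases the $\SL_2(\Z)$ step as a contrapositive on the complement, but the content is the same.
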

\begin{proof}
It is clear that $\alpha( \bold{n}+\tau \bold{m} )\in \Lambda_\tau$, for any $\alpha\in \Phi$. 
Therefore, for $(z, \tau)\in \h\times \mathfrak{H}$ such that $\alpha( z) \notin \Lambda_\tau$, we have
$\alpha( z +\bold{n}+\tau \bold{m}) ) \notin \Lambda_\tau$. 

Let $(z, \tau)\in \h\times \mathfrak{H}$ be the element such that $\alpha( z) \notin \Lambda_\tau$ for any $\alpha\in \Phi$.  Suppose there exists some $\beta\in \Phi$ and $n, m\in \Z$, such that $\beta(\frac{z}{c\tau+d})=n+m \frac{a\tau+b}{c\tau+d}$. 
It is equivalent to $\beta(z)= n(c\tau+d ) +m (a\tau+b)\in \Lambda_{\tau}$, which is contradicting with the choice of $(z, \tau)$. This completes the proof. 
\end{proof}
We define the elliptic moduli space $\M_{1, n}$ to be the quotient of 
$\h \times \mathfrak{H}\setminus\bigcup_{\alpha\in \Phi^+, \tau\in \mathfrak{H}} \widetilde{H}_{\alpha, \tau}$ by the group $(P^\vee \oplus P^\vee) \rtimes \SL_2(\Z)$ action.  
Let $\pi: \h \times \mathfrak{H}\setminus\bigcup_{\alpha\in \Phi^+, \tau\in \mathfrak{H}} \widetilde{H}_{\alpha, \tau} \to \M_{1, n}$ be the natural projection.
We define a principal $G_n-$bundle $P_n$ on the elliptic moduli space $\M_{1, n}$ in this section.

For $u\in \C^*$, $u^d:=\left(\begin{smallmatrix}
u & 0 \\
0 & u^{-1}
\end{smallmatrix}\right)\in \SL_2(\C)\subset G_n$ and for $v\in \C, e^{vX}:=\left(\begin{smallmatrix}
1 & v \\
0 & 1
\end{smallmatrix}\right)\in \SL_2(\C)\subset G_n$.
\begin{prop}
\label{prop: G_n bundle}
There exists a unique principal $G_n-$bundle $P_n$ over $\M_{1, n},$ such that a section of $U \subset \M_{1, n}$ is a function
$f: \pi^{-1}(U)\to G_n$, with the properties that
\begin{align*}
&f(z+\lambda^\vee_i|\tau)=f(z|\tau), \,\ f(z+\tau\lambda^\vee_i|\tau)=e^{-2\pi ix_{\lambda_i^\vee}}f(z|\tau), \,\ f(z|\tau+1)=f(z|\tau), \\ 
&f(\frac{z}{\tau}|-\frac{1}{\tau})=\tau^d\exp(\frac{2\pi i}{\tau}(\sum_iz_i x_{\lambda_i^\vee}+X))f(z|\tau).
\end{align*}
\end{prop}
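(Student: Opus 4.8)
The plan is to follow \cite{CEE} and realise $P_n$ as an automorphy quotient. Write $\wt{X}=\h\times\mathfrak H\setminus\bigcup_{\alpha\in\Phi^+,\tau}\wt{H}_{\alpha,\tau}$ and $\Gamma=(P^\vee\oplus P^\vee)\rtimes\SL_2(\Z)$, so that $\M_{1,n}=\wt X/\Gamma$ with quotient map $\pi\colon\wt X\to\M_{1,n}$. First I would define a $G_n$-valued automorphy factor, i.e. a map $j\colon\Gamma\times\wt X\to G_n$, by prescribing it on generators of $\Gamma$ so as to reproduce the four conditions in the statement:
\[
j\big((\lambda_i^\vee,0),(z,\tau)\big)=1,\qquad
j\big((0,\lambda_i^\vee),(z,\tau)\big)=e^{-2\pi i x_{\lambda_i^\vee}},
\]
\[
j(T,(z,\tau))=1,\qquad
j(S,(z,\tau))=\tau^{d}\exp\Big(\tfrac{2\pi i}{\tau}\big(\textstyle\sum_i z_i x_{\lambda_i^\vee}+X\big)\Big),
\]
where $T=\left(\begin{smallmatrix}1&1\\0&1\end{smallmatrix}\right)$, $S=\left(\begin{smallmatrix}0&-1\\1&0\end{smallmatrix}\right)$. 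I then set $P_n:=(\wt X\times G_n)/\Gamma$ (an equivariant, i.e. orbifold, bundle) for the action $\gamma\cdot(x,g)=(\gamma x,j(\gamma,x)g)$, whose sheaf of sections over $U$ consists exactly of the $G_n$-valued functions $f$ on $\pi^{-1}(U)$ with $f(\gamma x)=j(\gamma,x)f(x)$; this unwinds to the stated quasi-periodicity.

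For this to define a bundle I must verify that $j$ is a genuine $1$-cocycle, $j(\gamma_1\gamma_2,x)=j(\gamma_1,\gamma_2 x)j(\gamma_2,x)$. A crossed homomorphism on $\Gamma$ is determined by its values on generators as soon as these are compatible with the defining relations, so it suffices to check: (i) the abelian relations among the two lattices in $P^\vee\oplus P^\vee$; (ii) the semidirect-product relations expressing how $S,T$ act on $P^\vee\oplus P^\vee$; and (iii) the relations $S^4=1$, $(ST)^3=S^2$ with $S^2$ central in $\SL_2(\Z)$.

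Relations (i) are immediate from (xx): the factors $e^{-2\pi i x_{\lambda_i^\vee}}$ commute because $[x(u),x(v)]=0$, while the first copy of $P^\vee$ contributes trivially. The model computation for (ii) is the conjugation of an $\bold m$-translation by $S$. Using $S(0,\lambda_i^\vee)=(-\lambda_i^\vee,0)$ together with the argument shift $\sum_k(z+\tau\lambda_i^\vee)_k x_{\lambda_k^\vee}=\sum_k z_k x_{\lambda_k^\vee}+\tau x_{\lambda_i^\vee}$ and the relations $[x(u),x(v)]=0$ and $[X,x(u)]=0$ (that is $\widetilde{X}(x(u))=0$ from Proposition~\ref{prop:d is deriv}), one finds $j(S,(0,\lambda_i^\vee)x)=j(S,x)\,e^{2\pi i x_{\lambda_i^\vee}}$, so that the factor $e^{-2\pi i x_{\lambda_i^\vee}}$ is exactly absorbed and matches the trivial factor of the $(-\lambda_i^\vee,0)$-translation; the remaining cases of (ii) are analogous, using that $d$ acts on $x(u)$ with weight $1$.

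The main obstacle is (iii), the modular relations for $j(S,\cdot)$ and $j(T,\cdot)$. Here $j(T,\cdot)=1$, while $j(S,\cdot)=\tau^{d}\exp(\tfrac{2\pi i}{\tau}(\sum_i z_i x_{\lambda_i^\vee}+X))$ involves only the Borel part $\langle d,X\rangle$ of $\mathfrak{sl}_2$; the relations nonetheless close up because of the successive argument shifts $(z,\tau)\mapsto(z/\tau,-1/\tau)$. I would evaluate $j(S,S^3x)\,j(S,S^2x)\,j(S,Sx)\,j(S,x)$ and the analogous triple product for $(ST)^3$ directly: the $\tau^{d}$-factors telescope, since $d$ rescales the weight-one arguments $x_{\lambda_i^\vee}$ and $X$ by the successive values of $\tau$ along the orbit, and the unipotent exponentials recombine via $[x(u),x(v)]=0$ and $[X,x(u)]=0$, leaving the identity (resp. $S^2$). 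This is the root-system generalisation of the computation in \cite{CEE}, and it is the only genuinely laborious point. Uniqueness is then formal: the statement prescribes the sheaf of local sections of $P_n$ verbatim, and a principal $G_n$-bundle is determined up to canonical isomorphism by its sheaf of sections, so any two bundles satisfying the condition are isomorphic.
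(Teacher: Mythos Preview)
Your approach is correct and, in substance, is the computation that underlies the result; but it differs from what the paper actually does. The paper's proof is a two-line reduction: it invokes \cite[Proposition~10]{CEE}, where \CEE already constructed a principal $G_n$-bundle $\widetilde P_n$ on $\h\times\mathfrak H/((\Z^n)^2\rtimes\SL_2(\Z))$ with precisely these automorphy factors (for generators $x_i$ in place of your $x_{\lambda_i^\vee}$), and then obtains $P_n$ by restriction to $\M_{1,n}$. In other words, the cocycle verification you outline---the lattice relations via $[x(u),x(v)]=0$, the semidirect relations via $\widetilde X(x(u))=0$ and $\widetilde d(x(u))=x(u)$, and the laborious $S^4=1$, $(ST)^3=S^2$ checks---was carried out once and for all in \cite{CEE} using only those abstract relations, which hold equally well for the elements $x_{\lambda_i^\vee}$ here. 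Your write-up redoes that computation from scratch in the root-system notation; the paper simply cites it. Your route is more self-contained and makes explicit which relations in $\Aell\rtimes\mathfrak d$ are actually used, while the paper's route is more economical and avoids repeating a calculation already in the literature.
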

\begin{proof}  
In \cite[Proposition 10]{CEE}, \CEE constructed a principal bundle $\widetilde P_n$ on $\h \times \mathfrak{H}/((\Z^n)^2\rtimes \SL_2(\Z))$ with the structure group $G_n$. 
The vector bundle $P_n$ can be obtained by 
restricting this bundle $\widetilde P_n$ on $\M_{1, n} \subset \h \times \mathfrak{H}/((\Z^n)^2\rtimes \SL_2(\Z))$.
\end{proof}

\section{Flat connection on the elliptic moduli space}
In this section, we construct the universal flat connection on the moduli space $\mathcal{M}_{1, n}$ of (pointed) elliptic curves associated to a root system $\Phi$. This connection is an extension of the universal KZB connection $\nabla_{\KZB, \tau}$. 
Recall that in Section \S\ref{sec:conn def} \eqref{function k}, we have the function $ 
k(z, x|\tau)=\frac{\theta(z+x| \tau)}{\theta(z| \tau)\theta(x| \tau)}-\frac{1}{x} \in \Hol(\C-\Lambda_\tau)[\![x]\!]$. Let 
\[g(z, x|\tau):=k_x(z, x|\tau)=
\frac{\theta(z+x|\tau)}{\theta(z|\tau)\theta(x|\tau)} 
\left(
\frac{\theta'}{\theta}(z+x|\tau)-\frac{\theta'}{\theta}(x|\tau)\right)+\frac{1}{x^2}
\]
be the derivative of function $k(z, x|\tau)$ with respect to variable $x$. We have $g(z, x|\tau) \in \Hol(\C-\Lambda_\tau)[\![x]\!]$.

For a power series $\psi(x)=\sum_{n\geq 1}b_{2n}x^{2n}\in \C[\![x]\!]$ with positive even degrees, we consider the following two elements in $\widehat{\Aell\rtimes \mathfrak{d}}$\[
\delta_\psi:=\sum_{n\geq 1}b_{2n}\delta_{2n}, \,\ 
\Delta_\psi:=\Delta_0+\delta_\psi=\Delta_0+\sum_{n\geq 1}b_{2n}\delta_{2n}.\]
As in \cite{CEE}, we define the power series $\varphi(x)$ to be
\[
\varphi(x)=g(0, 0|\tau)-g(0, x|\tau)=
-\frac{1}{x^2}-(\frac{\theta'}{\theta})'(x|\tau)
+\Big(\frac{1}{x^2}+(\frac{\theta'}{\theta})'(x|\tau)\Big)|_{x=0}
\in \C[\![x]\!], \]
 which has positive even degrees. 
We set $a_{2n}:=-\frac{(2n+1)B_{2n+2}(2i\pi)^{2n+2}}{(2n+2)!}$, where $B_n$ are the Bernoulli numbers determined by the expansion $\frac{x}{e^x-1}=\sum_{r\geq 0}\frac{B_r}{r!}x^r$.
Then, the power series $\varphi(x)$ has the expansion $\varphi(x)=\sum_{n\geq 1}a_{2n}E_{2n+2}(\tau)x^{2n}$ for some coefficients $E_{2n+2}(\tau)$ only depend on $\tau$. 
By our convention, we have the following two elements in $\widehat{\Aell\rtimes \mathfrak{d}}$
 \[
 \delta_{\varphi}=\sum_{n\geq 1}a_{2n}E_{2n+2}(\tau)\delta_{2n}, \,\ 
\Delta_{\varphi}=\Delta_0+\delta_{\varphi}=\Delta_0+\sum_{n\geq 1}a_{2n}E_{2n+2}(\tau)\delta_{2n}. \]
Consider the following function on $\h\times \mathfrak{H}$ 
\begin{align*}
\Delta:=\Delta(\underline{\alpha}, \tau)=
&-\frac{1}{2\pi i}\Delta_{\varphi}+\frac{1}{2\pi i}\sum_{\beta \in \Phi^+}g(\beta, \ad\frac{ x_{\beta^\vee}}{2}|\tau)(t_\beta)\\
=&-\frac{1}{2\pi i}\Delta_0-\frac{1}{2\pi i}\sum_{n\geq 1}a_{2n}E_{2n+2}(\tau)\delta_{2n}+\frac{1}{2\pi i}\sum_{\beta \in \Phi^+}g(\beta, \ad\frac{ x_{\beta^\vee}}{2}|\tau)(t_\beta).
\end{align*}
This is a meromorphic function on $\C^n\times \mathfrak{H}$ valued in 
$\widehat{(\Aell\rtimes \mathfrak{d}_+)\rtimes \mathfrak{n}_+}\subset \Lie(G_n)$, where $\mathfrak{n}_+=\C\Delta_0\subset \mathfrak{sl}_2$. It has only poles along the hyperplanes $\bigcup_{\alpha\in \Phi^+, \tau\in \mathfrak{H}} \widetilde{H}_{\alpha, \tau}$.

\begin{theorem}\label{thm:moduli}
The following $\widehat{\Aell\rtimes\mathfrak{d}}$-valued KZB connection on $\M_{1, n}$ is flat.
\begin{equation}\label{conn:extension}
\nabla_{\KZB}=\nabla_{\KZB, \tau}-\Delta d\tau=
d-\Delta d\tau-\sum_{\alpha \in \Phi^+} k(\alpha, \ad\frac{x_{\alpha^\vee}}{2}|\tau)(t_\alpha)d\alpha+\sum_{i=1}^{n}
y(u^i)du_i.
\end{equation}
\end{theorem}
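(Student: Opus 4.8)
The plan is to verify flatness by a local curvature computation in the coordinates $(\underline u,\tau)$ on $\h\times\mathfrak H$, which then descends to $\M_{1,n}$ via the bundle $P_n$ of Proposition~\ref{prop: G_n bundle}. Write $\nabla_{\KZB}=d-\omega$ with $\omega=\sum_i K_i\,du_i+\Delta\,d\tau$, where $\sum_i K_i\,du_i$ is the spatial part of $\nabla_{\KZB,\tau}$ from \eqref{conn any type}. Flatness is the identity $d\omega=\omega\wedge\omega$. Decomposing the $2$--form $d\omega-\omega\wedge\omega$ according to its $du_i\wedge du_j$ and $du_i\wedge d\tau$ components (the $d\tau\wedge d\tau$ component vanishes for degree reasons), the purely spatial part collects into the curvature of $\nabla_{\KZB,\tau}$, which vanishes by Theorem~\ref{thm:connection flat}(1). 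Hence it remains to prove that the mixed component vanishes, i.e. that for every $i$
\[
\partial_\tau K_i=\partial_{u_i}\Delta+[\Delta,K_i].
\]
This is a zero--curvature (Lax--type) equation for the pair $(K_i,\Delta)$ in the $(u_i,\tau)$--plane.

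To exploit it I would separate $\Delta=-\tfrac{1}{2\pi i}\Delta_\varphi+\tfrac{1}{2\pi i}\sum_{\beta\in\Phi^+}g(\beta,\ad\tfrac{x_{\beta^\vee}}{2}|\tau)(t_\beta)$ into its derivation part $\Delta_\varphi=\Delta_0+\sum_{n\ge1}a_{2n}E_{2n+2}(\tau)\delta_{2n}$ and its $\Aell$--valued part. By Proposition~\ref{prop:d is deriv} the generators $\Delta_0,\delta_{2n}$ act on $\Aell$ through the explicit derivations $\widetilde\Delta_0,\widetilde\delta_{2n}$, so the brackets $[\Delta_0,K_i]$ and $[\delta_{2n},K_i]$ are computed by applying these derivations to the coefficients $k(\alpha,\ad\tfrac{x_{\alpha^\vee}}{2}|\tau)(t_\alpha)$ and to the $y(u^i)$ appearing in $K_i$. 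The remaining bracket $[\sum_\beta g(\beta,\dots)(t_\beta),K_i]$ is a genuine bracket inside $\widehat{\Aell}$; as in Section~\ref{flatness,part2} its cross terms (those with $\beta$ not collinear with the relevant root of $K_i$) are organised by the rank~$2$ reduction of Lemma~\ref{reduction} and killed by the (tt) relations, so that the whole identity reduces to a collection of rank~$\le 2$ statements indexed by pairs of roots, exactly paralleling the proof of flatness of $\nabla_{\KZB,\tau}$.

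The analytic heart is the comparison of $\partial_\tau K_i$, which differentiates the elliptic coefficients $k(\alpha,\cdot|\tau)$ in $\tau$, with $\partial_{u_i}\Delta$, which differentiates $g=k_x$ in the first variable. Here one uses the heat equation for the theta function (property~(5) of \S\ref{theta function}), $\partial_\tau\vartheta=\tfrac{1}{4\pi i}\partial_z^2\vartheta$: rewritten in terms of $k$ it yields an identity of the schematic shape $2\pi i\,\partial_\tau k(z,x|\tau)=\partial_z g(z,x|\tau)+(\text{anomaly})$, where the anomaly arises from the $\tau$--dependence of the normalising factor $\eta(\tau)^3$ and is governed by the quasimodular series $E_2$. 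The series $\varphi(x)=\sum_{n\ge1}a_{2n}E_{2n+2}(\tau)x^{2n}$ and the derivations $\widetilde\delta_{2n}$ are defined precisely so that the $\delta_{2n}$-- and $\Delta_0$--terms of $\partial_{u_i}\Delta+[\Delta,K_i]$ reproduce this anomaly, together with the derivation action on $t_\alpha$ and $y$.

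I expect the matching of the Eisenstein coefficients $a_{2n}E_{2n+2}(\tau)$ with the heat--equation anomaly to be the main obstacle: once this master identity for $k$ and $g$ is established and the $\widetilde\delta_{2n}(t_\alpha)$, $\widetilde\delta_{2n}(y(u))$ formulas of Section~\ref{sec:derivation} are inserted, the surviving algebraic terms should cancel by the (tt), (tx), (ty) and (yx) relations through the same rank~$2$ bookkeeping as before. Finally, that $\nabla_{\KZB}$ is a bona fide connection on $P_n$ rather than merely on $\h\times\mathfrak H$ follows from the quasiperiodicity and modular transformation of $\Delta$ under $(P^\vee\oplus P^\vee)\rtimes\SL_2(\Z)$, which is built into the construction of $P_n$ in Proposition~\ref{prop: G_n bundle}; flatness being local, the computation above then suffices.
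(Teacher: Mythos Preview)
Your overall architecture matches the paper's: split the curvature into spatial and mixed components, invoke Theorem~\ref{thm:connection flat} for the former, and for the latter combine the heat equation with the derivation action of $\mathfrak d$ and a rank~$2$ reduction. But there is one concrete misconception and one missing structural ingredient.

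The heat equation identity is \emph{exact}: $\partial_\tau k(z,x|\tau)=\tfrac{1}{2\pi i}\partial_z g(z,x|\tau)$ holds with no anomaly (the $\eta^3$ contribution is absorbed into the identity, not left over; see \cite{CEE}). The paper exploits this by writing $\nabla_{\KZB}=d-A$ and showing that $dA$ and $A\wedge A$ \emph{each vanish separately} on the $d\tau\wedge d\alpha$ components: the exact identity gives $dA=0$ outright, and then one must independently prove that the bracket $[\Delta,K_i]$ vanishes as an identity in $\widehat{\Aell\rtimes\mathfrak d}$. So the $\delta_{2n}$ and $\Delta_0$ terms are not there to cancel an analytic anomaly in $\partial_\tau k$; they are there so that the derivation brackets $[\Delta_\varphi,K_i]$ (computed via Proposition~\ref{prop:d is deriv}) exactly match the $\Aell$--brackets $[\sum_\beta g(\beta,\cdot)(t_\beta),K_i]$ inside $A\wedge A$. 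Your Lax formulation obscures this clean split and leads you to locate the difficulty in the wrong place.

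Second, the rank~$2$ reduction for $A\wedge A$ needs two further scalar theta identities beyond the heat equation, namely $L(z,u,v)\equiv 0$ and $H(z,z',u,v)\equiv 0$ (both from \cite{CEE}), together with a new coefficient lemma (Proposition~\ref{prop:H}) playing the role that Proposition~\ref{prop:coeff} played for $\nabla_{\KZB,\tau}$. The $L$--identity kills the diagonal terms where the same root appears in both slots of the bracket; the $H$--identity gives a four--term relation $H(\alpha,\gamma)-H(\alpha,\alpha+\gamma)-H(\gamma+\alpha,\gamma)+H(\gamma+\alpha,\alpha)=0$ among the cross coefficients, and it is this relation, not the (tt) relations alone, that drives the $A_2,B_2,G_2$ case analysis. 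Your proposal gestures at ``rank~$2$ bookkeeping as before'' but does not identify these inputs; without them the computation will not close.
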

Note we have already shown the flatness of
$\nabla_{\KZB, \tau}$ in Theorem \ref{thm:connection flat}. 
We prove Theorem \ref{thm:moduli} in the rest of this section. 
Set $A:=\Delta d\tau+\sum_{\alpha \in \Phi^+} k(\alpha, \ad\frac{x_{\alpha^\vee}}{2}|\tau)(t_\alpha)d\alpha-\sum_{i=1}^{n}
y(u^i)du_i$, so that $\nabla_{\KZB}=d-A$.  
By definition of flatness, it suffices to show the curvature $dA+A\wedge A$  is zero.
\subsection{Proof of $dA=0$}
In this subsection, we show the differential $dA$ vanishes. We have
\begin{align*}
dA&=\frac{\partial A}{\partial \tau}d\tau+\sum_{i=1}^n\frac{\partial A}{\partial \alpha_i}d\alpha_i\\
&=\sum_{\alpha\in \Phi^+}\frac{\partial }{\partial \tau}k(\alpha, \ad\frac{x_{\alpha^\vee}}{2}|\tau)(t_\alpha)d\tau\wedge d\alpha+\sum_i\frac{\partial }{\partial \alpha_i}\Delta(\underline{\alpha}, \tau)d\alpha_i \wedge d\tau\\
&=\sum_{\alpha\in \Phi^+}\frac{\partial }{\partial \tau}k(\alpha, \ad\frac{x_{\alpha^\vee}}{2}|\tau)(t_\alpha)-\frac{1}{2\pi i}\sum_i\sum_\beta\frac{\partial }{\partial \alpha_i}g(\beta, \ad\frac{ x_{\alpha^\vee}}{2}|\tau)(t_\alpha)d\tau\wedge d\alpha_i\\
&=\sum_{\alpha\in \Phi^+}(\frac{\partial }{\partial \tau}k(\alpha, \ad\frac{x_{\alpha^\vee}}{2}|\tau)(t_\alpha)-\frac{1}{2\pi i}\frac{\partial }{\partial \alpha}g(\alpha, \ad\frac{ x_{\alpha^\vee}}{2}|\tau)(t_\alpha))d\tau\wedge d\alpha,
\end{align*}
which is 0 by the differential equation
$(\partial_\tau k)(z, x|\tau)=\frac{1}{2\pi i}(\partial_z g)(z, x|\tau)$. 
See \cite[Page 190]{CEE} for the proof of this equation.
\subsection{Simplification of $A \wedge A$}
In this subsection, we simplify the term $A\wedge A$. 
By definition, we have,
\begin{align}
A \wedge A
=&\sum_{\alpha\in \Phi^+}[\Delta(\underline{\alpha}, \tau), k(\alpha, \ad\frac{x_{\alpha^\vee}}{2}|\tau)(t_\alpha)]d\tau \wedge d\alpha-\sum_i[\Delta(\underline{\alpha}|\tau), y_{\lambda_i^\vee}]d\tau \wedge d\alpha_i \notag\\
=&-\frac{1}{2\pi i}\Bigg(\sum_{\alpha\in \Phi^+}
\Big[\Delta_\varphi-\sum_{\beta\in \Phi^+}g(\beta, \ad\frac{x_{\beta^\vee}}{2}|\tau)(t_\beta), \,\ k(\alpha, \ad\frac{x_{\alpha^\vee}}{2}|\tau)(t_\alpha)\Big]d\tau \wedge d\alpha\notag \\
&\phantom{123456}-\sum_{i=1}^n \Big[\Delta_\varphi-\sum_{\beta\in \Phi^+}g(\beta, \ad\frac{x_{\beta^\vee}}{2}|\tau)(t_\beta),  \,\ y_{\lambda_i^\vee}\Big]d\tau \wedge d\alpha_i\Bigg) \notag\\
=&-\frac{1}{2\pi i}\Bigg(\sum_{\alpha\in \Phi^+}
\Big[\Delta_\varphi, k(\alpha, \ad\frac{x_{\alpha^\vee}}{2}|\tau)(t_\alpha)\Big]d\tau \wedge d\alpha
-\sum_{\alpha, \beta \in \Phi^+}\Big[g(\beta, \ad\frac{x_{\beta^\vee}}{2}|\tau)(t_\beta) , k(\alpha, \ad\frac{x_{\alpha^\vee}}{2}|\tau)(t_\alpha)\Big]d\tau \wedge d\alpha \label{eq:A wedge A 1}\\
&\phantom{123456}-\sum_{i=1}^n [\Delta_\varphi, y_{\lambda_i^\vee}]d\tau \wedge d\alpha_i
+\sum_{i=1}^n [\sum_{\beta\in \Phi^+}g(\beta, \ad\frac{x_{\beta^\vee}}{2}|\tau)(t_\beta), y_{\lambda_i^\vee}]d\tau \wedge d\alpha_i \Bigg). \label{eq:A wedge A 2}
\end{align}
We now simplify each summand of \eqref{eq:A wedge A 1}, \eqref{eq:A wedge A 2}. In the following lemma, we rewrite the second term of  \eqref{eq:A wedge A 1}. 
\begin{lemma}\label{flat1}
Modulo the relations (tx), (xx) of $\Aell$, the following identity holds for any $\alpha, \beta\in \Phi^+$
\[
[g(\beta, \ad\frac{x_{\beta^\vee}}{2}|\tau)(t_\beta), k(\alpha, \ad\frac{x_{\alpha^\vee}}{2}|\tau)(t_\alpha)]
=g(\beta, (\ad\frac{x_{\omega(\beta^\vee, \alpha)}}{-2})|\tau)k(\alpha, (\ad\frac{x_{\omega(\alpha^\vee, \beta)}}{-2})|\tau)[t_\beta, t_\alpha].
\]
\end{lemma}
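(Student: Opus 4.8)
The plan is to imitate the proof of Proposition \ref{prop:Ome1} almost verbatim, the only new input being that $g(z,x|\tau)=k_x(z,x|\tau)$ is, exactly like $k(z,x|\tau)$, an element of $\Hol(\C-\Lambda_\tau)[\![x]\!]$, i.e.\ a formal power series in $x$ with holomorphic coefficients in $z$. Writing $g(z,x|\tau)=\sum_{m\geq 0}c_m(z)x^m$ and $k(z,x|\tau)=\sum_{n\geq 0}d_n(z)x^n$, both sides of the asserted identity become double sums over $m,n$ with scalar coefficients $c_m(\beta)d_n(\alpha)$. Hence it suffices to establish, for all $m,n\geq 0$, the purely Lie-algebraic identity
\[
\Big[(\ad\tfrac{x_{\beta^\vee}}{2})^m(t_\beta),\ (\ad\tfrac{x_{\alpha^\vee}}{2})^n(t_\alpha)\Big]
=(\ad\tfrac{x_{\omega(\beta^\vee,\alpha)}}{-2})^m(\ad\tfrac{x_{\omega(\alpha^\vee,\beta)}}{-2})^n[t_\beta,t_\alpha],
\]
after which summing against $c_m(\beta)d_n(\alpha)$ recovers the claim. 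When $\alpha=\beta$ both sides vanish, so I may assume $\alpha\neq\beta$.

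To prove this monomial identity I would proceed in two steps, exactly as in Proposition \ref{prop:Ome1}. First, I would rewrite the inner adjoint actions using the defining perpendicularity of $\omega$: since $\beta^\vee+\omega(\beta^\vee,\alpha)\perp\beta$ and $\alpha^\vee+\omega(\alpha^\vee,\beta)\perp\alpha$, relation (tx) yields $[x_{\beta^\vee}+x_{\omega(\beta^\vee,\alpha)},t_\beta]=0$ and $[x_{\alpha^\vee}+x_{\omega(\alpha^\vee,\beta)},t_\alpha]=0$, whence $\ad x_{\beta^\vee}(t_\beta)=-\ad x_{\omega(\beta^\vee,\alpha)}(t_\beta)$ and similarly for $\alpha$. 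To iterate this to the $m$-th (resp.\ $n$-th) power I would use that, by (xx), the element $x_{\beta^\vee}+x_{\omega(\beta^\vee,\alpha)}$ commutes with both $x_{\beta^\vee}$ and $x_{\omega(\beta^\vee,\alpha)}$; consequently $\ad(x_{\beta^\vee}+x_{\omega(\beta^\vee,\alpha)})$ annihilates every element of the form $(\ad x_{\beta^\vee})^i(\ad x_{\omega(\beta^\vee,\alpha)})^j(t_\beta)$, and a straightforward induction gives $(\ad\tfrac{x_{\beta^\vee}}{2})^m(t_\beta)=(\ad\tfrac{x_{\omega(\beta^\vee,\alpha)}}{-2})^m(t_\beta)$, and likewise $(\ad\tfrac{x_{\alpha^\vee}}{2})^n(t_\alpha)=(\ad\tfrac{x_{\omega(\alpha^\vee,\beta)}}{-2})^n(t_\alpha)$; note that the sign is already absorbed into the $-2$ denominators at this stage.

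The second step is to extract the two adjoint operators out of the outer bracket. Here I would invoke the orthogonality relations $\omega(\beta^\vee,\alpha)\perp\alpha$ and $\omega(\alpha^\vee,\beta)\perp\beta$, which via (tx) give $[x_{\omega(\beta^\vee,\alpha)},t_\alpha]=0=[x_{\omega(\alpha^\vee,\beta)},t_\beta]$, together with $[x_{\omega(\beta^\vee,\alpha)},x_{\omega(\alpha^\vee,\beta)}]=0$ from (xx). Since $\ad x_{\omega(\beta^\vee,\alpha)}$ is a derivation annihilating both $t_\alpha$ and $x_{\omega(\alpha^\vee,\beta)}$, it commutes past the entire right-hand entry and factors out of the bracket; the symmetric reasoning lets $\ad x_{\omega(\alpha^\vee,\beta)}$ factor out through the left-hand entry. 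This produces precisely $(\ad\tfrac{x_{\omega(\beta^\vee,\alpha)}}{-2})^m(\ad\tfrac{x_{\omega(\alpha^\vee,\beta)}}{-2})^n[t_\beta,t_\alpha]$, establishing the monomial identity and hence the lemma.

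I do not expect a genuine obstacle: the computation is structurally identical to that of Proposition \ref{prop:Ome1}, and the asymmetry between $g$ and $k$ is immaterial, since only their common membership in $\Hol(\C-\Lambda_\tau)[\![x]\!]$ is used. The one point requiring a little care is the induction in the first step, where one must verify that $\ad(x_{\beta^\vee}+x_{\omega(\beta^\vee,\alpha)})$ really kills all intermediate monomials $(\ad x_{\beta^\vee})^i(\ad x_{\omega(\beta^\vee,\alpha)})^j(t_\beta)$; this is the only place where relation (xx) is essential and where the argument is not purely formal.
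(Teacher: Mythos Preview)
Your proposal is correct and follows essentially the same approach as the paper. The paper's proof of this lemma is very terse, simply listing the four relations $[x_{\beta^\vee}+x_{\omega(\beta^\vee,\alpha)},t_\beta]=0$, $[x_{\omega(\beta^\vee,\alpha)},t_\alpha]=0$, $[x_{\alpha^\vee}+x_{\omega(\alpha^\vee,\beta)},t_\alpha]=0$, $[x_{\omega(\alpha^\vee,\beta)},t_\beta]=0$ and declaring the identity to follow; you have correctly unpacked this by reducing to monomials and carrying out the same two-step manipulation as in Proposition~\ref{prop:Ome1}, including the (xx)-based induction needed to iterate the replacement $\ad x_{\beta^\vee}\rightsquigarrow -\ad x_{\omega(\beta^\vee,\alpha)}$.
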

\begin{proof}
When $\alpha=\beta$, it is clear that both sides of the identity are zero, hence they are equal. We now assume $\alpha \neq \beta$. 
The desired identity follows from the relations 
\begin{align*}
&[x_{\beta^\vee}+x_{\omega(\beta^\vee, \alpha)}, t_{\beta}]=0, \,\ [x_{\omega(\beta^\vee, \alpha)}, t_{\alpha}]=0,\\
\text{and} \,\ &
[x_{\alpha^\vee}+x_{\omega(\alpha^\vee, \beta)}, t_{\alpha}]=0, \,\ [x_{\omega(\alpha^\vee, \beta)}, t_{\beta}]=0.
\end{align*}
\end{proof}
In the following lemma, 
we rewrite the second term of  \eqref{eq:A wedge A 2}. 
\begin{lemma}\label{flat2}
Modulo the relations (tx), (ty),(yx), (xx) of $\Aell$,
the following identity holds.
\begin{align*}
&\sum_{i=1}^n \Big[y(u^i), \,\ g(\beta, \ad\frac{x_{\beta^\vee}}{2}|\tau)(t_\beta)\Big]d\tau\wedge du_i
-\sum_{\beta\in \Phi^+}\Big[y(\frac{\beta^\vee}{2}), \,\ g(\beta, \ad\frac{x_{\beta^\vee}}{2}|\tau)(t_\beta)\Big]d\tau\wedge d\beta\\
=&\sum_{\gamma \neq \beta\in \Phi^+}\frac{g(\beta, \ad\frac{x_{\beta^\vee}}{2}|\tau)-g(\beta, (\ad\frac{x_{\omega(\beta^\vee, \gamma)}}{-2})|\tau)}{\ad\frac{x_{\omega(\gamma^\vee, \beta)}}{-2}}[t_\gamma, t_\beta]d\tau\wedge d\gamma\\
&-\sum_{\gamma \neq \beta\in \Phi^+}\frac{\gamma(\beta^\vee)}{2}\frac{g(\beta, \ad\frac{x_{\beta^\vee}}{2}|\tau)-g(\beta, (\ad\frac{x_{\omega(\beta^\vee, \gamma)}}{-2})|\tau)}{\ad\frac{x_{\omega(\gamma^\vee, \beta)}}{-2}}[t_\gamma, t_\beta]d\tau\wedge d\beta. 
\end{align*}
\end{lemma}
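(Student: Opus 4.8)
The plan is to prove this exactly as the $g$-analogue of Proposition~\ref{prop:Ome2} and Corollary~\ref{cor:Omega2}, with the power series $g$ playing the role of $k$ and with the subtraction term accounting for the part of $y$ lying along $\beta$. It suffices to establish the identity for each fixed $\beta\in\Phi^+$ and then sum over $\Phi^+$. First I would record the $g$-analogue of Proposition~\ref{prop:Ome2}: for any $u\perp\beta$,
\[
[y(u), g(\beta, \ad\tfrac{x_{\beta^\vee}}{2}|\tau)(t_\beta)]=\sum_{\gamma\in\Phi^+}(\beta^\vee,\gamma)(u,\gamma)\frac{g(\beta,\ad\tfrac{x_{\beta^\vee}}{2}|\tau)-g(\beta,\ad\tfrac{x_{\omega(\beta^\vee,\gamma)}}{-2}|\tau)}{\ad x_{\beta^\vee}+\ad x_{\omega(\beta^\vee,\gamma)}}[t_\gamma,t_\beta].
\]
Since $g(z,x|\tau)\in\Hol(\C-\Lambda_\tau)[\![x]\!]$ is, like $k$, a formal power series in $x$ with holomorphic coefficients, it is enough to verify this with $g$ replaced by $x^n$; the resulting computation is word for word that of Proposition~\ref{prop:Ome2} and uses only the relations (yx), (tx), (ty), (xx).

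Next I would choose, for the fixed root $\beta$, a basis $\{u_i\}_{i=1}^n$ of $\h^*$ with $u_1=\beta$ and $u_i\perp\beta$ for $i\geq 2$. Its dual basis $\{u^i\}$ then satisfies $u^1=\tfrac{\beta^\vee}{2}$ (because $\beta(\tfrac{\beta^\vee}{2})=1$ and $u_i(\tfrac{\beta^\vee}{2})=0$ for $i\geq2$) and $u^i\perp\beta$ for $i\geq 2$. As $\sum_i y(u^i)\,du_i$ is independent of the chosen dual pair, the term $y(\tfrac{\beta^\vee}{2})\,d\beta$ is precisely the $i=1$ summand, so
\[
\sum_{i=1}^n[y(u^i),g(\beta,\ldots)(t_\beta)]\,d\tau\wedge du_i-[y(\tfrac{\beta^\vee}{2}),g(\beta,\ldots)(t_\beta)]\,d\tau\wedge d\beta=\sum_{i\geq 2}[y(u^i),g(\beta,\ldots)(t_\beta)]\,d\tau\wedge du_i.
\]
This is the purpose of the subtraction term: it removes the contribution along $\beta$ and leaves only summands with $u^i\perp\beta$, to which the $g$-analogue above applies.

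Finally I would apply that analogue to each $i\geq 2$ and simplify the denominator via Lemma~\ref{lemma:omeg}. The identity $\tfrac{\beta^\vee+\omega(\beta^\vee,\gamma)}{(\beta^\vee,\gamma)}=\tfrac{\omega(\gamma^\vee,\beta)}{-2}$ gives $\ad x_{\beta^\vee}+\ad x_{\omega(\beta^\vee,\gamma)}=(\beta^\vee,\gamma)\,\ad\tfrac{x_{\omega(\gamma^\vee,\beta)}}{-2}$, which cancels the factor $(\beta^\vee,\gamma)$ in the numerator and produces the denominator $\ad\tfrac{x_{\omega(\gamma^\vee,\beta)}}{-2}$ appearing on the right-hand side. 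I would then recombine the sum over $i\geq2$ into differential forms using the contraction identity $\sum_{i=1}^n(\gamma,u^i)\,du_i=d\gamma$; since $(\gamma,u^1)=(\gamma,\tfrac{\beta^\vee}{2})=\tfrac{\gamma(\beta^\vee)}{2}$, this yields $\sum_{i\geq2}(\gamma,u^i)\,du_i=d\gamma-\tfrac{\gamma(\beta^\vee)}{2}\,d\beta$, giving exactly the two terms of the claimed right-hand side, with the $\gamma=\beta$ contribution dropping out because $[t_\beta,t_\beta]=0$.

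The only genuinely delicate point is the bookkeeping in this last step: matching $\ad x_{\beta^\vee}+\ad x_{\omega(\beta^\vee,\gamma)}$ against $\ad\tfrac{x_{\omega(\gamma^\vee,\beta)}}{-2}$ through the correct identity of Lemma~\ref{lemma:omeg}, and tracking how the $i=1$ (that is, the $d\beta$) contribution splits off from $d\gamma$. No new algebraic input is needed beyond the relations already used for the flatness of $\nabla_{\KZB,\tau}$; the role of $g$ throughout is purely formal, so the identity is insensitive to the precise analytic form of $g$ versus $k$.
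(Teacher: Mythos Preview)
Your proposal is correct and follows essentially the same route as the paper: choose a basis with $u_1=\beta$ so that $u^1=\tfrac{\beta^\vee}{2}$ and $\beta(u^i)=0$ for $i\geq 2$, apply the $g$-analogue of Proposition~\ref{prop:Ome2} to the remaining terms, and then use the identity $\tfrac{\beta^\vee+\omega(\beta^\vee,\gamma)}{(\beta^\vee,\gamma)}=\tfrac{\omega(\gamma^\vee,\beta)}{-2}$ from Lemma~\ref{lemma:omeg} together with $\sum_i(\gamma,u^i)\,du_i=d\gamma$ to reach the claimed right-hand side. The paper's proof is organized identically.
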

\begin{proof}
If $\beta(u^i)=0$, by Proposition \ref{prop:Ome2}, we have
\[
[y(u^i), g(\beta, \ad\frac{x_{\beta^\vee}}{2}|\tau)(t_\beta)]=
\sum_{\gamma\in \Phi^+}\gamma(\beta^\vee)\gamma(u^i)\frac{g(\beta, \ad\frac{x_{\beta^\vee}}{2}|\tau)-g(\beta, (\ad\frac{x_{\omega(\beta^\vee, \gamma)}}{-2})|\tau)}{ad x_{\beta^\vee} +ad x_{\omega(\beta^\vee, \gamma)}}[t_\gamma, t_\beta].
\]
For a fixed root $\beta$, extend $\{\beta\}$ to a basis $\{u_1, \dots, u_n\}$ of $\h^*$, with $u_1=\beta$.  Let $\{u^1, \dots, u^n\}$ be the corresponding dual basis. We then have
$u^1=\frac{\beta^\vee}{2}$,  and $\beta(u^i)=0$, for $i\neq 1$. For such choice of $\{u_i\}$, we have
\begin{align*}
&\sum_{i=1}^n[y(u^i), g(\beta, \ad\frac{x_{\beta^\vee}}{2}|\tau)(t_\beta)]d\tau\wedge du_i-[y(\frac{\beta^\vee}{2}), g(\beta, \ad\frac{x_{\beta^\vee}}{2}|\tau)(t_\beta)]d\tau\wedge d\beta
\\
=&\sum_{i=2}^n[y(u^i), g(\beta, \ad\frac{x_{\beta^\vee}}{2}|\tau)(t_\beta)]d\tau\wedge du_i\\
=&\sum_{i=1}^n\sum_{\gamma\in \Phi^+}\gamma(\beta^\vee)\gamma(u^i)\frac{g(\beta, \ad\frac{x_{\beta^\vee}}{2}|\tau)-g(\beta, (\ad\frac{x_{\omega(\beta^\vee, \gamma)}}{-2})|\tau)}{\ad x_{\beta^\vee} +\ad x_{\omega(\beta^\vee, \gamma)}}[t_\gamma, t_\beta]d\tau\wedge du_i\\
&-\sum_{\gamma\in \Phi^+}\gamma(\beta^\vee)\frac{\gamma(\beta^\vee)}{2}\frac{g(\beta, \ad\frac{x_{\beta^\vee}}{2}|\tau)-g(\beta, (\ad\frac{x_{\omega(\beta^\vee, \gamma)}}{-2})|\tau)}{\ad x_{\beta^\vee} +\ad x_{\omega(\beta^\vee, \gamma)}}[t_\gamma, t_\beta]d\tau\wedge d\beta
\\
=&\sum_{\gamma \neq \beta\in \Phi^+}\gamma(\beta^\vee)\frac{g(\beta, \ad\frac{x_{\beta^\vee}}{2}|\tau)-g(\beta, (\ad\frac{x_{\omega(\beta^\vee, \gamma)}}{-2})|\tau)}{\ad x_{\beta^\vee} +\ad x_{\omega(\beta^\vee, \gamma)}}[t_\gamma, t_\beta]d\tau\wedge d\gamma\\
&-\sum_{\gamma\in \Phi^+}\gamma(\beta^\vee)\frac{\gamma(\beta^\vee)}{2} \frac{g(\beta, \ad\frac{x_{\beta^\vee}}{2}|\tau)-g(\beta, (\ad\frac{x_{\omega(\beta^\vee, \gamma)}}{-2})|\tau)}{\ad x_{\beta^\vee} +\ad x_{\omega(\beta^\vee, \gamma)}}[t_\gamma, t_\beta]d\tau\wedge d\beta\\
=&\sum_{\gamma \neq \beta\in \Phi^+}\frac{g(\beta, \ad\frac{x_{\beta^\vee}}{2}|\tau)-g(\beta, (\ad\frac{x_{\omega(\beta^\vee, \gamma)}}{-2})|\tau)}{\ad\frac{x_{\omega(\gamma^\vee, \beta)}}{-2}}[t_\gamma, t_\beta]d\tau\wedge d\gamma\\
&-\sum_{\gamma\in \Phi^+}\frac{\gamma(\beta^\vee)}{2}\frac{g(\beta, \ad\frac{x_{\beta^\vee}}{2}|\tau)-g(\beta, (\ad\frac{x_{\omega(\beta^\vee, \gamma)}}{-2})|\tau)}{\ad\frac{x_{\omega(\gamma^\vee, \beta)}}{-2}}[t_\gamma, t_\beta]d\tau\wedge d\beta. 
\end{align*}
The last equality follow from the identity $ \frac{x_{\beta^\vee} + x_{\omega(\beta^\vee, \gamma)} }{ \gamma(\beta^\vee)}= \frac{x_{\omega(\gamma^\vee, \beta)}}{-2}$ in Lemma \ref{lemma:omeg}. 
Thus, the conclusion follows.
\end{proof}
In the following lemma, we rewrite the first term of  \eqref{eq:A wedge A 2}.
\begin{lemma}\label{flat3}
Modulo the relation $(\delta y)$ of $\Aell\rtimes \mathfrak{d}$, we have
\[
\sum_{i=1}^n[\Delta_\varphi,y(u^i)]d u_i=\sum_{\alpha \in \Phi^+}
\sum_{s}\Big[f_s\big(\ad\frac{x_{\alpha^\vee}}{2}\big)(t_\alpha), \,\ g_s\big(\ad\frac{x_{\alpha^\vee}}{-2}\big)(t_\alpha)\Big]d\tau\wedge d\alpha,
\]
where the functions $f_s$ and $g_s$ are determined by the equality $\frac{1}{2}\frac{\varphi(u)-\varphi(v)}{u-v}=\sum_sf_s(u)g_s(v)$.
\end{lemma}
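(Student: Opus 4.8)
The plan is to compute the left-hand side directly from the derivation action of $\mathfrak{d}$ on $\Aell$ recorded in Section \ref{sec:derivation}, and then to match the result with the right-hand side through the defining generating-function identity for the $f_s,g_s$. Since $\Delta_\varphi=\Delta_0+\sum_{n\geq1}a_{2n}E_{2n+2}(\tau)\delta_{2n}$ lies in $\mathfrak{d}$ while $y(u^i)\in\Aell$, the bracket $[\Delta_\varphi,y(u^i)]$ taken in $\Aell\rtimes\mathfrak{d}$ is precisely the derivation $\widetilde{\Delta_\varphi}(y(u^i))$. Using $\widetilde{\Delta}_0(y(u))=0$, this reduces to $\sum_{n\geq1}a_{2n}E_{2n+2}(\tau)\,\tilde\delta_{2n}(y(u^i))$, which is exactly the input governed by relation $(\delta y)$.

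First I would substitute the explicit formula for $\tilde\delta_{2n}(y(u))$ and carry out the sum over the basis index $i$. The only geometric point is that $\sum_i\alpha(u^i)\,du_i=d\alpha$, since $\alpha=\sum_i\alpha(u^i)u_i$ as a linear function on $\h$. This collapses the basis sum into the one-form $d\alpha$ and yields
\[
\sum_{i=1}^n[\Delta_\varphi,y(u^i)]\,du_i=\frac12\sum_{\alpha\in\Phi^+}\sum_{n\geq1}a_{2n}E_{2n+2}(\tau)\sum_{p+q=2n-1}\Big[(\ad\tfrac{x_{\alpha^\vee}}{2})^p(t_\alpha),(\ad\tfrac{x_{\alpha^\vee}}{-2})^q(t_\alpha)\Big]\,d\alpha.
\]

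Next I would expand the generating function. Since $\varphi(x)=\sum_{n\geq1}a_{2n}E_{2n+2}(\tau)x^{2n}$, the elementary identity $\tfrac{u^{2n}-v^{2n}}{u-v}=\sum_{p+q=2n-1}u^pv^q$ gives
\[
\tfrac12\tfrac{\varphi(u)-\varphi(v)}{u-v}=\tfrac12\sum_{n\geq1}a_{2n}E_{2n+2}(\tau)\sum_{p+q=2n-1}u^pv^q,
\]
so that writing the left side as $\sum_s f_s(u)g_s(v)$ merely records this bihomogeneous expansion. The key remark is that the assignment sending a monomial $u^pv^q$ to $[(\ad\tfrac{x_{\alpha^\vee}}{2})^p(t_\alpha),(\ad\tfrac{x_{\alpha^\vee}}{-2})^q(t_\alpha)]$ extends to a well-defined linear map on $\C[\![u,v]\!]$, because the bracket is bilinear and the two operators $\ad\tfrac{x_{\alpha^\vee}}{2}$ and $\ad\tfrac{x_{\alpha^\vee}}{-2}$ act in the two separate slots. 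Consequently $\sum_s[f_s(\ad\tfrac{x_{\alpha^\vee}}{2})(t_\alpha),g_s(\ad\tfrac{x_{\alpha^\vee}}{-2})(t_\alpha)]$ is independent of the chosen decomposition into a sum of products, and applying this map to the two displayed series makes them coincide, which is the assertion.

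The computation is entirely formal, so I do not expect a genuine obstacle; the only care needed is bookkeeping, namely matching the sign $(\ad\tfrac{x_{\alpha^\vee}}{-2})^q=(-1)^q(\ad\tfrac{x_{\alpha^\vee}}{2})^q$ with the variable $v$ in the generating function, and confirming that well-definedness of the monomial substitution uses nothing beyond bilinearity of the bracket. I note that the one-form $d\tau$ appearing in the stated right-hand side is inherited from the ambient term of $A\wedge A$ in which this lemma is applied.
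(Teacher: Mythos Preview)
Your proof is correct and follows essentially the same route as the paper: both reduce $[\Delta_\varphi,y(u^i)]$ to $\sum_{n\geq1}b_{2n}\,\tilde\delta_{2n}(y(u^i))$ via $\widetilde{\Delta}_0(y(u))=0$, insert the $(\delta y)$ formula, collapse $\sum_i\alpha(u^i)\,du_i$ to $d\alpha$, and then identify the result with $\sum_s[f_s(\cdot)(t_\alpha),g_s(\cdot)(t_\alpha)]$ through the expansion $\tfrac12\tfrac{\varphi(u)-\varphi(v)}{u-v}=\tfrac12\sum_{n}b_{2n}\sum_{p+q=2n-1}u^pv^q$. Your remark that the $d\tau$ is supplied by the ambient $A\wedge A$ computation is the right reading of the statement.
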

\begin{proof}
Recall that by definition, we have $[\Delta_0, y(u^i)]=0$, and the action of $\delta_{2m}$ on $y(u^i)$ is given by 
\[[\delta_{2m}, y(u^i)]=\frac{1}{2}\sum_{\alpha\in \Phi^+}\alpha(u^i)\sum_{p+q=2m-1}[(\ad\frac{x_{\alpha^\vee}}{2})^{p}(t_\alpha), (\ad\frac{x_{\alpha^\vee}}{-2})^{q}(t_\alpha)]. \] 
Let $\varphi(x)=\sum_{n\geq 1} b_{2n} x^{2n}$, then,
\[
\frac{1}{2}\frac{\varphi(u)-\varphi(v)}{u-v}
=\frac{1}{2} \sum_{n\geq 1} b_{2n}\sum_{p+q=2n-1} u^p v^q=\sum_sf_s(u)g_s(v). 
\] 
Therefore, 
\begin{align*}
[\Delta_\varphi,y(u^i)]=&\delta_{\varphi}(y(u^i))
=\sum_{n\geq 1} b_{2n} \delta_{2n}(y(u^i))\\
=& \frac{1}{2}\sum_{n\geq 1} b_{2n}
\sum_{\alpha\in \Phi^+}\alpha(u^i)\sum_{p+q=2n-1}[(\ad\frac{x_{\alpha^\vee}}{2})^{p}(t_\alpha), (\ad\frac{x_{\alpha^\vee}}{-2})^{q}(t_\alpha)]\\
=&\sum_{\alpha\in \Phi^+}\alpha(u^i) \sum_{s}[ f_s(\ad\frac{x_{\alpha^\vee}}{2})(t_\alpha), g_s(\ad\frac{x_{\alpha^\vee}}{-2})(t_\alpha)].
\end{align*}
This implies the conclusion. 
\end{proof}
We now compute the first term of  \eqref{eq:A wedge A 1}. First, we have the Lemma. 
\begin{lemma}\label{flat4}
Modulo the relations $(\delta x)$ and $(\delta t)$ of $\Aell\rtimes \mathfrak{d}$, we have
\[
[\delta_{\varphi}, k(\alpha, \ad\frac{x_{\alpha^\vee}}{2}|\tau)(t_\alpha)]=
\sum_s[l_s^\alpha(\ad\frac{x_{\alpha^\vee}}{2})(t_\alpha), m_s^\alpha(\ad\frac{x_{\alpha^\vee}}{2})(t_\alpha)],
\]
where $k(\alpha, u+v)\varphi(v)=\sum_sl_s^\alpha(u)m_s^\alpha(v)$.
\end{lemma}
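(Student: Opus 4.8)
The plan is to read the bracket $[\delta_\varphi,-]$ as the derivation action of $\delta_\varphi$ on $\Aell$ supplied by Proposition~\ref{prop:d is deriv}, and then to commute $\delta_\varphi$ through the power series operator $k(\alpha,\ad\frac{x_{\alpha^\vee}}{2}|\tau)$ using only the two relations $(\delta x)$ and $(\delta t)$. First I would set $D:=\ad\frac{x_{\alpha^\vee}}{2}$, an (inner) derivation of $\Aell$, and write $k(\alpha,x|\tau)=\sum_{n\ge 0}c_n(\alpha)x^n$, so that $k(\alpha,D|\tau)(t_\alpha)=\sum_n c_n(\alpha)D^n(t_\alpha)$. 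The relation $(\delta x)$ says $\delta_\varphi(x_{\alpha^\vee})=0$, and for any derivation $\partial$ and element $Z$ one has $[\partial,\ad Z]=\ad(\partial Z)$; hence $[\delta_\varphi,D]=\ad(\delta_\varphi(\tfrac{x_{\alpha^\vee}}{2}))=0$, so $\delta_\varphi$ commutes with every $D^n$ and therefore with the whole operator $k(\alpha,D|\tau)$. This gives the reduction
\[
[\delta_\varphi,\ k(\alpha,D|\tau)(t_\alpha)]=k(\alpha,D|\tau)\bigl(\delta_\varphi(t_\alpha)\bigr).
\]

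Next I would invoke $(\delta t)$, which packages the formulas $\delta_{2m}(t_\alpha)=[t_\alpha,D^{2m}(t_\alpha)]$ into the single identity $\delta_\varphi(t_\alpha)=[t_\alpha,\varphi(D)(t_\alpha)]$, since $\varphi(x)=\sum_{n\ge 1}b_{2n}x^{2n}$. It then remains to expand $k(\alpha,D|\tau)\bigl([t_\alpha,\varphi(D)(t_\alpha)]\bigr)$. Because $D$ is a derivation, the Leibniz rule $D^n([a,b])=\sum_{j+l=n}\binom{n}{j}[D^j a,D^l b]$ applied with $a=t_\alpha$ and $b=\varphi(D)(t_\alpha)$, summed against $\sum_n c_n(\alpha)$, yields
\[
k(\alpha,D|\tau)\bigl([t_\alpha,\varphi(D)(t_\alpha)]\bigr)=\sum_{j,l}c_{j+l}(\alpha)\binom{j+l}{j}\bigl[D^j(t_\alpha),\,D^l\varphi(D)(t_\alpha)\bigr].
\]
Reading off the bivariate symbol, with $u$ bookkeeping the power of $D$ in the left slot $t_\alpha$ and $v$ the power in the right slot, the scalar coefficient is $\sum_{j,l}c_{j+l}(\alpha)\binom{j+l}{j}u^j v^l\varphi(v)=\varphi(v)\sum_n c_n(\alpha)(u+v)^n=k(\alpha,u+v|\tau)\varphi(v)$, which is exactly $\sum_s l_s^\alpha(u)m_s^\alpha(v)$ by the defining factorization. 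Since $u$ and $v$ are bookkept in separate, already-commuting slots of the bracket, substituting $u,v\mapsto D$ back into the respective factors turns each monomial $u^j v^l$ into $[D^j(t_\alpha),D^l(t_\alpha)]$ unambiguously, producing $\sum_s[l_s^\alpha(D)(t_\alpha),m_s^\alpha(D)(t_\alpha)]$ as claimed.

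The only genuine content, and the step I would handle most carefully, is the generating-function bookkeeping in the last display: verifying that the binomial identity $\sum_{j+l=n}\binom{n}{j}u^j v^l=(u+v)^n$ really collapses $\sum_n c_n(\alpha)(\cdots)$ into $k(\alpha,u+v|\tau)$ at the level of the two-variable symbol, and that absorbing $\varphi(v)$ into the right factor is consistent with the substitution of the single operator $D$ into both variables. Everything else is the formal calculus of two commuting derivations and uses no relation of $\Aell\rtimes\mathfrak{d}$ beyond $(\delta x)$ and $(\delta t)$, precisely as the statement asserts; in particular no use of $(tt)$, $(yx)$, or the combinatorial input of \S\ref{sec:S} is needed here, in contrast to the harder Lemmas that establish that $\tilde\delta_{2m}$ is a derivation in the first place.
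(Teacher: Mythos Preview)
Your proof is correct and follows essentially the same route as the paper's: both commute $\delta_\varphi$ through $k(\alpha,D|\tau)$ using $(\delta x)$, apply $(\delta t)$ to obtain $[t_\alpha,\varphi(D)(t_\alpha)]$, expand via the Leibniz rule for $D^n$ on a bracket, and then recognize the two-variable symbol $k(\alpha,u+v|\tau)\varphi(v)$. Your write-up is slightly more explicit in isolating the identity $[\partial,\ad Z]=\ad(\partial Z)$ and in packaging $\varphi$ at once rather than working term-by-term over $\delta_{2n}$, but these are cosmetic differences.
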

\begin{proof}
Let $\varphi(x)=\sum_{n\geq 1} b_{2n} x^{2n}$, and $k(x)=\sum_{m>0} a_{m}x^m$ be the expansions of $\varphi(x)$ and $k(x)$, then we have
\[
k(\alpha, u+v)\varphi(v)=\sum_{m>0} a_{m}\sum_{p+q=m}{m\choose p} \sum_{n\geq 1} b_{2n} u^m  v^{2n+q}=\sum_sl_s^\alpha(u)m_s^\alpha(v).\]
Therefore,
\begin{align*}
[\delta_{\varphi}, k(\alpha, \ad\frac{x_{\alpha^\vee}}{2}|\tau)(t_\alpha)]
=&\sum_{n\geq 1} b_{2n} k(\alpha, \ad\frac{x_{\alpha^\vee}}{2}|\tau)\delta_{2n}(t_\alpha)\\
=&\sum_{m>0} a_{m}\sum_{n\geq 1} b_{2n} (\ad\frac{x_{\alpha^\vee}}{2})^{m}[t_\alpha, 
(\ad \frac{x_{\alpha^\vee}}{2})^{2n} (t_\alpha)]\\
=&\sum_{m>0} a_{m}\sum_{n\geq 1} b_{2n} \sum_{p+q=m}{m \choose p}\Big[(\ad\frac{x_{\alpha^\vee}}{2})^{p}(t_\alpha), 
(\ad \frac{x_{\alpha^\vee}}{2})^{2n+q} (t_\alpha)\Big]\\
=&\sum_s[l_s^\alpha(\ad\frac{x_{\alpha^\vee}}{2})(t_\alpha), m_s^\alpha(\ad\frac{x_{\alpha^\vee}}{2})(t_\alpha)].
\end{align*}
This completes the proof. 
\end{proof}
We need the following Lemma for the first term of  \eqref{eq:A wedge A 1}. 
\begin{lemma}\label{flat5}
Modulo the relations $[\Delta_0, x], [\Delta_0, y],[\Delta_0, t]$, and $(yx)$ of $\Aell\rtimes \mathfrak{d}$, we have
\begin{align*}
[\Delta_0, k(\alpha, \ad\frac{x_{\alpha^\vee}}{2}|\tau)(t_\alpha)]
=&\Big[\frac{y_{\alpha^\vee}}{2}, g(\alpha, \ad\frac{x_{\alpha^\vee}}{2}|\tau)(t_\alpha)\Big]
-\sum_s\Big[h_s(\ad\frac{x_{\alpha^\vee}}{2}(t_\alpha), k_s(\ad\frac{x_{\alpha^\vee}}{2}(t_\alpha)\Big]\\
-&\sum_{\gamma\neq \alpha}\frac{k(\alpha, \ad\frac{x_{\alpha^\vee}}{2})-k(\alpha, \ad\frac{x_{\omega(\alpha^\vee, \gamma)}}{-2})-(\ad\frac{x_{\alpha^\vee}}{2}+\ad\frac{x_{\omega(\alpha^\vee, \gamma)}}{2})g(\alpha, \ad\frac{x_{\omega(\alpha^\vee, \gamma)}}{-2})}{(\ad \frac{x_{\omega(\gamma^\vee, \alpha)}}{-2})^2}[t_\gamma, t_\alpha], 
\end{align*}
where the functions $h_s, k_s$ are determined by the equality $\frac{1}{2}\Big(\frac{k(\alpha, u+v)-k(\alpha, u)-vg(\alpha, u)}{v^2}-\frac{k(\alpha, u+v)-k(\alpha, v)-ug(\alpha, v)}{u^2}\Big)=-\sum_sh_s(u)k_s(v)$.
\end{lemma}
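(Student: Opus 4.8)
The plan is to exploit that $\Delta_0$ acts on $\Aell$ as a derivation (Proposition \ref{prop:d is deriv}) with $\widetilde{\Delta}_0(x_{\alpha^\vee})=y_{\alpha^\vee}$ and $\widetilde{\Delta}_0(t_\alpha)=0$, and then to reorganise the resulting sum by commuting $y_{\alpha^\vee}$ past $x_{\alpha^\vee}$ using relation (yx). Writing $X:=\ad\frac{x_{\alpha^\vee}}{2}$ and expanding $k(\alpha,w|\tau)=\sum_{n}p_n w^n$, the Leibniz rule gives $\Delta_0\big(k(\alpha,X|\tau)(t_\alpha)\big)=\sum_n p_n\sum_{s=0}^{n-1}X^s\,(\ad\tfrac{y_{\alpha^\vee}}{2})\,X^{n-1-s}(t_\alpha)$, since only the inner copies of $x_{\alpha^\vee}$ are differentiated and $\Delta_0(t_\alpha)=0$.

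First I would isolate the ``free'' part in which $\ad\frac{y_{\alpha^\vee}}{2}$ is pulled to the left: from $X^s(\ad\tfrac{y_{\alpha^\vee}}{2})X^{n-1-s}=(\ad\tfrac{y_{\alpha^\vee}}{2})X^{n-1}+[X^s,\ad\tfrac{y_{\alpha^\vee}}{2}]X^{n-1-s}$ and the fact that $\sum_n n\,p_n w^{n-1}$ is exactly $g(\alpha,w|\tau)=k_x(\alpha,w|\tau)$, the $n$ copies of the first summand collect into the term $[\frac{y_{\alpha^\vee}}{2},g(\alpha,X|\tau)(t_\alpha)]$. The entire correction is then controlled by the single commutator $[\ad\tfrac{y_{\alpha^\vee}}{2},X]=\ad[\tfrac{y_{\alpha^\vee}}{2},\tfrac{x_{\alpha^\vee}}{2}]$, which by relation (yx) equals $\frac14\sum_{\gamma\in\Phi^+}(\alpha^\vee,\gamma)^2\,\ad t_\gamma$.

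Next I would split this correction according to $\gamma=\alpha$ and $\gamma\neq\alpha$. For $\gamma=\alpha$ the coefficient is $\frac14(\alpha^\vee,\alpha)^2=1$, and expanding each resulting $X^{j}[t_\alpha,X^{n-2-j}(t_\alpha)]$ by the Leibniz rule for the derivation $\ad\frac{x_{\alpha^\vee}}{2}$ turns it into $\sum_i\binom{j}{i}[X^i(t_\alpha),X^{n-2-i}(t_\alpha)]$; summing the binomial coefficients over the nested ranges of $s$ and $j$ reproduces the antisymmetrised second-order remainder of $k$ that defines $h_s,k_s$, yielding $-\sum_s[h_s(X)(t_\alpha),k_s(X)(t_\alpha)]$. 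For $\gamma\neq\alpha$ I would instead invoke the (tx) relations $[x_{\alpha^\vee}+x_{\omega(\alpha^\vee,\gamma)},t_\alpha]=0$ and $[x_{\omega(\gamma^\vee,\alpha)},t_\gamma]=0$ to replace $\ad\frac{x_{\alpha^\vee}}{2}$ by $\ad\frac{x_{\omega(\alpha^\vee,\gamma)}}{-2}$ whenever it meets $t_\alpha$, together with the identity $\frac{\alpha^\vee+\omega(\alpha^\vee,\gamma)}{(\alpha^\vee,\gamma)}=\frac{\omega(\gamma^\vee,\alpha)}{-2}$ of Lemma \ref{lemma:omeg}, which gives $\ad\frac{x_{\alpha^\vee}}{2}+\ad\frac{x_{\omega(\alpha^\vee,\gamma)}}{2}=\frac{(\alpha^\vee,\gamma)}{2}\ad\frac{x_{\omega(\gamma^\vee,\alpha)}}{-2}$. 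The two nested summations then assemble into a second divided difference of $k$ in the operators $\ad\frac{x_{\alpha^\vee}}{2}$ and $\ad\frac{x_{\omega(\alpha^\vee,\gamma)}}{-2}$, whose two denominators contribute a factor $\frac{(\alpha^\vee,\gamma)^2}{4}(\ad\frac{x_{\omega(\gamma^\vee,\alpha)}}{-2})^2$; this cancels the prefactor $\frac14(\alpha^\vee,\gamma)^2$ and leaves exactly the denominator $(\ad\frac{x_{\omega(\gamma^\vee,\alpha)}}{-2})^2$ of term C, with numerator $k(\alpha,\ad\frac{x_{\alpha^\vee}}{2})-k(\alpha,\ad\frac{x_{\omega(\alpha^\vee,\gamma)}}{-2})-(\cdots)\,g(\alpha,\ad\frac{x_{\omega(\alpha^\vee,\gamma)}}{-2})$.

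The main obstacle is the purely combinatorial verification that the nested sums $\sum_{s}\sum_{j}\binom{j}{i}$ (for $\gamma=\alpha$) and the analogous double sum (for $\gamma\neq\alpha$) reproduce the prescribed generating functions: both must be recognised as second-order divided differences / Taylor remainders of $k$, which is precisely where the $u^2,v^2$ denominators and the subtracted $g$-terms in the definitions of $h_s,k_s$ and in term C originate. Once these divided-difference identities are in hand, collecting the free part with the $\gamma=\alpha$ and $\gamma\neq\alpha$ corrections gives the claimed formula.
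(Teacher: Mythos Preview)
Your proposal is correct and follows essentially the same route as the paper's proof. Both arguments apply the Leibniz rule for the derivation $\Delta_0$ to $(\ad\frac{x_{\alpha^\vee}}{2})^n(t_\alpha)$, pull the resulting $\ad\frac{y_{\alpha^\vee}}{2}$ to the left (the paper does this via a telescoping identity for $(\ad x)^i\ad[x,y](\ad x)^{n-2-i}$, you via expanding $[X^s,\ad\tfrac{y}{2}]$, which is the same computation read backwards), split the correction $\ad[y_{\alpha^\vee}/2,x_{\alpha^\vee}/2]=\tfrac14\sum_\gamma(\alpha^\vee,\gamma)^2\ad t_\gamma$ into $\gamma=\alpha$ and $\gamma\neq\alpha$, and recognise the double sum as a second-order divided difference. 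For $\gamma\neq\alpha$ the paper invokes exactly the (tx) relations and the identity $\frac{x_{\alpha^\vee}+x_{\omega(\alpha^\vee,\gamma)}}{(\alpha^\vee,\gamma)}=\frac{x_{\omega(\gamma^\vee,\alpha)}}{-2}$ from Lemma~\ref{lemma:omeg} that you cite, together with the closed form $\sum_{s=1}^{n-1}\sum_{i=0}^{s-1}a^ib^{n-2-i}=\frac{a^n-b^n-nb^{n-1}(a-b)}{(a-b)^2}$; for $\gamma=\alpha$ it simply refers back to the method of Lemma~\ref{flat4}, which is the Leibniz-rule expansion you describe.
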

\begin{proof}
Using the relations $[\Delta_0, t_{\alpha}]=0$ and $[\Delta_0, x_{\alpha^\vee}]=y_{\alpha^\vee}$, we have
\[
[\Delta_0, (\ad x_{\alpha^\vee})^n(t_\alpha)]
=\sum_{s=0}^{n-1}(\ad x_{\alpha^\vee})^s(\ad[\Delta_0,x_{\alpha^\vee}])(\ad x_{\alpha^\vee})^{n-1-s}(t_\alpha)
=\sum_{s=0}^{n-1}(\ad x_{\alpha^\vee})^s(\ad y_{\alpha^\vee})(\ad x_{\alpha^\vee})^{n-1-s}(t_\alpha).
\]
Using the Jacobi identity, we have the following equality, for $0\leq i\leq n-2$.
\[
(\ad x_{\alpha^\vee})^{i}(\ad [x_{\alpha^\vee}, y_{\alpha^\vee}])(\ad x_{\alpha^\vee})^{n-2-i}(t_\alpha)
=
(\ad x_{\alpha^\vee})^{i+1}(\ad y_{\alpha^\vee})(\ad x_{\alpha^\vee})^{n-2-i}(t_\alpha)
-(\ad x_{\alpha^\vee})^{i}(\ad y_{\alpha^\vee})(\ad x_{\alpha^\vee})^{n-1-i}(t_\alpha).
\]
Taking the summation over $\{i \mid 0\leq i \leq s-1\}$, we have
\begin{align*}
&\sum_{i=0}^{s-1}(\ad x_{\alpha^\vee})^{i}(\ad [x_{\alpha^\vee}, y_{\alpha^\vee}])(\ad x_{\alpha^\vee})^{n-2-i}(t_\alpha)\\
=&\sum_{i=0}^{s-1}
(\ad x_{\alpha^\vee})^{i+1}(\ad y_{\alpha^\vee})(\ad x_{\alpha^\vee})^{n-2-i}(t_\alpha)
-\sum_{i=0}^{s-1}(\ad x_{\alpha^\vee})^{i}(\ad y_{\alpha^\vee})(\ad x_{\alpha^\vee})^{n-1-i}(t_\alpha)\\
=&
(\ad x_{\alpha^\vee})^{s}(\ad y_{\alpha^\vee})(\ad x_{\alpha^\vee})^{n-1-s}(t_\alpha)
-(\ad y_{\alpha^\vee})(\ad x_{\alpha^\vee})^{n-1}(t_\alpha). 
\end{align*}
Therefore, taking the summation over $\{s\mid 0\leq s \leq n-1\}$, we have
\begin{align*}
&[\Delta_0, (\ad x_{\alpha^\vee})^n(t_\alpha)]=
\sum_{s=0}^{n-1}(\ad x_{\alpha^\vee})^s(\ad y_{\alpha^\vee})(\ad x_{\alpha^\vee})^{n-1-s}(t_\alpha)\\
=&n\ad y_{\alpha^\vee}(\ad x_{\alpha^\vee})^{n-1}(t_\alpha)+
\sum_{s=1}^{n-1}\sum_{i=0}^{s-1}(\ad x_{\alpha^\vee})^i\ad[x_{\alpha^\vee}, y_{\alpha^\vee}](\ad x_{\alpha^\vee})^{n-2-i}(t_\alpha)\\
=&n\ad y_{\alpha^\vee}(\ad x_{\alpha^\vee})^{n-1}(t_\alpha)-\sum_{s=1}^{n-1}\sum_{i=0}^{s-1}\sum_{\gamma\in \Phi^+}(\alpha^\vee, \gamma)^2(\ad x_{\alpha^\vee})^i\ad t_\gamma(\ad x_{\alpha^\vee})^{n-2-i}(t_\alpha)\\
=&n\ad y_{\alpha^\vee}(\ad x_{\alpha^\vee})^{n-1}(t_\alpha)
-\sum_{s=1}^{n-1}\sum_{i=0}^{s-1}(\alpha^\vee, \alpha)^2(\ad x_{\alpha^\vee})^i
\ad t_\alpha(\ad x_{\alpha^\vee})^{n-2-i}(t_\alpha)\\
&\phantom{1234567}-\sum_{s=1}^{n-1}\sum_{i=0}^{s-1}\sum_{\{\gamma\in \Phi^+\mid \gamma\neq \alpha\}}(\alpha^\vee, \gamma)^2(\ad x_{\alpha^\vee})^i
(\ad x_{\omega(\alpha^\vee, \gamma)})^{n-2-i}[t_{\gamma}, t_\alpha], 
\end{align*}
where the last equality follows from the (yx) relation 
$[x_{\alpha^\vee}, y_{\alpha^\vee}]=-\sum_{\gamma\in \Phi^+} (\alpha^\vee, \gamma) t_{\gamma}$. 
We now use the following identity which can be shown by induction
\[
\sum_{s=1}^{n-1}\sum_{i=0}^{s-1}a^ib^{n-2-i}=
\frac{a^n-b^n-nb^{n-1}(a-b)}{(a-b)^2}, \,\ \text{for $n\geq 2$.}
\]
The argument similar to the proof of Lemma \ref{flat4} shows
\begin{align*}
[\Delta_0, k(\alpha, &\ad\frac{x_{\alpha^\vee}}{2}|\tau)(t_\alpha)]
=\Big[\frac{y_{\alpha^\vee}}{2}, g(\alpha, \ad\frac{x_{\alpha^\vee}}{2}|\tau)(t_\alpha)\Big]
-\sum_s\Big[h_s(\ad\frac{x_{\alpha^\vee}}{2})(t_\alpha), k_s(\ad\frac{x_{\alpha^\vee}}{2})(t_\alpha)\Big]\\
&-\sum_{\{\gamma\in \Phi^+\mid \gamma\neq \alpha\}}(\alpha^\vee, \gamma)^2\frac{k(\alpha, \ad\frac{x_{\alpha^\vee}}{2})-k(\alpha, \ad\frac{x_{\omega(\alpha^\vee, \gamma)}}{-2})-(\ad\frac{x_{\alpha^\vee}}{2}+\ad\frac{x_{\omega(\alpha^\vee, \gamma)}}{2})g(\alpha, \ad\frac{x_{\omega(\alpha^\vee, \gamma)}}{-2})}{(\ad x_{\alpha^\vee}+\ad x_{\omega(\alpha^\vee, \gamma)})^2}[t_\gamma, t_\alpha].
\end{align*}
The conclusion now follows from the equality 
$\frac{x_{\alpha^\vee}+ x_{\omega(\alpha^\vee, \gamma)}}{(\alpha^\vee, \gamma)}=\frac{x_{\omega(\gamma^\vee, \alpha)}}{-2}$. 
\end{proof}
Plugging the formulas in Lemmas \ref{flat1}, \ref{flat2}, \ref{flat3}, \ref{flat4},and  \ref{flat5} into the formula  \eqref{eq:A wedge A 1}+\eqref{eq:A wedge A 2} of $A\wedge A$, we get
\begin{align}\label{AA}
-2\pi i A\wedge A=\sum_{\alpha\in \Phi^+}\sum_s[F_s^\alpha(\ad\frac{x_\alpha}{2}), G_s^\alpha(\ad\frac{x_\alpha}{2})]d\tau\wedge d\alpha+\sum_{\{\alpha, \beta \in \Phi^+\mid  \gamma\neq \alpha\}}H(\alpha, \gamma)[t_\gamma, t_\alpha] d\tau\wedge d\alpha,
\end{align}
where $\sum_s F_s(u)^\alpha G_s^\alpha(v)=-L(\alpha, u, v)$, and
\begin{align*}
L(z, u, v)=&\frac{1}{2}\frac{\varphi(u)-\varphi(v)}{u+v}+\frac{1}{2}k(z, u+v)(\varphi(u)-\varphi(v))+\frac{1}{2}(g(z, u)k(z, v)-k(z, u)g(z, v))\\
&-\frac{1}{2}\left(\frac{k(z, u+v)-k(z, u)-vg(z, u)}{v^2}-\frac{k(z, u+v)-k(z, v)-vg(z, v)}{u^2}\right).
\end{align*}
As shown in \cite{CEE}, we have $L(z, u, v)=0$. Therefore, the first summand of \eqref{AA} is zero. 
In \eqref{AA}, we have
\begin{align*}
H(\alpha, \gamma)=&-\frac{k(\alpha,\ad \frac{x_{\alpha^\vee}}{2})-k(\alpha,\ad \frac{x_{\omega(\alpha^\vee, \gamma)}}{-2})}{(\ad\frac{x_{\omega(\gamma^\vee, \alpha)}}{-2})^2}
+\frac{\gamma(\alpha^\vee)}{2}\frac{g(\alpha,\ad \frac{x_{\alpha^\vee}}{2})}{\ad\frac{x_{\omega(\gamma^\vee, \alpha)}}{-2}}\\
&+\frac{g(\gamma,\ad \frac{x_{\gamma^\vee}}{2})-g(\gamma,\ad \frac{x_{\omega(\gamma^\vee, \alpha)}}{-2})}{\ad\frac{x_{\omega(\alpha^\vee, \gamma)}}{-2}}
-g(\gamma, \ad \frac{x_{\omega(\gamma^\vee, \alpha)}}{-2})k(\alpha, \ad \frac{x_{\omega(\alpha^\vee, \gamma)}}{-2}).
\end{align*}
To show the vanishing of \eqref{AA}, we first need the following identity of the coefficients $H(\alpha, \gamma)$. 
\begin{prop}\label{prop:H}
The following identity holds
\[
H(\alpha, \gamma)-H(\alpha, \alpha+\gamma)-H(\gamma+\alpha, \gamma)+H(\gamma+\alpha, \alpha)\equiv 0.
\]
\end{prop}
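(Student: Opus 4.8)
The plan is to reduce this operator identity to a scalar functional identity among theta functions, exactly as in the proof of Proposition \ref{prop:coeff}. First I would observe that $H(\alpha,\gamma)$ is built entirely from the function $k(z,x|\tau)$, its spectral derivative $g(z,x|\tau)=k_x(z,x|\tau)$, and the power series $\varphi(x)$, with the spectral variable $x$ specialized to the adjoint operators $\ad\frac{x_{\alpha^\vee}}{2}$, $\ad\frac{x_{\omega(\alpha^\vee,\gamma)}}{-2}$, and so on. Since each of these is a power series in $x$ with coefficients in $\Hol(\C-\Lambda_\tau)$, it suffices to prove the identity after replacing $k$, $g$, $\varphi$ by monomials in independent scalar variables; equivalently, to prove the corresponding identity of scalar-valued functions in which the operators $\ad x_{(-)}$ are treated as commuting scalars. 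This reduction is legitimate because, modulo the relations (tx) and (xx) already exploited throughout \S\ref{sec:conn def}, all the operators $\ad x_{\alpha^\vee}$, $\ad x_{\omega(\alpha^\vee,\gamma)}$ occurring in front of $[t_\gamma,t_\alpha]$ commute and are linked by the orthogonality relations that characterize $\omega$.

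Second, I would introduce the same change of variables as in Proposition \ref{prop:coeff}: setting $u:=\frac{\omega(\alpha^\vee,\gamma)}{-2}$ and $v:=\frac{\omega(\gamma^\vee,\alpha+\gamma)}{-2}$, the identities of Lemma \ref{lemma:omeg} express every $\omega$-vector that occurs in the four terms $H(\alpha,\gamma)$, $H(\alpha,\alpha+\gamma)$, $H(\alpha+\gamma,\gamma)$, $H(\alpha+\gamma,\alpha)$ in terms of $u$, $v$, and $u+v$ (up to sign), while the theta-arguments $z$ run precisely over $\alpha$, $\gamma$, and $\alpha+\gamma$. Carrying out the substitution collapses the four-term combination into a single scalar expression $\Xi(\alpha,\gamma,u,v)$ in which each occurrence of $k$ or $g$ carries one of these three theta-arguments and a spectral argument drawn from $\{u,\,v,\,u+v,\,-u,\,-v\}$.

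The final and principal step is to verify that $\Xi\equiv 0$ as a consequence of a theta-function identity. The relevant identity is the derivative, with respect to the spectral parameter, of the master identity displayed in the proof of Proposition \ref{prop:coeff}, combined with the normalization that defines $\varphi$ through $\varphi(x)=g(0,0|\tau)-g(0,x|\tau)$; this is exactly the genus-one identity established by \CEE in \cite{CEE} in their treatment of the extended KZB connection. I expect this step to be the main obstacle: one must assemble the correct combination of the master identity and its spectral derivatives so that the double-pole contributions of the form $\frac{k-k-(\cdots)g}{(\cdots)^2}$ in $H$ cancel cleanly, and check that the rank-two combinatorics of the triple $\alpha,\gamma,\alpha+\gamma$ supplies precisely the arguments that identity demands. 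Once $\Xi\equiv 0$ is established at the scalar level, the operator identity of Proposition \ref{prop:H} follows at once by re-specializing the scalar variables to the adjoint operators.
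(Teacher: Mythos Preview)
Your approach is essentially the paper's: use the identities of Lemma \ref{lemma:omeg} to rewrite the four-term combination as a scalar function $H(z,z',u,v)$ in $k$ and $g$ (with $z=\alpha$, $z'=\alpha+\gamma$, and $u,v$ given by suitable $\omega$-vectors), then invoke \cite{CEE} for its vanishing. One small correction: $\varphi$ does not appear in $H(\alpha,\gamma)$ at all --- only $k$ and $g$ do --- so neither the normalization defining $\varphi$ nor a spectral derivative of the earlier master identity enters here; the paper simply cites the relevant identity $H(z,z',u,v)\equiv 0$ from \cite{CEE} directly.
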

\begin{proof}
We have the equalities
$x_{\omega(\gamma^\vee, \alpha)}=x_{\omega((\alpha+\gamma)^\vee, \alpha)}$,  and $x_{\omega(\alpha^\vee, \gamma)}
=x_{\omega((\alpha+\gamma)^\vee, \gamma)}$. Plugging them into the definition of $H(\alpha, \gamma)$, we can simplify as follows.
\begin{align*}
&H(\alpha, \gamma)-H(\alpha, \alpha+\gamma)-H(\gamma+\alpha, \gamma)+H(\gamma+\alpha, \alpha)\\
=&-\frac{k(\alpha,\ad \frac{x_{\omega(\alpha^\vee, \alpha+\gamma)}}{-2})-k(\alpha,\ad \frac{x_{\omega(\alpha^\vee, \gamma)}}{-2})}{(\ad\frac{x_{\omega(\gamma^\vee, \alpha)}}{-2})^2}
-\frac{g(\alpha,\ad \frac{x_{\omega(\alpha^\vee, \alpha+\gamma})}{-2})}{\ad\frac{x_{\omega(\gamma^\vee, \alpha)}}{-2}}
+\frac{k(\alpha+\gamma,\ad \frac{x_{\omega((\alpha+\gamma)^\vee, \alpha)}}{-2})-k(\alpha+\gamma,\ad \frac{x_{\omega((\alpha+\gamma)^\vee, \gamma)}}{-2})}{(\ad\frac{x_{\omega(\alpha^\vee, \alpha+\gamma)}}{-2})^2}\\
&+\frac{g(\alpha+\gamma,\ad \frac{x_{\omega((\alpha+\gamma)^\vee, \alpha})}{-2})}{\ad\frac{x_{\omega(\alpha^\vee, \alpha+\gamma)}}{-2}}
+\frac{g(\gamma,\ad \frac{x_{\omega(\gamma^\vee, \alpha+\gamma)}}{-2})-g(\gamma,\ad \frac{x_{\omega(\gamma^\vee, \alpha)}}{-2})}{\ad\frac{x_{\omega(\alpha^\vee, \gamma)}}{-2}}\\
&-g(\gamma, \ad \frac{x_{\omega(\gamma^\vee, \alpha)}}{-2})k(\alpha, \ad \frac{x_{\omega(\alpha^\vee, \gamma)}}{-2})
+g(\alpha+\gamma, \ad \frac{x_{\omega((\alpha+\gamma)^\vee, \alpha)}}{-2})k(\alpha, \ad \frac{x_{\omega(\alpha^\vee, \alpha+\gamma)}}{-2})\\
&+g(\gamma, \ad \frac{x_{\omega(\gamma^\vee, \alpha+\gamma)}}{-2})k(\alpha+\gamma, \ad \frac{x_{\omega((\alpha+\gamma)^\vee, \gamma)}}{-2})
-g(\alpha, \ad \frac{x_{\omega(\alpha^\vee, \alpha+\gamma)}}{-2})k(\alpha+\gamma, \ad \frac{x_{\omega((\alpha+\gamma)^\vee, \alpha)}}{-2}).
\end{align*}
We choose $z=\alpha, z'=\alpha+\gamma$, then $z'-z=\gamma$.
Choose $v=\frac{x_{\omega(\gamma^\vee, \alpha)}}{-2}=\frac{x_{\omega((\gamma+\alpha)^\vee, \alpha)}}{-2}$, 
$u=\frac{x_{\omega(\alpha^\vee, \alpha+\gamma)}}{-2}=-\frac{x_{\omega(\gamma^\vee, \alpha+\gamma)}}{-2}$, then,
$u+v=\frac{x_{\omega(\alpha^\vee, \gamma)}}{-2}
=\frac{x_{\omega((\alpha+\gamma)^\vee, \gamma)}}{-2}
$. Note that $k(z, x)=-k(-z, -x)$ and $g(z, x)=g(-z, -x)$.
The above expression is of the following form
\begin{align*}
H(z, z', u, v)=&\frac{k(z, u+v)-k(z, u)-vg(z, u)}{v^2}
-\frac{k(z', u+v)-k(z', v)-ug(z', v)}{u^2}\\
&+\frac{g(z'-z, -u)-g(z'-z, v)}{u+v}
-g(-z', -v)k(-z, -u)+g(-z, -u)k(-z', -v)\\
&-g(z-z', -v)k(z, u+v)+g(z'-z, -u)k(z', u+v). 
\end{align*}
It is shown in \cite{CEE} that $H(z, z', u, v)\equiv 0$. 
This implies the conclusion.
\end{proof}
To show the vanishing of the second term of \eqref{AA}, by Lemma \ref{reduction}, it suffices to show the vanishing of of \eqref{AA} for rank 2 root system $\Phi$. 
We show \eqref{AA} is zero case by case. The mail tool is the identity in Proposition \ref{prop:H}. 
\subsection{Case $A_1\times A_1$}
It is obvious that \eqref{AA} is zero since the (tt) relation $[t_{\alpha_1}, t_{\alpha_2}]=0$. 
\subsection{Case $A_2$}
According to Proposition \ref{prop:H}, we have the equality
\[H(\alpha_1, \alpha_2)=H(\alpha_1, \alpha_1+\alpha_2)-H(\alpha_1+\alpha_2, \alpha_1)-H(\alpha_1+\alpha_2, \alpha_2). \]
We use the graph
$H(\alpha_1, \alpha_2)\rightarrow\left\{
   \begin{array}{ll}
     H(\alpha_1, \alpha_1+\alpha_2) \\
     H(\alpha_1+\alpha_2, \alpha_1)\\
     H(\alpha_1+\alpha_2, \alpha_2)
   \end{array}
 \right.
$ to represent we split the term $H(\alpha_1, \alpha_2)$ according to  Proposition \ref{prop:H}. We plug it into $\sum_{\alpha\in \Phi^+}(\sum_{\gamma\neq \alpha}H(\alpha, \gamma)[t_\alpha, t_\gamma])d\tau\wedge d\alpha$. By computation, the coefficient of $d\tau\wedge d\alpha_1$ is
\begin{align*}
H(\alpha_1, \alpha_1+\alpha_2)[t_{\alpha_1}, t_{\alpha_1+\alpha_2}+t_{\alpha_2}]
+H(\alpha_1+\alpha_2, \alpha_1)[t_{\alpha_1+\alpha_2}+t_{\alpha_2}, t_{\alpha_1}]
+H(\alpha_1+\alpha_2, \alpha_2)[t_{\alpha_1+\alpha_2}+t_{\alpha_1}, t_{\alpha_2}].
\end{align*}
It vanishes because of the (tt) relations. 

We now use the graph $H(\alpha_2, \alpha_1)\rightarrow\left\{
   \begin{array}{ll}
     H(\alpha_2, \alpha_1+\alpha_2) \\
     H(\alpha_1+\alpha_2, \alpha_2)\\
     H(\alpha_1+\alpha_2, \alpha_1)
   \end{array}
 \right.
$. In this case, the coefficient of $d\tau\wedge d\alpha_2$ becomes
\begin{align*}
H(\alpha_2, \alpha_1+\alpha_2)[t_{\alpha_2}, t_{\alpha_1+\alpha_2}+t_{\alpha_1}]
+H(\alpha_1+\alpha_2, \alpha_1)[t_{\alpha_1+\alpha_2}+t_{\alpha_2}, t_{\alpha_1}]
+H(\alpha_1+\alpha_2, \alpha_2)[t_{\alpha_1+\alpha_2}+t_{\alpha_1}, t_{\alpha_2}].
\end{align*}
This also vanishes because of the (tt) relations. 
Therefore,  \eqref{AA} is zero in the case of $A_2$.
\subsection{Case $B_2$}
According to Proposition \ref{prop:H}, we rewrite $H(\alpha, \gamma)$ following the graphs
\[H(\alpha_1, \alpha_2)\rightarrow\left\{
   \begin{array}{ll}
     H(\alpha_1, \alpha_1+\alpha_2) \\
     H(\alpha_1+\alpha_2, \alpha_1)\\
     H(\alpha_1+\alpha_2, \alpha_2)
   \end{array}
 \right.
\text{and} \,\ H(\alpha_1+2\alpha_2, \alpha_2)\rightarrow\left\{
   \begin{array}{ll}
     H(\alpha_1+2\alpha_2, \alpha_1+\alpha_2) \\
     H(\alpha_1+\alpha_2, \alpha_1+2\alpha_2)\\
     H(\alpha_1+\alpha_2, \alpha_2)
   \end{array}
 \right.
\]
We plug them into $\sum_{\alpha\in \Phi^+}(\sum_{\gamma\neq \alpha}H(\alpha, \gamma)[t_\alpha, t_\gamma])d\tau\wedge d\alpha$. 
By computation, the coefficient of $d\tau\wedge d\alpha_1$ is
\begin{align*}
&H(\alpha_1, \alpha_1+\alpha_2)[t_{\alpha_1}, t_{\alpha_1+\alpha_2}+t_{\alpha_2}]
+H(\alpha_1+\alpha_2, \alpha_1)[t_{\alpha_1+\alpha_2}+t_{\alpha_2}, t_{\alpha_1}]
+H(\alpha_1+\alpha_2, \alpha_1+2\alpha_2)[t_{\alpha_1+\alpha_2}+t_{\alpha_2}, t_{\alpha_1+2\alpha_2}]\\
&+H(\alpha_1+\alpha_2, \alpha_2)[t_{\alpha_1+\alpha_2}+t_{\alpha_1}+t_{\alpha_1+2\alpha_2}, t_{\alpha_2}]
+H(\alpha_1+2\alpha_2, \alpha_1+\alpha_2)[t_{\alpha_1+2\alpha_2}, t_{\alpha_1+\alpha_2}+t_{\alpha_2}]. 
\end{align*}
It vanishes because of the (tt) relations of root system $B_2$. 
 
Similarly, we rewrite $H(\alpha, \gamma)$ following the graphs, using the identity in Proposition \ref{prop:H}.
\[
H(\alpha_2, \alpha_1)\rightarrow\left\{
   \begin{array}{ll}
     H(\alpha_2, \alpha_1+\alpha_2) \\
     H(\alpha_1+\alpha_2, \alpha_2)\\
     H(\alpha_1+\alpha_2, \alpha_1)
   \end{array}
 \right.
 , \,\ H(\alpha_2, \alpha_1+2\alpha_2)\rightarrow\left\{
   \begin{array}{ll}
     H(\alpha_1+2\alpha_2, \alpha_2) \\
     H(\alpha_1+2\alpha_2, \alpha_1+\alpha_2)\\
     H(\alpha_2, \alpha_1+\alpha_2)
   \end{array}
 \right.
 \]
and $H(\alpha_1+\alpha_2, \alpha_1+2\alpha_2)\rightarrow\left\{
   \begin{array}{ll}
     H(\alpha_1+2\alpha_2, \alpha_1+\alpha_2) \\
     H(\alpha_1+2\alpha_2, \alpha_2)\\
     H(\alpha_1+\alpha_2, \alpha_2)
   \end{array}
 \right.
$. By computation, the coefficient of $d\tau\wedge d\alpha_2$ is
\begin{align*}
&H(\alpha_2, \alpha_1+\alpha_2)[t_{\alpha_2}, t_{\alpha_1+\alpha_2}+t_{\alpha_1}+t_{\alpha_1+2\alpha_2}]
+H(\alpha_1+2\alpha_2, \alpha_1+\alpha_2)[t_{\alpha_1+2\alpha_2}, t_{\alpha_1+\alpha_2}+t_{\alpha_2}]\\
&+2H(\alpha_1+2\alpha_2, \alpha_2)[t_{\alpha_1+2\alpha_2}, t_{\alpha_2}+t_{\alpha_1+\alpha_2}]
+H(\alpha_1+\alpha_2, \alpha_1)[t_{\alpha_1+\alpha_2}+t_{\alpha_2}, t_{\alpha_1}]\\
&+H(\alpha_1+\alpha_2, \alpha_2)[t_{\alpha_1+2\alpha_2}+t_{\alpha_1+\alpha_2}+t_{\alpha_1},t_{\alpha_2}]. 
\end{align*}
It vanishes because of the (tt) relations of root system $B_2$. This shows \eqref{AA} is zero for $B_2$. 
\subsection{Case $G_2$}
According to Proposition \ref{prop:H}, we rewrite $H(\alpha, \gamma)$ following the graphs in order. 
\begin{align*}
&H(\alpha_1, \alpha_1+3\alpha_2)\rightarrow\left\{
   \begin{array}{ll}
     H(\alpha_1, 2\alpha_1+3\alpha_2) \\
     H(2\alpha_1+3\alpha_2, \alpha_1)\\
     H(2\alpha_1+3\alpha_2, \alpha_1+3\alpha_2)
   \end{array}
 \right.
\,\ \,\ H(\alpha_1, \alpha_2)\rightarrow\left\{
   \begin{array}{ll}
    H (\alpha_1, \alpha_1+\alpha_2) \\
     H(\alpha_1+\alpha_2, \alpha_1)\\
     H(\alpha_1+\alpha_2, \alpha_2)
   \end{array}
 \right.\\
&H(\alpha_1+3\alpha_2, \alpha_1)\rightarrow\left\{
   \begin{array}{ll}
     H(\alpha_1+3\alpha_2, 2\alpha_1+3\alpha_2) \\
     H(2\alpha_1+3\alpha_2, \alpha_1)\\
     H(2\alpha_1+3\alpha_2, \alpha_1+3\alpha_2)
   \end{array}
 \right.
\,\  \,\ H(\alpha_1+3\alpha_2, \alpha_2)\rightarrow\left\{
   \begin{array}{ll}
     H(\alpha_1+3\alpha_2, \alpha_1+2\alpha_2) \\
    H(\alpha_1+2\alpha_2, \alpha_2)\\
     H(\alpha_1+2\alpha_2, \alpha_1+3\alpha_2)
   \end{array}
 \right.
\\
&H(\alpha_1+\alpha_2, 2\alpha_1+3\alpha_2)\rightarrow\left\{
   \begin{array}{ll}
     H(\alpha_1+\alpha_2, \alpha_1+2\alpha_2) \\
     H(2\alpha_1+3\alpha_2, \alpha_1+2\alpha_2)\\
     H(2\alpha_1+3\alpha_2, \alpha_1+\alpha_2)
   \end{array}
 \right.\\
&  H(\alpha_1+2\alpha_2, 2\alpha_1+3\alpha_2)\rightarrow\left\{
   \begin{array}{ll}
    H(\alpha_1+2\alpha_2, \alpha_1+\alpha_2) \\
     H(2\alpha_1+3\alpha_2, \alpha_1+2\alpha_2)\\
     H(2\alpha_1+3\alpha_2, \alpha_1+\alpha_2)
   \end{array}
 \right. \,\ H(\alpha_1+\alpha_2, \alpha_2)\rightarrow\left\{
   \begin{array}{ll}
     H(\alpha_1+\alpha_2, \alpha_1+2\alpha_2) \\
     H(\alpha_1+2\alpha_2, \alpha_2)\\
     H(\alpha_1+2\alpha_2, \alpha_1+\alpha_2)
   \end{array}
 \right.
\end{align*} 
\Omit{and 
$H(\alpha_1+\alpha_2, \alpha_2)\rightarrow\left\{
   \begin{array}{ll}
     H(\alpha_1+\alpha_2, \alpha_1+2\alpha_2) \\
     H(\alpha_1+2\alpha_2, \alpha_2)\\
     H(\alpha_1+2\alpha_2, \alpha_1+\alpha_2)
   \end{array}
 \right.
$. }We plug them into $\sum_{\alpha\in \Phi^+}(\sum_{\gamma\neq \alpha}H(\alpha, \gamma)[t_\alpha, t_\gamma])d\tau\wedge d\alpha$. 
By computation, the coefficient of $d\tau\wedge d\alpha_1$ is
\begin{align*}
&H(\alpha_1, \alpha_1+\alpha_2)[t_{\alpha_1}, t_{\alpha_1+\alpha_2}+t_{\alpha_2}]
+H(\alpha_1, 2\alpha_1+3\alpha_2)[t_{\alpha_1}, t_{2\alpha_1+3\alpha_2}+t_{\alpha_1+3\alpha_2}]\\
&+H(\alpha_1+\alpha_2, \alpha_1)[t_{\alpha_1+\alpha_2}+t_{\alpha_2}, t_{\alpha_1}]
+H(\alpha_1+\alpha_2, \alpha_1+2\alpha_2)[t_{\alpha_1+\alpha_2}, t_{\alpha_1+2\alpha_2}+t_{2\alpha_1+3\alpha_2}+t_{\alpha_1}+t_{\alpha_2}]\\
&+H(\alpha_1+2\alpha_2, \alpha_2)
[t_{\alpha_1+2\alpha_2}+t_{\alpha_1+\alpha_2}+t_{\alpha_1}+t_{\alpha_1+3\alpha_2},t_{\alpha_2}]\\&
+H(\alpha_1+2\alpha_2, \alpha_1+\alpha_2)[t_{\alpha_1+2\alpha_2}+t_{2\alpha_1+3\alpha_2}+t_{\alpha_1}+t_{\alpha_2}, t_{\alpha_1+\alpha_2}]\\
&+H(\alpha_1+2\alpha_2, \alpha_1+3\alpha_2)
[t_{\alpha_1+2\alpha_2}+t_{\alpha_2}, t_{\alpha_1+3\alpha_2}]
+H(\alpha_1+3\alpha_2, \alpha_1+2\alpha_2)
[t_{\alpha_1+3\alpha_2}, t_{\alpha_1+2\alpha_2}+t_{\alpha_2}]\\
&+H(\alpha_1+3\alpha_2, 2\alpha_1+3\alpha_2)
[t_{\alpha_1+3\alpha_2}, t_{2\alpha_1+3\alpha_2}+t_{\alpha_1}]
+2H(2\alpha_1+3\alpha_2, \alpha_1)[t_{\alpha_1+3\alpha_2}+t_{2\alpha_1+3\alpha_2}, t_{\alpha_1}]\\
&+H(2\alpha_1+3\alpha_2, \alpha_1+\alpha_2)[t_{2\alpha_1+3\alpha_2}, t_{\alpha_1+\alpha_2}+t_{\alpha_1+2\alpha_2}]
+H(2\alpha_1+3\alpha_2, \alpha_1+2\alpha_2)[t_{2\alpha_1+3\alpha_2}, t_{\alpha_1+\alpha_2}+t_{\alpha_1+2\alpha_2}]\\
&+2H(2\alpha_1+3\alpha_2, \alpha_1+3\alpha_2)[t_{\alpha_1}+t_{2\alpha_1+3\alpha_2}, t_{\alpha_1+3\alpha_2}].
\end{align*}
The coefficient vanishes because of the (tt) relations of root system $G_2$. 

Similarly, we rewrite $H(\alpha, \gamma)$ according to Proposition \ref{prop:H} by following the graphs in order. 
\begin{align*}
&3H(\alpha_1+3\alpha_2, \alpha_1)\rightarrow\left\{
   \begin{array}{ll}
     3H(\alpha_1+3\alpha_2, 2\alpha_1+3\alpha_2) \\
     3H(\alpha_1+3\alpha_2, \alpha_1)\\
     3H(\alpha_1+3\alpha_2, \alpha_1+3\alpha_2)
   \end{array}
 \right.
\,\ \,\ H(\alpha_2, \alpha_1)\rightarrow\left\{
   \begin{array}{ll}
     H(\alpha_2, \alpha_1+\alpha_2) \\
     H(\alpha_1+\alpha_2, \alpha_2)\\
     H(\alpha_1+\alpha_2, \alpha_1)
   \end{array}
 \right.
\\
&H(2\alpha_1+3\alpha_2, \alpha_1+\alpha_2)\rightarrow\left\{
   \begin{array}{ll}
     H(2\alpha_1+3\alpha_2, \alpha_1+2\alpha_2) \\
     H(\alpha_1+\alpha_2, \alpha_1+2\alpha_2)\\
     H(\alpha_1+\alpha_2, 2\alpha_1+3\alpha_2)
   \end{array}
 \right.\\
&2H(2\alpha_1+3\alpha_2, \alpha_1+\alpha_2)\rightarrow\left\{
   \begin{array}{ll}
     2H(2\alpha_1+3\alpha_2, \alpha_1+2\alpha_2) \\
     2H(\alpha_1+2\alpha_2, \alpha_1+\alpha_2)\\
     2H(\alpha_1+2\alpha_2, 2\alpha_1+3\alpha_2)
   \end{array}
 \right.
\end{align*}
\begin{align*}
&H(\alpha_1+2\alpha_2, \alpha_2)\rightarrow\left\{
   \begin{array}{ll}
     H(\alpha_1+2\alpha_2, \alpha_1+\alpha_2) \\
     H(\alpha_1+\alpha_2, \alpha_2)\\
     H(\alpha_1+\alpha_2, \alpha_1+2\alpha_2)
   \end{array}
 \right.
&&H(\alpha_1+2\alpha_2, \alpha_2)\rightarrow\left\{
   \begin{array}{ll}
     H(\alpha_1+2\alpha_2, \alpha_1+\alpha_2) \\
     H(\alpha_2, \alpha_1+\alpha_2)\\
     H(\alpha_2, \alpha_1+2\alpha_2)
   \end{array}
 \right.
\\
&H(\alpha_1+3\alpha_2, \alpha_2)\rightarrow\left\{
   \begin{array}{ll}
     H(\alpha_1+3\alpha_2, \alpha_1+2\alpha_2) \\
     H(\alpha_1+2\alpha_2, \alpha_2)\\
     H(\alpha_1+2\alpha_2, \alpha_1+3\alpha_2)
   \end{array}
 \right.
&&H(\alpha_1+\alpha_2, \alpha_2)\rightarrow\left\{
   \begin{array}{ll}
     H(\alpha_1+\alpha_2, \alpha_1+2\alpha_2) \\
     H(\alpha_1+2\alpha_2, \alpha_2)\\
     H(\alpha_1+2\alpha_2, \alpha_1+\alpha_2)
   \end{array}
 \right.
\\
&
H(\alpha_2, \alpha_1+3\alpha_2)\rightarrow\left\{
   \begin{array}{ll}
     H(\alpha_2, \alpha_1+2\alpha_2) \\
     H(\alpha_1+2\alpha_2, \alpha_1+3\alpha_2)\\
     H(\alpha_1+2\alpha_2, \alpha_2)
   \end{array}
 \right.
&&H(\alpha_2, \alpha_1+\alpha_2)\rightarrow\left\{
   \begin{array}{ll}
      H(\alpha_1+2\alpha_2, \alpha_1+\alpha_2)\\
     H(\alpha_1+2\alpha_2, \alpha_2)\\
     H(\alpha_2, \alpha_1+2\alpha_2)
   \end{array}
 \right.
\end{align*}
Similar computation shows the coefficient of $d\tau\wedge d\alpha_2$ is
\begin{align*}
&H(\alpha_2, \alpha_1+2\alpha_2)[t_{\alpha_2}, t_{\alpha_1+2\alpha_2}-t_{\alpha_1+2\alpha_2}+t_{\alpha_1+3\alpha_2}-t_{\alpha_1+3\alpha_2}]
+H(\alpha_1+\alpha_2, \alpha_1)[t_{\alpha_1+\alpha_2}+t_{\alpha_2}, t_{\alpha_1}]\\
&+H(\alpha_1+\alpha_2, \alpha_1+2\alpha_2)[t_{\alpha_1+\alpha_2}+t_{2\alpha_1+3\alpha_2}+
t_{\alpha_2}+t_{\alpha_1+3\alpha_2},t_{\alpha_1+2\alpha_2}]\\&
+H(\alpha_1+\alpha_2, 2\alpha_1+3\alpha_2)[t_{\alpha_1+\alpha_2}, t_{2\alpha_1+3\alpha_2}-t_{2\alpha_1+3\alpha_2}]\\
&+2H(\alpha_1+2\alpha_2, \alpha_1+\alpha_2)[t_{\alpha_1+2\alpha_2}, t_{\alpha_1+\alpha_2}+t_{2\alpha_1+3\alpha_2}+
t_{\alpha_2}+t_{\alpha_1+3\alpha_2}]\\&
+2H(\alpha_1+2\alpha_2, \alpha_1+3\alpha_2)[t_{\alpha_1+2\alpha_2}+t_{\alpha_2}, t_{\alpha_1+3\alpha_2}]\\
&+2H(\alpha_1+2\alpha_2, 2\alpha_1+3\alpha_2)[t_{\alpha_1+2\alpha_2}+t_{\alpha_1+\alpha_2}, t_{2\alpha_1+3\alpha_2}]\\
&+3H(\alpha_1+3\alpha_2, \alpha_1+2\alpha_2)[t_{\alpha_1+3\alpha_2}, t_{\alpha_1+2\alpha_2}+
t_{\alpha_2}]
+3H(\alpha_1+3\alpha_2, 2\alpha_1+3\alpha_2)[t_{\alpha_1+3\alpha_2}, t_{2\alpha_1+3\alpha_2}+t_{\alpha_1}]\\
&+3H(2\alpha_1+3\alpha_2, \alpha_1)[t_{2\alpha_1+3\alpha_2}+t_{\alpha_1+3\alpha_2}, t_{\alpha_1}]
+3H(2\alpha_1+3\alpha_2, \alpha_1+2\alpha_2)[t_{2\alpha_1+3\alpha_2}, t_{\alpha_1+2\alpha_2}+ t_{\alpha_1+\alpha_2}]\\
&+3H(2\alpha_1+3\alpha_2, \alpha_1+3\alpha_2)[t_{2\alpha_1+3\alpha_2}+t_{\alpha_1}, t_{\alpha_1+3\alpha_2}].
\end{align*}
This coefficient vanishes because of the (tt) relations of root system $G_2$. 
Thus, \eqref{AA} is zero in the case of $G_2$.

\section{The elliptic connection valued in rational Cherednik algebras}
\label{sec:rca}

In this section, we specialise the coefficients of the connections $\nabla_{\KZB, \tau}$
\eqref{conn any type} and $\nabla_{\KZB}$ \eqref{conn:extension} to the rational Cherednik
algebra of a Weyl group $W$. This specialisation is the elliptic analogue of the Coxeter
KZ connection valued in the group algebra $\IC W$, and Cherednik's affine KZ connection
valued in the degenerate affine Hecke algebra of $W$.

\subsection{The rational Cherednik algebra}\label{ss:rca}

In this subsection, we review some basic facts about the rational Cherednik algebras.
For details, see \cite{EG}, \cite{EM}. 

Let $W$ be a Weyl group and $\h$ be its complexified reflection representation. Let
$S\subset W$ be the set of reflections and, for any $s\in S$, fix $\alpha_s\in \h^*$, such
that $s(\alpha_s)=-\alpha_s$. Let $K$ be the vector space of $W$--invariant functions $S\to\C$,
and $\wt{K}=K\oplus\C$. We denote the standard linear functions on $\wt{K}$ by $\{c_s
\}_{s\in S/W}$ and $\hbar$.

\begin{definition}
The rational Cherednik algebra $H_{\hbar, c}$ is the quotient of the algebra
$\C W \ltimes T(\h\oplus \h^*)[\wt{K}]$ by the ideal generated by the relations
\[[x, x'] = 0, \,\  [y, y'] = 0, \,\ 
[y, x]=\hbar\langle y, x\rangle-\sum_{s\in S}c_s\langle \alpha_s, y\rangle\langle \alpha_s^\vee, x\rangle s,\]
where $x, x'\in \h^*$, $y, y'\in \h$. The algebra is $\IN\times\IN$--graded by
$\deg(x)=(1,0)$, $x\in\h$, $\deg(y)=(0,1)$, $y\in\h^*$, $\deg(w)=0$, $w\in W$,
$\deg(\hbar)=(1,1)$ and $\deg(c_s)=(1,1)$.

\end{definition}
Let $\{y_i \mid 1\leq i \leq n\}$ be the basis of $\h$, and $\{x_i \mid 1\leq i \leq n\}$ be the corresponding dual basis of $\h^*$. We have the following elements of the rational Cherednik algebras.
\begin{align}\label{formula of hef}
\textbf{h}:=\sum_{i=1}^n x_iy_i+\frac{1}{2}\dim \h-\sum_{s\in S}c_s s=\sum_{i=1}^n \frac{x_iy_i+y_ix_i}{2},\,\ 
\textbf{E}:=-\frac{1}{2}\sum_{i=1}^n x_i^2, \,\ \text{and} \,\, \textbf{F}:=\frac{1}{2}\sum_{i=1}^n y_i^2.
\end{align}
The following properties of $\textbf{h}$, $\textbf{E}$ and $\textbf{F}$ can be found in \cite[Prop. 3.18, 3.19]{EM}.

\begin{prop}
\label{prop:RCA hEF}
The following holds. 
\begin{romenum}
\item For any $x\in \h^*, y\in \h$, $[\textbf{h}, x]=\hbar x, [\textbf{h}, y]=-\hbar y$.
\item The elements $\frac{1}{\hbar}\textbf{h}, \frac{1}{\hbar}\textbf{E}, \frac{1}{\hbar}\textbf{F}$ form an $\mathfrak{sl}_2$-triple.
\end{romenum}
\end{prop}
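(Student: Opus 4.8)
The plan is to deduce the whole proposition from the single defining commutator $[y,x]=\hbar\langle y,x\rangle-\sum_{s\in S}c_s\langle\alpha_s,y\rangle\langle\alpha_s^\vee,x\rangle s$ together with the reflection identities $s(\alpha_s)=-\alpha_s$ and $s(\alpha_s^\vee)=-\alpha_s^\vee$, taking $\{y_i\}$ to be an orthonormal basis of $\h$ for the $W$-invariant form and $\{x_i\}$ the dual basis (equivalently the images $y_i^\flat\in\h^*$); this is the choice that makes $\textbf{E}$ and $\textbf{F}$ independent of the basis. As a preliminary, I would check that the two displayed expressions for $\textbf{h}$ agree, by expanding $\frac12\sum_i(x_iy_i+y_ix_i)=\sum_i x_iy_i+\frac12\sum_i[y_i,x_i]$ and evaluating $\sum_i[y_i,x_i]$ with the defining relation; using $\sum_i\langle\alpha_s,y_i\rangle\langle\alpha_s^\vee,x_i\rangle=\langle\alpha_s^\vee,\alpha_s\rangle=2$ this gives $\sum_i[y_i,x_i]=\hbar\dim\h-2\sum_{s}c_s s$, reproducing the scalar and reflection terms of the first form.

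For part (i) I would use that $\ad(\textbf{h})$ is a derivation and compute $[\textbf{h},x]$ for $x\in\h^*$ from the symmetric form $\textbf{h}=\frac12\sum_i(x_iy_i+y_ix_i)$. Since $\h^*$ is commutative this reduces to $\frac12\sum_i(x_i[y_i,x]+[y_i,x]x_i)$. The scalar part of $[y_i,x]$ contributes $\hbar\sum_i\langle y_i,x\rangle x_i=\hbar x$, while the reflection part is, for each $s$, a multiple of $\sum_i\langle\alpha_s,y_i\rangle(x_i+s(x_i))=\alpha_s+s(\alpha_s)=0$, using $\sum_i\langle\alpha_s,y_i\rangle x_i=\alpha_s$ and $sx_i=s(x_i)\,s$. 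Hence $[\textbf{h},x]=\hbar x$, and $[\textbf{h},y]=-\hbar y$ follows by the identical computation with $\alpha_s$ replaced by $\alpha_s^\vee$ and $\sum_i\langle\alpha_s^\vee,x_i\rangle y_i=\alpha_s^\vee$.

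Given (i), the relations $[\textbf{h},\textbf{E}]=2\hbar\textbf{E}$ and $[\textbf{h},\textbf{F}]=-2\hbar\textbf{F}$ are immediate, since $\ad(\textbf{h})$ is a derivation and $\textbf{E},\textbf{F}$ are quadratic in the $x_i,y_i$; dividing by $\hbar$ these are exactly $[\tfrac1\hbar\textbf{h},\tfrac1\hbar\textbf{E}]=2\cdot\tfrac1\hbar\textbf{E}$ and $[\tfrac1\hbar\textbf{h},\tfrac1\hbar\textbf{F}]=-2\cdot\tfrac1\hbar\textbf{F}$. The remaining relation $[\textbf{E},\textbf{F}]=\hbar\textbf{h}$ is the crux. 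I would first compute $[\textbf{E},y_j]=-\frac12\sum_i(x_i[x_i,y_j]+[x_i,y_j]x_i)$: the scalar part yields $\hbar x_j$, and the reflection part is a multiple of $\beta_s+s(\beta_s)$, where $\beta_s=\sum_i\langle\alpha_s^\vee,x_i\rangle x_i$ is the image of $\alpha_s^\vee$ in $\h^*$ under the form. Because the form is $W$-invariant and $s(\alpha_s^\vee)=-\alpha_s^\vee$, we get $s(\beta_s)=-\beta_s$, so this part vanishes and $[\textbf{E},y_j]=\hbar x_j$. Then $[\textbf{E},\textbf{F}]=\frac12\sum_j(y_j[\textbf{E},y_j]+[\textbf{E},y_j]y_j)=\frac\hbar2\sum_j(y_jx_j+x_jy_j)=\hbar\textbf{h}$, using the symmetric form of $\textbf{h}$, and dividing by $\hbar^2$ gives $[\tfrac1\hbar\textbf{E},\tfrac1\hbar\textbf{F}]=\tfrac1\hbar\textbf{h}$.

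The main obstacle is the reflection-term cancellation inside $[\textbf{E},y_j]$: it is the exact analogue for $\textbf{E}$ of the cancellation used in (i), but it only succeeds once one commits to bases dual with respect to the $W$-invariant form, so that $\beta_s$ is a $(-1)$-eigenvector of $s$. Verifying this form-compatibility and the identity $s(\beta_s)=-\beta_s$ carefully is where the genuine content lies; everything else is derivation-and-scalar bookkeeping.
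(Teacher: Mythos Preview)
Your argument is correct. The paper does not actually supply a proof of this proposition; it simply cites \cite[Prop.~3.18, 3.19]{EM}, so there is no in-paper argument to compare against. Your computation is the standard one, and the key cancellation you isolate---that the reflection contribution to $[\textbf{E},y_j]$ is proportional to $\beta_s+s(\beta_s)=0$ once $\{x_i\}$ is the image of an orthonormal $\{y_i\}$ under the $W$-invariant form---is exactly the point. In fact the paper performs the mirror computation $[\textbf{F},x_j]=\hbar y_j$ later, in the proof of Proposition~\ref{prop:ext for rca}, by the same mechanism (there using $s_\alpha\alpha=-\alpha s_\alpha$), and explicitly imposes the orthonormality assumption at that stage. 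Your remark that the $\mathfrak{sl}_2$ relation $[\textbf{E},\textbf{F}]=\hbar\textbf{h}$ genuinely requires the bases to be dual via the $W$-invariant form is well taken: as written in \eqref{formula of hef} the elements $\textbf{E},\textbf{F}$ are basis-dependent, and the paper only records the orthonormality hypothesis just before Proposition~\ref{prop:ext for rca}.
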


\subsection{Specialising the KZB connection $\nabla_{\KZB, \tau}$ to the rational Cherednik algebra}

In this subsection, we specialize the KZB connection $\nabla_{\KZB, \tau}$ to the rational Cherednik
algebra $H_{\hbar, c}$ by constructing a homomorphism from the Lie algebra $\Aell$ to $H_{\hbar, c}$.

Let $\tilde{\alpha}$ be the highest root of Lie algebra $\g$. Write $\tilde{\alpha}^\vee=\sum_{i=1}^ng_i \alpha_i^\vee$, and let
$h^\vee=1+\sum_{i=1}^ng_i$ be the dual Coxeter number of $\g$.
\begin{prop}\label{prop:map to H_{h,c}}
For any $a, b\in \C$, there is a bigraded Lie algebra homomorphism
$\xi_{a, b}: \Aell \to H_{\hbar, c}$,  defined as follows
\begin{gather*}
x(v)\mapsto a\pi(v), \,\ y(u)\mapsto bu, \,\ 
t_\gamma\mapsto ab\left(\frac{\hbar}{h^\vee}-\frac{2c_{s_\gamma}}{(\gamma|\gamma)}s_{\gamma}\right),
\end{gather*}
for $u, v\in \h$ and $\gamma\in \Phi^+$, 
where $\pi: \h\to \h^*$ is the isomorphism induced by the non-degenerate bilinear form $(\cdot|\cdot)$ on $\h$.
and $s_{\gamma}$ is the reflection corresponding to the root $\gamma$. 
\end{prop}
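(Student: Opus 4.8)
The plan is to exploit the presentation of $\Aell$ from Definition \ref{def:holonomy Aell}. Since $\Aell$ is generated by the linear maps $x,y$ and the symbols $\{t_\alpha\}$ subject only to the relations (tt), (xx), (yy), (yx), (tx), (ty), it suffices to define $\xi_{a,b}$ on generators as stated and to check that the images satisfy each of these six relations inside the commutator Lie algebra of $H_{\hbar, c}$. I would first record that the assignment respects the bigradings: $\pi(v)\in\h^*$, $u\in\h$, and $\hbar$, $c_{s_\gamma}s_\gamma$ carry the degrees matching $\deg x(v)$, $\deg y(u)$, $\deg t_\gamma$ respectively.

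Relations (xx) and (yy) are immediate, as $\pi$ maps into the commutative subalgebra $\h^*\subset H_{\hbar, c}$ and $y$ into $\h$, on which $[x,x']=0$ and $[y,y']=0$ hold. For (tx) and (ty), the scalar $\hbar/h^\vee$ is central, so it suffices to show $[s_\alpha, \pi(u)]=0$ and $[s_\alpha, u]=0$ whenever $\langle\alpha, u\rangle=0$; using the smash-product relation $s\,w\,s^{-1}=s(w)$ and the fact that $s_\alpha$ fixes $u$ (hence $\pi(u)$, by $W$-invariance of the form) exactly when $u\perp\alpha$, both commutators vanish.

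For (tt) I would reduce to showing $[s_\alpha, \sum_{\beta\in\Psi^+}\tfrac{c_{s_\beta}}{(\beta|\beta)}s_\beta]=0$ for a rank $2$ subsystem $\Psi\ni\alpha$. Conjugation by $s_\alpha$ permutes the set of reflections $\{s_\beta : \beta\in\Psi^+\}$, since $s_\alpha$ preserves $\Psi$ and $s_{-\beta}=s_\beta$; moreover the weights $c_{s_\beta}/(\beta|\beta)$ are constant along this permutation, because $c$ is a $W$-invariant function on reflections and $(\cdot|\cdot)$ is $W$-invariant. Hence the weighted sum is fixed under conjugation by $s_\alpha$ and therefore commutes with it.

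The main work, and the step I expect to be the genuine obstacle, is relation (yx). Here I would compute $[\xi_{a,b}(y(u)), \xi_{a,b}(x(v))]=ab\,[u,\pi(v)]$ directly from the defining commutator of $H_{\hbar, c}$, obtaining a scalar term $\hbar\langle u,\pi(v)\rangle=\hbar(u|v)$ and a reflection term $-\sum_{\gamma\in\Phi^+}c_{s_\gamma}\langle\gamma,u\rangle\langle\gamma^\vee,\pi(v)\rangle s_\gamma$. The reflection term matches the desired $-\sum_\gamma \langle u,\gamma\rangle\langle v,\gamma\rangle\tfrac{2c_{s_\gamma}}{(\gamma|\gamma)}s_\gamma$ once one uses $\langle\gamma^\vee,\pi(v)\rangle=\tfrac{2}{(\gamma|\gamma)}\langle v,\gamma\rangle$, which follows from $\gamma^\vee=2\pi^{-1}(\gamma)/(\gamma|\gamma)$ and the definition of $\pi$. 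The scalar term requires the normalization identity $\sum_{\gamma\in\Phi^+}\langle u,\gamma\rangle\langle v,\gamma\rangle=h^\vee(u|v)$, equivalently that the $W$-invariant form $\sum_{\gamma\in\Phi}\gamma\otimes\gamma$ equals $2h^\vee(\cdot|\cdot)$; this is precisely where the dual Coxeter number enters, and pinning down the correct normalization of $(\cdot|\cdot)$ so that this holds is the crux of the argument. Granting it, the two contributions combine to $\sum_{\gamma\in\Phi^+}\langle u,\gamma\rangle\langle v,\gamma\rangle\big(\tfrac{\hbar}{h^\vee}-\tfrac{2c_{s_\gamma}}{(\gamma|\gamma)}s_\gamma\big)$, which is exactly $\xi_{a,b}$ applied to the right-hand side of (yx), completing the verification.
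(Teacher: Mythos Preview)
Your proposal is correct and follows essentially the same route as the paper. The paper declares (tt), (xx), (yy), (tx), (ty) trivial and focuses on (yx), proving the identity $(\,\cdot\mid\cdot\,)=\tfrac{1}{h^\vee}\sum_{\gamma\in\Phi^+}\langle\,\cdot\,,\gamma\rangle\langle\,\cdot\,,\gamma\rangle$ as a separate lemma (both sides are $W$-invariant positive-definite forms, hence proportional, with the constant fixed by evaluating on the highest coroot); this is exactly the normalization you flag as the crux, and your computation of $[u,\pi(v)]$ matches the paper's. Your explicit treatment of (tt) via the $W$-invariance of $c_{s_\beta}/(\beta\mid\beta)$ and of (tx), (ty) via $s_\alpha$ fixing vectors orthogonal to $\alpha$ is a welcome elaboration of what the paper leaves implicit.
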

For simplicity and without lost of generality we assume $a=b=1$. To prove the Proposition, we need to show the map $\xi_{a, b}$ respects all defining relations of $\Aell$. The only non-trivial relation to check is the (yx) relation
\[
[y(u), x(v)]=\sum_{\gamma\in \Phi^+}\langle v, \gamma\rangle\langle u, \gamma\rangle t_\gamma.
\]
In the remainder of this subsection, we check $\xi$ preserves the (yx) relation of $\Aell$.
\begin{lemma}\label{equal:inn}
The following equality holds
\[(\cdot|\cdot)=\frac{1}{h^\vee}\sum_{\gamma\in \Phi^+}\langle \cdot, \gamma\rangle\langle \cdot, \gamma\rangle.\]
\end{lemma}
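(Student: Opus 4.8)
The plan is to recognise both sides as $W$-invariant symmetric bilinear forms on $\h$ and to compare them via Schur's lemma, the only real content being the identification of the proportionality constant with $h^\vee$. Since the dual Coxeter number $h^\vee=1+\sum_i g_i$ is defined through the highest root $\tilde\alpha$, I may assume throughout that $\Phi$ is irreducible.

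First I would set $B(u,v):=\sum_{\gamma\in\Phi^+}\langle u,\gamma\rangle\langle v,\gamma\rangle$ and check that $B$ is $W$-invariant. Because $\langle u,-\gamma\rangle\langle v,-\gamma\rangle=\langle u,\gamma\rangle\langle v,\gamma\rangle$, one has $B=\tfrac12\sum_{\gamma\in\Phi}\langle\cdot,\gamma\rangle\langle\cdot,\gamma\rangle$, and since $W$ permutes $\Phi$ and fixes the pairing $\langle\cdot,\cdot\rangle$ up to the $W$-action, $B$ is $W$-invariant; the form $(\cdot|\cdot)$ is $W$-invariant by construction. Next, as $\Phi$ is irreducible the reflection representation $\h$ is an irreducible $W$-module, so the space of $W$-invariant symmetric bilinear forms on $\h$ is one-dimensional by Schur's lemma. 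Hence there is a scalar $\lambda\in\C$ with $B=\lambda\,(\cdot|\cdot)$, and it remains to show $\lambda=h^\vee$.

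To pin down $\lambda$ I would take traces relative to $(\cdot|\cdot)$. Choosing an orthonormal basis $\{e_i\}$ of $\h$ and using $\langle u,\gamma\rangle=(u|\pi^{-1}\gamma)$ so that $\sum_i\langle e_i,\gamma\rangle^2=(\gamma|\gamma)$, I obtain
\[
\lambda\cdot\dim\h=\sum_i B(e_i,e_i)=\sum_{\gamma\in\Phi^+}(\gamma|\gamma),
\qquad\text{i.e.}\qquad
\lambda=\frac{1}{\dim\h}\sum_{\gamma\in\Phi^+}(\gamma|\gamma).
\]
The hard part will be the final step, matching this to the combinatorial $h^\vee$: one must invoke the classical identity $\sum_{\gamma\in\Phi^+}(\gamma|\gamma)=h^\vee\dim\h$ for the suitably normalised form, which is equivalent to the statement that the restriction of the Killing form of $\g$ to $\h$ equals $2h^\vee(\cdot|\cdot)$. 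I would verify the normalisation convention used for $(\cdot|\cdot)$ in the definition of $\xi_{a,b}$ is the one making this hold (so that $(\tilde\alpha|\tilde\alpha)=2$), and then quote this standard relation between the Killing form and the dual Coxeter number to conclude $\lambda=h^\vee$, which gives the claimed equality $(\cdot|\cdot)=\frac{1}{h^\vee}\sum_{\gamma\in\Phi^+}\langle\cdot,\gamma\rangle\langle\cdot,\gamma\rangle$.
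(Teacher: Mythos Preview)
Your approach is essentially the same as the paper's: both recognise the two sides as $W$-invariant symmetric bilinear forms on the irreducible reflection representation and invoke uniqueness up to scalar. The only difference is in how the proportionality constant is pinned down. The paper evaluates both forms on the single vector $\tilde\alpha^\vee$, using $(\tilde\alpha^\vee|\tilde\alpha^\vee)=2$ together with the identity $\sum_{\gamma\in\Phi^+}\langle\tilde\alpha^\vee,\gamma\rangle^2=2h^\vee$ (quoted from \cite[Lemma~1.2]{L}); you instead take the trace over an orthonormal basis and reduce to the Killing-form identity $\sum_{\gamma\in\Phi^+}(\gamma|\gamma)=h^\vee\dim\h$. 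These two identities are equivalent avatars of the same fact, so the arguments are interchangeable; the paper's evaluation at $\tilde\alpha^\vee$ is marginally more direct since it avoids the intermediate passage through the Killing form normalisation.
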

\begin{proof}
Both $(\cdot|\cdot):\h\times\h\to \C$, and $\sum_{\gamma\in \Phi^+}\langle \cdot, \gamma\rangle\langle \cdot, \gamma\rangle:\h\times\h\to \C$
are two symmetric bilinear forms on $\h$.  They are both positive definite, and invariant under the Weyl group $W$. Note that there is only one such bilinear form up to constants. Therefore, there exists some constant $k\in \Q$, such that 
\[(\cdot|\cdot)=k\sum_{\gamma\in \Phi^+}\langle \cdot, \gamma\rangle\langle \cdot, \gamma\rangle.\]
It remains to show the constant $k$ is the same as $\frac{1}{h^\vee}$. 
It follows from \cite[Lemma1.2]{L} that
$\sum_{\gamma\in \Phi^+}\langle \tilde{\alpha}^\vee, \gamma\rangle\langle \tilde{\alpha}^\vee, \gamma\rangle=2h^\vee$. 
On the other hand, $(\tilde{\alpha}^\vee | \tilde{\alpha}^\vee)=2$. This gives $k=\frac{1}{h^\vee}$. 
Note that in the case of type $A_n$, we have $k=\frac{1}{n+1}$, which coincides with the constant in \cite{CEE}.
\end{proof}
\begin{proof}[Proof of Proposition \ref{prop:map to H_{h,c}}]
We now use Lemma \ref{equal:inn} to prove Proposition \ref{prop:map to H_{h,c}}.
Under the map $\xi_{a, b}$, we have
\begin{align*}
&[y(u), x(v)]\mapsto [u, \pi(v)]
=\hbar\langle u, \pi(v)\rangle-\sum_{s\in S}c_s\langle \alpha_s, u\rangle\langle \alpha_s^\vee, \pi(v)\rangle s
=\hbar(u|v)-\sum_{\gamma\in \Phi^+}c_s\langle \gamma, u\rangle\langle \gamma^\vee, \pi(v)\rangle s_{\gamma}
\\
&\sum_{\gamma\in \Phi^+}\langle v, \gamma\rangle\langle u, \gamma\rangle t_\gamma \mapsto
\sum_{\gamma\in \Phi^+}\langle v, \gamma\rangle\langle u, \gamma\rangle(\frac{\hbar}{h^\vee}-\frac{2c_s}{(\gamma|\gamma)}s_{\gamma})
=\hbar(u|v)-
\sum_{\gamma\in \Phi^+}c_s\langle \pi(v), \gamma^\vee\rangle\langle u, \gamma\rangle s_{\gamma}. 
\end{align*}
Therefore,  $\xi_{a, b}([y(u), x(v)])=\xi_{a, b} (\sum_{\gamma\in \Phi^+}\langle v, \gamma\rangle\langle u, \gamma\rangle t_\gamma)$. 
This completes the proof. 
\end{proof}
\begin{theorem}
\label{thm:KZB for rca}
The universal KZB connection specializes to the following 
elliptic KZ connection valued in the rational Cherednik algebra
\[
\nabla_{H_{\hbar, c}}=d+\sum_{\alpha \in \Phi^+}\frac{2c_{\alpha}}{(\alpha|\alpha)}k(\alpha, \ad(\frac{ \alpha^\vee}{2})|\tau) s_{\alpha} d\alpha
   -\sum_{\alpha \in \Phi^+} \frac{\hbar}{h^\vee} \frac{\theta'(\alpha|\tau)}{\theta(\alpha|\tau)} d\alpha
   +\sum_{i=1}^{n} u^i du_i.
\] The elliptic connection is flat and $W$-equivariant.
\end{theorem}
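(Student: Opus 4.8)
The plan is to deduce the statement from the universality results already established, by transporting the flat, $W$--equivariant universal connection $\nabla_{\KZB, \tau}$ along the Lie algebra homomorphism $\xi := \xi_{1,1}\colon \Aell \to H_{\hbar, c}$ of Proposition \ref{prop:map to H_{h,c}}. Concretely, $\xi$ induces a homomorphism on (completed) enveloping algebras, and applying it to the coefficients of $\nabla_{\KZB, \tau}$ produces an $H_{\hbar, c}$--valued connection on $T_{\reg}$. Three things must be verified: (i) this pushforward equals the explicit formula in the statement; (ii) it is flat; (iii) it is $W$--equivariant. Items (ii) and (iii) will be formal consequences of functoriality, so the real content is the explicit identification (i).

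For (i) I would compute the image of each summand of $\nabla_{\KZB, \tau} = d - \sum_{\alpha \in \Phi^+} k(\alpha, \ad(\tfrac{x_{\alpha^\vee}}{2})|\tau)(t_\alpha)\, d\alpha + \sum_i y(u^i)\, du_i$. Since $y(u^i) \mapsto u^i$, the last term becomes $\sum_i u^i\, du_i$ at once. For the middle term the key inputs are $\xi(x_{\alpha^\vee}) = \pi(\alpha^\vee) = \tfrac{2}{(\alpha|\alpha)}\alpha \in \h^* \subset H_{\hbar,c}$, the relation $\ad(x)(s_\alpha) = \langle x, \alpha^\vee\rangle\, \alpha\, s_\alpha$ for $x \in \h^*$ (immediate from $s_\alpha x = s_\alpha(x) s_\alpha$ in $H_{\hbar, c}$), and its iterate $\ad(\tfrac{x_{\alpha^\vee}}{2})^n(s_\alpha) \mapsto \big(\tfrac{2}{(\alpha|\alpha)}\alpha\big)^n s_\alpha$, valid because $\alpha$ commutes with itself in $S\h^*$. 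Next I would split $\xi(t_\alpha) = \tfrac{\hbar}{h^\vee} - \tfrac{2c_{\alpha}}{(\alpha|\alpha)} s_\alpha$ into its central part and its reflection part. As $\hbar$ is central, $\ad(\tfrac{x_{\alpha^\vee}}{2})$ annihilates it, so only the constant term of $k$ in its second argument survives; using $k(z, 0|\tau) = \tfrac{\theta'(z|\tau)}{\theta(z|\tau)}$ (which follows from the normalisation $\partial_z\theta(0|\tau) = 1$ and the oddness of $\theta$), this central part contributes exactly $-\tfrac{\hbar}{h^\vee}\tfrac{\theta'(\alpha|\tau)}{\theta(\alpha|\tau)}\, d\alpha$. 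The reflection part contributes $\tfrac{2c_\alpha}{(\alpha|\alpha)} k(\alpha, \ad(\tfrac{\alpha^\vee}{2})|\tau) s_\alpha\, d\alpha$, once $\ad(\tfrac{\alpha^\vee}{2})$ is read through the identification $\pi$. Assembling the three contributions yields precisely $\nabla_{H_{\hbar, c}}$.

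For (ii), flatness is functorial: by Theorem \ref{thm:connection flat} the curvature of $\nabla_{\KZB, \tau}$, written $\Omega = A \wedge A$ with $A$ the $\Aell$--valued one--form, vanishes because the defining relations of $\Aell$ hold. The curvature of the pushed--forward connection is $\xi(A)\wedge\xi(A)$, whose antisymmetrised coefficients are the brackets $[\xi(A_i),\xi(A_j)] = \xi([A_i,A_j])$; these vanish since $\xi$ preserves brackets and $\Omega = 0$. Hence $\nabla_{H_{\hbar, c}}$ is flat. For (iii), I would check that $\xi$ intertwines the $W$--action on $\Aell$ (Theorem \ref{thm:connection flat}(2)) with the conjugation action of $W$ on $H_{\hbar, c}$: indeed $w\, \pi(u)\, w^{-1} = \pi(wu)$ because the form is $W$--invariant, and $w\, \xi(t_\gamma)\, w^{-1} = \tfrac{\hbar}{h^\vee} - \tfrac{2c_\gamma}{(\gamma|\gamma)} w s_\gamma w^{-1} = \xi(t_{w\gamma})$ using $w s_\gamma w^{-1} = s_{w\gamma}$ together with the $W$--invariance of $(\cdot|\cdot)$ and of the parameter function $c$. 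Thus the $W$--equivariance of $\nabla_{\KZB, \tau}$ descends to $\nabla_{H_{\hbar, c}}$.

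The main obstacle is the bookkeeping in step (i): one must track the isomorphism $\pi\colon \h \to \h^*$ through the formal series $k(\alpha, \ad(\cdot)|\tau)$, and, above all, recognise that the new coefficient $\tfrac{\theta'(\alpha|\tau)}{\theta(\alpha|\tau)}$, absent from the universal connection, arises solely from the central summand $\tfrac{\hbar}{h^\vee}$ of $\xi(t_\alpha)$ through the constant term $k(\alpha, 0|\tau)$. Everything else is routine given the results assumed above.
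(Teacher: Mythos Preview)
Your proposal is correct and follows essentially the same approach as the paper: push forward $\nabla_{\KZB,\tau}$ along $\xi_{1,1}$, split $\xi(t_\alpha)$ into its central piece $\hbar/h^\vee$ and its reflection piece, and use $k(z,0|\tau)=\theta'(z|\tau)/\theta(z|\tau)$ to identify the former's contribution. The paper's proof is terser---it leaves the flatness and $W$--equivariance of the specialised connection implicit as consequences of Theorem~\ref{thm:connection flat} and Proposition~\ref{prop:map to H_{h,c}}---whereas you spell out the functoriality argument explicitly; this is harmless extra detail rather than a different route.
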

\begin{proof}
By Proposition \ref{prop:map to H_{h,c}}, the universal KZB connection specializes to the following connection
\[
\nabla_{H_{\hbar, c}}=d-\sum_{\alpha \in \Phi^+} k(\alpha, \ad(\frac{\alpha^\vee}{2})|\tau)\left(\frac{\hbar}{h^\vee}-\frac{2c_{s_\alpha}}{(\alpha|\alpha)}s_{\alpha}\right)d\alpha+\sum_{i=1}^{n} u^idu_i.
\]
We now simplify the above expression. Note that $k(z, 0|\tau)=\frac{\theta'(z| \tau)}{\theta(z| \tau)}$. Therefore, 
\begin{align*}
k(\alpha, \ad(\frac{x_{\alpha^\vee}}{2})|\tau)(\frac{\hbar}{h^\vee}-\frac{2c_s}{(\alpha|\alpha)}s_{\alpha})
=&k(\alpha, \ad(\frac{x_{\alpha^\vee}}{2})|\tau) \Big(\frac{\hbar}{h^\vee}\Big)
   -k(\alpha, \ad(\frac{x_{\alpha^\vee}}{2})|\tau)\Big(\frac{2c_s}{(\alpha|\alpha)}s_{\alpha}\Big)\\
=&\frac{\hbar}{h^\vee} \frac{\theta'(\alpha|\tau)}{\theta(\alpha|\tau)} 
   -k(\alpha, \ad(\frac{x_{\alpha^\vee}}{2})|\tau)\Big(\frac{2c_s}{(\alpha|\alpha)}s_{\alpha}\Big). 
\end{align*}
This completes the proof. 
\end{proof}
\subsection{The monodromy of $\nabla_{H_{\hbar, c}}$}
The connection $\nabla_{H_{\hbar, c}}$ is flat and W-equivariant. Its monodromy yields a one parameter family of monodromy representations of the elliptic braid group
$\Bell=\pi_{1}^{\orb}(T_{\reg}/W)$. The double affine Hecke algebra $\mathbb{H}_\g$ is the quotient of the group algebra $\C[\Bell]$ by the quadratic relations \[
(S_i-q t_i)(S_i+qt_i^{-1})=0,
\] where $q=e^{\pi i\frac{\hbar}{h^\vee}}$, and $t_i=e^{-\pi i \frac{2 c_{i}}{(\alpha_i, \alpha_i)}}$.
\begin{prop}
The monodromy of the elliptic connection $\nabla_{H_{\hbar, c}}$ factors through  the double affine Hecke algebra $\mathbb{H}_\g$. 
\end{prop}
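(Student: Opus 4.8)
The plan is to reduce the statement to a local monodromy computation at each reflection divisor. Since $\nabla_{H_{\hbar,c}}$ is flat and $W$-equivariant, it descends to a connection on the orbifold quotient, so its monodromy is already a representation $\rho\colon\C\Bell\to\wh{H}_{\hbar,c}$ of $\Bell=\pi_1^{\orb}(T_{\reg}/W)$, valued in the graded completion. As $\mathbb{H}_\g$ is by definition the quotient of $\C\Bell$ by the quadratic relations on the generators $S_i$, it suffices to verify that each $\rho(S_i)$ satisfies $(\rho(S_i)-qt_i)(\rho(S_i)+qt_i^{-1})=0$ in $\wh{H}_{\hbar,c}$, with $q=e^{\pi i\hbar/h^\vee}$ and $t_i=e^{-\pi i\,2c_{\alpha_i}/(\alpha_i|\alpha_i)}$.

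First I would restrict the connection to a transverse slice of the reflection hyperplane $T_{\alpha_i}$, with local coordinate $w=\alpha_i$. Using that $k(z,x|\tau)=\tfrac1z+O(1)$ and $\tfrac{\theta'}{\theta}(z|\tau)=\tfrac1z+O(z)$ as $z\to0$ (the residue of $k$ in its first variable being $1$, independent of $x$), the connection is regular singular along $w=0$, with residue
\[
R_i=\frac{2c_{\alpha_i}}{(\alpha_i|\alpha_i)}\,s_{\alpha_i}-\frac{\hbar}{h^\vee}.
\]
Crucially, $R_i$ commutes with $s_{\alpha_i}$, and on the $(\pm1)$-eigenspaces of $s_{\alpha_i}$ it acts by the scalars $\pm\tfrac{2c_{\alpha_i}}{(\alpha_i|\alpha_i)}-\tfrac{\hbar}{h^\vee}$.

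Next I would identify the orbifold generator $S_i$. Near a generic point of $T_{\alpha_i}$ the quotient map looks like $w\mapsto w^2$, so a small loop around the image divisor in $T_{\reg}/W$ lifts to a half-loop in $T_{\reg}$ joining $w_0$ to $s_{\alpha_i}(w_0)=-w_0$; under $W$-equivariance the two fibres are glued by $s_{\alpha_i}$. By the standard regular-singular analysis the parallel transport along the half-loop is $\exp(-\pi i R_i)$ up to conjugation by the holomorphic part, which does not affect the spectrum, so that $\rho(S_i)$ is conjugate to $s_{\alpha_i}\exp(-\pi i R_i)$. Reading off eigenvalues on the two eigenspaces of $s_{\alpha_i}$ gives $e^{\pi i(\hbar/h^\vee-2c_{\alpha_i}/(\alpha_i|\alpha_i))}=qt_i$ on the $+1$-eigenspace and $-e^{\pi i(\hbar/h^\vee+2c_{\alpha_i}/(\alpha_i|\alpha_i))}=-qt_i^{-1}$ on the $-1$-eigenspace, exactly as in the definition of $\mathbb{H}_\g$. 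Hence $\rho(S_i)$ is semisimple with minimal polynomial dividing $(X-qt_i)(X+qt_i^{-1})$, and the monodromy factors through $\mathbb{H}_\g$.

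The main obstacle is making the second step rigorous rather than formal. One must justify that the orbifold generator $S_i$ really maps to (a conjugate of) $s_{\alpha_i}\exp(-\pi i R_i)$, which requires: confirming the transverse local model is regular singular with the stated residue after passing to the completed coefficient algebra, where the $\ad$-expansions entering $k(\alpha_i,\ad(\tfrac{\alpha_i^\vee}{2})|\tau)$ converge; carrying out the half-loop analysis with the correct orientation and the correct interaction between the analytic half-monodromy and the $W$-action on the fibre; and handling possible resonances among the eigenvalues of $R_i$ (those differing by a nonzero integer), which I would bypass by noting that the quadratic identity is polynomial in $(\hbar,c)$ and holds on the non-resonant open locus, hence identically. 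I would model this computation on the reduction-to-rank-one monodromy arguments already invoked in the excerpt (cf. \cite{B} and \cite{TL2}) and on the type-$\sfA$ precedent of \cite{CEE}.
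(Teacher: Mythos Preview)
Your argument is correct and is precisely the standard local-monodromy computation one uses in this setting: the residue of $\nabla_{H_{\hbar,c}}$ along $\{\alpha_i=0\}$ is $R_i=\tfrac{2c_{\alpha_i}}{(\alpha_i|\alpha_i)}s_{\alpha_i}-\tfrac{\hbar}{h^\vee}$, the orbifold generator $S_i$ is (up to conjugation by a holomorphic gauge) $s_{\alpha_i}e^{-\pi i R_i}$, and the eigenvalue check on the $\pm1$-eigenspaces of $s_{\alpha_i}$ produces exactly $qt_i$ and $-qt_i^{-1}$. Your treatment of resonances via genericity in $(\hbar,c)$ is also the right way to close the argument.

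There is nothing to compare against: the paper states the proposition without proof, treating it as routine by analogy with the rational and trigonometric cases (the Coxeter--KZ and affine KZ connections, where the same half-monodromy computation is classical, cf.\ Cherednik \cite{Ch1} and \cite{TL1}). Your proposal supplies exactly the details one would expect. The only minor remark is that in $H_{\hbar,c}$ the computation of $k(\alpha_i,\ad(\tfrac{x_{\alpha_i^\vee}}{2})|\tau)(s_{\alpha_i})$ is actually rather explicit: since $(\ad\tfrac{x_{\alpha_i^\vee}}{2})^n(s_{\alpha_i})=(x_{\alpha_i^\vee})^n s_{\alpha_i}$, the operator acts as multiplication by $k(\alpha_i,x_{\alpha_i^\vee}|\tau)$, so the residue really is just $s_{\alpha_i}$ with no higher corrections, making the local analysis entirely elementary.
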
 


The monodromy of $\nabla_{H_{\hbar, c}}$ gives an algebra homomorphism
\[
\mathbb{H}_\g\to \widehat{H_{\hbar, c}}, 
\]  
where $\widehat{H_{\hbar, c}}$ is the completion of $H_{\hbar, c}$ with respect to the $\IN$--grading on $H_{\hbar, c}$.
Furthermore, it induces an isomorphism between the completion $\widehat{\mathbb{H}_\g}$ and $\widehat{H_{\hbar, c}}$.
Indeed, $\widehat{\mathbb{H}_\g}$ and $\widehat{H_{\hbar, c}}$ are both flat deformations of $\C[[\h]]\otimes \C[[\h^*]]\otimes \C W$. Thus, the algebra homomorphism 
$\widehat{\mathbb{H}_\g}\to \widehat{H_{\hbar, c}}$ is an isomorphism.

\subsection{Specialisation of $\nabla_{\KZB}$ to the rational Cherednik algebra}
In this subsection, we specialize the KZB connection $\nabla_{\KZB}$ to the rational Cherednik algebra $H_{\hbar, c}$ by constructing a homomorphism from the Lie algebra $\Aell\rtimes \mathfrak{d}$ to $H_{\hbar, c}$.

In the formula \eqref{formula of hef} of $\textbf{E}$ , $\textbf{F}$ and $\textbf{H}$, we assume furthermore that $ \{y_i\}$ is an orthonormal basis, and $\{x_i\}$ is the corresponding dual basis. 
\begin{prop} 
\label{prop:ext for rca}
The homomorphism $\xi_{a, b}: \Aell\to H_{\hbar, c}$ can be extended to the homomorphism
$\tilde{\xi}_{a, b}: U(\Aell\rtimes \mathfrak{d})\rtimes W\to H_{\hbar, c}$ by the following formulas
\begin{align*}
&w\mapsto w,\,\
d\mapsto \frac{\textbf{h}}{\hbar},\,\  X\mapsto ab^{-1} \frac{\textbf{E}}{\hbar}, \,\ \Delta_0\mapsto ba^{-1}\frac{\textbf{F}}{\hbar},\\
&\delta_{2m}\mapsto -2\frac{a^{2m-1}b^{-1}}{\hbar}\sum_{\alpha\in \Phi^+}\frac{c_\alpha^2}{(\alpha, \alpha)}(x_{\alpha^\vee})^{2m}.
\end{align*}
\end{prop}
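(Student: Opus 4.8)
The plan is to exploit the universal properties in play: an algebra homomorphism out of $U(\Aell\rtimes\mathfrak{d})\rtimes W$ is the same datum as a Lie algebra homomorphism $\Aell\rtimes\mathfrak{d}\to H_{\hbar,c}$ for the commutator bracket, together with a compatible action of $W$ by conjugation. Since $\xi_{a,b}$ is already a Lie algebra homomorphism on $\Aell$ by Proposition \ref{prop:map to H_{h,c}}, and since a Lie homomorphism out of the semidirect product $\Aell\rtimes\mathfrak{d}$ amounts to Lie homomorphisms on the two factors that intertwine the $\mathfrak{d}$--action with the commutator, the verification reduces to three tasks: (a) the proposed images of $d,X,\Delta_0,\delta_{2m}$ satisfy the defining relations of $\mathfrak{d}$; (b) for each generator $\xi$ of $\mathfrak{d}$ and each generator $g$ of $\Aell$ one has $[\tilde{\xi}_{a,b}(\xi),\xi_{a,b}(g)]=\xi_{a,b}(\tilde{\xi}(g))$, where $\tilde{\xi}$ is the derivation of Proposition \ref{prop:d is deriv}; and (c) equivariance of the $W$--actions. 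The scalars $a,b$ are arranged to cancel in every bracket, so I would set $a=b=1$ throughout.

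For task (a), the relations $[d,X]=2X$, $[d,\Delta_0]=-2\Delta_0$ and $[X,\Delta_0]=d$ are immediate from Proposition \ref{prop:RCA hEF}, which states that $\frac{1}{\hbar}\textbf{h},\frac{1}{\hbar}\textbf{E},\frac{1}{\hbar}\textbf{F}$ form an $\mathfrak{sl}_2$--triple. The relation $[d,\delta_{2m}]=2m\delta_{2m}$ holds because the image of $\delta_{2m}$ lies in $S^{2m}\h^*$, on which $\ad\frac{\textbf{h}}{\hbar}$ acts by the scalar $2m$ by Proposition \ref{prop:RCA hEF}(i); and $[\delta_{2m},X]=0$ holds since both images lie in the commutative subalgebra generated by $\h^*$. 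The only delicate relation is $(\ad\Delta_0)^{2m+1}(\delta_{2m})=0$, where I would argue that the adjoint action of the $\mathfrak{sl}_2$--triple on $H_{\hbar,c}$ is locally finite: the subspace $\h\oplus\h^*$ is a copy of the two--dimensional representation (since $\ad\frac{\textbf{E}}{\hbar}$ and $\ad\frac{\textbf{F}}{\hbar}$ interchange $\h$ and $\h^*$ with no group--algebra terms, by the coroot normalisation recalled in (b) below) while $\C W$ is fixed, so $H_{\hbar,c}$, being generated by these finite--dimensional submodules, is an integrable $\mathfrak{sl}_2$--module. The image of $\delta_{2m}$ is an $\ad\frac{\textbf{E}}{\hbar}$--highest weight vector of $\frac{\textbf{h}}{\hbar}$--weight $2m$, so $(\ad\frac{\textbf{F}}{\hbar})^{2m+1}$ annihilates it by finite--dimensional $\mathfrak{sl}_2$--representation theory.

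For task (b), conjugation by the $\mathfrak{sl}_2$--images reproduces the derivations $\tilde d,\tilde X,\widetilde{\Delta}_0$: $\ad\frac{\textbf{h}}{\hbar}$ has weights $+1,-1,0$ on $\h^*,\h,\C W$, while $\ad\frac{\textbf{E}}{\hbar}$ kills $\h^*$ and sends $u\in\h$ to $\pi(u)$, and $\ad\frac{\textbf{F}}{\hbar}$ kills $\h$ and sends $\pi(v)$ to $v$. The identities $[\frac{\textbf{E}}{\hbar},u]=\pi(u)$ and $[\frac{\textbf{F}}{\hbar},\pi(v)]=v$ follow from the Cherednik relation, the reflection terms cancelling because of the normalisation $\pi(\alpha_s^\vee)=\frac{2}{(\alpha_s|\alpha_s)}\alpha_s$, and $[\frac{\textbf{E}}{\hbar},s_\alpha]=[\frac{\textbf{F}}{\hbar},s_\alpha]=[\frac{\textbf{h}}{\hbar},s_\alpha]=0$ express the $W$--invariance of $\textbf{E},\textbf{F},\textbf{h}$, which uses the orthonormality of $\{y_i\}$. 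For $\tilde\delta_{2m}$ the key computation is $s_\alpha\,\pi(\alpha^\vee)\,s_\alpha^{-1}=-\pi(\alpha^\vee)$, whence $(\ad\frac{x_{\alpha^\vee}}{2})^{k}(s_\alpha)=\pi(\alpha^\vee)^{k}s_\alpha$; this converts the iterated $\Aell$--brackets into monomials in $\pi(\alpha^\vee)$ times $s_\alpha$, and reduces the identity on $t_\alpha$ to the vanishing of $[\tilde{\xi}_{1,1}(\delta_{2m}),\xi_{1,1}(t_\alpha)]$, which holds because $s_\alpha$ commutes with $\pi(\alpha^\vee)^{2m}s_\alpha$ and because the defining sum $\sum_{\beta\in\Phi^+}\frac{c_\beta^2}{(\beta|\beta)}\pi(\beta^\vee)^{2m}$ is $W$--invariant.

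Finally, task (c) follows from the $W$--invariance of $\textbf{E},\textbf{F},\textbf{h}$ and of the sum defining the image of $\delta_{2m}$, together with $w\,\pi(u)\,w^{-1}=\pi(wu)$ and $w\,s_\alpha\,w^{-1}=s_{w\alpha}$ with $c_\alpha,(\alpha|\alpha)$ constant on $W$--orbits. The main obstacle I anticipate is the cross--relation for $\tilde\delta_{2m}$ on $y(u)$, since $\tilde\delta_{2m}(y(u))$ is the double sum $\frac12\sum_\gamma\gamma(u)\sum_{p+q=2m-1}[(\ad\frac{x_{\gamma^\vee}}{2})^{p}(t_\gamma),(\ad-\frac{x_{\gamma^\vee}}{2})^{q}(t_\gamma)]$. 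Matching its image under $\xi_{1,1}$ against $[\tilde{\xi}_{1,1}(\delta_{2m}),u]$ requires expanding $[\pi(\beta^\vee)^{2m},u]$ by the full relation $[y,x]=\hbar\langle y,x\rangle-\sum_s c_s\langle\alpha_s,y\rangle\langle\alpha_s^\vee,x\rangle s$; the resulting scalar $\hbar$--terms must be reorganised into the $\frac{\hbar}{h^\vee}$--contribution coming from the $p=0$ and $q=0$ boundary terms of the double sum, which is exactly where Lemma \ref{equal:inn} enters, while the reflection terms are matched orbit by orbit. Carrying out this bookkeeping is the crux, and it is precisely the computation that \CEE handled in type $\sfA$.
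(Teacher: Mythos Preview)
Your overall strategy is exactly the paper's: verify the $\mathfrak{d}$--relations, then the cross--relations $[\tilde{\xi}(\xi),\xi(g)]=\xi(\tilde{\xi}(g))$, then $W$--equivariance; and setting $a=b=1$ is what the paper does as well. Your use of $\mathfrak{sl}_2$--integrability for $(\ad\Delta_0)^{2m+1}(\delta_{2m})=0$ is a pleasant alternative to the paper's direct computation, and it rests on the same lemma $[\textbf{F},x_j]=\hbar y_j$ (the reflection terms cancel because $\pi^{-1}(\alpha_s)s_\alpha+s_\alpha\pi^{-1}(\alpha_s)=0$), which you correctly flag.

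Your diagnosis of the crux in the $\tilde\delta_{2m}(y(u))$ step is off, however. The image $\xi(t_\gamma)=\frac{\hbar}{h^\vee}-\frac{2c_\gamma}{(\gamma|\gamma)}s_\gamma$ has its $\frac{\hbar}{h^\vee}$ part \emph{central}, so it disappears from every commutator $[(\ad\frac{x_{\gamma^\vee}}{2})^{p}(t_\gamma),(\ad-\frac{x_{\gamma^\vee}}{2})^{q}(t_\gamma)]$; there is no $\frac{\hbar}{h^\vee}$--bookkeeping to do and Lemma \ref{equal:inn} is not invoked anywhere in this proof. The right--hand side reduces (via your own identity $(\ad\frac{x_{\alpha^\vee}}{2})^{k}(s_\alpha)=x_{\alpha^\vee}^{k}s_\alpha$) to a telescoping sum equal to $4m\,x_{\gamma^\vee}^{2m-1}$, so the image of $\tilde\delta_{2m}(y(u))$ is $4m\sum_{\alpha}\frac{c_\alpha^2}{(\alpha,\alpha)}\alpha^\vee(u)x_{\alpha^\vee}^{2m-1}$. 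On the other side, expanding $[\tilde\xi(\delta_{2m}),u]=-\frac{2}{\hbar}\sum_\alpha\frac{c_\alpha^2}{(\alpha,\alpha)}[x_{\alpha^\vee}^{2m},u]$ with the Cherednik relation yields exactly the same $\hbar$--term, while the reflection terms are
\[
\sum_{\alpha\in\Phi^+}\frac{c_\alpha^2}{(\alpha,\alpha)}\sum_{p+q=2m-1}\langle\gamma,\alpha^\vee\rangle\, x_{\alpha^\vee}^{p}\,s_\gamma\,x_{\alpha^\vee}^{q}.
\]
These do \emph{not} match ``orbit by orbit'': the cancellation is obtained by pairing each $\alpha$ with $s_\gamma(\alpha)$ in the sum over $\Phi^+$ and using $\langle\gamma,\alpha^\vee+s_\gamma(\alpha^\vee)\rangle=0$. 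That symmetry, rather than Lemma \ref{equal:inn}, is the actual content of the last step.
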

\begin{proof}
For simplicity and without loss of generality, we assume that $a=b=1$.
We first show the above homomorphism preserves the relations of $\mathfrak{d}$, see Section \S\ref{sec:derivation} for the relations. 
From Proposition \ref{prop:RCA hEF}, we know the triple $\frac{\textbf{h}}{\hbar}, \frac{\textbf{E}}{\hbar},\frac{\textbf{F}}{\hbar}$ form an $\mathfrak{sl}_2$-triple. Thus, the homomorphism
$\tilde{\xi}$ preserves the relations of the triple $d, X, \Delta_0$.

It is obvious that $\tilde{\xi}$ preserves the relation $[\delta_{2m}, X]=0$, since the images of $\delta_{2m}$ and $X$ under $\tilde{\xi}$ lie in $\C[\h]$. The fact that $[\textbf{h}, x]=\hbar x$ implies $\tilde{\xi}$ preserves the relation
$[d, \delta_{2m}]=2m\delta_{2m}$.
Now we check $\tilde{\xi}$ preserves the relation $(\ad\Delta_0)^{2m+1}(\delta_{2m})=0$. We have the following relation
\[
[\textbf{F}, x_j]=\hbar y_j, \,\, \text{for any $1\leq j\leq n$.}
\]
Since
\begin{align*}
2[\textbf{F}, x_j]
&=[\sum_{i=1}^n y_i^2, x_j]
=\sum_{i=1}^n \Big(y_i[y_i, x_j]+[y_i, x_j]y_i\Big)\\
&=2\hbar y_j-\sum_{i}\sum_{s}c_s(y_i, \alpha_s)(x_j, \alpha_s^\vee)(y_is+sy_i)\\
&=2\hbar y_j-\sum_{i}\sum_{s}c_s \langle x_i, \alpha_s\rangle(x_j, \alpha_s^\vee)(y_is+sy_i)&&\text{by $(y_i, \alpha_s)=\langle x_i, \alpha_s\rangle$}\\
&=2\hbar y_j-\sum_{\alpha\in \Phi^+}c_s(x_j, \alpha_s^\vee)\Big(\sum_{i} \langle x_i,\alpha_s\rangle(y_is+sy_i)\Big)\\
&=2\hbar y_j-\sum_{\alpha\in \Phi^+}c_s(x_j, \alpha_s^\vee)(\alpha s_{\alpha}+s_{\alpha}\alpha)=2\hbar y_j. &&\text{by $s_{\alpha}\alpha=-\alpha s_{\alpha}$}
\end{align*}
Using above relation, we obtain
\begin{align*}
&(\ad\textbf{F})^{2m+1}x_{\alpha^\vee}^{2m}
=(\ad\textbf{F})^{2m}[\textbf{F}, x_{\alpha^\vee}^{2m}]
=\hbar(\ad\textbf{F})^{2m}\Big(x_{\alpha^\vee}^{2m-1}y_{\alpha^\vee}+y_{\alpha^\vee} x_{\alpha^\vee}^{2m-1}\Big)\\
=&\hbar \Big((\ad\textbf{F})^{2m}x_{\alpha^\vee}^{2m-1}\Big)y_{\alpha^\vee}+\hbar y_{\alpha^\vee}\Big((\ad\textbf{F})^{2m}x_{\alpha^\vee}^{2m-1}\Big)
=0.
\end{align*}
The last equality follows from the fact  $(\ad\textbf{F})^{2m-1}x_{\alpha^\vee}^{2m-1} \in \C[y_{\alpha^\vee}]$, and $[\textbf{F}, y^n]=0$, for any $n$. Therefore, $(\ad\textbf{F})^{2m}x_{\alpha^\vee}^{2m-1}=0$. This shows $\tilde{\xi}$ preserves the relation $(\ad\Delta_0)^{2m+1}(\delta_{2m})=0$. Therefore, $\tilde{\xi}$ preserves relations of $\mathfrak{d}$. 

It is clear that the map $\xi: \Aell\to H_{\hbar, c}$ is compatible with the Weyl group $W$ action. The Weyl group $W$ acts on $\mathfrak{d}$ trivially. We now check that for any $\eta\in \mathfrak{d}$, we have $w(\tilde{\eta})=0$. 
First, we have $\textbf{h}=w(\textbf{h})$. Indeed, for any $w\in W$, $w(x_i)$ are a basis of $\h^*$, and $w(y_i)$ are the corresponding dual basis of $\h$. Then, we have  $\sum_{i=1}^n \Big(x_i y_i+y_ix_i\Big)=\sum_{i=1}^n \Big(w(x_i) w(y_i)+w(y_i) w(x_i)\Big)$.  Thus, $\textbf{h}=w(\textbf{h}).$
We now show that  $w(\textbf{F})=\textbf{F}$, for any $w\in W$. This follows from the computation
\begin{align*}
w(\sum_{i=1}^n y_i^2)=\sum_{i=1}^n (w(y_i))^2=\sum_{i=1}^n (y_i)^2.
\end{align*}
The last equality follows from the fact that $\{y_i \mid 1\leq i \leq n\}$ is an orthonormal basis. Similarly, $w(\textbf{E})=\textbf{E}$, for any $w\in W$. 
We now show $w(\tilde{\xi}(\delta_{2m}))=\tilde{\xi}(\delta_{2m})$. This follows from the fact that $w$ permutes the root system $\Phi$ and preserves the killing form.

Next, we show that the map $\tilde{\xi}$ preserves the relations between $\Aell$ and $\mathfrak{d}$, see Section \S\ref{sec:derivation} for the relations. 
By Proposition \ref{prop:RCA hEF}, it is clear that $\tilde{\xi}$ preserves the relations  $\tilde{d}(x(u))=x(u)$ and $\tilde{d}(y(u))=-y(u)$. The map $\tilde{\xi}$ preserves the relation
$\tilde{d}(t_{\alpha})=0$ follows from the fact $w(\textbf{h})=\textbf{h}$, for any $w\in W$. Indeed, $[\textbf{h}, w]=(\textbf{h}-w(\textbf{h}))w=0$. 

We check $\tilde{\xi}_{a, b}$ preserves the relation $\tilde{\delta}_{2m}(t_{\alpha})=[t_\alpha, (\ad\frac{x(\alpha^\vee)}{2})^{2m}(t_\alpha)]$. 
We have already shown that $[\tilde{\xi}(\delta_{2m}), w]=0$, for any $w\in W$. It suffices to show $[s_\alpha, (\ad\frac{x(\alpha^\vee)}{2})^{2m}(s_\alpha)]=0$. 
Using the fact $\Big[\frac{x(\alpha^\vee)}{2}, s_\alpha\Big]=x(\alpha^\vee)s_\alpha$, we have \begin{align*}
[s_\alpha, (\ad\frac{x(\alpha^\vee)}{2})^{2m}(s_\alpha)]
=[s_\alpha, x(\alpha^\vee)^{2m}s_\alpha]
=s_\alpha x(\alpha^\vee)^{2m}s_\alpha-x(\alpha^\vee)^{2m}=0.
\end{align*}

Finally, we show that $\tilde{\xi}_{a, b}$ preserves relation
\begin{equation}\label{eqn:elta and y}
\tilde{\delta}_{2m}(y(u))
= \frac{1}{2}\sum_{\alpha\in \Phi^+}\alpha(u)\sum_{p+q=2m-1}[(\ad\frac{x(\alpha^\vee)}{2})^{p}
   (t_\alpha), (\ad\frac{x(\alpha^\vee)}{-2})^{q}(t_\alpha)].
\end{equation}
We first compute
\begin{align*}
&\sum_{p+q=2m-1}[(\ad\frac{x(\alpha^\vee)}{2})^{p}
   (s_\alpha), (\ad\frac{x(\alpha^\vee)}{-2})^{q}(s_\alpha)]\\
=&\sum_{p+q=2m-1}(-1)^q[x(\alpha^\vee)^{p}s_\alpha, x(\alpha^\vee)^{q}s_\alpha] &&\text{by $\Big[\frac{x(\alpha^\vee)}{2}, s_\alpha\Big]=x(\alpha^\vee)s_\alpha$}\\
=&\sum_{p+q=2m-1}(-1)^q\Big((-1)^q-(-1)^p\Big) x(\alpha^\vee)^{2m-1}=4m x(\alpha^\vee)^{2m-1}.&&\text{by $s_\alpha x(\alpha^\vee)^n=(-1)^n x(\alpha^\vee)^ns_\alpha$}\end{align*}
Therefore, under the map $\tilde{\xi}$, the right hand side of \eqref{eqn:elta and y}  maps to \[
\frac{1}{2}\sum_{\alpha\in \Phi^+}\alpha(u)4m \left(\frac{2 c_\alpha}{(\alpha, \alpha)}\right)^2 x(\alpha^\vee)^{2m-1}
=4m\sum_{\alpha \in \Phi^+}\frac{c_\alpha^2}{(\alpha, \alpha)}\alpha^\vee(u) x_{\alpha^\vee}^{2m-1}. \]
We now compute the image of the left hand side of \eqref{eqn:elta and y}.
We have
\begin{align*}
[\delta_{2m}, y(u)]
=&-\frac{2}{\hbar}\sum_{\alpha\in \Phi^+}\frac{c_\alpha^2}{(\alpha, \alpha)}[(x_{\alpha^\vee})^{2m}, y(u)]\\
=&-\frac{2}{\hbar}
\sum_{\alpha\in \Phi^+} \frac{c_\alpha^2}{(\alpha, \alpha)}\Big(\sum_{p+q=2m-1}x_{\alpha^\vee}^p [x_{\alpha^\vee}, y(u)]x_{\alpha^\vee}^{q}\Big)\\
=&-\frac{2}{\hbar}
\sum_{\alpha\in \Phi^+}\frac{c_\alpha^2}{(\alpha, \alpha)} \sum_{p+q=2m-1} x_{\alpha^\vee}^p 
\Big(-\hbar\langle u, \alpha^\vee\rangle+\sum_{\gamma\in \Phi^+}c_\gamma\langle \gamma^\vee, u\rangle\langle \gamma, \alpha^\vee\rangle s_\gamma\Big)x_{\alpha^\vee}^{q}\\
=&4m\sum_{\alpha\in \Phi^+} \frac{c_\alpha^2}{(\alpha, \alpha)} \alpha^\vee(u) x_{\alpha^\vee}^{2m-1}.
\end{align*}
The last equality follows from the following calculation.
\begin{align*}
&\sum_{\alpha\in \Phi^+} \frac{c_\alpha^2}{(\alpha, \alpha)}\sum_{p+q=2m-1}
\langle \gamma, \alpha^\vee \rangle \left( x_{\alpha^\vee}^{p} 
s_\gamma x_{\alpha^\vee}^{q} \right)\\
=&\frac{1}{2}\sum_{\alpha\in \Phi^+} \frac{c_\alpha^2}{(\alpha, \alpha)}\sum_{p+q=2m-1}
\langle \gamma, \alpha^\vee \rangle
 x_{\alpha^\vee}^{p} (x_{s_{\gamma}(\alpha^\vee)})^{q} s_\gamma
+\frac{1}{2}\sum_{\alpha\in \Phi^+} \frac{c_\alpha^2}{(\alpha, \alpha)}\sum_{p+q=2m-1}
\langle \gamma, s_{\gamma}(\alpha^\vee) \rangle
 (x_{s_{\gamma}(\alpha^\vee)})^{q} (x_{\alpha^\vee})^{p} s_\gamma
\\
=&\frac{1}{2}\sum_{\alpha\in \Phi^+} \frac{c_\alpha^2}{(\alpha, \alpha)}\sum_{p+q=2m-1}
\langle \gamma, \alpha^\vee+s_{\gamma}(\alpha^\vee) \rangle
 (x_{\alpha^\vee})^{p} (x_{s_{\gamma}(\alpha^\vee)})^{q}  s_\gamma=0,
\end{align*}
since $\langle \gamma, \alpha^\vee+s_{\gamma}(\alpha^\vee) \rangle$=0. 
Therefore, $\tilde{\xi}_{a, b}$ preserves the relation \eqref{eqn:elta and y}.
This completes the proof. 
\end{proof}

\section{The degenerate affine Hecke algebra and the rational Cherednik algebra}
\label{sec:AKZ part}
In this section, we construct a map from the degenerate affine Hecke algebra ${\mathcal H}'$ to the completion of the rational Cherednik algebra $\wh{H}_{\hbar, c}$ by degenerating the connection in Section \S\ref{sec:rca}.
The completion is with respect to the grading of the rational Cherednik algebra $H_{\hbar, c}$
\[
\deg(x)=\deg(y)=1, \deg(w)=0, \deg(\hbar)=\deg(c)=2,
\] for $x\in \h^*, y\in \h$ and $w\in W$. 
\begin{definition}
The degenerate affine Hecke algebra $\mathcal{H}'$ is the associative
algebra generated by $\C W$ and the symmetric algebra $S\h$, subject to the relations,
  \[ s_ix_u-x_{s_i(u)}s_i=k_i(u, \alpha_i), \] 
  for any simple reflection $s_i\in W$ and linear generator $x_u$, $u\in \h$, and
  $k_{i}\in \C$.
\end{definition}
Cherednik in  \cite{Ch1} constructed the affine KZ connection. It is a flat and $W$-equivariant connection on $H\rreg$ valued in the degenerate affine Hecke algebra ${\mathcal H}'$. The affine KZ connection can be obtained by specializing the trigonometric KZ connection in Section \S\ref{sec:degen}. More precisely, we have a map $A_{\trig}\to \mathcal{H}'$, by $t_{\alpha} \mapsto k_\alpha s_\alpha$, and  $X(u)\mapsto x(u)$, for $\alpha\in \Phi$, and $u\in \h$. This gives the  affine KZ connection
\[
\nabla_{\AKZ}=d-\sum_{\alpha \in \Phi^+} \frac{2\pi i t_{\alpha}}{e^{2\pi i \alpha}-1} d \alpha k_{\alpha}s_\alpha
-\sum_{i} x(u^i) d u_i. \]

As $\Im\tau\to \infty$, by Proposition \ref{prop:degeneration}, the elliptic KZ connection valued in the rational Cherednik algebra degenerates to the following trigonometric connection.
\begin{align*}
\nabla=&d-\sum_{\alpha \in \Phi^+} \frac{2\pi i d \alpha}{e^{2\pi i \alpha}-1} \Big(\frac{\hbar}{h^\vee}-\frac{2c_\alpha}{(\alpha|\alpha)} s_{\alpha}\Big)
-\sum_{\alpha \in \Phi^+}
\Big(\frac{2\pi i e^{2\pi i \ad(\frac{x_{\alpha^\vee}}{2})}}{e^{2\pi i \ad(\frac{x_{\alpha^\vee}}{2})}-1 }  -\frac{1}{\ad(\frac{x_{\alpha^\vee}}{2})}\Big)\Big(\frac{\hbar}{h^\vee}-\frac{2c_\alpha}{(\alpha|\alpha)} s_{\alpha}\Big) d\alpha +\sum_{i=1}^{n} y(u^i)du_i\\
=&d-\sum_{\alpha \in \Phi^+}
-\frac{2c_\alpha}{(\alpha|\alpha)} \frac{2\pi i s_{\alpha}}{e^{2\pi i\alpha}-1}d\alpha
-\sum_{\alpha\in \Phi^+}
\left( 
\frac{e^{2\pi i \alpha}+1}{e^{2\pi i \alpha}-1}\frac{\pi i \hbar}{h^\vee}
+\frac{2c_\alpha}{(\alpha|\alpha)} 
\Big(\frac{2\pi i e^{2\pi i x_{\alpha^\vee}}}{e^{2\pi i x_{\alpha^\vee}}-1 }  -\frac{1}{x_{\alpha^\vee}}\Big) s_{\alpha} \right)d\alpha
+\sum_{i=1}^{n}y(u^i)du_i.
\end{align*}
Note that the constant term of $\frac{2\pi i e^{2\pi i \ad(\frac{x_{\alpha^\vee}}{2})}}{e^{2\pi i \ad(\frac{x_{\alpha^\vee}}{2})}-1 }  -\frac{1}{\ad(\frac{x_{\alpha^\vee}}{2})}$ is $\pi i$. 

By the universality of the affine KZ connection $\nabla_{\AKZ}$, we have an algebra homomorphism $\mathcal{H}'\to \wh{H}_{\hbar, c}$ by
\begin{align*}
k_\alpha &\mapsto -\frac{2c_\alpha}{(\alpha|\alpha)},  \,\ w\mapsto w, \,\ \text{for $w \in W$}, \\
 x(u) &\mapsto -y(u)+\sum_{\alpha\in \Phi^+} \alpha(u)\left( 
\frac{e^{2\pi i \alpha}+1}{e^{2\pi i \alpha}-1}\frac{\pi i \hbar}{h^\vee}
+\frac{2c_\alpha}{(\alpha|\alpha)} 
\Big(\frac{2\pi i e^{2\pi i x_{\alpha^\vee}}}{e^{2\pi i x_{\alpha^\vee}}-1 }  -\frac{1}{x_{\alpha^\vee}}\Big) s_{\alpha} \right).
\end{align*}

\section{The elliptic Dunkl operators}
In \cite{BFV}, Buchstaber-Felder-Veselov defined elliptic Dunkl operators for an arbitrary Weyl groups. Etingof and Ma in \cite{EM2} generalised the elliptic Dunkl operators to an abelian variety with a finite group action. Using the elliptic Dunkl operators, Etingof-Ma defined the elliptic Cherednik algebras as a sheaf of algebras on the abelian variety. They also constructed certain representations of the elliptic Cherednik algebra.

In this section, we show that those representations constructed in \cite{EM2} of the elliptic Cherednik algebra arise from the flat connections valued in the rational Cherednik algebra $H_{0, c}$. 

Let $T=P^\vee \otimes \mathcal{E}_{\tau}$ be the abelian variety and $\mathcal{F}$ be the sheaf on $T$ considered in \cite[Section 5]{EM2}. The sheaf $\mathcal{F}$ can be identified with the vector bundle $\h_{\C}\times_{P^\vee \otimes(\Z+\tau\Z)} \C W$ on $T$, whose fiber is the regular representation of $W$. 
We construct an action of the rational Cherednik algebra  $H_{0, c}$ on the fiber $\C W$. Such action induces a flat connection on the vector bundle $\h_{\C}\times_{P^\vee \otimes(\Z+\tau\Z)} \C W$, see Section \S\ref{sec:rca}. 
\begin{prop}
\label{prop:dunkl operator}
The flat connection of Section \S\ref{sec:rca} specialized on the vector bundle $\h_{\C}\times_{P^\vee \otimes(\Z+\tau\Z)} \C W$ coincides with the elliptic Dunkl operator
 \begin{align*}
\nabla=d-\sum_{w\in W}\sum_{\alpha \in \Phi^+}
\frac{2c_{\alpha}}{(\alpha|\alpha)} \frac{ \theta(\alpha+\alpha^\vee(w\rho)}{\theta(\alpha)\theta(\alpha^\vee(w\rho))} s_{\alpha} d\alpha
\end{align*}
in \cite{BFV} and \cite{EM2}. 
\end{prop}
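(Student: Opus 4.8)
The plan is to start from the rational-Cherednik specialisation $\nabla_{H_{\hbar,c},\tau}$ of Theorem~\ref{thm:KZB for rca}, set the parameter $\hbar=0$ so that the scalar term $\frac{\hbar}{h^\vee}\frac{\theta'(\alpha|\tau)}{\theta(\alpha|\tau)}\,d\alpha$ drops out, and then restrict the resulting $H_{0,c}$-valued connection to the regular representation. Concretely, I would first simplify the operator coefficients: since $x(\alpha^\vee)=\pi(\alpha^\vee)\in\h^*$ is proportional to $\alpha$ and $s_\alpha$ negates it, the computation $[\frac{x(\alpha^\vee)}{2},s_\alpha]=x(\alpha^\vee)s_\alpha$ already used in the proof of Proposition~\ref{prop:ext for rca} gives $(\ad\frac{x(\alpha^\vee)}{2})^n(s_\alpha)=x(\alpha^\vee)^n s_\alpha$, hence
\[
k\Bigl(\alpha,\ad\tfrac{x(\alpha^\vee)}{2}\,\big|\,\tau\Bigr)(s_\alpha)=k(\alpha,x(\alpha^\vee)|\tau)\,s_\alpha,
\]
where now $x(\alpha^\vee)$ is an honest element of the commutative subalgebra $\h^*\subset H_{0,c}$. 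After this step the specialised connection reads $d+\sum_{\alpha\in\Phi^+}\frac{2c_\alpha}{(\alpha|\alpha)}k(\alpha,x(\alpha^\vee)|\tau)s_\alpha\,d\alpha+\sum_i u^i du_i$.

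Next I would make precise the $H_{0,c}$-module structure on the fibre $\C W$ of the bundle $\h_{\C}\times_{P^\vee\otimes(\Z+\tau\Z)}\C W$, matching it with the sheaf $\mathcal F$ of \cite{EM2}. The natural model is to identify $\C W=\bigoplus_{w\in W}\C e_w$ with the space of functions on the generic $W$-orbit $W\rho\subset\h$: the group $W$ acts by the left regular representation, the commutative subalgebra $\h^*$ (the ``$x$''-variables) acts diagonally by evaluation $x\cdot e_w=\langle x,w\rho\rangle e_w$, so that $x(\alpha^\vee)$ has eigenvalue $\alpha^\vee(w\rho)$ on $e_w$, and the ``$y$''-variables $\h$ act by the residual $\hbar=0$ Dunkl operators determined by the defining relation $[y,x]=-\sum_{s}c_s\langle\alpha_s,y\rangle\langle\alpha_s^\vee,x\rangle\,s$. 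With this model, expanding $k(\alpha,x(\alpha^\vee)|\tau)s_\alpha$ over the orbit and reindexing by $w\mapsto s_\alpha w$ (using $\alpha^\vee(s_\alpha w\rho)=-\alpha^\vee(w\rho)$ and the oddness of $\theta$) produces exactly the sum $\sum_{w\in W}\frac{\theta(\alpha+\alpha^\vee(w\rho))}{\theta(\alpha)\theta(\alpha^\vee(w\rho))}s_\alpha$ appearing in the claimed elliptic Dunkl operator.

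The crux of the argument, and the step I expect to be the main obstacle, is accounting for the two ``rational'' pieces that do \emph{not} appear in the target formula: the $-\frac{1}{x(\alpha^\vee)}$ tail in the definition of $k(z,x|\tau)$ and the differential term $\sum_i u^i du_i$ coming from the image of the $y$-part of the universal connection. I must show that these cancel. On the orbit model the off-diagonal part of the $y$-action is forced by the commutation relation to be $y\cdot e_w=(\text{diagonal})+\sum_{\alpha}\frac{c_\alpha\langle\alpha,y\rangle}{\alpha^\vee(w\rho)}(\cdots)\,e_{s_\alpha w}$, i.e. precisely the classical rational Dunkl reflection terms with denominators $\alpha^\vee(w\rho)$; I would then verify that $\sum_i u^i du_i$ evaluated through this action reproduces, with opposite sign, the contribution of the $-\frac{1}{x(\alpha^\vee)}$ tails, so that their sum vanishes and only the theta-quotient terms survive. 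Carrying this out requires careful bookkeeping of signs and of the normalisation of $\rho$, together with a matching of our theta-function conventions (see the Remark in \S\ref{theta function}) with those of \cite{BFV,EM2}; once this cancellation is established the two connections coincide. Flatness and $W$-equivariance need not be reproved, as they are inherited from Theorem~\ref{thm:KZB for rca}.
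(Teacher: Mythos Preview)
Your plan is essentially the paper's own argument: set $\hbar=0$, split $k(\alpha,x|\tau)=\frac{\theta(\alpha+x)}{\theta(\alpha)\theta(x)}-\frac{1}{x}$, and show that the $-\frac{1}{x}$ tails cancel against the $\sum_i y(u^i)\,du_i$ term via the relation $[y,x]$ in $H_{0,c}$, after which evaluating $x(\alpha^\vee)$ on the $w$-th summand by its eigenvalue $\alpha^\vee(w\rho)$ gives exactly the elliptic Dunkl form. The only cosmetic difference is that the paper performs the cancellation at the algebra level (by applying $[x(v),-]$ and observing the two pieces give opposite results), whereas you propose to check it on the module via the explicit Dunkl-type formula for the $y$-action; your reindexing by $w\mapsto s_\alpha w$ is not needed in the paper's setup but is harmless.
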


\subsection{The vector bundle on $T$}
In this subsection, we recall the construction of elliptic Dunkl operators in \cite{EM2}. 
Let $T=P^\vee\otimes \mathcal{E}_\tau$ be the abelian variety. 
In \cite{EM2}, the sheaf $\mathcal{F}$ on $T$ is defined as follows. 
Choose $\rho\in \h^*$, such that $\rho$ is not fixed by any $w\in W$.
Consider the trivial rank 1 bundle $\h \times \C$ on $\h$. We define the $P^\vee\oplus \tau P^\vee$ action on $\h \times \C$ by
\[
\lambda_i^\vee: (z, \xi) \mapsto (z+\lambda_i^\vee, \xi ), \,\ 
\tau\lambda_i^\vee: (z, \xi) \mapsto (z+\tau \lambda_i^\vee, \exp(-2\pi i \rho(\lambda_i^\vee))\xi ).
\]
Here, we assume $\Im(\tau)=1$. 
Denote by $\mathcal{L}_{\rho}$ the line bundle on 
$T$  which is the quotient of $\h\times \C$ by the group $P^{\vee}\oplus \tau P^\vee$ action. We use the notation as in \cite{EM2} that $\mathcal{L}_{w}:=\mathcal{L}_{w\rho}$, for $w\in W$. The sheaf $\mathcal{F}=\oplus_{w\in W} \mathcal{L}_{w}$ is defined by taking direct sum of $\mathcal{L}_{w}$ for all $w\in W$. 
Labelling each fiber of $\mathcal{L}_{w}$ by $\C_w$, we identify the vector bundle $\mathcal{F}$ with the vector bundle $\h_{\C}\times_{P^\vee \otimes(\Z+\tau\Z)} \C W$ with rank $|W|$. Under this identification, the action of $P^\vee\oplus \tau P^\vee$ on the fiber $\C W$ is given by
\[
\lambda_i^\vee: (z, \xi) \mapsto (z+\lambda_i^\vee, \xi ), \,\ \text{and}  \,\ 
\tau\lambda_i^\vee: (z, \xi) \mapsto (z+\tau \lambda_i^\vee, \exp(-2\pi i w(\rho)(\lambda_i^\vee))\xi_w), 
\]
where $\xi=(\xi_w)$, for $\xi_w\in \C_w$.

\subsection{The action of the rational Cherednik algebra $H_{0, c}$}
In this subsection, we define the action of rational Cherednik algebra $H_{0, c}$ on the fiber $\C W$, such that the element $\lambda_i^\vee\in \h$ acts by multiplication of $ w\rho(\lambda_i^\vee)$. 

We first recall some facts of the rational Cherednik algebra $H_{0, c}$, see \cite{EG, EM} for details. By the Sataka isomorphism, the center $Z_{0, c}$ of $H_{0, c}$ is isomorphic to the spherical subalgebra $eH_{0, c}e$, where 
\[e=\frac{1}{|W|} \sum_{w\in W} w\in \C W\] 
is the idempotent element. Any irreducible representation of $H_{0, c}$ as a representation of $W$ is isomorphic to the regular representation $\C W$. In particular, the dimension is $|W|$. The moduli space of irreducible representations of $H_{0, c}$ is the Calogero-Moser space $\Spec(Z_{0, c})$.

We construct an action of $H_{0, c}$ on $\C W$. 
This following action is a  generalization of type $A$ case in \cite[Section 9.6]{EM}. As before, we choose $\rho\in \h$ such that $\rho$ is not fixed by any $w\in W$.  Let $E=E_{\rho, \mu}$ be the space of the complex valued functions on the $W$ orbit of $(\rho, \mu)\in \h\times \h^*$. As a representation of the Weyl group $W$, $E_{\rho, \mu}$ is isomorphic to the regular representation of $W$.

We define the action of $H_{0, c}$ on $E_{\rho, \mu}$ as follows. For $x\in \h^*$, and $y\in \h$,
\begin{align*}
&x\cdot F(\bold{a}, \bold{b})=(x, \bold{a}) F(\bold{a}, \bold{b}), \\
&y\cdot F(\bold{a}, \bold{b})=(y, \bold{b})F(\bold{a}, \bold{b})+\sum_{\alpha\in \Phi^+} c_{\alpha}\frac{\alpha(y)}{\alpha(\bold{a})}(1-s_{\alpha}) F(\bold{a}, \bold{b}).
\end{align*}
We now check that it is a well-defined action. 
We check the action respects the relation $[y, y']=0$. We have
\begin{align*}
yy'F(\bold{a}, \bold{b})=
&(y, \bold{b})(y', \bold{b})F(\bold{a}, \bold{b})
+\sum_{\alpha\in \Phi^+} c_{\alpha}\frac{\alpha(y)}{\alpha(\bold{a})}(1-s_{\alpha})(y', \bold{b})F(\bold{a}, \bold{b})\\
&+\sum_{\alpha\in \Phi^+}c_{\alpha} \frac{\alpha(y')}{\alpha(\bold{a})}(y, \bold{b})(1-s_{\alpha}) F(\bold{a}, \bold{b})
+\sum_{\beta, \alpha\in \Phi^+} c_{\beta}c_{\alpha}\frac{\beta(y)}{\beta(\bold{a})}(1-s_{\beta})\frac{\alpha(y')}{\alpha(\bold{a})}(1-s_{\alpha}) F(\bold{a}, \bold{b})\\
=&(y, \bold{b})(y', \bold{b})F(\bold{a}, \bold{b})
+\sum_{\alpha\in \Phi^+}c_{\alpha} \frac{\alpha(y)}{\alpha(\bold{a})}
\Big(
(y', \bold{b})F(\bold{a}, \bold{b})-(y', \bold{b})F(s_{\alpha}\bold{a}, s_{\alpha}\bold{b})
\Big)\\
&+\sum_{\alpha\in \Phi^+} c_{\alpha}\frac{\alpha(y')}{\alpha(\bold{a})}
\Big(
(y, \bold{b})F(\bold{a}, \bold{b})-(y, s_{\alpha}\bold{b})F(s_{\alpha}\bold{a}, s_{\alpha}\bold{b})
\Big)
+\sum_{\beta, \alpha\in \Phi^+} c_{\alpha}c_{\beta}\frac{\beta(y)}{\beta(\bold{a})}\frac{\alpha(y')}{\alpha(\bold{a})}(1-s_{\beta})(1-s_{\alpha}) F(\bold{a}, \bold{b}). 
\end{align*}
Therefore, the action of the commutator $[y, y']$ is given by
\begin{align*}
[y, y'] F(\bold{a}, \bold{b})=
&-\sum_{\alpha\in \Phi^+} c_{\alpha}\frac{\alpha(y)}{\alpha(\bold{a})}
\Big(
(y', \bold{b})F(s_{\alpha}\bold{a}, s_{\alpha}\bold{b})
\Big)-\sum_{\alpha\in \Phi^+} c_{\alpha}\frac{\alpha(y')}{\alpha(\bold{a})}
\Big(
(y, s_{\alpha}\bold{b})F(s_{\alpha}\bold{a}, s_{\alpha}\bold{b})
\Big)\\
&+\sum_{\alpha\in \Phi^+} c_{\alpha}\frac{\alpha(y')}{\alpha(\bold{a})}
\Big(
(y, \bold{b})F(s_{\alpha}\bold{a}, s_{\alpha}\bold{b})
\Big)+\sum_{\alpha\in \Phi^+} c_{\alpha}\frac{\alpha(y)}{\alpha(\bold{a})}
\Big(
(y', s_{\alpha}\bold{b})F(s_{\alpha}\bold{a}, s_{\alpha}\bold{b})
\Big)\\
=&-\sum_{\alpha\in \Phi^+} c_{\alpha}\frac{\alpha(y)}{\alpha(\bold{a})}
\alpha(y')(\bold{b}, \alpha^\vee)F(s_{\alpha}\bold{a}, s_{\alpha}\bold{b})
+\sum_{\alpha\in \Phi^+} c_{\alpha}\frac{\alpha(y')}{\alpha(\bold{a})}
\alpha(y)(\bold{b}, \alpha^\vee)F(s_{\alpha}\bold{a}, s_{\alpha}\bold{b})
\\
=&0.
\end{align*}
We then check the action respects the relation $[y, x]=-\sum_{\alpha\in \Phi^+} c_{\alpha}\alpha(y)\alpha^\vee(x) s_{\alpha}$. We have 
\begin{align*}
[x, y] F(\bold{a}, \bold{b})=
&\sum_{\alpha\in \Phi^+} c_{\alpha}\frac{\alpha(y)}{\alpha(\bold{a})}\Big((x, \bold{a})F(\bold{a}, \bold{b})
-(x, s_{\alpha}\bold{a})F(s_{\alpha} \bold{a}, s_{\alpha}\bold{b})\Big)
-\sum_{\alpha\in \Phi^+} c_{\alpha}\frac{\alpha(y)}{\alpha(\bold{a})}\Big((x, \bold{a})F(\bold{a}, \bold{b})
-(x, \bold{a})F(s_{\alpha} \bold{a}, s_{\alpha}\bold{b})\Big)\\
=&\sum_{\alpha\in \Phi^+} c_{\alpha}\frac{\alpha(y)}{\alpha(\bold{a})}
\alpha(\bold{a})\alpha^\vee(x)F(s_{\alpha} \bold{a}, s_{\alpha}\bold{b})
=\sum_{\alpha\in \Phi^+}c_{\alpha}\alpha(y)\alpha^\vee(x) s_{\alpha} F(\bold{a}, \bold{b}).
\end{align*}
It is straightforward to show that the action respects to other defining relations of $H_{0, c}$. 

Therefore, we have an irreducible representation $\C W$ of the rational Cherednik algebra $H_{0, c}$. It has the following properties. As a representation of the Weyl group $W$, it is isomorphic to the regular representation of $W$. And the element $\lambda_i^\vee\in \h$ acts by multiplication of $ w\rho(\lambda_i^\vee)$. 
\subsection{The comparison}
The KZB connection valued in $H_{\hbar, c}$ in Section \S\ref{sec:rca} can be simplified as follows, when $\hbar=0$. 
\begin{align}
\nabla=&d-\sum_{\alpha \in \Phi^+}
k(\alpha, \ad(\frac{ \alpha^\vee}{2})|\tau) t_{\alpha} d\alpha
   +\sum_{i=1}^{n} y(u^i) du_i \notag\\
=&d-\sum_{\alpha \in \Phi^+}
\frac{ \theta(\alpha+\ad(\frac{ \alpha^\vee}{2})}{\theta(\alpha)\theta(\ad(\frac{ \alpha^\vee}{2}))} t_{\alpha} d\alpha
+\sum_{i=1}^{n}\left(\sum_{\alpha \in \Phi^+}\frac{ \alpha(u^i)}{\ad (\frac{ \alpha^\vee}{2})}t_\alpha
+ y(u^i) \right)du_i \notag\\
=&d-\sum_{\alpha \in \Phi^+}
\frac{ \theta(\alpha+\ad(\frac{ \alpha^\vee}{2})}{\theta(\alpha)\theta(\ad(\frac{ \alpha^\vee}{2}))} t_{\alpha} d\alpha. \label{conn:rca t=0}
\end{align}
The equality \eqref{conn:rca t=0} follows from the following computations. 
For any $v\in \h$, we apply $[x(v), -]$ to the term $\sum_{\alpha \in \Phi^+}\frac{ \alpha(u^i)}{\ad (\frac{ \alpha^\vee}{2})}t_\alpha
+ y(u^i)$. Using the relation of $H_{0, c}$, we have $[x(v), y(u^i)]=-\sum_{\alpha\in \Phi^+} (\alpha, v)(\alpha, u^i) t_\alpha$. 
On the other hand, we have
\begin{align*}
[x(v), \sum_{\alpha\in \Phi^+}\frac{ (\alpha, u^i) }{\ad(\frac{ \alpha^\vee}{2})}t_{\alpha}]
=&\sum_{\alpha\in \Phi^+}\frac{ (\alpha, u^i) }{\ad(\frac{ \alpha^\vee}{2})}[x(v), t_{\alpha}]
=\sum_{\alpha\in \Phi^+}\frac{(\alpha, u^i)(\alpha, v)  }{\ad(\frac{ \alpha^\vee}{2})}\ad(\frac{ \alpha^\vee}{2}) t_{\alpha}
=\sum_{\alpha\in \Phi^+}(\alpha, u^i)(\alpha, v) t_\alpha,
\end{align*}
where the second equality is obtained by decomposing the vector $v$ as $v=v_{\parallel}+v_{\perp}$, where $v_{\parallel}=(\alpha, v) \frac{\alpha^\vee}{2}$, and we have the relation $[x(v_{\perp}), t_{\alpha}]=0$, since $(v_{\perp}, \alpha)=0$. 
This implies \eqref{conn:rca t=0}. 

We specialize the connection \eqref{conn:rca t=0} to the vector bundle $\h\times_{P^\vee\otimes\Lambda_{\tau}} \C W$. Recall that the action of  $\lambda_i^\vee\in \h$ is given by multiplication of $ w\rho(\lambda_i^\vee)$.
This gives the $W$-equivariant flat connection on the bundle 
$\h\times_{P^\vee\otimes\Lambda_{\tau}} \C W$. It takes the following form
\begin{align*}
\nabla=d-\sum_{w\in W}\sum_{\alpha \in \Phi^+}
\frac{2c_{\alpha}}{(\alpha|\alpha)} \frac{ \theta(\alpha+\alpha^\vee(w\rho)}{\theta(\alpha)\theta(\alpha^\vee(w\rho))} s_{\alpha} d\alpha. 
\end{align*}
This is exactly the elliptic Dunkl operator in \cite{BFV} and \cite{EM2}. The 
flat connection defines a representation of $W\rtimes D_{T_{\reg}}$ on the  bundle $\mathcal{F}|_{T_{\reg}}$, where $D_{T_{\reg}}$ is ring of differential operators on $T_{\reg}$. This restricts to the action of the elliptic Cherednik algebra defined in \cite{EM2}.

\newcommand{\arxiv}[1]
{\texttt{\href{http://arxiv.org/abs/#1}{arXiv:#1}}}
\newcommand{\doi}[1]
{\texttt{\href{http://dx.doi.org/#1}{doi:#1}}}
\renewcommand{\MR}[1]{}

\end{document}